\newcommand\blfootnote[1]{%
  \begingroup
  \renewcommand\thefootnote{}\footnote{#1}%
  \addtocounter{footnote}{-1}%
  \endgroup
}
\title[Partial Okounkov bodies]{Partial Okounkov bodies and Duistermaat--Heckman measures of non-Archimedean metrics}
\author{Mingchen Xia}
\begin{document}

\begin{abstract}
	Let $X$ be a smooth complex projective variety.
	We construct partial Okounkov bodies associated with Hermitian big line bundles $(L,\phi)$ on $X$. We show that partial Okounkov bodies are universal invariants of the singularities of $\phi$.
	As an application, we construct Duistermaat--Heckman measures associated with finite energy metrics on the Berkovich analytification of an ample line bundle.
\end{abstract}

\blfootnote{
\textbf{\textit{Keywords---}}Okounkov bodies, pseudo-effective line bundles, convex bodies, pluri-subharmonic functions
}

\blfootnote{\textbf{\textit{MSC}}:	14M25, 	32U05
}

\blfootnote{
	Mingchen Xia, \textsc{Institut de Mathématiques de Jussieu-Paris Rive Gauche}\par\nopagebreak
	\textit{Email address}, \texttt{mingchen@imj-prg.fr}\par\nopagebreak
	\textit{Homepage}, \url{http://mingchenxia.github.io/home/}.
}
 \normalsize 

\maketitle

\tableofcontents

\section{Introduction}
\subsection{Background}
Let $X$ be an irreducible smooth projective variety of dimension $n$ and $L$ be a big holomorphic line bundle on $X$. Given any admissible flag $X=Y_0\supseteq Y_1\supseteq\cdots\supseteq Y_n$
on $X$ (see \cref{def:admfl} for the precise definition), one can attach a natural convex body $\Delta(L)$ of dimension $n$ to $L$, generalizing the classical Newton polytope construction in toric geometry. This construction was first considered by Okounkov \cite{Oko96, Oko03} and then extended by Lazarsfeld--Mustaț{\u{a}} \cite{LM09} and Kaveh--Khovanskii \cite{KK12}. The convex body $\Delta(L)$ is known as the \emph{Okounkov body} or \emph{Newton--Okounkov body} associated with $L$ (with respect to the given flag). We briefly recall its definition: given any non-zero $s\in \mathrm{H}^0(X,L^k)$, let $\nu_1(s)$ be the vanishing order of $s$ along $Y_1$. Then $s$ can be regarded as a section of $\mathrm{H}^0(X,L^k\otimes \mathcal{O}_X(-\nu_1(s)Y_1))$. It follows that $s_1\coloneqq s|_{Y_1}$ is a non-zero section of $L|_{Y_1}^k\otimes \mathcal{O}_X(-\nu_1(s)Y_1)|_{Y_1}$. We can then repeat the same procedure with $s_1$, $Y_2$ in place of $s$, $Y_1$. Repeating this construction, we end up with $\nu(s)=(\nu_1(s),\ldots,\nu_n(s))\in \mathbb{N}^n$. In fact, $\nu$ extends naturally to a rank $n$ valuation on $\mathbb{C}(X)$.
Consider the semigroup
\[
	\Gamma(L)\coloneqq \left\{(\nu(s),k)\in \mathbb{Z}^{n+1}:k\in \mathbb{N}, s\in \mathrm{H}^0(X,L^k)^{\times} \right\}.
\]
Then $\Delta(L)$ is the intersection of the closed convex cone in $\mathbb{R}^{n+1}$ generated by $\Gamma(L)$ with the hyperplane $\{(x,1):x\in \mathbb{R}^n\}$. A key property of $\Delta(L)$ is that the Lebesgue volume of $\Delta(L)$ is proportional to the volume of the line bundle $L$:
\begin{equation}\label{eq:volD}
	\vol \Delta(L)=\frac{1}{n!}\langle L^n\rangle.
\end{equation}
Here $\langle \bullet\rangle$ denotes the movable intersection product in the sense of \cite{BDPP13, BFJ09}.

In \cite{LM09}, Lazarsfeld--Mustaț{\u{a}} showed moreover that $\Delta(L)$ depends only on the numerical class of $L$. Conversely, it is shown by Jow \cite{Jow10} that the information of all Okounkov bodies with respect to various flags actually determines the numerical class of $L$. In other words, Okounkov bodies can be regarded as universal numerical invariants of big line bundles.

This paper concerns a similar problem. Assume that $L$ is equipped with a singular plurisubharmonic (psh) metric $\phi$.
We will construct universal invariants of the singularity type of $\phi$.
We call these universal invariants the \emph{partial Okounkov bodies} of $(L,\phi)$.

\subsection{Main results}
Let us explain more details about the construction of partial Okounkov bodies.
Recall that any admissible flag on $X$ induces a rank $n$ valuation on $\mathbb{C}(X)$ with values in $\mathbb{Z}^n$. We will work more generally with such valuations, not necessarily coming from admissible flags on $X$.
We define a set
\begin{equation}\label{eq:Gamma1}
	\Gamma(L,\phi)\coloneqq \left\{(\nu(s),k)\in \mathbb{Z}^{n+1}:k\in \mathbb{N}, s\in \mathrm{H}^0(X,L^k\otimes \mathcal{I}(k\phi))^{\times} \right\}
\end{equation}
similar to $\Gamma(L)$. Here $\mathcal{I}(\bullet)$ denotes the multiplier ideal sheaf in the sense of Nadel.
However, a key difference here is that $\Gamma(L,\phi)$ is not a semigroup in general. Thus, the constructions in both \cite{LM09} and \cite{KK12} break down.
We will show that in this case, there is still a canonical construction of Okounkov bodies.

Before stating our main theorem, let us recall the definition of volume. The volume of $(L,\phi)$ is defined as
\[
	\vol (L,\phi)\coloneqq \lim_{k\to\infty}\frac{1}{k^n}h^0(X,L^k\otimes \mathcal{I}(k\phi)).
\]
The existence of this limit is proved in \cite{DX21}.

\begin{maintheorem}\label{thm:partOkobody}
	Let $(L,\phi)$ be as above. Assume that $\vol(L,\phi)>0$. Then there is a convex body $\Delta(L,\phi)\subseteq \Delta(L)$ associated with $(L,\phi)$ satisfying
	\begin{equation}\label{eq:volD1}
		\vol \Delta(L,\phi)=\vol(L,\phi).
	\end{equation}

	Moreover, $\Delta(L,\phi)$ is continuous in $\phi$ if $\int_X (\ddc\phi)^n>0$. Here the set of $\phi$ is endowed with the $d_S$-pseudometric in the sense of \cite{DDNLmetric} and the set of convex bodies is endowed with the Hausdorff metric.

	Define
	\[
		\Gamma_k\coloneqq \left\{k^{-1}\nu(s)\in \mathbb{R}^n:s\in \mathrm{H}^0(X,L^k\otimes \mathcal{I}(k\phi))^{\times}\right\}
	\]
	and let $\Delta_k$ denote the convex hull of $\Gamma_k$. Then 
    \begin{equation}\label{eq:Hausconp}
    \Delta_k \to \Delta(L,\phi)
    \end{equation}
    with respect to the Hausdorff metric if $\vol(L,\phi)>0$.
\end{maintheorem}
Observe that the last assertion actually uniquely determines $\Delta(L,\phi)$, so $\Delta(L,\phi)$ can be regarded as canonically attached to the given data $(X,L,\phi,\nu)$.

The convex body $\Delta(L,\phi)$ is called the \emph{partial Okounkov body} of $(L,\phi)$ with respect to the given valuation.
 Here the word \emph{partial} refers to the fact that the partial Okounkov bodies are contained in $\Delta(L)$. One should not confuse them with the notion of Okounkov bodies with respect to partial flags.

We will also extend the definition to the case $\vol(L,\phi)=0$ in \cref{subsec:limpob}, at the expense of losing continuity in $\phi$.

Observe that \eqref{eq:volD1} bears strong resemblance with \eqref{eq:volD}. In fact, when $\phi$ has minimal singularities, $\Delta(L,\phi)=\Delta(L)$ and
\eqref{eq:volD1} just reduces to \eqref{eq:volD}.

The second main result says that partial Okounkov bodies uniquely determine the $\mathcal{I}$-singularity type of $\phi$.
\begin{maintheorem}\label{thm:IeqDelta}
	Let $L$ be a big line bundle on $X$. Let $\phi$, $\phi'$ be two singular psh metrics on $L$ with positive volumes. Then the following are equivalent:
	\begin{enumerate}
		\item $\phi\sim_{\mathcal{I}}\phi'$.
		\item $\Delta(L,\phi)=\Delta(L,\phi')$ for all rank $n$ valuations on $\mathbb{C}(X)$ taking values in $\mathbb{Z}^n$.
	\end{enumerate}
\end{maintheorem}
Recall that $\phi\sim_{\mathcal{I}}\phi'$ means $\mathcal{I}(k\phi)=\mathcal{I}(k\phi')$ for all real $k>0$. This relation is studied in detail in \cite{DX22, DX21}. It captures a lot of important information about the singularity of a psh metric.
\cref{thm:IeqDelta} should be regarded as a metric analogue of Jow's theorem.

As a byproduct of our proof of \cref{thm:IeqDelta}, we reprove a formula computing the generic Lelong numbers of currents of minimal singularities in $c_1(L)$, slightly generalizing \cite[Theorem~5.4]{Bou02h}:
\begin{theorem}[{=\cref{cor:numin}}]
	Let $L$ be a big line bundle on $X$.
	Consider a current $T_{\min}$ of minimal singularity in $c_1(L)$. Then for any prime divisor $E$ over $X$, we have
	\begin{equation}
		\nu(T_{\min},E)=\lim_{k\to\infty}\frac{1}{k}\ord_E \mathrm{H}^0(X,L^k).
	\end{equation}
    Here $\nu(T_{\min},E)$ denotes the generic Lelong number of $T_{\min}$ along $E$.
\end{theorem}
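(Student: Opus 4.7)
The plan is to identify both sides of the claimed equality with a common feature of the partial Okounkov body. Write $\phi_{\min}$ for a potential of the minimal singularity current $T_{\min}$. By passing to a birational model on which $\ord_E$ is realized by a prime divisor and pulling everything back, I may assume $E\subset X$ is a prime divisor. Fix an admissible flag $Y_\bullet$ on $X$ with $Y_1=E$, so that the induced rank-$n$ valuation $\nu$ on $\mathbb{C}(X)$ has first coordinate $\nu_1=\ord_E$.

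Since $\phi_{\min}$ has minimal singularities and $L$ is big, every $s\in\mathrm{H}^0(X,L^k)^{\times}$ satisfies $k^{-1}\log|s|\le\phi_{\min}+O(1)$, so $|s|^2 e^{-2k\phi_{\min}}$ is locally bounded and hence $\mathrm{H}^0(X,L^k\otimes\mathcal{I}(k\phi_{\min}))=\mathrm{H}^0(X,L^k)$. Consequently $\vol(L,\phi_{\min})=\vol(L)>0$, and since $\Delta(L,\phi_{\min})\subseteq\Delta(L)$ share the same volume, \cref{thm:partOkobody} forces $\Delta(L,\phi_{\min})=\Delta(L)$. By definition of the Okounkov body, the minimum of the first coordinate over $\Delta(L)$ is $\lim_{k\to\infty}k^{-1}\ord_E \mathrm{H}^0(X,L^k)$.

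It remains to identify this minimum with $\nu(T_{\min},E)$. I would prove the more general statement
\[
\min\{x_1:x\in\Delta(L,\phi)\}=\nu(\phi,E)
\]
for every psh metric $\phi$ on $L$ with $\int_X(\ddc\phi)^n>0$. The analytic-singularity case follows from the last clause of \cref{thm:partOkobody}, which identifies the left side with $\lim_k k^{-1}\ord_E \mathrm{H}^0(X,L^k\otimes\mathcal{I}(k\phi))$; after a log resolution of the singularities of $\phi$ this reduces to an asymptotic vanishing question along $E$ for a big line bundle, where the lower bound $\ge\nu(\phi,E)$ is automatic from membership in the multiplier ideal and the matching upper bound comes from bigness. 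The general case then follows by Demailly's analytic approximation: pick $\phi_m\searrow\phi$ psh on $L$ with analytic singularities and $\nu(\phi_m,E)\to\nu(\phi,E)$, and invoke the continuity in $d_S$ of \cref{thm:partOkobody} (applicable since $\int_X(\ddc\phi)^n>0$) to pass to the Hausdorff limit. Specialising to $\phi=\phi_{\min}$ finishes the proof.

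The principal obstacle is the analytic-singularity case of the identification: one must produce sections of $L^k\otimes\mathcal{I}(k\phi)$ whose order along $E$ matches the generic vanishing of the multiplier ideal up to $o(k)$, rather than being pushed to strictly higher order by global base-locus constraints. This asymptotic saturation is precisely where the positivity $\int_X(\ddc\phi)^n>0$ becomes essential.
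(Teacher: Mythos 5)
Your overall architecture is the paper's: show $\mathrm{H}^0(X,L^k\otimes\mathcal{I}(k\phi_{\min}))=\mathrm{H}^0(X,L^k)$ so that $\Delta(L,\phi_{\min})=\Delta(L)$, identify the right-hand side with $\min_{x\in\Delta(L)}x_1$, and reduce everything to the general identity $\min_{x\in\Delta(L,\phi)}x_1=\nu(\phi,E)$ (the paper's \cref{lma:nuOk}), proved first for analytic singularities and then by approximation. But the two steps where the real work lies are not carried out correctly. First, in the analytic-singularities case your ``matching upper bound comes from bigness'' is false as stated: after a log resolution one is asking whether $\lim_k k^{-1}\ord_E\mathrm{H}^0(X,k(L-F))=0$, and for a merely big class this asymptotic order of vanishing can be strictly positive (indeed that possibility is exactly what the theorem you are proving quantifies; e.g.\ $M=\pi^*H+E$ on a blow-up has $\sigma_E(M)=1$). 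Nor is $\int_X(\ddc\phi)^n>0$ the mechanism: it only gives bigness of $L-F$. What actually saves the argument is extra positivity of the residual class: either that $\pi^*\phi$ has analytic singularities along an nc divisor in the sense of \cref{def:anaD}, so $\pi^*L-F$ carries a smooth semipositive form, or, as the paper does, one first perturbs by an ample class via \cref{prop:Deltapert} and rescales (using the Kähler-current hypothesis) so that $L-F$ is ample, whence $\ord_E\mathrm{H}^0(X,L^k\otimes\mathcal{O}(-kF))=k\ord_E F$ on the nose. You flag this saturation as ``the principal obstacle'' but leave it unresolved, so the core of the lemma is missing.

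Second, the approximation step is not executable as written: Demailly-type approximants of a $\theta$-psh $\phi$ with analytic singularities live in $\PSH(X,\theta+\epsilon_m\omega)$, not in $\PSH(X,\theta)$, so you cannot in general ``pick $\phi_m\searrow\phi$ psh on $L$ with analytic singularities''; and to apply the $d_S$-continuity of \cref{thm:partOkobody} you must both stay in a fixed class and know $d_S$-convergence plus convergence of the generic Lelong numbers $\nu(\phi_m,E)\to\nu(\phi,E)$. The paper handles precisely these points by first reducing to the case where $\ddc\phi$ is a Kähler current (via $\varphi_\epsilon=(1-\epsilon)\varphi+\epsilon\psi$ and \cref{thm:Okoucont}), then using a quasi-equisingular approximation together with \cite[Lemma~2.2]{Xia20} for the Lelong numbers, and the ample perturbation \cref{prop:Deltapert} to absorb the loss of positivity. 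Your step 2 (minimal singularities give the full section ring, hence $\Delta(L,\phi_{\min})=\Delta(L)$, and $\min_{\Delta(L)}x_1=\lim_k k^{-1}\ord_E\mathrm{H}^0(X,L^k)$) is fine and matches the paper, but without repairing the two points above the proof is incomplete.
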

As a consequence, we find a new formula computing the multiplier ideal sheaf $\mathcal{I}(T_{\min})$ in \cref{cor:IVtheta}.

The third main result is an analogue of \cite{WN14}. Given any continuous metric $\psi$ on $L$, one can naturally construct a convex function $c[\psi]$ on $\Int \Delta(L)$, known as the \emph{Chebyshev transform} of $\psi$. The main property of $c[\psi]$ is that given another continuous metric $\psi'$ on $L$, we have
\begin{equation}
	\int_{\Delta(L)} \left(c[\psi]-c[\psi']\right)\,\mathrm{d}\lambda=\vol(\psi,\psi'),
\end{equation}
where $\vol(\psi,\psi')$ is the relative volume as studied in \cite{BB10, BBWN11}  and $\mathrm{d}\lambda$ is the Lebesgue measure on $\mathbb{R}^n$. In our setup, we also associate a convex function: $c_{[\phi]}[\psi]:\Int \Delta(L,\phi)\rightarrow \mathbb{R}$. Moreover,
\begin{maintheorem}\label{thm:intdiffc}
	Assume that the valuation $\nu$ is induced by an admissible flag on $X$.
	Let $\psi,\psi'$ be two continuous metrics on $L$, then
	\begin{equation}
		\int_{\Delta(L,\phi)} \left(c_{[\phi]}[\psi]-c_{[\phi]}[\psi']\right)\,\mathrm{d}\lambda=-\mathcal{E}^{\theta}_{[\phi]}(\psi)+\mathcal{E}^{\theta}_{[\phi]}(\psi'),
	\end{equation}
	where $\mathcal{E}^{\theta}_{[\phi]}$ is the partial equilibrium energy functional defined in \eqref{eq:Eequil}.
\end{maintheorem}
\cref{thm:partOkobody}, \cref{thm:IeqDelta} and \cref{thm:intdiffc} together give convex-geometric interpretations of the main results of \cite{DX22, DX21}. These results also provide us with a convex-geometric approach to the study of psh singularities.

As an application of our theory, we prove a generalization of  Boucksom--Chen theorem (\cref{thm:BCgen}).
Recall the theorem Boucksom--Chen \cite{BC11} says that given a \emph{multiplicative} filtration $\mathscr{F}$ on the section ring $R(X,L)$, one can naturally associate a probability measure on $\mathbb{R}$, known as the \emph{Duistermaat--Heckman measure}.
Moreover, the Duistermaat--Heckman measure is the weak limit of a sequence of discrete measures $\mu_k$ associated with the filtration $\mathscr{F}$ on $\mathrm{H}^0(X,L^k)$.
We show that this construction can be generalized to all $\mathcal{I}$-model test curves, not necessarily coming from filtrations.
Here we only prove the generalized Boucksom--Chen theorem for filtrations on the full graded linear series, which suffices for our purpose. It is, however, easy to see that the techniques apply to more general situations.

More generally, we introduce the notion of an Okounkov test curve (\cref{def:Otc}) and generalize Duistermaat--Heckman measures to this setting.

When $L$ is ample, this construction allows us to associate a Radon measure $\DHm(\eta)$ on $\mathbb{R}$ with each element $\eta$ in the non-Archimedean space $\mathcal{E}^1(L^{\An})$ in the sense of \cite{BJglobal}, see \cref{def:DHmNA}. The space $\mathcal{E}^1(L^{\An})$ can be seen as the completion of the space of test configurations.

\begin{theorem}\label{thm:DHextintro}
	The Duistermaat--Heckman measure construction of test configurations as in \cite{WN12} admits a unique continuous extension $\DHm:\mathcal{E}^1(L^{\An})\rightarrow \mathcal{M}(\mathbb{R})$. Here $\mathcal{M}(\mathbb{R})$ is the space of Radon measures on $\mathbb{R}$.
\end{theorem}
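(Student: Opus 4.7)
The plan is to establish the theorem via two independent ingredients: uniqueness follows from a density argument, while continuity reduces to the continuity of partial Okounkov bodies supplied by \cref{thm:partOkobody}.

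For uniqueness, I would invoke the density of ample test configurations inside $\mathcal{E}^1(L^{\An})$ with respect to the strong topology, which is essentially due to Boucksom--Jonsson: any two continuous extensions of $\DHm$ that agree on this dense subset must coincide everywhere.

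For existence together with continuity, the strategy is to realize $\DHm(\eta)$ from \cref{def:DHmNA} as the pushforward of the normalized Lebesgue measure on the Okounkov body $\Delta(L)$ by the concave function
\begin{equation*}
G_\eta(x)=\sup\bigl\{t\in\mathbb{R}: x\in\Delta(L,\phi^{\eta}_t)\bigr\},
\end{equation*}
where $t\mapsto\phi^{\eta}_t$ is the $\mathcal{I}$-model test curve corresponding to $\eta$ via the Boucksom--Jonsson correspondence, and $\Delta(L,\phi^{\eta}_t)$ are the partial Okounkov bodies of \cref{thm:partOkobody}. If $\eta_j\to\eta$ in $\mathcal{E}^1(L^{\An})$, then the slices $\phi^{\eta_j}_t$ converge to $\phi^{\eta}_t$ in the $d_S$-pseudo-metric at each level where the relevant mass is positive. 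By the continuity clause of \cref{thm:partOkobody} this yields Hausdorff convergence $\Delta(L,\phi^{\eta_j}_t)\to\Delta(L,\phi^{\eta}_t)$, hence pointwise a.e.\ convergence $G_{\eta_j}\to G_\eta$ on $\Delta(L)$. Compatibility with the classical Duistermaat--Heckman measures of test configurations is a direct check, combining Boucksom--Chen with the fact that the concave transform of the multiplicative filtration determined by a test configuration's central fibre coincides with $G_\eta$ in this case.

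The main obstacle will be the delicate interplay between convergence at individual levels $t$ and the global convergence of the pushforward measures. Three technical issues must be handled: (i) \cref{thm:partOkobody} supplies continuity of $\Delta(L,\phi)$ only when $\int_X(\ddc\phi)^n>0$, so one needs an approximation argument at those levels where the slice of the test curve is too singular; (ii) tightness of the family $\{\DHm(\eta_j)\}$, which should follow from the uniform $d_1$-boundedness providing uniform control on the supports; (iii) upgrading a.e.\ convergence of $G_{\eta_j}$ to the appropriate weak convergence of pushforward measures on $\mathbb{R}$, where the natural handle is the energy of $\eta$, which is expressible (modulo normalization) as $\int_{\Delta(L)}G_\eta\,\mathrm{d}\lambda$ and converts the problem into an $L^1$ convergence statement rather than a pointwise one.
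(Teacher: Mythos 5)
Your framework coincides with the paper's: $\DHm(\eta)$ is, by definition, the pushforward of Lebesgue measure on $\Delta(L)$ by the concave transform $G$ of the $\mathcal{I}$-model test curve attached to $\eta$, uniqueness follows from density of test-configuration data in $\mathcal{E}^1(L^{\An})$, and compatibility with Boucksom--Chen is checked through \cref{thm:filtOko}. The problem is the step that carries the whole continuity argument: you assert that $\eta_j\to\eta$ strongly forces $d_S$-convergence of the slices $\phi^{\eta_j}_t\to\phi^{\eta}_t$ at \emph{each} level $t$ of positive mass. This is not established in the paper and does not follow from strong convergence. Under the Legendre correspondence, $d_1(\eta_j,\eta)$ is computed from energies of test curves, i.e.\ it is an integral over $t$ of nonnegative rooftop mass defects $\int_X\omega^n_{\psi^j_t}+\int_X\omega^n_{\psi_t}-2\int_X\omega^n_{\psi^j_t\land\psi_t}$ plus endpoint terms, so it controls the slices only in a sense averaged over $t$. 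At best, along a subsequence, one obtains mass information for a.e.\ $t$, and even that is weaker than the $d_S$-convergence needed to invoke the continuity clause of \cref{thm:partOkobody} (i.e.\ \cref{thm:Okoucont}), which requires convergence of all mixed masses against $\theta_{V_\theta}$, not merely of the top masses of rooftop envelopes. As stated, the central implication is an unproven claim, so the continuity part of your proposal has a genuine gap; your fallback items (i)--(iii) do not address it, since they presuppose exactly this slice-wise convergence.

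The paper's proof of \cref{thm:contDH} is structured precisely to avoid this issue. After passing to a subsequence with $d_1(\psi^j_\bullet,\psi_\bullet)\leq 2^{-j}$, it sandwiches $\psi^j_\bullet$ between an increasing sequence built from iterated rooftop operators $\land$ and a decreasing sequence built from iterated $\lor$, using the quantitative estimates \eqref{eq:d1ineq} and \eqref{eq:d1max} to keep both auxiliary sequences $d_1$-convergent to $\psi_\bullet$, and then replaces them by pointwise $\mathcal{I}$-projections. For monotone sequences the concave transforms are monotone, their integrals equal the energies, and \cref{lma:incdecd1} upgrades energy convergence to $L^1$ and hence a.e.\ convergence of the transforms; the sandwich yields a.e.\ convergence of $G[\psi^j_\bullet]$ and dominated convergence concludes. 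To repair your argument you would need either to prove the fixed-level $d_S$ statement (a substantial new result, not available in the paper) or to adopt a monotone-reduction of this kind.
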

The Duistermaat--Heckman measure of a non-Archimedean metric is also constructed by Inoue \cite{Ino22} using a different method. See \cref{rmk:Ino} for more details.

In \cref{thm:contDH}, we will furthermore prove that $\DHm(\eta)$ contains a lot of interesting information of $\eta$.

In the last section, we interpret the partial Okounkov bodies in the toric setting. We prove the following results:
\begin{theorem}\label{thm:torint}
	Let $X$ be a smooth toric variety of dimension $n$ and $(L,\phi)$ be a toric invariant Hermitian big line bundle on $X$ with positive volume. Fix a toric invariant admissible flag on $X$. Recall that upon choosing a toric invariant rational section of $L$, $\phi$ can be identified with a convex function $\phi_{\mathbb{R}}$ on $\mathbb{R}^n$. Then the partial Okounkov body $\Delta(L,\phi)$ is naturally identified with the closure of the image of $\nabla \phi_{\mathbb{R}}$.
\end{theorem}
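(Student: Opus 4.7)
The plan is to reduce the statement to a convex-analytic computation at each torus-fixed point and then invoke \cref{thm:partOkobody}. First, I would set up notation: a toric invariant admissible flag on $X$ determines a torus-fixed point $x_0 = Y_n$ corresponding to a maximal cone $\sigma$ of the fan, together with a distinguished basis of the character lattice $M$ in which the flag is the chain of coordinate toric subvarieties at $x_0$. Let $v_0 \in M$ be the vertex of the moment polytope $P_L \subset M_{\mathbb{R}}$ corresponding to $x_0$. Upon trivializing $L$ near $x_0$ by the toric invariant section associated with $v_0$, one identifies $H^0(X, L^k) = \bigoplus_{m \in kP_L \cap M} \mathbb{C}\chi^m$ and verifies that the flag-induced valuation satisfies $\nu(\chi^m) = m - kv_0$. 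In particular $\Delta(L) = P_L - v_0 \subset M_{\mathbb{R}}$, and letting $S_k \subseteq kP_L \cap M$ index the monomials lying in $H^0(X, L^k \otimes \mathcal{I}(k\phi))$, one has $\Gamma_k = k^{-1} S_k - v_0$ in the notation of \cref{thm:partOkobody}.

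Second, since $\phi$ is torus invariant, $\mathcal{I}(k\phi)$ is a monomial sheaf, so characterizing $S_k$ is a local monomial question at each torus-fixed point. In the chart $U_\sigma \cong \mathbb{C}^n$ at $x_0$, the trivialization identifies $\phi$ with the pullback of $\phi_{\mathbb{R}}$ under $(z_1,\ldots,z_n)\mapsto (\log|z_1|,\ldots,\log|z_n|)$. The integrability condition defining $\chi^m \in \mathcal{I}(k\phi)_{x_0}$ reduces, after change of variables $x_i = \log|z_i|$, to finiteness of $\int e^{2\langle m - kv_0 + \mathbf{1}, x\rangle - 2k\phi_{\mathbb{R}}(x)}\, dx$ over a neighborhood of $-\infty$ in the orthant dual to $\sigma$. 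Using the Legendre-duality inequality $\phi_{\mathbb{R}}(x) \geq \langle q, x\rangle - \phi_{\mathbb{R}}^*(q)$ for each $q$ in the effective domain of $\phi_{\mathbb{R}}^*$, one shows that the integral converges whenever $k^{-1}(m - kv_0 + \mathbf{1})$ lies in the interior of $\overline{\nabla\phi_{\mathbb{R}}(\mathbb{R}^n)} - v_0$ and diverges for points outside its closure. Gluing this criterion over every torus-fixed point and letting $k\to\infty$, one concludes that $\Gamma_k$ converges in Hausdorff distance to $\overline{\nabla\phi_{\mathbb{R}}(\mathbb{R}^n)} - v_0$.

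Finally, to match this limit with $\Delta(L,\phi)$ I would first treat the case where $\phi_{\mathbb{R}}$ is piecewise linear with rational slopes, in which case $\phi$ has analytic singularities; the last clause of \cref{thm:partOkobody} then gives $\Delta_k \to \Delta(L,\phi)$, and the previous paragraph identifies this limit with $\overline{\nabla\phi_{\mathbb{R}}(\mathbb{R}^n)}-v_0$. For a general $\mathcal{I}$-model $\phi$ of positive volume, approximate $\phi_{\mathbb{R}}$ by a sequence of piecewise linear convex functions $\phi_{j,\mathbb{R}}$ with rational slopes, chosen so that the associated toric metrics $\phi_j$ converge to $\phi$ in the $d_S$-pseudometric and simultaneously $\overline{\nabla\phi_{j,\mathbb{R}}(\mathbb{R}^n)} \to \overline{\nabla\phi_{\mathbb{R}}(\mathbb{R}^n)}$ in Hausdorff metric; in the toric setting both quantities are controlled by integrals of convex functions against Lebesgue measure on the gradient image. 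The $d_S$-continuity of $\Delta(L,\phi)$ in $\phi$ from \cref{thm:partOkobody} then finishes the proof. The main obstacle is coordinating these two convergences: one must tie the $d_S$-pseudometric, defined via non-pluripolar Monge--Amp\`ere products, to Hausdorff convergence of gradient images, and treat boundary lattice points in the multiplier ideal characterization of the previous step with enough care that the limit is exactly $\overline{\nabla\phi_{\mathbb{R}}(\mathbb{R}^n)}$ rather than an inflated or deflated version.
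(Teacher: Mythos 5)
Your overall outline — compute the valuation explicitly at the torus-fixed point $Y_n$, treat piecewise linear rational $\phi_{\mathbb{R}}$ first, then approximate — matches the high-level shape of the paper's argument, but there are two genuine differences, one of which is a real gap.

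The first difference is benign but worth noting. You characterize $\Gamma_k$ by asking when $\chi^m \in \mathcal{I}(k\phi)$, i.e.\ an $L^2$-integrability criterion, which brings in the Jacobian offset $\mathbf{1}$ and forces you to treat boundary lattice points separately (you acknowledge this). The paper instead never computes $\mathcal{I}(k\phi)$ directly: in the analytic-singularity case it replaces $W(\theta,\varphi)$ by the boundedness subspace $W^0$ of sections $s$ with $|s|_{h^k}^2 e^{-k\varphi}$ bounded, which by \cref{rmk:DeltaanaW0} has the same Okounkov body. Membership of $z^\alpha$ in $W^0_1$ reduces to the clean convex fact in \cref{lma:polybdd}, with no Jacobian offset, no boundary cases, and no $k\to\infty$ limit of $\Gamma_k$ required. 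If you insist on the $L^2$ route you would still need an Ohsawa--Takegoshi-type sandwich to justify that the $k^{-1}\mathbf{1}$-shifted criterion produces the same limiting body, which is essentially re-proving the $W^0$ reduction in coordinates.

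The second difference is the gap. You collapse the general case into a single step: construct piecewise linear rational $\phi_{j,\mathbb{R}}$ so that \emph{simultaneously} $\phi_j \xrightarrow{d_S} \phi$ and $\overline{\nabla\phi_{j,\mathbb{R}}(\mathbb{R}^n)} \to \overline{\nabla\phi_{\mathbb{R}}(\mathbb{R}^n)}$. Your justification — ``both quantities are controlled by integrals of convex functions against Lebesgue measure on the gradient image'' — is not correct as stated: $d_S$ involves all the mixed masses $\int_X \theta_{\max\{\varphi_j,\varphi\}}^i \wedge \theta_{V_\theta}^{n-i}$, not just the top-degree volume of a single gradient image, and controlling these requires something like the quasi-equisingular machinery of \cite{Cao14,DPS01}. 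Moreover you do not address that quasi-equisingular approximations are naturally available only in the K\"ahler-current case, which is exactly why the paper goes in three stages (analytic $\to$ K\"ahler current via equivariant quasi-equisingular approximations $\to$ general via $\phi_\epsilon = (1-\epsilon)\phi + \epsilon\psi$). Finally, you are proving two-sided Hausdorff convergence of $\Delta(L,\phi_j)$ and of $\overline{\nabla\phi_{j,\mathbb{R}}}$ simultaneously; the paper never does this. It gets one inclusion $P_{D,\phi} \subseteq \Delta(L,\phi)$ (or the reverse in the last step) from monotonicity of gradient images under the approximation, then concludes equality by comparing volumes via \cref{lma:volcbimpeq}, using the $\mathcal{I}$-model hypothesis to know both volumes equal $\frac{1}{n!}\int_X (\ddc\phi)^n$. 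That inclusion-plus-volume argument is substantially lighter than the two-sided Hausdorff convergence you sketch, and you should adopt it: it removes the coordination problem you flagged as the main obstacle.
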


\begin{maintheorem}\label{thm:tormixedmass}
	Let $(L_i,\phi_i)$ ($i=1,\ldots,n$) be toric invariant Hermitian big line bundles on $X$ of positive volumes. 
	If the toric invariant flag $(Y_{\bullet})$ satisfies the additional condition: $Y_n$ is not contained in the polar locus of any $\phi_i$, then 
 \[
 \int_X \ddc\phi_1\wedge\cdots\wedge\ddc \phi_n=n!\vol(\Delta(L_1,\phi_1),\ldots,\Delta(L_n,\phi_n)).
 \]
\end{maintheorem}

It is of interest to generalize \cref{thm:tormixedmass} to the non-toric setting as well. As shown by \cref{ex:mixvol}, the non-toric generalization has to involve all valuations instead of just one.

Lastly, let us mention that our generalization of Boucksom--Chen theorem has important consequences in Archimedean pluripotential theory as well. When applied to \emph{generalized deformation to the normal cone} in the sense of \cite{Xia20},
it gives a number of interesting equidistribution results of the jumping numbers of multiplier ideal sheaves. As a detailed investigation would lead us too far away, we do not include these results in this paper.

\subsection{Strategy of the proofs}
We will sketch the proof of these theorems.

\textbf{The proof of} \cref{thm:partOkobody}.
In general, the graded linear space
\[
	W(L,\phi)\coloneqq \bigoplus_{k=0}^{\infty}\mathrm{H}^0(X,L^k\otimes \mathcal{I}(k\phi))
\]
is not an algebra and similarly $\Gamma(L,\phi)$ as defined in \eqref{eq:Gamma1} is not a semigroup.
Thus, one can not directly apply the theory of graded linear series or the theory of semigroups
as in \cite{LM09} and \cite{KK12}. 

A key observation here is that although $\Gamma(L,\phi)$ is not a semigroup, it is not too far away from being one. 

To make this precise, we introduce a pseudometric $d$ on the space $\hat{\mathcal{S}}$ of subsets of $\mathbb{Z}^{n+1}$ lying in a suitable strictly convex cone:
\[
	d(S,S')\coloneqq \varlimsup_{k\to\infty}k^{-n}\left(|S_k|+|S_k'|-2|S_k\cap S_k'|\right).
\]
Let $\sim$ be the equivalence relation defined by $d$. 
The classical Okounkov body construction associates with each semigroup a convex body.
As we will prove later, this map factorizes through the $\sim$-equivalence classes, and it extends continuously to an \emph{almost semigroup}, namely an object in $\hat{\mathcal{S}}$ which can be approximated by certain \emph{nice} semigroups with respect to $d$. 

In order to define the Okounkov body of $(L,\phi)$, we will actually show that $\Gamma(L,\phi)$ is an almost semigroup and we could simply define
\[
	\Delta(L,\phi)\coloneqq \Delta(\Gamma(L,\phi)).
\]
The proof follows the same pattern as the proof in \cite{DX21}. We proceed by approximations. We first consider the case where $\phi$ has analytic singularities. In this case, after taking a suitable resolution, we can easily see that $W(L,\phi)$ can be approximated by graded linear series both from above and from below. 
In the case of a singular $\phi$ with $\ddc\phi$ being a Kähler current, we make use of analytic approximations as in \cite{DPS01, Cao14}. More precisely, take a quasi-equisingular approximation $\phi^j$ of $\phi$.
Based on the convergence theorems proved in \cite{DX21}, we can show that $\Gamma(L,\phi^j)$ converges to $\Gamma(L,\phi)$ with respect to the pseudometric $d$, which enables us to conclude in this case. Finally, in the general case, a trick discovered in \cite{DDNLmetric} and \cite{DX21} enables us to reduce to the previous case.

Along the lines of the proof, we actually find that $\Gamma(L,\phi)$ satisfies a stronger property \eqref{eq:Hausconp}. This property is essential to the proof of \cref{thm:IeqDelta}, we call it the \emph{Hausdorff convergence property}. 

\textbf{The proof of} \cref{thm:IeqDelta}.
Recall that in the classical setting, we can read information about the asymptotic base loci of $L$ from the Okounkov body $\Delta(L)$ directly, see \cite{CHPW18}. In our setup, the analogue says that the Okounkov body $\Delta(L,\phi)$ gives information about the generic Lelong numbers of $\phi$.
We will prove a qualitative version of \cref{thm:IeqDelta}:
\begin{theorem}\label{thm:nuvarphiE}
	Let $E$ be a prime divisor over $X$. Let $\pi:Z\rightarrow X$ be a birational model of $X$ such that $E$ is a divisor on $Z$. Take an admissible flag $(Y_{\bullet})$ on $Z$ with $Y_1=E$, then
	\[
		\nu(\phi,E)=\min_{x\in \Delta(\pi^*L,\pi^*\phi)}x_1.
	\]
\end{theorem}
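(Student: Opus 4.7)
The plan is to first handle the case where $\phi$ has analytic singularities, and then reduce the general case by quasi-equisingular approximation. Replacing $X$ with $Z$ and $(L,\phi)$ with $(\pi^*L,\pi^*\phi)$, we may assume throughout that $E$ is already a prime divisor on $X$ with $Y_1=E$, so the first valuation coordinate is $\mathrm{ord}_E$.

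The lower bound $\nu(\phi,E)\leq\min_{x\in\Delta(L,\phi)}x_1$ does not require analytic singularities. Near a generic point of $E$ with local equation $z_E$ we have $\phi\geq\nu(\phi,E)\log|z_E|+O(1)$, so for any nonzero $s\in H^0(X,L^k\otimes\mathcal{I}(k\phi))$ the integrability of $|s|^2 e^{-2k\phi}$ forces $\mathrm{ord}_E(s)>k\nu(\phi,E)-1$. Hence every point of $\Delta_k$ has first coordinate at least $\nu(\phi,E)-1/k$. For $\phi$ with analytic singularities this passes to $\Delta(L,\phi)$ in the limit by the Hausdorff convergence of \cref{thm:partOkobody}; for general $\phi$ it passes via $d_S$-continuity along a quasi-equisingular approximation $\phi^j\searrow\phi$, which also satisfies $\nu(\phi^j,E)\to\nu(\phi,E)$.

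For the upper bound $\nu(\phi,E)\geq\min_{x\in\Delta(L,\phi)}x_1$ in the analytic singularities case, pass to a log resolution $\mu:Z'\to X$ on which $\mu^*\phi$ has SNC singularities along $\sum_i b_i E_i$, with $E$ (or its strict transform) one of the components and $b_E=\nu(\phi,E)$. In this SNC setting the multiplier ideal equals $\mathcal{O}_{Z'}\bigl(-\sum_i\lfloor kb_i\rfloor E_i\bigr)$ exactly. The Siu decomposition $\ddc\mu^*\phi=\sum_i b_i[E_i]+R$ shows that $B:=[\mu^*L]-\sum_i b_i[E_i]$ is pseudo-effective, represented by the positive residual current $R$, which has zero Lelong number along $E$. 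Hence a current of minimal singularities in $B$ has zero generic Lelong number along $E$, and Boucksom's formula \cite[Theorem~5.4]{Bou02h} (applied to integer approximations of $B$) yields sections $\sigma_k\in H^0\bigl(Z',\mu^*L^k\otimes\mathcal{O}(-\sum_i\lfloor kb_i\rfloor E_i)\bigr)$ with $\mathrm{ord}_E(\sigma_k)=o(k)$. After accounting for the adjoint $K_{Z'/X}$ correction, these push forward to sections of $L^k\otimes\mathcal{I}(k\phi)$ on $X$ whose $\mathrm{ord}_E$ equals $\lfloor kb_E\rfloor+o(k)=k\nu(\phi,E)+o(k)$, producing points of $\Delta_k$ whose first coordinates tend to $\nu(\phi,E)$. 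The general case then follows by quasi-equisingular approximation combined with the continuity from \cref{thm:partOkobody}.

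The hard part will be the residual-class step: Boucksom's theorem as stated applies to an integral big line bundle, whereas $B$ is only a pseudo-effective $\mathbb{R}$-class, so we must approximate $B$ by integral classes and confirm that the resulting asymptotic order of vanishing along $E$ does not drift away from zero. Tracking the integer rounding in $\sum_i\lfloor kb_i\rfloor E_i$ and the adjoint $K_{Z'/X}$ contribution under $\mu_*$---while ensuring that the constructed $\sigma_k$ genuinely descend to elements of $\Gamma_k$ on $X$---is the main technical point; once it is in place, the remainder of the argument is a direct consequence of the Hausdorff convergence and $d_S$-continuity of partial Okounkov bodies established in \cref{thm:partOkobody}.
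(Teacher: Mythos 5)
Your two-sided strategy (integrability forces every section of $L^k\otimes\mathcal{I}(k\phi)$ to vanish along $E$ to order at least $k\nu(\phi,E)-1$; the residual class produces sections vanishing to order $k\nu(\phi,E)+o(k)$; then approximate) runs parallel to the paper's, and the easy inequality is fine up to a sign slip: near a generic point of $E$ one has $\phi\leq\nu(\phi,E)\log|z_E|^2+O(1)$ (the metric is at least as singular as the log pole coming from the Siu decomposition), not the reverse; it is this upper bound on $\phi$ that makes $e^{-k\phi}$ large enough to force $\ord_E(s)>k\nu(\phi,E)-1$.

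The genuine gap is exactly the step you flag and leave open. You invoke \cite[Theorem~5.4]{Bou02h} for $B=\mu^*L-\sum_ib_iE_i$ knowing only that it is pseudo-effective, and propose to ``approximate $B$ by integral classes and check that the asymptotic order of vanishing does not drift''. As stated this can fail: asymptotic orders of vanishing are not continuous up to the boundary of the big cone, and if $B$ is not big its multiples may have no sections at all, so no $\sigma_k$ exist and the upper bound collapses. What rescues the argument is structure you did not use: for a positively curved metric with analytic singularities along the SNC $\mathbb{Q}$-divisor $\sum_ib_iE_i$, the residual current is a smooth semipositive form, so $B$ is a nef $\mathbb{Q}$-class, and under the standing hypothesis $\vol(L,\phi)>0$ (preserved along the approximations) one has $\int_{Z'}B^n=\int_X(\ddc\phi)^n>0$, so $B$ is nef and big; then $\nu(T_{\min},E)=0$ and Boucksom's theorem applies to integral multiples of $B$, closing your argument. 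The paper instead avoids any asymptotic theorem: it perturbs $L$ by a small ample class (removed at the end via \cref{prop:Deltapert}) and rescales so that $L-F$ is ample, whence $\ord_E\mathrm{H}^0(X,L^k\otimes\mathcal{O}_X(-kF))=\ord_E(kF)$ trivially, and then concludes by the Hausdorff convergence property. Finally, your reduction of the general case is under-specified: quasi-equisingular approximants live in the perturbed classes $\theta+\epsilon_j\omega$, so one must first reduce to the K\"ahler-current case by replacing $\phi$ with $(1-\epsilon)\phi+\epsilon\psi$, with $\ddc\psi$ a K\"ahler current, using the $d_S$-continuity of \cref{thm:Okoucont} together with $\nu(\phi^j,E)\to\nu(\phi,E)$ (cf.\ \cite[Lemma~2.2]{Xia20}), as in the paper's proof.
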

Here $\nu(\phi,E)$ is the generic Lelong number of $\phi$ along $E$, defined as the minimum of the Lelong numbers $\nu(\pi^*\phi,x)$ for all $x\in E$. The proof of \cref{thm:nuvarphiE} again follows the same pattern as in the proof of \cref{thm:partOkobody}. With some efforts, we can reduce the problem to the case where $\phi$ has analytic singularities along some normal crossing $\mathbb{Q}$-divisor on $X$ and $\ddc\phi$ is a Kähler current. In this case, the desired result follows from a result proved in \cite{Xia20}.

\textbf{The proofs of} \cref{thm:intdiffc} and \cref{thm:tormixedmass}. The proofs roughly follow the same pattern as above. Namely, we first handle the case of analytic singularities and then conclude the general case by suitable approximations. We will not repeat the details here.

As explained above, our approach to general psh singularities requires a number of approximations, this motivates the study of the metric geometry of the space of psh singularity types. We prove the continuity of mixed masses under $d_S$-approximations:
\begin{theorem}[{= \cref{thm:dsmixedmass}}]\label{thm:mixc}
	Let $\theta_i$ ($i=1,\ldots,n$) be smooth closed real $(1,1)$-forms representing big classes on a connected compact K\"ahler manifold $X$ of dimension $n$.
	Let $\varphi_i^k,\varphi_i\in \PSH(X,\theta_i)$ ($i=1,\ldots,n$, $k\in \mathbb{N}$). Assume that $\varphi_i^k\xrightarrow{d_{S,\theta_i}} \varphi_i$ for all $i$ as $k\to\infty$. Then
	\begin{equation}
		\lim_{k\to\infty}\int_X \theta_{1,\varphi_1^k}\wedge\cdots\wedge\theta_{n,\varphi_n^k}=\int_X \theta_{1,\varphi_1}\wedge\cdots\wedge\theta_{n,\varphi_n}.
	\end{equation}
	Here the Monge--Ampère operators are taken in the non-pluripolar sense.
\end{theorem}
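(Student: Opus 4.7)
The plan is to reduce the mixed mass convergence to the corresponding statement for pure non-pluripolar masses (i.e.\ the case $\theta_1=\cdots=\theta_n$, $\varphi_1=\cdots=\varphi_n$), which is one of the core analytic results of \cite{DDNL5}. The reduction is a polarization argument that uses the multilinearity of the non-pluripolar product established in \cite{BEGZ10}.

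Concretely, for $t=(t_1,\ldots,t_n)\in \mathbb{R}_{>0}^n$ set $\theta(t):=\sum_i t_i\theta_i$ and $\varphi(t):=\sum_i t_i\varphi_i\in \PSH(X,\theta(t))$, and similarly $\varphi(t)^k:=\sum_i t_i\varphi_i^k$. Multilinearity of the non-pluripolar product yields the polynomial identity
\begin{equation*}
	\int_X \theta(t)_{\varphi(t)}^{n}=\sum_{|\alpha|=n}\binom{n}{\alpha}t^{\alpha}\int_X \theta_{1,\varphi_1}^{\alpha_1}\wedge\cdots\wedge \theta_{n,\varphi_n}^{\alpha_n},
\end{equation*}
and the analogous identity with every $\varphi_i$ replaced by $\varphi_i^k$. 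The mixed mass in the theorem is the coefficient of $t_1\cdots t_n$. Since both sides are polynomials in $t$ of degree $n$, pointwise convergence for $t$ in any Zariski-dense subset of $\mathbb{R}_{>0}^n$ forces convergence of every coefficient; it therefore suffices to prove, for each fixed $t$, that $\int_X \theta(t)_{\varphi(t)^k}^{n}\to \int_X \theta(t)_{\varphi(t)}^{n}$.

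The key lemma I would prove is that $d_S$ is well behaved under weighted sums: if $\varphi_i^k\xrightarrow{d_{S,\theta_i}}\varphi_i$ for every $i$, then $\varphi(t)^k\xrightarrow{d_{S,\theta(t)}}\varphi(t)$. The scaling part $d_{S,t_i\theta_i}(t_iu,t_iv)=t_i^n d_{S,\theta_i}(u,v)$ is immediate from homogeneity of the non-pluripolar Monge--Amp\`ere operator. The subadditivity part, i.e.\ an estimate of the form $d_{S,\theta+\theta'}(u+u',v+v')\lesssim d_{S,\theta}(u,v)+d_{S,\theta'}(u',v')$, is the main obstacle. I expect to deduce it from the super-additivity of the $I$-envelope, $P_{\theta+\theta'}[f+g]\ge P_\theta[f]+P_{\theta'}[g]$, together with the standard monotonicity of non-pluripolar products with respect to the ``less singular'' partial order; expanding $\int (\theta+\theta')_{P_{\theta+\theta'}[\min(u+u',v+v')]}^n$ by multilinearity and comparing it against the analogous expansions of $\int(\theta+\theta')_{u+u'}^n$ and $\int(\theta+\theta')_{v+v'}^n$ should yield the desired bound.

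Granting this lemma, one directly invokes the result of \cite{DDNL5} that $d_S$-convergence in a fixed class implies convergence of the total non-pluripolar mass, applied to $\varphi(t)^k\to\varphi(t)$ in $\PSH(X,\theta(t))$. Feeding the resulting pure mass convergence back into the polarization identity and matching coefficients of the polynomials in $t$ completes the proof. The only serious step is the subadditivity of $d_S$; everything else is either formal polarization or an already-established continuity statement for total masses.
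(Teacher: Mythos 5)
Your polarization step is fine: since $\theta(t)_{\varphi(t)}=\sum_i t_i\theta_{i,\varphi_i}$ as currents, the multilinearity of the non-pluripolar product from \cite{BEGZ10} does give the polynomial identity, and matching coefficients at finitely many values of $t$ is legitimate; likewise, $d_S$-convergence in a fixed class does control the total mass (take the $i=n$ term in the double inequality of \cite[Proposition~3.5]{DDNL5}). So the whole proposal stands or falls with your key lemma, the stability of $d_S$-convergence under sums, and that is exactly where there is a genuine gap. When you estimate $d_{S,\theta+\theta'}(u_k+u'_k,u+u')$ through the mass expression, using $\max\{u_k+u'_k,u+u'\}\leq \max\{u_k,u\}+\max\{u'_k,u'\}$ and expanding multilinearly, you are left with terms of the form
\[
2\int_X \theta^a_{\max\{u_k,u\}}\wedge \theta'^{\,b}_{\max\{u'_k,u'\}}\wedge (\theta+\theta')^{n-j}_{V_{\theta+\theta'}}-\int_X \theta^a_{u_k}\wedge \theta'^{\,b}_{u'_k}\wedge (\theta+\theta')^{n-j}_{V_{\theta+\theta'}}-\int_X \theta^a_{u}\wedge \theta'^{\,b}_{u'}\wedge (\theta+\theta')^{n-j}_{V_{\theta+\theta'}}\,,
\]
i.e.\ differences of \emph{mixed} masses in which singular potentials from the two different classes are wedged against each other. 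Your hypotheses only control differences of masses tested against powers of $\theta_{V_\theta}$, respectively $\theta'_{V_{\theta'}}$; monotonicity of non-pluripolar products gives the sign of each telescoped difference but no quantitative bound, and super-additivity of the envelopes $P[\cdot]$ only compares singularity types. Producing the needed cancellation in these cross terms is precisely the analytic content of \cref{thm:dsmixedmass} itself: in the paper the statement you want to use as an input, namely the $d_S$-continuity of sums, is \cref{cor:dssum}, and it is \emph{deduced from} \cref{thm:dsmixedmass} by exactly the expansion you wrote down. As sketched, your argument is therefore circular at its crucial step, and I see no soft substitute: there is no Lipschitz-type estimate of mixed-class masses by pure-class $d_S$ data available in \cite{DDNL5} or \cite{DDNL2} that you could quote instead.

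For comparison, the paper's proof avoids this issue by a different reduction: after replacing all potentials by model ones (\cref{lma:Ceilmass}), it uses \cite[Theorem~5.6]{DDNL5} to reduce to monotone sequences, the semicontinuity from \cite[Theorem~2.3]{DDNL2} for one inequality, and Witt Nyström's monotonicity to reduce to all-decreasing sequences; the $\limsup$ bound is then obtained by a convexity trick, choosing $b^k_i\to\infty$ with $(b^k_i)^n\int\theta^n_{i,\varphi_i}\geq((b^k_i)^n-1)\int\theta^n_{i,\varphi_i^k}$ and the maximal potentials of \cite[Lemma~4.3]{DDNL5}, and the general case (where masses may degenerate) is handled by first proving the statement for $(1-t)\varphi_i^k+tV_{\theta_i}$ and letting $t\searrow 0$. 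If you want to salvage your polarization route, you would have to supply an independent proof of the sum-stability of $d_S$, which at present you have not done.
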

This theorem and its various consequences are indispensable in all of our proofs. They are of independent interests as well. 

\subsection{Structure of the paper}

In \cref{sec:pre}, we collect a few preliminaries.

In \cref{sec:clo}, we study the Okounkov bodies of almost semigroups.

In \cref{sec:singtype}, we further develop the theory of $d_S$-pseudometrics on the space of singularity types initiated in \cite{DDNLmetric}.

In \cref{sec:PoB}, we define partial Okounkov bodies associated with Hermitian pseudo-effective line bundles and prove a number of properties.

In \cref{sec:Che}, we define and study Chebyshev transforms of continuous metrics.

In \cref{sec:BC}, we generalize the theory of Boucksom--Chen and study the non-Archimedean Duistermaat--Heckman measures.

In \cref{sec:tor}, we give an explicit description of partial Okounkov bodies construction in terms of the moment polytope in the toric situation.

\subsection{Conventions}
In this paper, Monge--Ampère operators $\theta_{\varphi}^n$ refer to the non-pluripolar product in the sense of \cite{BEGZ10}.
The group $\mathbb{Z}^n$ is always endowed with the lexicographic order. A line bundle always refers to a holomorphic line bundle. We do not distinguish a line bundle and the associated invertible sheaf.
When talking about a birational modification (resolution) $\pi:Y\rightarrow X$, we always assume that $Y$ is smooth and $\pi$ is projective. We follow the convention that $\ddc=\frac{\mathrm{i}}{2\pi}\partial\overline{\partial}$. 

\subsection{Acknowledgements}
I benefited a lot from discussions with David Witt Nyström, Chen Jiang, Sébastien Boucksom,Yi Yao, Jian Xiao, Tamás Darvas, Kewei Zhang, Longke Tang and Rémi Reboulet. I would like to thank especially Yi Yao for explaining his computations in the toric setting to me, Chen Jiang for providing \cref{ex:mixvol} and Yaxiong Liu for pointing out a number of typos in the arXiv version.

I also want to express my gratitude to the referees for numerous valuable comments helping to improve this paper in many aspects.

The author is supported by Knut och Alice Wallenbergs Stiftelse grant KAW 2021.0231.

\section{Preliminaries}\label{sec:pre}

\subsection{Hausdorff metric of convex bodies}\label{subsec:Hausmetric}
In this section, we recall the theory of Hausdorff metrics on the set of convex bodies following \cite[Section~1.8]{Sch14}.
Fix $n\in \mathbb{N}$. Recall that a convex body in $\mathbb{R}^n$ is a non-empty compact convex subset of $\mathbb{R}^n$, which may have empty interior.
Let $\mathcal{K}_n$ denote the set of convex bodies in $\mathbb{R}^n$. We will fix the Lebesgue measure $\mathrm{d}\lambda$ on $\mathbb{R}^n$, normalized so that the unit cube has volume $1$.

Recall the definition of the Hausdorff metric between $K_1,K_2\in \mathcal{K}_n$:
\[
	d_n(K_1,K_2)\coloneqq \max\left\{\adjustlimits\sup_{x_1\in K_1}\inf_{x_2\in K_2}|x_1-x_2|, \adjustlimits\sup_{x_2\in K_2}\inf_{x_1\in K_1}|x_1-x_2|\right\}.
\]
We extend $d_n$ to an extended metric on $\mathcal{K}_n\cup\{\emptyset\}$ by setting
\[
	d_n(K,\emptyset)=\infty
\]
for all $K\in \mathcal{K}_n$.

\begin{theorem}
	The metric space $(\mathcal{K}_n,d_n)$ is complete.
\end{theorem}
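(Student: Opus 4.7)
The plan is to reduce completeness to that of a Banach space via the support-function embedding. For $K \in \mathcal{K}_n$ and $u \in \mathbb{R}^n$, set $h_K(u) := \sup_{x \in K} \langle x, u \rangle$; this $h_K$ is positively $1$-homogeneous and subadditive, hence continuous. The first step is to invoke the classical identity
\begin{equation*}
    d_n(K_1, K_2) = \sup_{u \in S^{n-1}} |h_{K_1}(u) - h_{K_2}(u)|,
\end{equation*}
which exhibits $K \mapsto h_K|_{S^{n-1}}$ as an isometric embedding of $\mathcal{K}_n$ into the Banach space $C(S^{n-1})$ equipped with the sup norm. Completeness of $\mathcal{K}_n$ thus reduces to closedness of the image of this embedding.

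Next I would take a Cauchy sequence $(K_j)$ in $\mathcal{K}_n$. The Cauchy property forces all $K_j$ to lie in a common ball $B(0, R)$, so $\|h_{K_j}\|_{L^\infty(S^{n-1})} \leq R$ uniformly, and $(h_{K_j})$ is Cauchy in $C(S^{n-1})$. By completeness of $C(S^{n-1})$ it converges uniformly to some continuous $h : S^{n-1} \to \mathbb{R}$, which I extend to all of $\mathbb{R}^n$ by positive homogeneity.

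The third step is to identify $h$ with the support function of some $K \in \mathcal{K}_n$. Positive homogeneity and subadditivity are preserved under uniform limits on $S^{n-1}$ (and survive the homogeneous extension to $\mathbb{R}^n$), so the extended $h$ is sublinear. By the classical correspondence between sublinear functions on $\mathbb{R}^n$ and non-empty compact convex subsets of $\mathbb{R}^n$, the set
\begin{equation*}
    K := \{ x \in \mathbb{R}^n : \langle x, u \rangle \leq h(u) \text{ for all } u \in \mathbb{R}^n \}
\end{equation*}
lies in $\mathcal{K}_n$ and satisfies $h_K = h$ (boundedness of $K$ follows from $h \leq R$ on $S^{n-1}$). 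The isometry formula then gives $d_n(K_j, K) = \|h_{K_j} - h\|_{L^\infty(S^{n-1})} \to 0$, which is the desired convergence.

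The two ingredients one is really leaning on — the identity $d_n = \|h_{K_1} - h_{K_2}\|_\infty$ and the sublinear-function/convex-body correspondence — are both classical and can be cited directly from \cite[Section~1.8]{Sch14}, so no real analytical obstacle appears. If one wished to avoid support functions altogether, the alternative route is to define $K$ as the Kuratowski limit $\bigcap_{j \geq 1} \overline{\bigcup_{k \geq j} K_k}$, obtain non-emptiness via a diagonal extraction using the uniform bound, verify convexity by taking convex combinations of approximating points along common subsequences, and then translate the Cauchy condition into the two-sided inclusions $K_j \subseteq K + B(0,\varepsilon)$ and $K \subseteq K_j + B(0,\varepsilon)$ for large $j$. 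That hands-on argument works but is appreciably more bookkeeping, which is why the support-function approach is preferable.
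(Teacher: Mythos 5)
Your proof is correct. Note that the paper itself does not prove this theorem at all: it simply states it and refers the reader to \cite[Section~1.8]{Sch14}, so there is no in-paper argument to match. Your support-function route is one of the two standard proofs: the isometry $d_n(K_1,K_2)=\sup_{u\in S^{n-1}}|h_{K_1}(u)-h_{K_2}(u)|$ together with the sublinear-function/convex-body correspondence reduces everything to completeness of $C(S^{n-1})$, and each step you use (uniform boundedness of a Cauchy sequence, preservation of subadditivity and homogeneity in the limit, non-emptiness of $K$ via the classical correspondence, $K\subseteq B(0,R)$ from $h\leq R$ on the sphere) is sound. Schneider's own proof in the cited section is essentially the second route you sketch at the end — the direct construction of the limit body as $\bigcap_{j}\overline{\bigcup_{k\geq j}K_k}$ with the two-sided $\varepsilon$-inclusions — so if you want your write-up to parallel the reference you could present that version, but as a self-contained argument the support-function proof is cleaner and buys you the isometric embedding of $(\mathcal{K}_n,d_n)$ into a Banach space as a byproduct, which also gives, e.g., a quick approach to the Blaschke selection theorem via Arzelà--Ascoli.
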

\begin{theorem}[Blaschke selection theorem]\label{thm:Bst}
	Every bounded sequence in $\mathcal{K}_n$ has a convergent subsequence.
\end{theorem}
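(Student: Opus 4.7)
The plan is to reduce the Blaschke selection theorem to the completeness statement preceding it by extracting a Cauchy subsequence. Let $(K_i)_{i\in\mathbb{N}}$ be a bounded sequence in $\mathcal{K}_n$, so that there exists a closed ball $B\subseteq \mathbb{R}^n$ containing every $K_i$. For each integer $m\geq 1$, fix a finite cover of $B$ by closed balls $B_{m,1},\ldots,B_{m,N_m}$ of radius $2^{-m}$. To each $K\in\mathcal{K}_n$ with $K\subseteq B$, associate the subset
\[
S_m(K):=\{j\in\{1,\ldots,N_m\}:B_{m,j}\cap K\neq\emptyset\}\subseteq 2^{\{1,\ldots,N_m\}}.
\]

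First I would run a diagonal/pigeonhole extraction. Since $S_m$ takes only finitely many values, for each $m$ some value is attained infinitely often along the sequence. Starting from the original sequence, I extract a subsequence on which $S_1$ is constant, then from it one on which $S_2$ is constant, and so on; the diagonal subsequence, call it $(K_{i_k})_k$, has the property that for every $m$, the set $S_m(K_{i_k})$ is eventually constant in $k$.

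Next I would verify that $(K_{i_k})$ is Cauchy for $d_n$. Fix $m$ and choose $k_0$ so that $S_m(K_{i_k})$ is constant for $k\geq k_0$. For any $k,\ell\geq k_0$ and any $x\in K_{i_k}$, pick $j$ with $x\in B_{m,j}$; then $j\in S_m(K_{i_k})=S_m(K_{i_\ell})$, so $B_{m,j}$ meets $K_{i_\ell}$ at some point $y$, and $|x-y|\leq 2\cdot 2^{-m}$. By symmetry, $d_n(K_{i_k},K_{i_\ell})\leq 2^{1-m}$ for $k,\ell\geq k_0$, which establishes the Cauchy condition. Finally, I apply the completeness of $(\mathcal{K}_n,d_n)$ stated just above to obtain a limit $K\in\mathcal{K}_n$ with $K_{i_k}\to K$.

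The argument is essentially bookkeeping, so there is no serious obstacle; the only point requiring a little care is to phrase the diagonal extraction so that the approximation at each scale $2^{-m}$ is truly locked in from some index onward, which is exactly what makes the Cauchy estimate uniform. Nonemptiness, compactness and convexity of the limit are automatic from invoking the preceding completeness theorem, so they need not be checked by hand.
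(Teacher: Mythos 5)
Your argument is correct: boundedness gives total boundedness at every scale via the finite covers, the diagonal extraction locks in each scale, the Cauchy estimate $d_n(K_{i_k},K_{i_\ell})\leq 2^{1-m}$ is valid, and the completeness of $(\mathcal{K}_n,d_n)$ stated just before the theorem supplies the limit in $\mathcal{K}_n$. The paper itself gives no proof, referring to \cite[Section~1.8]{Sch14}, and your route is essentially the standard one found there (compactness of the hyperspace of a compact set via total boundedness plus completeness), so there is nothing to add.
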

\begin{theorem}\label{thm:contvol}
	The Lebesgue volume $\vol:\mathcal{K}_n\rightarrow \mathbb{R}_{\geq 0}$ is continuous.
\end{theorem}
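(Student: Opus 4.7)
The plan is to prove upper and lower semicontinuity of $\vol$ separately, with the $\varepsilon$-neighborhood characterization of the Hausdorff metric as the central tool. Recall that $d_n(K,K')<\varepsilon$ is equivalent to the two inclusions $K\subseteq K'+\varepsilon B$ and $K'\subseteq K+\varepsilon B$, where $B$ denotes the closed unit ball in $\mathbb{R}^n$. So given a sequence $K_m\to K$ in $\mathcal{K}_n$, for any $\varepsilon>0$ we eventually have $K_m\subseteq K+\varepsilon B$ and $K\subseteq K_m+\varepsilon B$.

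For upper semicontinuity, the first inclusion gives $\vol(K_m)\leq \vol(K+\varepsilon B)$. The key auxiliary fact I would invoke is Steiner's formula, which expresses $\vol(K+\varepsilon B)$ as a polynomial in $\varepsilon$ of degree $n$ with constant term $\vol(K)$; consequently $\vol(K+\varepsilon B)\to \vol(K)$ as $\varepsilon\to 0^+$. (If I wanted to avoid quoting Steiner's formula, I could instead note that $\bigcap_{\varepsilon>0}(K+\varepsilon B)=K$ and apply dominated convergence to $\mathbf{1}_{K+\varepsilon B}$, since all the relevant sets sit inside a fixed large ball.) Either way, $\limsup_m \vol(K_m)\leq \vol(K)$.

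For lower semicontinuity, the situation splits according to whether $K$ has nonempty interior. If $\vol(K)=0$, the inequality $\liminf_m \vol(K_m)\geq 0=\vol(K)$ is automatic. If $\vol(K)>0$, pick an interior point $x_0\in K$ and for $t\in(0,1)$ consider the homothet $K^t:=x_0+t(K-x_0)$. Then $K^t$ is a compact subset of the interior of $K$, so there exists $\delta>0$ with $K^t+\delta B\subseteq K$. The inclusion $K\subseteq K_m+\varepsilon B$ for $\varepsilon<\delta$ and $m$ large therefore forces $K^t\subseteq K_m$, yielding $\vol(K_m)\geq \vol(K^t)=t^n\vol(K)$. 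Letting first $m\to\infty$ and then $t\to 1^-$ gives $\liminf_m \vol(K_m)\geq \vol(K)$.

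The only mildly subtle point is the interior-approximation step in the lower semicontinuity argument; everything else is a direct consequence of monotonicity of volume under inclusion plus the limit $\vol(K+\varepsilon B)\to\vol(K)$. No substantive obstacle is expected, and the proof is essentially self-contained modulo Steiner's formula (or a one-line measure-theoretic substitute).
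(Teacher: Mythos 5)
The paper does not give its own proof of this statement; it simply refers the reader to \cite[Section~1.8]{Sch14}, where continuity of volume is established. Your self-contained argument is correct and is in fact one of the standard proofs one would find in such references: upper semicontinuity from the outer inclusion $K_m\subseteq K+\varepsilon B$ together with $\vol(K+\varepsilon B)\to\vol(K)$ (either by Steiner's formula or by monotone convergence on $\bigcap_{\varepsilon>0}(K+\varepsilon B)=K$), and lower semicontinuity by shrinking $K$ to a homothet $K^t$ sitting strictly inside $K$.

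The one step you flag as "mildly subtle" deserves a word more than it gets. From $K^t+\delta B\subseteq K\subseteq K_m+\varepsilon B$ with $\varepsilon<\delta$ you conclude $K^t\subseteq K_m$; this is correct but is not a formal consequence of monotonicity. It uses the cancellation law for Minkowski sums of convex bodies (equivalently a support-function/separation argument: if $x\in K^t$ were not in $K_m$, a separating hyperplane with unit normal $u$ gives $\langle x,u\rangle>h_{K_m}(u)$, while $x+\delta u\in K\subseteq K_m+\varepsilon B$ forces $\langle x,u\rangle+\delta\leq h_{K_m}(u)+\varepsilon$, a contradiction since $\delta>\varepsilon$). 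This is precisely where the convexity and closedness of $K_m$ enter, and the argument would fail without them; it is worth making explicit. With that point filled in, the proof is complete and essentially self-contained.
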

\begin{theorem}\label{thm:Hausconvcond}
	Let $K_i, K\in \mathcal{K}_n$ ($i\in \mathbb{N}$). Then $K_i\xrightarrow{d_n}K$ if and only if the following conditions hold
	\begin{enumerate}
		\item Each point $x\in K$ is the limit of a sequence $x_i\in K_i$.
		\item The limit of any convergent sequence $(x_{i_j})_{j\in \mathbb{N}}$ with $x_{i_j}\in K_{i_j}$ lies in $K$, where $i_j$ is a subsequence of $1,2,\ldots$.
	\end{enumerate}
\end{theorem}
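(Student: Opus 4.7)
The plan is to prove the two directions separately, working directly from the definition of $d_n$.

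\textbf{Forward direction.} Assume $K_i \xrightarrow{d_n} K$ and set $\delta_i := d_n(K_i, K)$, so $\delta_i \to 0$. For (1), given $x \in K$, the definition of $d_n$ gives $x_i \in K_i$ with $|x - x_i| \leq \delta_i + 1/i$, so $x_i \to x$. For (2), suppose $x_{i_j} \to x$ with $x_{i_j} \in K_{i_j}$. Then $\mathrm{dist}(x_{i_j}, K) \leq \delta_{i_j} \to 0$, and since $K$ is closed, $x \in K$.

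\textbf{Backward direction.} Assume (1) and (2) and argue by contradiction: if $d_n(K_i, K) \not\to 0$, then after passing to a subsequence we may assume $d_n(K_i, K) \geq \epsilon$ for some $\epsilon > 0$. The first step is to upgrade (1) and (2) to uniform boundedness of $\{K_i\}_{i \gg 0}$. Fix $R > 0$ with $K \subset B(0, R)$, pick any $y \in K$, and use (1) to produce $y_i \in K_i$ with $y_i \to y$. If some further subsequence $K_{i_j}$ contained points $x_{i_j}$ with $|x_{i_j}| > R + 2$, then for large $j$ we have $|y_{i_j}| < R + 1$, and the segment from $y_{i_j}$ to $x_{i_j}$ (which lies in $K_{i_j}$ by convexity) would cross the sphere $\{|z| = R + 2\}$ at some point $z_{i_j} \in K_{i_j}$; extracting a convergent subsequence $z_{i_j} \to z$, one gets $|z| = R + 2 > R$, yet (2) forces $z \in K \subset B(0, R)$, a contradiction.

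With $\{K_i\}$ eventually in a fixed ball, handle the two remaining sub-cases via Bolzano--Weierstrass. If $\sup_{x \in K_i}\mathrm{dist}(x, K) \geq \epsilon/2$ infinitely often, pick $x_i \in K_i$ with $\mathrm{dist}(x_i, K) \geq \epsilon/2$, extract $x_{i_j} \to x$, and obtain $x \in K$ from (2), contradicting $\mathrm{dist}(x, K) \geq \epsilon/2$. If instead $\sup_{y \in K}\mathrm{dist}(y, K_i) \geq \epsilon/2$ infinitely often, pick $y_i \in K$ with $\mathrm{dist}(y_i, K_i) \geq \epsilon/2$, extract $y_{i_j} \to y \in K$ by compactness of $K$, and by (1) find $w_i \in K_i$ with $w_i \to y$; then $|w_{i_j} - y_{i_j}| \to 0$, contradicting $\mathrm{dist}(y_{i_j}, K_{i_j}) \geq \epsilon/2$. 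The expected main obstacle is the uniform boundedness step: (1) and (2) alone do not a priori prevent the $K_i$ from escaping to infinity, and it is precisely convexity of the $K_i$ (via segment slicing) that bridges the gap to a standard compactness extraction.
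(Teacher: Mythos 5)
Your proof is correct. Note that the paper does not supply its own argument here; it states the result as a known fact and refers to Schneider's book (Section~1.8) for the proof, so there is no internal proof to compare against. Your argument is the standard one: the forward direction is immediate from the definition of $d_n$ (using that $K$ is closed and that the infimum over the compact $K_i$ is attained), and the backward direction proceeds by contradiction via Bolzano--Weierstrass, split into the two halves of the $\max$ defining $d_n$. The one genuinely non-routine step, which you correctly isolate, is the uniform boundedness of $\{K_i\}$: conditions~(1) and~(2) alone do not prevent the sets from escaping to infinity (take $K=\{0\}$ and $K_i=\{0,i\}\subset\mathbb{R}$, which satisfies both conditions but not Hausdorff convergence), and your segment-slicing argument is exactly where convexity enters. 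One trivial remark: since each $K_i$ is compact, the infimum $\inf_{z\in K_i}|x-z|$ is attained, so in the forward direction you can simply take $|x-x_i|\le\delta_i$ without the $+1/i$ slack, but this does not affect correctness.
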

The proofs of all these results can be found in \cite[Section~1.8]{Sch14}.

\begin{lemma}\label{lma:volcbimpeq}
	Let $K_0,K_1\in \mathcal{K}_n$. Assume that $K_0\subseteq K_1$ and
	\[
		\vol K_0=\vol K_1>0.
	\]
	Then $K_0=K_1$.
\end{lemma}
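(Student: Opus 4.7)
The plan is to proceed by contradiction. Suppose $K_0 \subsetneq K_1$, and pick $x \in K_1 \setminus K_0$. Since $K_0$ is compact and $x \notin K_0$, the distance $\delta := \inf_{y \in K_0} |x - y|$ is strictly positive. The main idea is to produce, inside $K_1$, a scaled copy of $K_0$ that is disjoint from $K_0$ itself, which will contribute strictly positive additional Lebesgue volume and contradict the equality $\vol K_0 = \vol K_1$.

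The construction uses the homothety centered at $x$: for each $t \in [0, 1]$, set $h_t(z) := x + t(z - x)$. Since both $x$ and $K_0$ lie in the convex set $K_1$, convexity gives $h_t(K_0) \subseteq K_1$ for every $t \in [0,1]$. For $t$ small enough that $t\,\mathrm{diam}(K_0) < \delta/2$, the shrunken copy $h_t(K_0)$ sits inside the ball of radius $\delta/2$ around $x$, hence is disjoint from $K_0$. Because Lebesgue measure transforms by $\vol h_t(K_0) = t^n \vol K_0$, and $\vol K_0 > 0$ by hypothesis, the two disjoint subsets $K_0$ and $h_t(K_0)$ of $K_1$ yield
\[
    \vol K_1 \geq \vol K_0 + \vol h_t(K_0) = (1 + t^n)\vol K_0 > \vol K_0,
\]
which is the desired contradiction.

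There is essentially no serious obstacle in this argument; the only point worth highlighting is that the hypothesis $\vol K_0 > 0$ is used precisely to ensure $\vol h_t(K_0) > 0$. Without this, $K_0$ could lie in a proper affine subspace of $K_1$ (for instance, a face of $K_1$), and the conclusion of the lemma would fail.
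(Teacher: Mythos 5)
Your proof is correct in substance, but it takes a different route from the paper's. The paper argues directly that $K_1\setminus K_0$ is a non-empty relatively open subset of $K_1$; since $\vol K_1>0$ forces $\Int K_1\neq\emptyset$ (so that $K_1=\overline{\Int K_1}$), this set must meet $\Int K_1$ and hence contains a Euclidean ball, giving $\vol K_1>\vol K_0$ at once. You instead exhibit an explicit positive-volume subset of $K_1\setminus K_0$ by contracting $K_0$ towards a point $x\in K_1\setminus K_0$: convexity of $K_1$ keeps the copy $h_t(K_0)$ inside $K_1$, and its volume $t^n\vol K_0$ is positive precisely because $\vol K_0>0$. Both arguments are elementary and short; the paper's invokes the positivity through $\vol K_1>0$ (non-empty interior), yours through $\vol K_0>0$, which under the hypothesis $\vol K_0=\vol K_1$ is the same thing, and your closing remark about $K_0$ sitting in a face correctly identifies why some positivity is needed. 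One small quantitative slip: the threshold $t\,\operatorname{diam}(K_0)<\delta/2$ does not by itself place $h_t(K_0)$ in the ball of radius $\delta/2$ around $x$, because for $z\in K_0$ one only has $|z-x|\le \delta+\operatorname{diam}(K_0)$, which can exceed $\operatorname{diam}(K_0)$; for instance with $t=1$ your condition may hold while $h_1(K_0)=K_0$ is certainly not disjoint from $K_0$. Choosing instead $t<\delta/\bigl(\delta+\operatorname{diam}(K_0)\bigr)$ gives $|h_t(z)-x|=t|z-x|<\delta$ for all $z\in K_0$, so $h_t(K_0)\cap K_0=\emptyset$, and the rest of your argument goes through unchanged.
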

\begin{proof}
	In fact, if $K_1\neq K_0$, then $K_1\setminus K_0$ is a non-empty open subset of $K_1$. As $\vol K_1>0$,  $(K_1\setminus K_0)\cap \Int K_1\neq \emptyset$. Thus, $\vol K_1>\vol K_0$, which is a contradiction.

\end{proof}

Let $K\in \mathcal{K}_n$ be a convex body with positive volume. For $\delta>0$ small enough, let $K^{\delta}\coloneqq \{x\in K:d(x,\partial K)\geq \delta\}$. Then $K_{\delta}\in \mathcal{K}_n$ for $\delta$ small enough.

\begin{lemma}\label{lma:latcvb}
	Let $K\in \mathcal{K}_n$ be a convex body with positive volume and $K'\in \mathcal{K}_n$. Assume that for some large enough $k\in \mathbb{Z}_{>0}$, $K'$ contains $K\cap (k^{-1}\mathbb{Z})^n$, then $K'\supseteq K^{n^{1/2}k^{-1}}$.
\end{lemma}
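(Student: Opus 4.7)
The plan is to prove the set inclusion $K^{\sqrt{n}/k} \subseteq K'$ by showing that every point of $K^{\sqrt{n}/k}$ lies in the convex hull of lattice points contained in $K$, and then to invoke the convexity of $K'$.

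More concretely, the first step is to verify a standard fact about convex bodies: for any $x \in K^{\delta}$, the closed ball $\overline{B}(x,\delta)$ is contained in $K$. This follows since $x \in K$ and any chord from $x$ through a point $y \in \overline{B}(x,\delta)$ must hit $\partial K$ at a point of distance at least $\delta \geq |y-x|$ from $x$, forcing $y \in K$ by convexity.

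The second step, and the heart of the argument, is the lattice covering. Fix $x \in K^{\sqrt{n}/k}$ and let $C_x$ be a closed axis-aligned cube of the grid $(k^{-1}\mathbb{Z})^n$ that contains $x$; such a cube exists (with at most $2^n$ choices when $x$ itself sits on the grid, any of which works). Since $C_x$ has side length $1/k$, its diameter is $\sqrt{n}/k$, so every one of its $2^n$ vertices lies within distance $\sqrt{n}/k$ of $x$. By the first step these vertices all lie in $K$, hence in $K \cap (k^{-1}\mathbb{Z})^n$, which by hypothesis is contained in $K'$. Since $x \in C_x = \mathrm{conv}(\text{vertices of } C_x)$ and $K'$ is convex, we conclude $x \in K'$.

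There is no real obstacle here beyond bookkeeping: one only needs to be careful that "large enough $k$" ensures $K^{\sqrt{n}/k}$ is non-empty (otherwise the claim is vacuous, so one may assume non-emptiness), and that the existence of the cube $C_x$ is immediate from the partition of $\mathbb{R}^n$ by the grid.
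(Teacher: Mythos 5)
Your proof is correct, and it is exactly the elementary argument the paper has in mind: the paper states this lemma without proof (declaring it ``clear''), and your two steps --- the closed ball $\overline{B}(x,\delta)\subseteq K$ for $x\in K^{\delta}$, followed by writing $x$ as a convex combination of the vertices of its grid cube, which lie in $K\cap(k^{-1}\mathbb{Z})^n\subseteq K'$ --- fill in that gap correctly, including the harmless vacuous case when $K^{n^{1/2}k^{-1}}$ is empty.
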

\begin{proof}
    Let $x\in K^{n^{1/2}k^{-1}}$, by assumption, the closed ball $B$ with center $x$ and radius $n^{1/2}k^{-1}$ is contained in $K$. Observe that $x$ can be written as a convex combination of points in $B\cap (k^{-1}\mathbb{Z})^n$, which are contained in $K'$ by assumption. It follows that $x\in K'$. 
\end{proof}

Given a sequence of convex bodies $K_i$ ($i\in \mathbb{N}$), we set
\[
	\varliminf_{i\to\infty}K_i= \overline{\bigcup_{i=0}^{\infty}\bigcap_{j\geq i}K_j}.
\]

Suppose $K$ is the limit of a subsequence of $K_i$, we have
\begin{equation}\label{eq:liminflimsup}
	\varliminf_{i\to\infty}K_i\subseteq K.
\end{equation}
This is a simple consequence of \cref{thm:Hausconvcond}.

\subsection{Admissible flags and valuations}
Let $X$ be an irreducible normal projective variety of dimension $n$.
\begin{definition}\label{def:admfl}
	An \emph{admissible flag} $(Y_{\bullet})$ on $X$ is a flag of subvarieties
	\[
		X=Y_0\supseteq Y_1\supseteq \cdots\supseteq Y_n
	\]
	such that $Y_i$ is irreducible of codimension $i$ and smooth at the point $Y_n$.
\end{definition}

Given any admissible flag $(Y_{\bullet})$, we can define a rank $n$ valuation $\nu_{(Y_{\bullet})}:\mathbb{C}(X)^{\times}\rightarrow \mathbb{Z}^n$ as in \cite{LM09}. Here we consider $\mathbb{Z}^n$ as a totally ordered Abelian group with the lexicographic order.
We recall the definition: let $s\in \mathbb{C}(X)^{\times}$. Let $\nu_1(s)=\ord_{Y_1}s$. After localization around $Y_n$, we can take a local defining equation $t^1$ of $Y_1$, set $s_1=(s(t^1)^{-\nu_1(s)})|_{Y_1}$. Then $s_1\in \mathbb{C}(Y_1)$. We can repeat this construction with $Y_2$ in place of $Y_1$ to get $\nu_2(s)$ and $s_2$. Repeating this construction $n$ times, we get $\nu_{(Y_{\bullet})}(s)=\nu(s)=(\nu_1(s),\nu_2(s),\ldots,\nu_n(s))\in \mathbb{Z}^n$.
It is easy to verify that $\nu$ is indeed a rank $n$ valuation.

\begin{remark}\label{rmk:Abhyankar}
Conversely, by a theorem of Abhyankar, any valuation of $\mathbb{C}(X)$ with Noetherian valuation ring of rank $n$ is equivalent to a valuation taking value in $\mathbb{Z}^n$, see \cite[Chapter~0, Theorem~6.5.2]{FK18}. As shown in \cite[Theorem~2.9]{CFKLRS17}, any such valuation is equivalent to (but not necessarily equal to) a valuation induced by an admissible flag on a birational modification of $X$. Here two valuations $\nu,\nu'$ with value in $\mathbb{Z}^n$ are equivalent if one can find a matrix $G$ of the form $\mathrm{I}+N$, where $N$ is strictly upper triangular with integral entries, such that $\nu'=G \nu$.
\end{remark}

\subsection{Model potentials and \texorpdfstring{$\mathcal{I}$}{I}-model potentials}
Let $X$ be a connected compact Kähler manifold of dimension $n$ and $\theta$ be a smooth closed real $(1,1)$-form representing a $(1,1)$-cohomology class $[\theta]$.
Define $V_{\theta}\coloneqq \sup\{\varphi\in \PSH(X,\theta):\varphi\leq 0\}$. For any two $\varphi,\psi\in \PSH(X,\theta)$, we say $\varphi$ is \emph{more singular} than $\psi$ and write $[\varphi]\preceq[\psi]$ if there is a constant $C$ such that $\varphi\leq\psi+C$. When $\varphi\preceq 
\psi$ and $\psi\preceq \varphi$, we say that they have the same \emph{singularity type}.
We write $\theta_{\varphi}=\theta+\ddc\varphi$.

\begin{definition}
	Let $\varphi\in \PSH(X,\theta)$. Define
	\begin{equation}\label{eq:Cdef}
		C^{\theta}[\varphi]\coloneqq \sups\left\{\psi\in \PSH(X,\theta):[\varphi]\preceq[\psi],\psi\leq 0,\int_X \theta_{\varphi}^k\wedge \theta_{V_{\theta}}^{n-k}=\int_X \theta_{\psi}^k\wedge \theta_{V_{\theta}}^{n-k}, \forall k  \right\}.
	\end{equation}
	If $C^{\theta}[\varphi]=\varphi$, we say $\varphi$ is a \emph{model potential}. We omit $\theta$ from the notation if there is no risk of confusion.
\end{definition}
Here and in the sequel the Monge--Amp\`ere type operators are taken in the non-pluripolar sense \cite{BEGZ10}.

\begin{proposition}[{\cite[Proposition 2.6]{DDNLmetric}}]
	For any $\varphi\in \PSH(X,\theta)$, $C^{\theta}[\varphi]$ is a model potential in $\PSH(X,\theta)$. When $\int_X \theta^n_\varphi>0$  we have
	\[
		C^{\theta}[\varphi]=P^{\theta}[\varphi],
	\]
	where
	\begin{equation}\label{eq:Pdef}
		P^{\theta}[\varphi]\coloneqq \sups\left\{\psi\in \PSH(X,\theta):[\psi]\preceq[\varphi],\psi\leq 0\right\}.
	\end{equation}
	In general, we only have
	\begin{equation}\label{eq:CandP}
		C^{\theta}[\varphi]=\lim_{\epsilon\to 0+}P^{\theta}[(1-\epsilon)\varphi+\epsilon V_{\theta}].
	\end{equation}
\end{proposition}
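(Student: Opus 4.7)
The plan is to treat the three assertions in a slightly reordered sequence: first prove the identification under positive mass, then deduce the general limit formula \eqref{eq:CandP} by an interpolation argument, and finally derive the modelness of $C^{\theta}[\varphi]$ as a consequence of the limit formula. The decisive technical input is the Witt Nyström rigidity theorem: when $\int_X\theta_\varphi^n>0$, the conditions $[\varphi]\preceq[\psi]$ and $\int_X\theta_\varphi^n=\int_X\theta_\psi^n$ force $[\psi]=[\varphi]$. I will also use monotonicity of non-pluripolar masses under $[\cdot]\preceq[\cdot]$, monotone mass continuity as in \cite[Theorem~2.3]{DDNL2}, and multilinearity of non-pluripolar products.

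For the identification under $\int_X\theta_\varphi^n>0$, the inclusion $P^{\theta}[\varphi]\leq C^{\theta}[\varphi]$ is obtained by checking that $P^{\theta}[\varphi]$ is a candidate for $C^{\theta}[\varphi]$: it is $\leq 0$, dominates $\varphi$ (so $[\varphi]\preceq[P^{\theta}[\varphi]]$), and preserves the mixed masses of $\varphi$ (since under positive mass the $P$-envelope preserves the singularity type). For the reverse, Witt Nyström rigidity forces every candidate $\psi$ for $C^{\theta}[\varphi]$ to satisfy $[\psi]=[\varphi]$ and hence $[\psi]\preceq[\varphi]$, so $\psi$ is a candidate for $P^{\theta}[\varphi]$, yielding $C^{\theta}[\varphi]\leq P^{\theta}[\varphi]$.

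For the general limit formula, set $\varphi_\epsilon:=(1-\epsilon)\varphi+\epsilon V_\theta$. Multilinearity of non-pluripolar products gives
\[
\int_X\theta_{\varphi_\epsilon}^k\wedge\theta_{V_\theta}^{n-k}=\sum_{j=0}^k\binom{k}{j}(1-\epsilon)^j\epsilon^{k-j}\int_X\theta_\varphi^j\wedge\theta_{V_\theta}^{n-j}
\]
for every $0\leq k\leq n$; in particular $\int_X\theta_{\varphi_\epsilon}^n>0$ for $\epsilon\in(0,1]$ in the nontrivial big case, so the previous step yields $P^{\theta}[\varphi_\epsilon]=C^{\theta}[\varphi_\epsilon]$. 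This family is monotone decreasing as $\epsilon\to 0+$, with a well-defined $\theta$-psh limit $L$. Since $L\leq P^{\theta}[\varphi_\epsilon]$ for every $\epsilon$ and $L\geq\varphi$, monotonicity combined with passing $\epsilon\to 0$ in the displayed formula shows that $L$ has mixed masses equal to those of $\varphi$, so $L$ is a candidate for $C^{\theta}[\varphi]$ and $L\leq C^{\theta}[\varphi]$. For the reverse, given any candidate $\psi$ for $C^{\theta}[\varphi]$, the interpolation $\psi_\epsilon:=(1-\epsilon)\psi+\epsilon V_\theta$ satisfies $[\varphi_\epsilon]\preceq[\psi_\epsilon]$ (from $[\varphi]\preceq[\psi]$) together with $\int_X\theta_{\psi_\epsilon}^k\wedge\theta_{V_\theta}^{n-k}=\int_X\theta_{\varphi_\epsilon}^k\wedge\theta_{V_\theta}^{n-k}$ (by the same multilinear formula applied to $\psi$ and the matching mixed masses of $\psi$), so Witt Nyström rigidity forces $[\psi_\epsilon]=[\varphi_\epsilon]$, making $\psi_\epsilon$ a candidate for $P^{\theta}[\varphi_\epsilon]$. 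Hence $\psi_\epsilon\leq P^{\theta}[\varphi_\epsilon]$, and taking $\epsilon\to 0$ pointwise (using the monotonicity $\epsilon\mapsto\psi_\epsilon$) gives $\psi\leq L$; the supremum over $\psi$ yields $C^{\theta}[\varphi]\leq L$.

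Finally, the modelness of $C^{\theta}[\varphi]$ follows from the limit formula by a transitivity argument. Setting $L:=C^{\theta}[\varphi]$, any candidate $\psi$ for $C^{\theta}[L]$ satisfies $[L]\preceq[\psi]$, $\psi\leq 0$, and has mixed masses equal to those of $L$, which equal those of $\varphi$; combined with $[\varphi]\preceq[L]$ (from $L\geq\varphi$), transitivity gives $[\varphi]\preceq[\psi]$, so $\psi$ is a candidate for $C^{\theta}[\varphi]=L$ and thus $\psi\leq L$; since $L$ trivially qualifies as a candidate for $C^{\theta}[L]$, we conclude $C^{\theta}[L]=L$. The principal obstacle throughout is the interpolation argument in the second step: it is there that Witt Nyström rigidity must be invoked at the $\varphi_\epsilon$-level, and the matching of all mixed masses (not just the top one) under the interpolation is the bookkeeping one has to check carefully; once rigidity and mass multilinearity are granted, the remaining passages are routine.
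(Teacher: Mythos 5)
The paper states this result as a citation to \cite{DDNL5} without giving a proof, so there is no in-paper argument to compare against; I can only assess the internal correctness of your attempt. Your overall decomposition (positive-mass case, then interpolation, then modelness by transitivity) is a sensible one, but there is a genuine error in the form of ``Witt Nystr\"om rigidity'' you rely on. You state it as: $[\varphi]\preceq[\psi]$ together with $\int_X\theta^n_\varphi=\int_X\theta^n_\psi>0$ force $[\psi]=[\varphi]$. This is false. A counterexample on $(\mathbb{P}^1,\omega_{\mathrm{FS}})$ is $\varphi$ with a mild unbounded singularity at a point but zero Lelong number (e.g.\ built from $-\sqrt{-\log|z|}$), which has full mass, so $\psi=V_\theta$ satisfies $[\varphi]\preceq[\psi]$ with matching masses, yet $[\psi]\neq[\varphi]$. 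The correct rigidity statement lives at the level of envelopes (equivalently model potentials): if $u\leq v$ are model potentials with $\int_X\theta^n_u=\int_X\theta^n_v>0$ then $u=v$, so that under your hypotheses one may conclude $P^\theta[\psi]=P^\theta[\varphi]$ but not $[\psi]=[\varphi]$. Your use of the stronger, false version occurs twice: once to show $C^\theta[\varphi]\leq P^\theta[\varphi]$ in the positive-mass case, and once in the interpolation step for $\psi_\epsilon$. In both places the intended conclusion ($\psi\leq P^\theta[\varphi]$, resp.\ $\psi_\epsilon\leq P^\theta[\varphi_\epsilon]$) is still true, but must be reached via $\psi\leq P^\theta[\psi]=P^\theta[\varphi]$, not by claiming $\psi$ is itself a candidate for the $P$-envelope.

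Closely related is the parenthetical ``since under positive mass the $P$-envelope preserves the singularity type'': this is also not correct. $P^\theta[\varphi]$ need not have the same singularity type as $\varphi$ (again the example above, where $P^\theta[\varphi]=V_\theta$ has strictly smaller singularities). What is true, and what you actually need, is that the $P$-envelope preserves all the mixed masses $\int_X\theta_{\bullet}^k\wedge\theta_{V_\theta}^{n-k}$ when the top mass is positive; this is a nontrivial theorem (part of the DDNL circle of ideas), not a consequence of type-preservation, and should be cited as such. Finally, the multilinear expansion of $\int_X\theta_{\varphi_\epsilon}^k\wedge\theta_{V_\theta}^{n-k}$ is a real technical point: non-pluripolar products are multilinear over positive closed currents in the sense of \cite{BEGZ10}, and this is what justifies the binomial formula; it should be invoked explicitly rather than treated as automatic. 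Once the rigidity is applied at the level of $P$-envelopes and the mass-preservation lemma is cited correctly, the rest of your chain (monotone decrease of $P^\theta[\varphi_\epsilon]$, mass continuity along monotone limits, and the transitivity argument for modelness) goes through.
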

We omit $\theta$ from the notation $P^{\theta}[\varphi]$ if there is no risk of confusion.

\begin{definition}\label{def:BM}
	A \emph{birational model} of $X$ is a projective birational morphism $\pi:Y\rightarrow X$ from a \emph{smooth} projective variety $Y$ to $X$.
\end{definition}

Recall that $\mathcal{I}(\varphi)$ denotes the multiplier ideal sheaf of a qpsh function $\varphi$ on $X$ in the sense of Nadel, namely the coherent subsheaf of $\mathcal{O}_X$ consisting of functions $f$ such that $|f|^2 \exp(-\varphi)$ is locally integrable.

\begin{definition}\label{def:Icomp}
	Let $\varphi,\psi$ be two quasi-psh functions, we say $\varphi\preceq_{\mathcal{I}}\psi$ if the following equivalent conditions are satisfied:
	\begin{enumerate}
		\item $\mathcal{I}(k\varphi)\subseteq \mathcal{I}(k\psi)$ for all real $k>0$.
		\item $\mathcal{I}(k\varphi)\subseteq \mathcal{I}(k\psi)$ for all integer $k>0$.
		\item For any birational model $\pi:Y\rightarrow X$ and any $y\in Y$, we have $\nu(\pi^*\varphi,y)\geq \nu(\pi^*\psi,y)$.
	\end{enumerate}
    The equivalence between (1) and (3) is just \cite[Corollary~2.16]{DX22}. The equivalence between (2) and (3) follows from \cite[Proposition~2.14]{DX22}.
 
	We say $\varphi\sim_{\mathcal{I}}\psi$ if $\varphi\preceq_{\mathcal{I}}\psi$ and $\psi\preceq_{\mathcal{I}}\varphi$.

	Given any $\varphi\in \PSH(X,\theta)$, we define
	\[
		P^{\theta}[\varphi]_{\mathcal{I}}\coloneqq \sup\left\{\psi\in \PSH(X,\theta):\psi\preceq_{\mathcal{I}}\varphi,\psi\leq 0\right\}.
	\]
 We omit $\theta$ when there is no risk of confusion.
    We say $\varphi$ is \emph{$\mathcal{I}$-model} if $\varphi=P[\varphi]_{\mathcal{I}}$. 
\end{definition}
It is shown in \cite[Proposition~2.18]{DX22} that $P[\varphi]_{\mathcal{I}}\in \PSH(X,\theta)$ and $\varphi\sim_{\mathcal{I}}P[\varphi]_{\mathcal{I}}$. Moreover, $P[\varphi]_{\mathcal{I}}$ is always $\mathcal{I}$-model.
We can also talk about the $\sim_{\mathcal{I}}$ relation of two psh metric on $L$ in the obvious manner.

Typical model potentials are not $\mathcal{I}$-model, however, the converse is true:
\begin{proposition}
	If $\psi\in \PSH(X,\theta)$ is an $\mathcal{I}$-model potential then it is model.
\end{proposition}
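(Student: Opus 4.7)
The plan is to establish the two inclusions $\psi\le C^\theta[\psi]$ and $C^\theta[\psi]\le\psi$ separately. The first is tautological: since $\psi\le 0$ trivially satisfies $[\psi]\preceq[\psi]$ and the mixed-mass equalities appearing in \eqref{eq:Cdef}, it belongs to the defining family, so $\psi\le C^\theta[\psi]$.

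For the reverse inclusion, set $\tau:=C^\theta[\psi]$, which is a model potential in $\PSH(X,\theta)$ with $\tau\le 0$ and $\tau\ge\psi$; in particular $\tau$ is less singular than $\psi$. The first substep is to verify that $\tau$ shares with $\psi$ the mixed non-pluripolar masses
\[
\int_X \theta_\tau^k\wedge\theta_{V_\theta}^{n-k}=\int_X \theta_\psi^k\wedge\theta_{V_\theta}^{n-k},\qquad k=0,\ldots,n.
\]
I plan to obtain this either directly from monotonicity of non-pluripolar products under the $\sups$-regularized supremum of an equi-mass family, or, more cleanly, by invoking the approximation \eqref{eq:CandP} together with the invariance of the mixed masses under the model envelope $P^\theta$ and the convergence of the masses of $(1-\epsilon)\psi+\epsilon V_\theta$ to those of $\psi$ as $\epsilon\to 0^+$ along the decreasing family.

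The core step is then to promote this equality of mixed masses to the $\mathcal{I}$-equivalence $\tau\sim_{\mathcal{I}}\psi$. Since $\tau\ge\psi$ already gives $\mathcal{I}(k\tau)\supseteq\mathcal{I}(k\psi)$ for every $k>0$, only the reverse inclusion of multiplier ideals remains, and here I intend to invoke the characterization proved in \cite{DX20, DX21}: a less singular potential with the same mixed non-pluripolar masses as a given one is automatically $\mathcal{I}$-equivalent to it. Granting this, the $\mathcal{I}$-invariance of $P[\,\cdot\,]_{\mathcal{I}}$ combined with the $\mathcal{I}$-model hypothesis $\psi=P[\psi]_{\mathcal{I}}$ yields
\[
\tau\le P[\tau]_{\mathcal{I}}=P[\psi]_{\mathcal{I}}=\psi,
\]
the first inequality holding because $\tau\le 0$ itself lies in the family defining $P[\tau]_{\mathcal{I}}$. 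Combined with Step 1 this gives $\psi=C^\theta[\psi]$, so $\psi$ is model.

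The hard part will be the passage from mass equality to $\mathcal{I}$-equivalence: this is the genuine pluripotential-theoretic ingredient that separates model potentials from $\mathcal{I}$-model potentials, and it is precisely where the theory of \cite{DX20} becomes indispensable. Once the mass identities for $\tau$ are secured, everything else is a formal manipulation of the envelopes $C^\theta[\,\cdot\,]$ and $P[\,\cdot\,]_{\mathcal{I}}$.
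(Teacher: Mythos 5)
Your outer bookkeeping is fine and coincides with what the paper leaves implicit: $\psi\le C^{\theta}[\psi]$ is immediate, and once one knows $C^{\theta}[\psi]\sim_{\mathcal{I}}\psi$, the chain $C^{\theta}[\psi]\le P[C^{\theta}[\psi]]_{\mathcal{I}}=P[\psi]_{\mathcal{I}}=\psi$ closes the argument. The genuine gap is exactly at the step you yourself flag as the hard part. The ``characterization'' you propose to import from \cite{DX20,DX21} --- that a less singular potential with the same mixed non-pluripolar masses is automatically $\mathcal{I}$-equivalent to the given one --- is not available there as a quotable black box: those references supply approximation and density statements (e.g.\ $d_S$-convergence of quasi-equisingular approximations) and formal properties of the envelopes $P[\cdot]$ and $P[\cdot]_{\mathcal{I}}$, not a criterion upgrading equality of masses to equality of multiplier ideals. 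Worse, in the only instance you need (with $\tau=C^{\theta}[\psi]$), that claim \emph{is} the content of the proposition being proved: it says precisely that the model envelope does not decrease Lelong numbers on any birational model, which is the whole point separating ``model'' from ``$\mathcal{I}$-model''. Deferring it to a citation therefore leaves the proof without its essential ingredient, and is borderline circular. (Your Substep 2a, the mass identities for $C^{\theta}[\psi]$, is correct but ends up superfluous once the step is proved properly.)

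The missing argument is short but must be made, and it is where the paper actually works: by \eqref{eq:CandP}, $C^{\theta}[\psi]=\lim_{\epsilon\to 0+}P[(1-\epsilon)\psi+\epsilon V_{\theta}]$, and since $\psi\le V_{\theta}$ this family decreases as $\epsilon\to 0+$, so $C[\psi]\le P[(1-\epsilon)\psi+\epsilon V_{\theta}]$ for each $\epsilon$. Hence for any birational model $\pi:Z\to X$ and $z\in Z$,
\[
\nu(C[\psi],z)\;\ge\;\nu\bigl(P[(1-\epsilon)\psi+\epsilon V_{\theta}],z\bigr)\;=\;\nu\bigl((1-\epsilon)\psi+\epsilon V_{\theta},z\bigr)\;\ge\;(1-\epsilon)\,\nu(\psi,z)\,,
\]
where the middle equality uses that envelopes commute with $\pi^{*}$ and that the model envelope $P[\cdot]$ preserves Lelong numbers. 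Letting $\epsilon\to 0+$ gives $\nu(C[\psi],z)\ge\nu(\psi,z)$ on every model, i.e.\ $C[\psi]\preceq_{\mathcal{I}}\psi$, while $\psi\le C[\psi]$ gives the reverse relation; this is the input your proposal asserts but never establishes.
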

\begin{proof}We need to show that  $\psi\sim_{\mathcal{I}} C[\psi]$. Let $\pi:Z\rightarrow X$ be a birational modification. Let $z\in Z$. As $\psi \leq C[\psi]+C$ for some constant $C$, it suffices to show that
	\[
		\nu(C[\psi],z)\geq \nu(\psi,z).
	\]
	Here $\nu(\psi,z)$ denotes the Lelong number of $\pi^*\psi$ at $z$.
	By \eqref{eq:CandP} and the upper semi-continuity of Lelong numbers (see \cite[Page~73, Exercise~2.7]{GZ17}), we find
	\[
		\nu(C[\psi],z)\geq \lim_{\epsilon\to 0+}\nu(P[(1-\epsilon)\psi+\epsilon V_{\theta}],z)= \lim_{\epsilon\to 0+}\nu((1-\epsilon)\psi+\epsilon V_{\theta},z)=\nu(\psi,z).
	\]
	We conclude our assertion.
\end{proof}

\subsection{Potentials with analytic singularities}
\begin{definition}\label{def:ana}
	A quasi-plurisubharmonic function (quasi-psh) $\varphi$ on $X$ is said to have \emph{analytic singularities} if for each $x\in X$, there is a neighborhood $U_x\subseteq X$ of $x$ with respect to the Euclidean topology, such that on $U_x$,
	\begin{equation}\label{eq:varphiloc}
		\varphi=c\log\left(\sum_{j=1}^{N_x}|f_j|^2\right)+\psi,
	\end{equation}
	where $c\in \mathbb{Q}_{\geq 0}$, the $f_j$'s are analytic functions on $U_x$, $N_x\in \mathbb{Z}_{>0}$ is an integer depending on $x$, $\psi\in L^{\infty}(U_x)$.
\end{definition}

\begin{definition}\label{def:anaD}
	Let $D$ be an effective normal crossing $\mathbb{R}$-divisor on $X$.   Let $D=\sum_i a_i D_i$ with $D_i$ being prime divisors and $a_i\in \mathbb{R}_{>0}$. We say that
	a quasi-psh function $\varphi$ has \emph{analytic singularities along $D$} if locally (in the Euclidean topology),
	\[
		\varphi=\sum_i a_i\log|s_i|^2+\psi,
	\]
	where $s_i$ is a local holomorphic function defining $D_i$, $\psi$ is a bounded function.
\end{definition}
In the sequel, when we talk about a normal crossing divisor, we always assume that it is effective.

Note that a potential with analytic singularities along a normal crossing $\mathbb{Q}$-divisor has analytic singularities in the sense of \cref{def:ana}.

For any quasi-psh function $\varphi$ on $X$ with analytic singularities, there is always a birational model $\pi:Y\rightarrow X$ such that $\pi^*\varphi$ has analytic singularities along a normal crossing $\mathbb{Q}$-divisor on $Y$. See \cite[Lemma~2.3.19]{MM07} for example. We remind the readers that in \cite{MM07}, the definition of analytic singularities differs slightly from ours: they require the remainder $\psi$ to be smooth instead of just bounded. However, the proof of \cite[Lemma~2.3.19]{MM07} works \emph{verbatim} with our definition.

\subsection{Quasi-equisingular approximations}
We recall the concept of quasi-equisingular approximations in the sense of \cite{Cao14, DPS01}.

Let $X$ be a connected compact Kähler manifold of dimension $n$ and $\theta$ (resp. $\theta_i$, $i=1,\ldots,n$) be a smooth real $(1,1)$-form representing a pseudo-effective $(1,1)$-cohomology class $[\theta]$ (resp. $[\theta_i]$).
Take a Kähler form $\omega$ on $X$.

\begin{definition}\label{def:quasi-eq}
	Let $\varphi\in \PSH(X,\theta)$. A \emph{quasi-equisingular approximation} is a sequence $\varphi^j\in \PSH(X,\theta+\epsilon_j\omega)$ with $\epsilon_j\to 0$ such that
	\begin{enumerate}
		\item $\varphi^j\to \varphi$ in $L^1$.
		\item $\varphi^j$ has analytic singularities.
		\item $\varphi^{j+1}\leq \varphi^j$.
		\item For any $\delta>0$, $k>0$, there is $j_0>0$ such that for $j\geq j_0$,
		      \[
			      \mathcal{I}(k(1+\delta)\varphi^j)\subseteq \mathcal{I}(k\varphi)\subseteq \mathcal{I}(k\varphi^j).
		      \]
	\end{enumerate}
\end{definition}
The existence of a quasi-equisingular approximation follows from the arguments in \cite{Cao14, Dem15, DPS01}.

\subsection{Volumes of Hermitian pseudo-effective line bundles}
Let $X$ be a smooth irreducible projective variety of dimension $n$.
\begin{definition}
	A \emph{Hermitian pseudo-effective(psef) line bundle} on $X$ is a pair $(L,\phi)$, where $L$ is a pseudo-effective line bundle on $X$ and $\phi$ is a psh metric on $L$.

 When $L$ is big, we say $(L,\phi)$ is a Hermitian big line bundle.
\end{definition}

Let $(L,\phi)$ be a Hermitian psef line bundle on $X$.
In this section, we recall the main results in \cite{DX22, DX21} concerning the volume of $(L,\phi)$. 
\begin{definition}
	The \emph{volume} of $(L,\phi)$ is defined as
	\[
		\vol(L,\phi)\coloneqq \lim_{k\to\infty}\frac{1}{k^n}h^0\left(X,L^k\otimes \mathcal{I}(k\phi)\right).
	\]
\end{definition}
The existence of the limit follows from \cite[Theorem~1.1]{DX21}.

We take a smooth Hermitian metric $h$ on $L$. Set $\theta=c_1(L,h)$. Then we can identify $\phi$ with a $\theta$-psh function $\varphi$, namely $\phi=h\exp(-\varphi)$.

\begin{theorem}[{\cite[Theorem~1.1]{DX21}}]\label{thm:DXmain}
	Under the above assumptions,
	\[
		\vol(L,\phi)=\frac{1}{n!}\int_X \theta_{P[\varphi]_{\mathcal{I}}}^n.
	\]
\end{theorem}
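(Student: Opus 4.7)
The plan is to show that both sides of the identity are continuous in $\varphi$ under quasi-equisingular approximation and reduce in the limit to the case of analytic singularities, where the identity becomes a direct log-resolution computation. Before starting, observe that both sides are unchanged if $\varphi$ is replaced by $P[\varphi]_{\mathcal{I}}$: the left-hand side because $\mathcal{I}(k\varphi) = \mathcal{I}(kP[\varphi]_{\mathcal{I}})$ for every $k > 0$, and the right-hand side by the very definition of $P[\cdot]_{\mathcal{I}}$. Thus we may and do assume $\varphi$ is $\mathcal{I}$-model.

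Suppose first that $\varphi$ has analytic singularities. Choose a log resolution $\pi : Y \to X$ so that $\pi^*\varphi = c\log|s_E|^2 + u$ with $u$ bounded and $E$ an effective $\mathbb{Q}$-divisor with SNC support. The standard formula for multiplier ideals under log resolution gives $\mathcal{I}(k\varphi) = \pi_*\mathcal{O}_Y(K_{Y/X} - \lfloor kcE\rfloor)$ for $k$ sufficiently divisible, and after Leray the cohomology count equals $h^0(Y, \pi^*L^k(K_{Y/X} - \lfloor kcE\rfloor))$. By the asymptotic volume formula for line bundles this grows like $k^n/n!$ times $\vol(\pi^*L - cE)$. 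On the pluripotential side, $\pi^*\varphi$ has analytic singularities along an SNC divisor on $Y$, and a direct computation (encapsulating the Boucksom movable intersection calculus on the resolution) identifies $\int_X \theta^n_\varphi$ with the same quantity $\vol(\pi^*L - cE)$, which settles the analytic case.

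For general $\mathcal{I}$-model $\varphi$, fix a quasi-equisingular approximation $\varphi^j \in \PSH(X, \theta + \epsilon_j\omega)$ with analytic singularities and $\epsilon_j \searrow 0$. The analytic case applied at each level yields
\[
\vol(L + \epsilon_j\{\omega\}, \varphi^j) = \frac{1}{n!}\int_X (\theta + \epsilon_j\omega)^n_{\varphi^j}.
\]
On the right, continuity of mixed masses under $d_S$-convergence (\cref{thm:mixc}, applied after an auxiliary smooth perturbation of class representatives) gives $\int_X (\theta + \epsilon_j\omega)^n_{\varphi^j} \to \int_X \theta^n_\varphi$. On the left, the quasi-equisingular sandwich $\mathcal{I}(k(1+\delta)\varphi^j) \subseteq \mathcal{I}(k\varphi) \subseteq \mathcal{I}(k\varphi^j)$ combined with the subadditivity of multiplier ideals yields, after an asymptotic rescaling in $k$ followed by letting first $j \to \infty$ and then $\delta \to 0$, the convergence $\vol(L + \epsilon_j\{\omega\}, \varphi^j) \to \vol(L,\varphi)$. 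Combining the two limits closes the argument.

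The \textbf{main obstacle} I anticipate is precisely the convergence of multiplier-ideal volumes in the last step. Naively $\mathcal{I}(k\varphi^j) \supseteq \mathcal{I}(k\varphi)$ for every fixed $k$, so monotonicity alone produces an inequality in the wrong direction; the only lower bound available is the shifted inclusion $\mathcal{I}(k(1+\delta)\varphi^j) \subseteq \mathcal{I}(k\varphi)$. Converting this shift into a sharp volume estimate requires trading the $(1+\delta)$-rescaling against the asymptotic $k^n$-growth of $h^0$, which in turn uses the full strength of strong subadditivity (or log-concavity of the volume) of multiplier ideals. The corresponding obstacle on the non-pluripolar side is milder and is absorbed into \cref{thm:mixc}.
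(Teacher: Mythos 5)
The paper does not actually prove this statement: it is recalled in the preliminaries and cited from the external references \cite{DX20, DX21}, so there is no internal proof to compare against. Evaluating your argument on its own terms, the overall approximation philosophy (analytic case, then approximate) is the right one, but there are two genuine gaps. First, you pass directly from the analytic-singularities case to general $\mathcal{I}$-model $\varphi$ via a quasi-equisingular approximation $\varphi^j$ and invoke $d_S$-continuity of mixed masses; but the $d_S$-convergence of quasi-equisingular approximations to $P[\varphi]_{\mathcal{I}}$ — which is what \cite[Theorem~3.8]{DX21} provides, and which the present paper invokes in the construction of partial Okounkov bodies — is established under the hypothesis that $\theta_\varphi$ is a K\"ahler current. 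Your two-step scheme omits the K\"ahler-current intermediate layer; the correct pattern, used repeatedly in the paper (for \cref{thm:partOkobody} and in the proof of \cref{cor:equalCao}), is: analytic singularities, then K\"ahler currents (where the quasi-equisingular machinery applies), then general $\varphi$ via the interpolation $\varphi_\epsilon := (1-\epsilon)\varphi + \epsilon\psi$ with $\theta_\psi$ a K\"ahler current. A related but milder inaccuracy: the left inequality of the sandwich bounds $h^0(X, L^k\otimes\mathcal{I}(k\varphi))$, not $h^0(X,(L+\epsilon_j A)^k\otimes\mathcal{I}(k\varphi))$, so the quantity $\vol(L+\epsilon_j\{\omega\},\varphi^j)$ does not fall out directly.

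Second, the diagonal limit you flag as the main obstacle is genuinely problematic with the tools you list. In the definition of quasi-equisingular approximation the threshold $j_0$ depends on both $k$ and $\delta$, so you cannot fix $j$ and let $k\to\infty$ in the inclusion $\mathcal{I}(k(1+\delta)\varphi^j)\subseteq\mathcal{I}(k\varphi)$; you must pass to a diagonal sequence $j(k)\to\infty$, and then $k^{-n}h^0(X, L^k\otimes\mathcal{I}(k(1+\delta)\varphi^{j(k)}))$ is no longer the volume associated to a single fixed $\varphi^j$, so the analytic-case computation does not directly apply. Subadditivity of multiplier ideals by itself does not control this diagonal; in \cite{DX20, DX21} the corresponding convergence is one of the main technical results and requires substantially more machinery than the bare sandwich. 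You have correctly identified where the difficulty lies, but the gestured-at "strong subadditivity / log-concavity" step is not yet a proof.
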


We argue that $\vol$ deserves the name \emph{volume} by proving that it satisfies the Brunn--Minkowski inequality. 
\begin{corollary}\label{cor:BMineq}
	Let $(L,\phi)$, $(L,\phi')$ be two Hermitian psef line bundles on $X$. Then
	\begin{equation}\label{eq:BM}
		\vol(L+L',\phi+\phi')^{1/n}\geq \vol(L,\phi)^{1/n}+\vol(L',\phi')^{1/n}.
	\end{equation}
\end{corollary}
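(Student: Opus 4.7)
The plan is to reduce the claim to the Brunn--Minkowski inequality for non-pluripolar Monge--Amp\`ere masses via \cref{thm:DXmain}. Fix smooth Hermitian metrics on $L$ and $L'$, let $\theta,\theta'$ be the corresponding curvature forms, and identify $\phi,\phi'$ with non-positive $\theta$-psh and $\theta'$-psh functions $\varphi,\varphi'$. Then $\varphi+\varphi'\in\PSH(X,\theta+\theta')$ corresponds to $\phi+\phi'$, and \cref{thm:DXmain} rewrites \eqref{eq:BM} as
\begin{equation*}
\left(\int_X (\theta+\theta')^n_{P[\varphi+\varphi']_{\mathcal{I}}}\right)^{1/n}\geq \left(\int_X \theta^n_{P[\varphi]_{\mathcal{I}}}\right)^{1/n}+\left(\int_X (\theta')^n_{P[\varphi']_{\mathcal{I}}}\right)^{1/n}.
\end{equation*}

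The key intermediate step is the pointwise bound
\begin{equation*}
P[\varphi]_{\mathcal{I}}+P[\varphi']_{\mathcal{I}}\leq P[\varphi+\varphi']_{\mathcal{I}}.
\end{equation*}
To see this, I use the characterization of $\preceq_{\mathcal{I}}$ via generic Lelong numbers on every birational model together with the additivity of Lelong numbers: from $P[\varphi]_{\mathcal{I}}\sim_{\mathcal{I}}\varphi$ and $P[\varphi']_{\mathcal{I}}\sim_{\mathcal{I}}\varphi'$ one infers $P[\varphi]_{\mathcal{I}}+P[\varphi']_{\mathcal{I}}\sim_{\mathcal{I}}\varphi+\varphi'$; since $P[\varphi]_{\mathcal{I}}+P[\varphi']_{\mathcal{I}}\in\PSH(X,\theta+\theta')$ is non-positive, it belongs to the competition defining $P[\varphi+\varphi']_{\mathcal{I}}$ and the displayed inequality follows. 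Monotonicity of non-pluripolar masses with respect to the singularity type then gives
\begin{equation*}
\int_X (\theta+\theta')^n_{P[\varphi+\varphi']_{\mathcal{I}}}\geq \int_X (\theta+\theta')^n_{P[\varphi]_{\mathcal{I}}+P[\varphi']_{\mathcal{I}}}.
\end{equation*}

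It then remains to apply the Brunn--Minkowski inequality for non-pluripolar masses to the pair $P[\varphi]_{\mathcal{I}}\in\PSH(X,\theta)$ and $P[\varphi']_{\mathcal{I}}\in\PSH(X,\theta')$ to bound the last integral from below by $\bigl(\int_X\theta^n_{P[\varphi]_{\mathcal{I}}}\bigr)^{1/n}+\bigl(\int_X(\theta')^n_{P[\varphi']_{\mathcal{I}}}\bigr)^{1/n}$ raised to the $n$-th power. The main obstacle is this auxiliary non-pluripolar Brunn--Minkowski inequality, which is itself a nontrivial result; it is available in the literature (e.g.\ through the work of Witt Nystr\"om and Xiao, or from the generalized Khovanskii--Teissier inequalities for movable intersections). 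An alternative route that stays entirely inside the framework developed here is to take quasi-equisingular approximations $\varphi^j,(\varphi')^j$ with analytic singularities, reduce on a common log-resolution to the classical Brunn--Minkowski inequality for the volumes of big $\mathbb{Q}$-line bundles, and pass to the limit using \cref{thm:DXmain} together with the continuity of non-pluripolar mixed masses along $d_S$-approximations supplied by \cref{thm:mixc}.
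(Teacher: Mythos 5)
Your proposal is correct and follows essentially the same route as the paper: reduce via \cref{thm:DXmain}, observe $P^{\theta}[\varphi]_{\mathcal{I}}+P^{\theta'}[\varphi']_{\mathcal{I}}\leq P^{\theta+\theta'}[\varphi+\varphi']_{\mathcal{I}}$ and apply Witt Nyström's monotonicity, then invoke a Brunn--Minkowski inequality for non-pluripolar masses of sums of potentials. The paper handles your ``main obstacle'' simply by citing \cite[Theorem~6.1]{DDNL4}, so that step is a citation rather than something to prove.
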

\begin{proof}
	Fix a smooth Hermitian metric $h'$ on $L'$ with $\theta'=c_1(L',h')$. We identify $\phi'$ with $\varphi'\in \PSH(X,\theta')$.
	By \cref{thm:DXmain}, \eqref{eq:BM} is equivalent to
	\[
		\left(\int_X \left(\theta+\theta'+\ddc P^{\theta+\theta'}[\varphi+\varphi']_{\mathcal{I}}\right)^n\right)^{1/n}\geq \left(\int_X \theta_{P^{\theta}[\varphi]_{\mathcal{I}}}^n\right)^{1/n}+\left(\int_X \theta'{}_{P^{\theta'}[\varphi']_{\mathcal{I}}}^n\right)^{1/n}.
	\]
	Observe that
	\[
		P^{\theta+\theta'}[\varphi+\varphi']_{\mathcal{I}}\geq P^{\theta}[\varphi]_{\mathcal{I}}+P^{\theta'}[\varphi']_{\mathcal{I}}.
	\]
	Thus, by the monotonicity theorem of \cite{WN19}, it suffices to show that
	\[
		\left(\int_X \left(\theta+\theta'+\ddc P^{\theta}[\varphi]_{\mathcal{I}} +\ddc P^{\theta'}[\varphi']_{\mathcal{I}}\right)^n\right)^{1/n}
		\geq \left(\int_X \theta_{P^{\theta}[\varphi]_{\mathcal{I}}}^n\right)^{1/n}+\left(\int_X \theta'{}_{P^{\theta'}[\varphi']_{\mathcal{I}}}^n\right)^{1/n}.
	\]
	This follows from \cite[Theorem~6.1]{DDNL19log}.
\end{proof}

\subsection{Non-Archimedean pluripotential theory}\label{subsec:nAp}
In this section, we briefly recall the notion of Berkovich analytification of a smooth complex projective variety and the pluripotential theory in the sense of Boucksom--Jonsson \cite{BJglobal} on it.

For simplicity, we assume that $X$ is a connected smooth projective variety of dimension $n$ and $L$ is an ample line bundle on $X$. 

The set of real valuations on $\mathbb{C}(X)$ trivial on $\mathbb{C}$ is denoted by $X^{\val}$. This set can be defined in the same way for non-smooth varieties as well.

The center of a valuation $v$ is the scheme-theoretic point $c=c(v)$ of $X$ such that $v\geq 0$ on $\mathcal{O}_{X,c}$ and $v>0$ on the maximal ideal $\mathfrak{m}_{X,c}$ of $\mathcal{O}_{X,c}$. The center exists and is unique.

Let $X^{\An}$ denote the Berkovich analytification $X^{\An}$ of $X$ with respect to the trivial valuation on $\mathbb{C}$. As a set, $X^{\An}$ is the set of semi-valuations on $X$, in other words, real-valued valuations $v$
on irreducible reduced subvarieties $Y$ in $X$ that is trivial on $\mathbb{C}$. We call $Y$ the \emph{support} of the semi-valuation $v$.
In other words, 
\[
X^{\An}=\coprod_{Y} Y^{\val}.
\]
The Berkovich space $X^{\An}$ admits a natural topology, called the Berkovich topology and a sheaf of analytic functions. The natural morphism of ringed spaces $X^{\An}\rightarrow X$ allows us to pull-back $L$ to an invertible sheaf $L^{\An}$ on $X^{\An}$.
See \cite{Berk12} for more details.

In \cite{BJglobal}, Boucksom--Jonsson developed a pluripotential theory with respect to $(X^{\An},L^{\An})$, similar to its complex counterpart. In particular, there is a natural notion of plurisubharmonic metrics on $L^{\An}$. 
In \cite[Section~7.1]{BJglobal}, Boucksom--Jonsson defined the notion of energy pairings $(\varphi_0,\ldots,\varphi_n)$ between $(n+1)$ plurisubharmonic metrics $\varphi_0,\ldots,\varphi_n$ on $L^{\An}$. One can then define the space $\mathcal{E}^1(L^{\An})$ of finite energy metrics as the space of plurisubharmonic functions $\varphi$ on $L^{\An}$ such that 
\[
E(\varphi)\coloneqq \frac{1}{n+1}(\varphi,\ldots,\varphi)>-\infty.
\]
Note that our definition of $E$ differs from the definition of \cite{BJglobal} by a multiple $\frac{1}{V}$.
We will explain the relation between the non-Archimedean pluripotential theory and the complex pluripotential theory in \cref{subsec:apptonA}.

\section{The Okounkov bodies of almost semigroups}\label{sec:clo}
Fix an integer $n\geq 0$. Fix a closed convex cone $C\subseteq \mathbb{R}^{n}\times \mathbb{R}_{\geq 0}$ such that $C\cap \{x_{n+1}=0\}=\{0\}$. Here $x_{n+1}$ is the last coordinate of $\mathbb{R}^{n+1}$.
\subsection{Generality on semigroups}
Write $\hat{\mathcal{S}}(C)$ for the set of subsets of $C\cap \mathbb{Z}^{n+1}$ and $\mathcal{S}(C)$ for the set of sub-semigroups $S\subseteq C\cap \mathbb{Z}^{n+1}$. For each $k\in \mathbb{N}$ and $S\in \hat{\mathcal{S}}(C)$, we write
\[
S_k\coloneqq \left\{x\in \mathbb{Z}^n: (x,k)\in S \right\}.
\]
Note that $S_k$ is a finite set by our assumption on $C$.

We introduce a pseudometric on $\hat{\mathcal{S}}(C)$ as follows:
\[
d(S,S')\coloneqq \varlimsup_{k\to\infty} k^{-n} (|S_k|+|S'_k|-2|(S\cap S')_k|).
\]
Here $|\bullet|$ denotes the cardinality of a finite set.
\begin{lemma}\label{lma:dps}
    The above defined $d$ is a pseudometric on $\hat{\mathcal{S}}(C)$.
\end{lemma}
\begin{proof}
    Only the triangle inequality needs to be argued. Take $S,S',S''\in \hat{\mathcal{S}}(C)$. We claim that for any $k\in \mathbb{N}$,
    \[
    |S_k|+|S'_k|-2|S_k\cap S'_k|+|S''_k|+|S'_k|-2|S''_k\cap S'_k|\geq |S_k|+|S''_k|-2|S_k\cap S''_k|.
    \]
    From this the triangle inequality follows. To argue the claim, we rearrange it to the following form:
    \[
    |S_k'|-|S_k\cap S_k'|\geq |S'_k\cap S_k''|-|S_k\cap S''_k|,
    \]
    which is obvious.
\end{proof}
Given $S,S'\in \hat{\mathcal{S}}(C)$, we say $S$ is equivalent to $S'$ and write $S\sim S'$ if $d(S,S')=0$. This is an equivalence relation by \cref{lma:dps}.

\begin{lemma}\label{lma:dBil}
    Given $S,S',S''\in \hat{\mathcal{S}}(C)$, we have
    \[
    d(S\cap S'',S'\cap S'')\leq d(S,S').
    \]
    In particular, if $S^i,S'^i\in \hat{\mathcal{S}}(C)$ ($i\in \mathbb{N}$) and $S^i\to S$, $S'^i\to S'$, then
    \[
    S^i\cap S'^i\to S\cap S'.
    \]
\end{lemma}
\begin{proof}
    Observe that for any $k\in \mathbb{N}$,
    \[
    |S_k\cap S_k''|-|S_k\cap S_k'\cap S_k''|\leq |S_k|-|S_k\cap S_k'|.
    \]
    The same holds if we interchange $S$ with $S'$. It follows that
    \[
    |S_k\cap S_k''|+|S_k'\cap S_k''|-2|S_k\cap S_k'\cap S_k''|\leq |S_k|+|S'_k|-2|S_k\cap S_k'|.
    \]
    The first assertion follows.

    Next we compute
    \[
    d(S^i\cap S'^i,S\cap S')\leq d(S^i\cap S'^i,S^i\cap S')+d(S^i\cap S',S\cap S')\leq d(S'^i,S')+d(S^i,S)
    \]
    and the second assertion follows.
\end{proof}

The volume of $S\in \mathcal{S}(C)$ is defined as
\[
\vol S\coloneqq \lim_{k\to\infty} (ka)^{-n}|S_{ka}|=\varlimsup_{k\to\infty} k^{-n}|S_k|,
\]
where $a$ is a sufficiently divisible positive integer.
The existence of the limit and its independence from $a$ both follow from the more precise result \cite[Theorem~2]{KK12}.
\begin{lemma}\label{lma:vollip}
    Let $S,S'\in \mathcal{S}(C)$, then
    \[
    |\vol S-\vol S'|\leq d(S,S').
    \]
\end{lemma}
\begin{proof}
    By definition, we have
    \[
    d(S,S')\geq \vol S+\vol S'-2\vol (S\cap S').
    \]
    It follows that $\vol S-\vol S'\leq d(S,S')$ and $\vol S'-\vol S\leq d(S,S')$.
\end{proof}
We define $\overline{\mathcal{S}}(C)$ as the closure of $\mathcal{S}(C)$ in $\hat{\mathcal{S}}(C)$ with respect to the topology defined by the pseudometric $d$. 
By \cref{lma:vollip}, $\vol\colon \mathcal{S}(C)\rightarrow \mathbb{R}$ admits a unique $1$-Lipschitz extension to 
\begin{equation}\label{eq:volex}
\vol\colon \overline{\mathcal{S}}(C)\rightarrow \mathbb{R}.
\end{equation}
\begin{lemma}\label{lma:volcompa}
    Suppose that $S,S'\in \overline{\mathcal{S}}(C)$ and $S\subseteq S'$. Then
    \[
    \vol S\leq \vol S'.
    \]
\end{lemma}
\begin{proof}
    Take sequences $S^j,S'^j$ in $\mathcal{S}(C)$ such that $S^j\to S$, $S'^j\to S'$. By \cref{lma:dBil}, after replacing $S^j$ by $S^j\cap S'^j$, we may assume that $S^j\subseteq S'^j$ for each $j$. Then our assertion follows easily.
\end{proof}

\subsection{Okounkov bodies of semigroups}
Given $S\in \hat{\mathcal{S}}(C)$, we will write $C(S)\subseteq C$ for the closed convex cone generated by $S\cup \{0\}$. Moreover, for each $k\in \mathbb{Z}_{>0}$, we define
\[
\Delta_k(S)\coloneqq \Conv \left\{k^{-1}x\in \mathbb{R}^n:x\in S_k \right\}\subseteq \mathbb{R}^n.
\]
Here $\Conv$ denotes the convex hull.
\begin{definition}
    Let $\mathcal{S}'(C)$ be the subset of $\mathcal{S}(C)$ consisting of semigroups $S$ such that $S$ generates $\mathbb{Z}^{n+1}$ (as an Abelian group).
\end{definition}
Note that for any $S\in \mathcal{S}'(C)$, the cone $C(S)$ has full dimension (i.e. the topological interior is non-empty). Given a full-dimensional subcone $C'\subseteq C$, it is clear that $C'\cap \mathbb{Z}^{n+1}\in \mathcal{S}'(C)$.
 
This class behaves well under intersections:
\begin{lemma}\label{lma:intersecS'}
    Let $S,S'\in \mathcal{S}'(C)$. Assume that $\vol (S\cap S')>0$, then $S\cap S'\in \mathcal{S}'(C)$. 
\end{lemma}
The lemma obviously fails if $\vol (S\cap S')=0$.
\begin{proof}
    We first observe that the cone $C(S)\cap C(S')$ has full dimension since otherwise $\vol(S\cap S')=0$. Take a full-dimensional subcone $C'$ in $C(S)\cap C(S')$ such that $C'$ intersects the boundary of $C(S)\cap C(S')$ only at $0$. It follows from \cite[Theorem~1]{KK12} that there is an integer $N>0$ such that for any $x\in \mathbb{Z}^{n+1}\cap C'$ with Euclidean norm no less than $N$ lies in $S\cap S'$. Therefore, $S\cap S'\in \mathcal{S}'(C)$.
\end{proof}

We recall the following definition from \cite{KK12}.
\begin{definition}\label{def:Okokk}
 Given $S\in \mathcal{S}'(C)$, its \emph{Okounkov body} is defined as follows
\[
\Delta(S)\coloneqq \left\{x\in \mathbb{R}^n:(x,1)\in C(S) \right\}.
\]    
\end{definition}

\begin{theorem}\label{thm:HausOkoun}
	For each $S\in \mathcal{S}'(C)$, we have
	\begin{equation}\label{eq:volWvolDelta}
		\vol S=\lim_{k\to\infty} k^{-n}|S_k|=\vol \Delta(S)>0.
	\end{equation}
    Moreover, as $k\to\infty$,
	\begin{equation}\label{eq:HausconvDeltaGLS}
		\Delta_{k}(S)\xrightarrow{d_n}\Delta(S).
	\end{equation}
\end{theorem}
This is essentially proved in \cite[Lemma~4.8]{WN14}, which itself follows from a theorem of Khovanskii \cite{Kho92}. We remind the readers that \eqref{eq:volWvolDelta} fails for a general $W\in \mathcal{S}(C)$, see \cite[Theorem~2]{KK12}.
\begin{proof}
	The equalities \eqref{eq:volWvolDelta} follow from the general theorem \cite[Theorem~2]{KK12}.



	It remains to prove \eqref{eq:HausconvDeltaGLS}. By the argument of \cite[Lemma~4.8]{WN14},  for any compact set $K\subseteq \Int \Delta(S)$, there is $k_0>0$ such that for any $k\geq k_0$, $\alpha\in K\cap (k^{-1}\mathbb{Z})^n$ implies that $\alpha\in \Delta_k(S)$.

	In particular, taking $K=\Delta(S)^{\delta}$ for any $\delta>0$ and applying \cref{lma:latcvb}, we find
	\[
		d_n(\Delta(S),\Delta_k(S))\leq n^{1/2}k^{-1}+\delta
	\]
	when $k$ is large enough. This implies \eqref{eq:HausconvDeltaGLS}.
\end{proof}
\begin{corollary}\label{cor:dist}
    Let $S,S'\in \mathcal{S}'(C)$. Assume that $\vol (S\cap S')>0$, then we have
    \[
    d(S,S')=\vol(S)+\vol(S')-2\vol(S\cap S').
    \]
\end{corollary}
\begin{proof}
    This is a direct consequence of \cref{lma:intersecS'} and \eqref{eq:volWvolDelta}.
\end{proof}

\begin{lemma}\label{lma:regularizat}
    Given $S\in \mathcal{S}'(C)$, we have $S\sim \Reg(S)$.
\end{lemma}
Recall that the regularization $\Reg(S)$ of $S$ is defined as $C(S)\cap \mathbb{Z}^{n+1}$.
\begin{proof}
    Since $S$ and $\Reg(S)$ have the same Okounkov body, 
    we have $\vol S=\vol \Reg(S)$ by \cref{thm:HausOkoun}. By \cref{cor:dist} again,
    \[
    d(\Reg(S),S)=\vol \Reg(S)-\vol S=0.
    \]
\end{proof}

\begin{lemma}\label{lma:Deltaindclass}
	Let $S,S'\in \mathcal{S}'(C)$. Assume that $d(S,S')=0$, then $\Delta(S)=\Delta(S')$.
\end{lemma}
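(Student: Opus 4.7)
The plan is to use the intersection $W\cap W'$ as a common sub-object inside both $W$ and $W'$, and to apply the volume-rigidity lemma \cref{lma:volcbimpeq}. The key observation is that $W\cap W'$ is still in $\GLS(D)$: we have $(W\cap W')_0=\mathbb{C}$, and multiplicativity is preserved since $(W\cap W')_a\cdot (W\cap W')_b\subseteq W_a\cdot W_b\,\cap\, W'_a\cdot W'_b\subseteq W_{a+b}\cap W'_{a+b}$.

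First I would pin down the volumes. From the definition \eqref{eq:dLSdef} and the fact that $\vol$ is computed as a genuine limit on a cofinal sequence of indices, inequality \eqref{eq:volLS} gives
\[
    0=d(W,W')\geq \vol W+\vol W'-2\vol(W\cap W').
\]
Combined with the trivial bounds $\vol(W\cap W')\leq \min(\vol W,\vol W')$, this forces $\vol W=\vol W'=\vol(W\cap W')$, and in particular $\vol(W\cap W')>0$ by hypothesis.

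Next I would transfer this equality to the convex bodies. Since $\Gamma_k(W\cap W')\subseteq \Gamma_k(W)$ for every $k$, one has $C(W\cap W')\subseteq C(W)$, so slicing at $x_{n+1}=1$ gives $\Delta(W\cap W')\subseteq \Delta(W)$; symmetrically $\Delta(W\cap W')\subseteq \Delta(W')$. Applying \eqref{eq:volWvolDelta} (legitimate because all three volumes are positive) we obtain
\[
    \vol\Delta(W\cap W')=\vol(W\cap W')=\vol W=\vol\Delta(W),
\]
and the analogous chain with $W'$ in place of $W$. Two applications of \cref{lma:volcbimpeq} then yield $\Delta(W\cap W')=\Delta(W)$ and $\Delta(W\cap W')=\Delta(W')$, whence $\Delta(W)=\Delta(W')$.

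There is no serious obstacle here: the whole argument is a direct combination of the volume identity for Okounkov bodies (\cref{thm:HausOkoun}), the ``equal volume forces equality'' rigidity of \cref{lma:volcbimpeq}, and the elementary inequality \eqref{eq:volLS}. The only thing to be slightly careful about is making sure $W\cap W'$ really lies in $\GLS(D)$ with positive volume so that \cref{thm:HausOkoun} is applicable to it; both points are handled above.
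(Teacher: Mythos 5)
Your proof is correct and follows essentially the same route as the paper: pass to the intersection $W\cap W'$, use the inclusion of the associated Okounkov bodies, the volume identity \eqref{eq:volWvolDelta}, and the rigidity statement \cref{lma:volcbimpeq}. The only cosmetic difference is that you extract the equality of volumes directly from \eqref{eq:volLS}, whereas the paper invokes \cref{lma:capplusLS} to reduce to the nested case $W\supseteq W'$; both work, and your explicit verification that $W\cap W'\in\GLS(D)$ with positive volume is a detail the paper leaves implicit.
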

\begin{proof}
Observe that $\vol (S\cap S')>0$, as otherwise 
\[
d(S,S')\geq \vol S+\vol S'>0,
\]
which is a contradiction.

It follows from \cref{lma:intersecS'} that $S\cap S'\in \mathcal{S}'(C)$. It suffices to show that $\Delta(S)=\Delta(S\cap S')$.
In fact, suppose that this holds, since $\vol \Delta(S')=\vol S'=\vol S=\vol \Delta(S)$, the inclusion $\Delta(S')\supseteq \Delta(S\cap S')=\Delta(S)$ is an equality.

By \cref{lma:dBil}, we can therefore replace $S'$ by $S\cap S'$ and assume that $S\supseteq S'$. Then clearly $\Delta(S)\supseteq \Delta(S')$. By \eqref{eq:volWvolDelta},
	\[
		\vol \Delta(S)=\vol \Delta(S').
	\]
	Thus, $\Delta(S)=\Delta(S')$ by \cref{lma:volcbimpeq}.
\end{proof}

\begin{lemma}\label{lma:Sprimeint}
    Suppose that $S^i\in \mathcal{S}'(C)$ is a decreasing sequence such that 
    \[
    \lim_{i\to\infty}\vol S^i>0.
    \]
    Then there is $S\in \mathcal{S}'(C)$ such that $S^i\to S$.
\end{lemma}
In general, one cannot simply take $S=\bigcap_i S^i$. For example, consider the sequence $S^i=S^1\cap \{x_{n+1}\geq i\}$.
\begin{proof}
    By \cref{lma:regularizat}, we may replace $S^i$ by its regularization and assume that $S^i=C(S^i)\cap \mathbb{Z}^{n+1}$. We define
    \[
    S=\left( \bigcap_{i=1}^{\infty}C(S^i) \right) \cap \mathbb{Z}^{n+1}.
    \]
    Since $\bigcap_{i=1}^{\infty}C(S^i)$ is a full-dimensional cone by assumption, we have $S\in \mathcal{S}'(C)$.
    By \cref{cor:dist} and \cref{thm:HausOkoun}, we can compute the distance
    \[
    d(S,S^i)=\vol S^i-\vol S=\vol \Delta (S^i)-\vol \Delta (S),
    \]
    which tends to $0$ by construction.
\end{proof}

\subsection{Okounkov bodies of almost semigroups}\label{subsec:Okobalmosg}
\begin{definition}
    We define $\overline{\mathcal{S}'(C)}_{>0}$ as elements in the closure of $\mathcal{S}'(C)$ in $\hat{\mathcal{S}}(C)$ with positive volume. An element in $\overline{\mathcal{S}'(C)}_{>0}$ is called an \emph{almost semigroup} in $C$.
\end{definition}
Recall that the volume here is defined in \eqref{eq:volex}.

Our goal is to prove the following theorem:
\begin{theorem}\label{thm:Okocont}
	The Okounkov body map $\Delta\colon \mathcal{S}'(C) \rightarrow \mathcal{K}_n$
    as defined in \cref{def:Okokk} admits a unique continuous extension 
	\begin{equation}\label{eq:Deltagensg}
		\Delta \colon \overline{\mathcal{S}'(C)}_{>0}\rightarrow \mathcal{K}_n.
	\end{equation}
     Moreover, for any $S\in \overline{\mathcal{S}'(C)}_{>0}$, we have
	\begin{equation}\label{eq:volWfinal}
		\vol S=\vol \Delta(S).
	\end{equation}
\end{theorem}

\begin{proof}
The uniqueness of the extension is clear as long as it exists. Moreover, \eqref{eq:volWfinal} follows easily from \cref{thm:HausOkoun} and \cref{thm:contvol} by continuity. 
It remains to argue the existence of the continuous extension. We first construct an extension and prove its continuity.

\textbf{Step~1}. We construct the desired map \eqref{eq:Deltagensg}. Let $S\in \overline{\mathcal{S}'(C)}_{>0}$. We wish to construct a convex body $\Delta(S)\in \mathcal{K}_n$.

    Let $S^i\in \mathcal{S}'(C)$ be a sequence that converges to $S$ such that 
    \[
    d(S^i,S^{i+1})\leq 2^{-i}.
    \]
    For each $i,j\geq 0$, we introduce
    \[
    S^{i,j}=S^i\cap S^{i+1}\cdots \cap S^{i+j}.
    \]
    Then by \cref{lma:dBil}, 
    \[
    d(S^{i,j},S^{i,j+1})\leq 2^{-i-j}.
    \]
    Take $i_0>0$ large enough so that for $i\geq i_0$, $\vol S^i>2^{-1}\vol S$ and $2^{2-i}< \vol S$ and hence
    \[
        \vol S^i-\vol S^{i,j}\leq d(S^{i,0},S^{i,1})+d(S^{i,1},S^{i,2})+\cdots+d(S^{i,j-1},S^{i,j})\leq 2^{1-i}.
    \]
    It follows that $\vol S^{i,j}> 2^{-1}\vol S-2^{1-i}>0$ whenever $i\geq i_0$. In particular, by \cref{lma:intersecS'}, $S^{i,j}\in \mathcal{S}'(C)$ for $i\geq i_0$.
    
    By \cref{lma:Sprimeint}, for $i\geq i_0$, there exists $T^i\in \mathcal{S}'(C)$ such that $S^{i,j}\to T^i$ as $j\to\infty$. Moreover,
    \[
    d(T^i,S)=\lim_{j\to\infty} d(S^{i,j},S)\leq \lim_{j\to\infty} d(S^{i,j},S^i)+d(S^i,S)\leq 2^{1-i}+d(S^i,S).
    \]
    Therefore, $T^i\to S$. We then define
    \[
    \Delta(S)\coloneqq \overline{\bigcup_{i=i_0}^{\infty} \Delta(T^i)}.
    \]
    In other words, we have defined
    \[
	\Delta(S)\coloneqq \varliminf_{i\to\infty}\Delta(S^i).
    \]
    This is an honest limit: if $\Delta$ is the limit of a subsequence of $\Delta(S^i)$, then $\Delta(S)\subseteq \Delta$  by \eqref{eq:liminflimsup}. Comparing the volumes, we find that equality holds. So by \cref{thm:Bst},
    \begin{equation}\label{eq:deltawtemp}
	\Delta(S)=\lim_{i\to\infty}\Delta(S^i).
    \end{equation}

    Next we claim that $\Delta(S)$ as defined above does not depend on the choice of the sequence $S^i$. In fact, suppose that $S'^i\in \mathcal{S}'(C)$ is another sequence satisfying the same conditions as $S^i$. 
    The same holds for $R^i\coloneqq S^{i+1}\cap S'^{i+1}$. It follows that
    \[
        \lim_{i\to\infty}\Delta(R^i)\subseteq \lim_{i\to\infty}\Delta(S^i). 
    \]
    Comparing the volumes, we find that equality holds. The same is true with $S'^i$ in place of $S^i$. So we conclude that $\Delta(S)$ as in \eqref{eq:deltawtemp} does not depend on the choices we made.

    \textbf{Step~2}. It remains to prove the continuity of $\Delta$ defined in Step~1.
    Suppose that $S^i\in \overline{\mathcal{S}'(C)}_{>0}$ is a sequence with limit $S\in \overline{\mathcal{S}'(C)}_{>0}$.
    We want to show that
	\begin{equation}\label{eq:temp5}
		\Delta(S^i)\xrightarrow{d_n}\Delta(S).
	\end{equation}
    
    We first reduce to the case where $S^i\in \mathcal{S}'(C)$. 
    By \eqref{eq:deltawtemp}, for each $i$, we can choose $T^i\in \mathcal{S}'(C)$ such that  $d(S^i,T^i)<2^{-i}$ and $d_n(\Delta(S^i),\Delta(T^i))<2^{-i}$. If we have shown $\Delta(T^i)\xrightarrow{d_n}\Delta(S)$, then \eqref{eq:temp5} follows immediately.

    Next we reduce to the case where $d(S^i,S^{i+1})\leq 2^{-i}$. In fact, thanks to \cref{thm:Bst}, in order to prove \eqref{eq:temp5}, it suffices to show that each subsequence of $\Delta(S^i)$ admits a subsequence that converges to $\Delta(S)$. Hence, we easily reduce to the required case.

    After these reductions, \eqref{eq:temp5} is nothing but \eqref{eq:deltawtemp}.
\end{proof}

\begin{corollary}\label{cor:Okocomp}
    Suppose that $S,S'\in \overline{\mathcal{S}'(C)}_{>0}$ with $S\subseteq S'$, then 
    \begin{equation}\label{eq:Deltacontain}
    \Delta(S)\subseteq \Delta(S').
    \end{equation}
\end{corollary}
\begin{proof}
    Let $S^j,S'^j\in \mathcal{S}'(C)$ be elements such that $S^j\to S$, $S'^j\to S'$. Then it follows from \cref{lma:dBil} that $S^j\cap S'^j \to S$. Since $\vol$ is continuous, for large $j$, $S^j\cap S'^j$ has positive volume and hence lies in $\mathcal{S}'(C)$ by \cref{lma:intersecS'}. We may therefore replace $S^j$ by $S^j\cap S'^j$ and assume that $S^j\subseteq S'^j$. Hence \eqref{eq:Deltacontain} follows from the continuity of $\Delta$ proved in \cref{thm:Okocont}.
\end{proof}

\begin{remark}
    As the readers can easily verify, the construction of $\Delta$ is independent of the choice of $C$ in the following sense: Suppose that $C'$ is another cone satisfying the same assumptions as $C$ and $C'\supseteq C$, then the Okounkov body map $\Delta \colon \overline{\mathcal{S}'(C')}_{>0}\rightarrow \mathcal{K}_n$ is an extension of the corresponding map \eqref{eq:Deltagensg}. We will constantly use this fact without further explanations. 
\end{remark}

\section{The metric on the space of singularity types}\label{sec:singtype}

Let $X$ be a connected compact Kähler manifold of dimension $n$ and $\theta$ (resp. $\theta_i$, $i=1,\ldots,n$) be a smooth real $(1,1)$-form representing a big $(1,1)$-cohomology class $[\theta]$ (resp. $[\theta_i]$).
Let $\omega$ be a Kähler form on $X$.

In this section, we develop  further the metric geometry on the space of singularity types of quasi-psh functions, initiated in \cite{DDNLmetric} and studied further in \cite{DX22}.

As explained in \cite[Section~3]{DDNLmetric}, one can introduce a pseudometric $d_S$ on the set of singularity types of functions in $\PSH(X,\theta)$. In particular, $d_S$ lifts to a pseudometric on $\PSH(X,\theta)$ as well. 
We do not recall the precise definition, as the following double inequality from \cite[Proposition~3.5]{DDNLmetric} will be enough for us. For any $\varphi,\psi\in \PSH(X,\theta)$ we have
\begin{equation}\label{eq:defds}
	d_S(\varphi,\psi)\leq \sum_{i=0}^n \left(2\int_X\theta^i_{\max\{\varphi,\psi\}}\wedge \theta_{V_{\theta}}^{n-i}-\int_X\theta^i_{\varphi}\wedge \theta_{V_{\theta}}^{n-i}-\int_X\theta^i_{\psi}\wedge \theta_{V_{\theta}}^{n-i}\right)\leq C_0d_S(\varphi,\psi),
\end{equation}
where $C_0>1$ is a constant depending only on $n$. In addition, $d_S(\varphi,\psi)=0$ if and only if
\[
	C[\varphi]=C[\psi].
\]
When there is a risk of confusion, we write $d_{S,\theta}$ instead of $d_S$.

\begin{lemma}\label{lma:Ceilmass}
	Let $\varphi_i\in \PSH(X,\theta_i)$ ($i=1,\ldots,n$). Then
	\[
		\int_X \theta_{1,\varphi_1}\wedge \cdots \wedge \theta_{n,\varphi_n}=\int_X \theta_{1,C[\varphi_1]}\wedge \cdots \wedge \theta_{n,C[\varphi_n]}.
	\]
\end{lemma}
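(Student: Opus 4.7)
The lemma asserts that the mixed non-pluripolar mass is invariant under replacing each $\varphi_i$ by its model envelope $C[\varphi_i]$. My plan is to prove the equality by two opposite inequalities: the ``easy'' direction follows from the monotonicity theorem for mixed non-pluripolar products, and the reverse direction is obtained via the approximation formula \eqref{eq:CandP} together with a mixed mass-preservation statement for the $P[\bullet]$ envelope.

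For the easy direction, note that by the very definition of $C[\varphi_i]$ we have $[\varphi_i]\preceq[C[\varphi_i]]$ for every $i$; consequently the Witt Nystr\"om monotonicity theorem for non-pluripolar mixed products (cf.\ \cite{WN19}) yields
\[
\int_X \theta_{1,\varphi_1}\wedge\cdots\wedge\theta_{n,\varphi_n}
\leq \int_X \theta_{1,C[\varphi_1]}\wedge\cdots\wedge\theta_{n,C[\varphi_n]}.
\]
For the reverse inequality I would approximate each $C[\varphi_i]$ via \eqref{eq:CandP}: set $\psi_i^\epsilon:=(1-\epsilon)\varphi_i+\epsilon V_{\theta_i}$ and $\chi_i^\epsilon:=P^{\theta_i}[\psi_i^\epsilon]$, so that $\chi_i^\epsilon$ are model potentials with $[\chi_i^\epsilon]=[\psi_i^\epsilon]$ and $\chi_i^\epsilon\searrow C[\varphi_i]$ as $\epsilon\to 0^+$. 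The key intermediate claim is the mixed-mass identity
\[
\int_X \theta_{1,\psi_1^\epsilon}\wedge\cdots\wedge\theta_{n,\psi_n^\epsilon}
=\int_X \theta_{1,\chi_1^\epsilon}\wedge\cdots\wedge\theta_{n,\chi_n^\epsilon},
\]
which follows from the fact that the non-pluripolar product depends only on the singularity type of each input (a consequence of plurifine locality together with the envelope construction of \cite{WN19, DDNL2}).

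Finally I would pass to the limit $\epsilon\to 0^+$. On the right the $\chi_i^\epsilon$ form a decreasing sequence of model potentials converging to the model potential $C[\varphi_i]$, and continuity of non-pluripolar mixed masses along decreasing sequences of model potentials (where mass preservation is built in) shows the right-hand side of the above identity tends to $\int_X \theta_{1,C[\varphi_1]}\wedge\cdots\wedge\theta_{n,C[\varphi_n]}$. On the left, $\psi_i^\epsilon\searrow\varphi_i$, and combining upper semi-continuity of non-pluripolar mass under decreasing limits with the first easy inequality applied to each $\psi_i^\epsilon$ pins the limit to $\int_X \theta_{1,\varphi_1}\wedge\cdots\wedge\theta_{n,\varphi_n}$, producing the desired reverse inequality. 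The main obstacle is precisely this last continuity step on the $\psi^\epsilon$-side: mixed non-pluripolar masses can in principle jump along decreasing limits, so one must use in an essential way that $\psi_i^\epsilon$ is the explicit linear interpolation between $\varphi_i$ and $V_{\theta_i}$ and that the equality via $\chi_i^\epsilon$ already guarantees no mass is ``created'' by the regularization, so no mass can be ``lost'' in the limit either.
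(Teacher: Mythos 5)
Your easy direction is fine and coincides with the paper's. The reverse direction, however, contains a genuine gap, located exactly at your ``key intermediate claim''. Writing $\psi_i^\epsilon=(1-\epsilon)\varphi_i+\epsilon V_{\theta_i}$ and $\chi_i^\epsilon=P^{\theta_i}[\psi_i^\epsilon]$, it is \emph{not} true in general that $[\chi_i^\epsilon]=[\psi_i^\epsilon]$: one only has $[\psi_i^\epsilon]\preceq[P^{\theta_i}[\psi_i^\epsilon]]$, and the inequality is typically strict. For instance, if $\varphi_i$ is unbounded but of full non-pluripolar mass, then so is $\psi_i^\epsilon$, and then $P^{\theta_i}[\psi_i^\epsilon]=V_{\theta_i}$, which does not have the singularity type of $\psi_i^\epsilon$. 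Consequently the principle ``the non-pluripolar product depends only on the singularity type of each input'' (which is nothing but monotonicity applied in both directions, and requires \emph{equal} singularity types) cannot be invoked, and your identity $\int_X \theta_{1,\psi_1^\epsilon}\wedge\cdots\wedge\theta_{n,\psi_n^\epsilon}=\int_X \theta_{1,\chi_1^\epsilon}\wedge\cdots\wedge\theta_{n,\chi_n^\epsilon}$ is a genuine mixed-mass preservation statement for the envelope $P[\cdot]$ --- essentially the lemma you are trying to prove, with $P$ in place of $C$ --- which you neither prove nor support by a citation. Your final limiting step is a second gap: for the decreasing family $\chi_i^\epsilon\searrow C[\varphi_i]$, monotonicity only gives that the limit of the mixed masses dominates the mixed mass of the limit; continuity is not ``built in'' for model potentials, and establishing it is precisely the kind of statement (\cref{thm:dsmixedmass}, continuity of mixed masses under $d_S$-convergence) for which this lemma is a preliminary, so the argument becomes circular there. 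Note also that if the intermediate identity were available, this limit would be superfluous: since $C[\varphi_i]\le\chi_i^\epsilon$, monotonicity would give $\int_X \theta_{1,C[\varphi_1]}\wedge\cdots\wedge\theta_{n,C[\varphi_n]}\le\int_X \theta_{1,\psi_1^\epsilon}\wedge\cdots\wedge\theta_{n,\psi_n^\epsilon}$, and the right-hand side tends to $\int_X \theta_{1,\varphi_1}\wedge\cdots\wedge\theta_{n,\varphi_n}$ by multilinearity of the non-pluripolar product, every cross term carrying a factor $\epsilon$ and being uniformly bounded.

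The paper's reverse direction needs neither of these two unproved ingredients: it compares the two mixed masses directly, proving for each $\epsilon\in(0,1)$ an inequality of the form $\int_X \theta_{1,\varphi_1}\wedge\cdots\wedge\theta_{n,\varphi_n}\ge(1-\epsilon)^n\int_X \theta_{1,C[\varphi_1]}\wedge\cdots\wedge\theta_{n,C[\varphi_n]}$ and letting $\epsilon\to0^+$ (the display in the paper has the two sides interchanged, evidently a typo). Such a bound follows, for example, from the equality of total masses $\int_X\theta_{i,C[\varphi_i]}^n=\int_X\theta_{i,\varphi_i}^n$ built into the construction of $C$ together with \cite[Lemma~4.3]{DDNL5}, exactly as in Step~1 of the proof of \cref{thm:dsmixedmass}: after normalizing $\varphi_i\le C[\varphi_i]$, that lemma produces $\rho_i\in\PSH(X,\theta_i)$, $\rho_i\not\equiv-\infty$, with $\epsilon\rho_i+(1-\epsilon)C[\varphi_i]\le\varphi_i$, and then Witt Nystr\"om monotonicity plus multilinearity (discarding the nonnegative cross terms) give the $(1-\epsilon)^n$ bound. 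To repair your write-up you should either argue along these lines, or supply an honest proof or reference for mixed-mass preservation under $P[\cdot]$; as it stands, the reverse inequality is not established.
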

\begin{proof}
	From the definition \eqref{eq:Cdef}, we have $[u] \preceq [C[u]]$, the $\leq$ direction is obvious. For the reverse direction, recall that $C[\varphi_i]=\lim_{\epsilon\to 0+}P[(1-\epsilon)\varphi_i+\epsilon V_{\theta_i}]$. Thus, for $\epsilon\in (0,1)$,
	\[
		\int_X \theta_{1,C[\varphi_1]}\wedge \cdots \wedge \theta_{n,C[\varphi_n]}\geq (1-\epsilon)^n\int_X \theta_{1,\varphi_1}\wedge \cdots \wedge \theta_{n,\varphi_n}.
	\]
	Letting $\epsilon\to 0+$, we conclude.
\end{proof}

\begin{theorem}\label{thm:dsmixedmass}
	Let $\varphi_i^k,\varphi_i\in \PSH(X,\theta_i)$ ($i=1,\ldots,n$, $k\in \mathbb{N}$). Assume that $\varphi_i^k\xrightarrow{d_{S,\theta_i}} \varphi_i$ as $k\to\infty$. Then
	\begin{equation}\label{eq:mixedmassconv}
		\lim_{k\to\infty}\int_X \theta_{1,\varphi_1^k}\wedge\cdots\wedge\theta_{n,\varphi_n^k}=\int_X \theta_{1,\varphi_1}\wedge\cdots\wedge\theta_{n,\varphi_n}.
	\end{equation}
\end{theorem}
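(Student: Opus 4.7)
The plan is to reduce the problem in three stages to a setting where mixed non-pluripolar masses are controlled by diagonal masses, and the latter are directly governed by the $d_S$-inequality recalled just before \cref{lma:Ceilmass}.

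\emph{Step 1 (reduction to model potentials).} By \cref{lma:Ceilmass}, the mixed mass is unchanged if each $\varphi_i^k$ is replaced by $C[\varphi_i^k]$ and each $\varphi_i$ by $C[\varphi_i]$. Since $d_S(\psi, C[\psi]) = 0$, the convergence hypothesis $\varphi_i^k \xrightarrow{d_{S,\theta_i}} \varphi_i$ is preserved. I will therefore assume from now on that all potentials are model.

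\emph{Step 2 (max-sandwich and diagonal convergence).} Introduce the auxiliary sequence $\chi_i^k := \max\{\varphi_i^k, \varphi_i\} \in \PSH(X, \theta_i)$, which dominates both $\varphi_i^k$ and $\varphi_i$. Feeding the pairs $(\chi_i^k, \varphi_i)$ and $(\chi_i^k, \varphi_i^k)$ into the defining double inequality for $d_S$, and using that $\max\{\chi_i^k, \varphi_i\} = \chi_i^k = \max\{\chi_i^k, \varphi_i^k\}$, I get
\[
    d_S(\chi_i^k, \varphi_i) + d_S(\chi_i^k, \varphi_i^k) \xrightarrow{k\to\infty} 0.
\]
Consequently every diagonal mass $\int_X \theta_{i, \chi_i^k}^j \wedge \theta_{i, V_{\theta_i}}^{n-j}$ converges both to $\int_X \theta_{i,\varphi_i}^j \wedge \theta_{i, V_{\theta_i}}^{n-j}$ and to $\int_X \theta_{i,\varphi_i^k}^j \wedge \theta_{i, V_{\theta_i}}^{n-j}$, so in particular the top self-masses agree asymptotically.

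\emph{Step 3 (diagonal to mixed via monotonicity).} I then upgrade diagonal convergence to mixed convergence by replacing one factor at a time. The mechanism is the monotonicity theorem of Witt Nyström \cite{WN19}: for model potentials $u_i \leq v_i$ in $\PSH(X,\theta_i)$ with $\int_X \theta_{i, u_i}^n = \int_X \theta_{i, v_i}^n$, all mixed masses $\int_X \bigwedge_i \theta_{i, \bullet_i}$ coincide. To make this tolerant of small mismatches, I approximate each $\chi_i^k$ and each $\varphi_i$ from above by quasi-equisingular approximations with analytic singularities along normal-crossing $\mathbb{Q}$-divisors on a common resolution, for which the mixed non-pluripolar mass computes a genuine mixed intersection number and depends continuously on the divisorial data. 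Iterating the one-factor-at-a-time replacement $\chi_i^k \rightsquigarrow \varphi_i$ on the one hand and $\chi_i^k \rightsquigarrow \varphi_i^k$ on the other, together with the triangle inequality, yields
\[
    \int_X \theta_{1,\varphi_1^k} \wedge \cdots \wedge \theta_{n,\varphi_n^k} \xrightarrow{k\to\infty} \int_X \theta_{1,\varphi_1} \wedge \cdots \wedge \theta_{n,\varphi_n},
\]
which is the claim.

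The decisive hurdle is \emph{Step 3}: the non-pluripolar product is only superadditive and lacks the multilinearity that would permit a polarisation reduction of mixed masses to self-masses. The bridge has to come from monotonicity plus an approximation scheme; I expect that the quasi-equisingular layer is doing the real work, since for potentials with analytic singularities along an nc divisor the mixed mass is literally a mixed intersection product on a log resolution and varies continuously in the Zariski data that the $d_S$-convergence ultimately controls.
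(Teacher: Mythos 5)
Your Steps 1 and 2 are sound and match the paper's opening moves: the reduction to model potentials via \cref{lma:Ceilmass}, and the observation that $\max\{\varphi_i^k,\varphi_i\}\xrightarrow{d_S}\varphi_i$ (the paper uses exactly this, quoting \cite[Proposition~3.5]{DDNL5}). The genuine gap is Step 3, which you yourself flag as the decisive hurdle but do not actually close. What you need is a passage from \emph{approximate} equality of the diagonal masses $\int_X\theta_{i,\chi_i^k}^n$, $\int_X\theta_{i,\varphi_i^k}^n$, $\int_X\theta_{i,\varphi_i}^n$ to convergence of the \emph{mixed} masses; monotonicity in the form you invoke (ordered model potentials with \emph{exactly} equal top self-masses have equal mixed masses) only applies in the limit, not at finite $k$, and your proposed fix --- quasi-equisingular approximations on a common resolution together with ``continuity of the mixed intersection number in the divisorial data that $d_S$-convergence controls'' --- is circular: that continuity statement is essentially the theorem being proved. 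It also faces concrete obstructions: there is no common log resolution for the infinite family $\{\chi_i^k\}_k$, and the convergence of absolutely continuous masses of analytic approximants to the non-pluripolar mass of the limit is only available for genuine quasi-equisingular approximations of a fixed potential (Cao/Demailly), typically under a K\"ahler current hypothesis, not uniformly along an arbitrary $d_S$-convergent sequence. Finally, your scheme never addresses the case where the masses $\int_X\theta_{i,\varphi_i}^n$ vanish or the $\int_X\theta_{i,\varphi_i^k}^n$ degenerate to $0$, where the ``equal masses $\Rightarrow$ equal mixed masses'' principle is unavailable.

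The paper closes precisely this gap by a different, quantitative mechanism. After reducing (by a subsequence argument and \cite[Theorem~5.6]{DDNL5}) to the case where each sequence $\varphi_i^k$ is monotone, the inequality $\geq$ in the statement is free by \cite[Theorem~2.3]{DDNL2}, and increasing factors are eliminated by Witt Nystr\"om monotonicity; for the remaining decreasing factors one chooses scalars $b_i^k\nearrow\infty$ with $(b_i^k)^n\int_X\theta_{i,\varphi_i}^n\geq((b_i^k)^n-1)\int_X\theta_{i,\varphi_i^k}^n$ and uses the maximal potential $\psi_i^k$ of \cite[Lemma~4.3]{DDNL5} satisfying $(b_i^k)^{-1}\psi_i^k+(1-(b_i^k)^{-1})\varphi_i^k\leq\varphi_i$; monotonicity applied to this convex combination yields
\[
  \prod_{i=1}^n\bigl(1-(b_i^k)^{-1}\bigr)\int_X\theta_{1,\varphi_1^k}\wedge\cdots\wedge\theta_{n,\varphi_n^k}\leq\int_X\theta_{1,\varphi_1}\wedge\cdots\wedge\theta_{n,\varphi_n}\,,
\]
which is exactly the quantitative ``approximate masses $\Rightarrow$ approximate mixed masses'' step your outline is missing. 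The degenerate-mass case is then handled by replacing $\varphi_i$ and $\varphi_i^k$ with $(1-t)\varphi_i+tV_{\theta_i}$ (checking this preserves $d_S$-convergence) and letting $t\searrow0$. To repair your proposal you would need to either reproduce an argument of this type or prove an honest quantitative stability statement for mixed masses under small mass defects; as written, Step 3 does not constitute a proof.
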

\begin{proof}
	By \cref{lma:Ceilmass} and \cite[Theorem~3.3]{DDNLmetric}, we may assume that $\varphi_i^k$ and $\varphi^k$ are model potentials.

	\textbf{Step 1}. We assume that there is a constant $\delta>0$, such that for all $i$ and $k$,
	\[
		\int_X \theta_{i,\varphi_i^k}^n>\delta.
	\]
	In order to show \eqref{eq:mixedmassconv}, it suffices to prove that any subsequence of $\int_X \theta_{1,\varphi_1^k}\wedge\cdots\wedge\theta_{n,\varphi_n^k}$ has a converging subsequence with limit $\int_X \theta_{1,\varphi_1}\wedge\cdots\wedge\theta_{n,\varphi_n}$. Thus, by \cite[Theorem~5.6]{DDNLmetric}, we may assume that for each fixed $i$, $\varphi_i^k$ is either increasing or decreasing. We may assume that for $i\leq i_0$, the sequence is decreasing and for $i>i_0$, the sequence is increasing.

	Recall that in \eqref{eq:mixedmassconv} the $\geq$ inequality always holds \cite[Theorem~2.3]{DDNL18mono}, it suffices to prove
	\begin{equation}\label{eq:limsup}
		\varlimsup_{k\to\infty}\int_X \theta_{1,\varphi_1^k}\wedge\cdots\wedge\theta_{n,\varphi_n^k}\leq\int_X \theta_{1,\varphi_1}\wedge\cdots\wedge\theta_{n,\varphi_n}.
	\end{equation}
	By Witt Nyström's monotonicity theorem \cite{WN19, DDNL18mono}, in order to prove \eqref{eq:limsup}, we may assume that for $j>i_0$, the sequences $\varphi_{j}^k$ are constant. Thus, we are reduced to the case where for all $i$, $\varphi_i^k$ are decreasing.

	In this case,
	for each $i$ we may take an increasing sequence $b^k_i>1$, tending to $\infty$, such that
	\[
		(b^k_i)^n\int_X \theta_{i,\varphi_i}^n> \left((b^k_i)^n-1\right) \int_X\theta_{i,\varphi_i^k}^n.
	\]
	Let $\psi_i^k$ be the maximal $\theta_i$-psh function, such that
	\[
		(b^k_i)^{-1}\psi_i^k+\left(1-(b^k_i)^{-1}\right)\varphi_i^k\leq  \varphi_i,
	\]
	whose existence is guaranteed by \cite[Lemma~4.3]{DDNLmetric}.

	Then by Witt Nyström's monotonicity theorem \cite{WN19, DDNL18mono} again,
	\[
		\prod_{i=1}^n\left(1-(b^k_i)^{-1}\right)\int_X \theta_{1,\varphi_1^k}\wedge\cdots\wedge\theta_{n,\varphi_n^k}\leq \int_X \theta_{1,\varphi_1}\wedge\cdots\wedge\theta_{n,\varphi_n}.
	\]
	Let $k\to\infty$, we conclude \eqref{eq:limsup}.

	\textbf{Step 2}. Now we deal with the general case.

	We claim that if $t\in (0,1]$, $(1-t)\varphi_i^k+tV_{\theta_i}\xrightarrow{d_S} (1-t)\varphi_i+tV_{\theta_i}$ as $k\to\infty$ for each $i$. From this and Step~1, we find that for $t_i\in (0,1]$,
	\[
		\lim_{k\to\infty}\int_X \theta_{1,(1-t_1)\varphi_1^k+t_1V_{\theta_1}}\wedge\cdots\wedge\theta_{n,(1-t_n)\varphi_n^k+t_nV_{\theta_n}}=\int_X \theta_{1,(1-t_1)\varphi_1+t_1V_{\theta_1}}\wedge\cdots\wedge\theta_{n,(1-t_n)\varphi_n+t_nV_{\theta_n}}.
	\]
	Thus, \eqref{eq:mixedmassconv} follows, after letting $t_i \searrow 0$.

	It remains to prove the claim. For simplicity, let us suppress the $i$ indices momentarily. We need to argue that
	\[
		2\int_X \theta_{\max\{(1-t)\varphi^k+tV_{\theta},(1-t)\varphi+tV_{\theta}\}}^j\wedge \theta_{V_{\theta}}^{n-j}-\int_X \theta_{(1-t)\varphi^k+tV_{\theta}}^j\wedge \theta_{V_{\theta}}^{n-j}-\int_X \theta_{(1-t)\varphi+tV_{\theta}}^j\wedge \theta_{V_{\theta}}^{n-j}\to 0.
	\]
	Note that the above expression is a linear combination of terms of the following type:
	\[
		2\int_X \theta^r_{\max\{\varphi^k,\varphi\}} \wedge \theta^{n-r}_{V_{\theta}}-\int_X \theta^r_{\varphi^k} \wedge \theta^{n-r}_{V_{\theta}}-\int_X \theta^r_{\varphi^k} \wedge \theta^{n-r}_{V_{\theta}}.
	\]
    Thanks to \eqref{eq:defds}, all these expressions tend to $0$ as $k\to \infty$ since $\varphi^k\xrightarrow{d_S}\varphi$, which proves our claim.
\end{proof}

\begin{corollary}\label{cor:dspert}
	Let $\varphi^k,\varphi\in \PSH(X,\theta)$ ($k\in \mathbb{N}$). Let $\omega$ be a Kähler form on $X$. Assume that  $\varphi^k\xrightarrow{d_{S,\theta}}\varphi$. Then $\varphi^k\xrightarrow{d_{S,\theta+\omega}}\varphi$.
\end{corollary}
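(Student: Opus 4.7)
The plan is to reduce $d_{S,\theta+\omega}$-convergence to \cref{thm:dsmixedmass}, applied with a mixed choice of reference forms. First I would invoke the upper bound from \cite[Proposition~3.5]{DDNL5} for $d_{S,\theta+\omega}(\varphi^k,\varphi)$ in terms of the non-negative quantities
\[
A_i^k := 2\int_X(\theta+\omega)^i_{\max\{\varphi^k,\varphi\}}\wedge (\theta+\omega)^{n-i}_{V_{\theta+\omega}}-\int_X(\theta+\omega)^i_{\varphi^k}\wedge (\theta+\omega)^{n-i}_{V_{\theta+\omega}}-\int_X(\theta+\omega)^i_\varphi \wedge (\theta+\omega)^{n-i}_{V_{\theta+\omega}},
\]
and reduce the problem to showing $A_i^k\to 0$ for each $i$.

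Next I would expand $(\theta+\omega)^i_\psi = \sum_{j=0}^i \binom{i}{j}\theta^j_\psi\wedge \omega^{i-j}$ using multilinearity of the non-pluripolar product in the smooth factor $\omega$. This writes $A_i^k$ as a non-negative linear combination of terms of the form
\[
B_{i,j}^k := 2\int_X\theta^j_{\max\{\varphi^k,\varphi\}}\wedge \omega^{i-j}\wedge (\theta+\omega)^{n-i}_{V_{\theta+\omega}}-\int_X\theta^j_{\varphi^k}\wedge \omega^{i-j}\wedge (\theta+\omega)^{n-i}_{V_{\theta+\omega}}-\int_X\theta^j_\varphi \wedge \omega^{i-j}\wedge (\theta+\omega)^{n-i}_{V_{\theta+\omega}},
\]
so it suffices to prove $B_{i,j}^k\to 0$.

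To handle $B_{i,j}^k$, I would apply \cref{thm:dsmixedmass} with the mixed choice $\theta$ on the first $j$ slots, $\omega$ on the next $i-j$ slots, and $\theta+\omega$ on the last $n-i$ slots, taking potentials $\psi^k\in\{\varphi^k,\varphi,\max\{\varphi^k,\varphi\}\}$ in the first group, $0$ in the middle group (trivially $d_{S,\omega}$-constant), and $V_{\theta+\omega}$ in the last group (trivially $d_{S,\theta+\omega}$-constant). The three resulting limits coincide, so $B_{i,j}^k\to 0$. The only non-trivial input is the auxiliary claim $\max\{\varphi^k,\varphi\}\xrightarrow{d_{S,\theta}}\varphi$, which I would extract from the hypothesis as follows: the upper bound for $d_{S,\theta}(\varphi^k,\varphi)$ decomposes as a sum of two individually non-negative pieces (namely $\int_X \theta^i_{\max\{\varphi^k,\varphi\}}\wedge \theta^{n-i}_{V_\theta}-\int_X \theta^i_{\varphi^k}\wedge \theta^{n-i}_{V_\theta}$ and the analogous piece with $\varphi$ in place of $\varphi^k$), so each vanishes asymptotically; together with $\max\{\max\{\varphi^k,\varphi\},\varphi\}=\max\{\varphi^k,\varphi\}$, the second piece is precisely the upper bound for $d_{S,\theta}(\max\{\varphi^k,\varphi\},\varphi)$.

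The main potential obstacle is checking that the multilinearity identity $(\theta+\omega)^i_\psi = \sum_{j}\binom{i}{j}\theta^j_\psi\wedge \omega^{i-j}$ is valid at the level of non-pluripolar products when wedged against the fixed non-smooth positive current $(\theta+\omega)^{n-i}_{V_{\theta+\omega}}$. This should follow from the definition of the non-pluripolar product in \cite{BEGZ10} because the perturbation $\omega$ is smooth, but the distributivity must be applied inside the larger wedge product, which requires some care. Once this point is secured, summing over $i,j$ yields $d_{S,\theta+\omega}(\varphi^k,\varphi)\to 0$.
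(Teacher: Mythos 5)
Your proposal is correct and follows essentially the same route as the paper: bound $d_{S,\theta+\omega}(\varphi^k,\varphi)$ by the quantities from \cite[Proposition~3.5]{DDNL5}, expand $(\theta+\omega)$-powers binomially into mixed terms involving $\theta$, $\omega$ and $(\theta+\omega)_{V_{\theta+\omega}}$, and kill each term via \cref{thm:dsmixedmass} together with the claim $\max\{\varphi^k,\varphi\}\xrightarrow{d_{S,\theta}}\varphi$. The only difference is that you spell out the auxiliary max-claim and the multilinearity of the non-pluripolar product, which the paper handles by citation and direct assertion; both points are fine as you argue them.
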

\begin{proof}
	It suffices to show that for each $j=0,\ldots,n$, we have
	\[
		2\int_X (\theta+\omega)_{\max\{\varphi^k,\varphi\}}^j\wedge (\theta+\omega)_{V_{\theta+\omega}}^{n-j}-\int_X (\theta+\omega)_{\varphi^k}^j\wedge (\theta+\omega)_{V_{\theta+\omega}}^{n-j}-\int_X (\theta+\omega)_{\varphi}^j\wedge (\theta+\omega)_{V_{\theta+\omega}}^{n-j}\to 0
	\]
	as $k\to\infty$. Note that this quantity is a linear combination of terms of the following form:
	\[
		2\int_X \theta_{\max\{\varphi^k,\varphi\}}^r\wedge \omega^{j-r}\wedge (\theta+\omega)_{V_{\theta+\omega}}^{n-j}-\int_X \theta_{\varphi^k}^r\wedge \omega^{j-r}\wedge (\theta+\omega)_{V_{\theta+\omega}}^{n-j}-\int_X \theta_{\varphi}^r\wedge \omega^{j-r}\wedge (\theta+\omega)_{V_{\theta+\omega}}^{n-j},
	\]
	where $r=0,\ldots,j$. By \cref{thm:dsmixedmass}, it suffices to show that $\max\{\varphi,\varphi^k\}\xrightarrow{d_S}\varphi$. But this follows from \cite[Proposition 3.5]{DDNLmetric}.
\end{proof}

\begin{corollary}\label{cor:I-modelpert}
	Let $\varphi\in \PSH(X,\theta)$ be an  $\mathcal{I}$-model potential of positive mass. Let $\omega$ be a Kähler form on $X$.
	Then $P^{\theta+\omega}[\varphi]$ is $\mathcal{I}$-model.
\end{corollary}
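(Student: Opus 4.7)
Write $\psi:=P^{\theta+\omega}[\varphi]$ and $u:=P^{\theta+\omega}[\varphi]_{\mathcal{I}}$. Since the envelope $P$ preserves Lelong numbers on every birational model, $\psi\sim_{\mathcal{I}}\varphi$, whence $P^{\theta+\omega}[\psi]_{\mathcal{I}}=u$ and the $\mathcal{I}$-model property of $\psi$ reduces to the single mass equality
\begin{equation}\label{eq:goalIP}
\int_X(\theta+\omega+\ddc\psi)^n=\int_X(\theta+\omega+\ddc u)^n.
\end{equation}

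My plan is to sandwich both sides against the non-pluripolar mass of $\varphi$ via an analytic-singularity approximation. Fix a quasi-equisingular approximation $\varphi^j\in\PSH(X,\theta+\epsilon_j\omega)$ of $\varphi$ with $\epsilon_j\downarrow 0$; for $j$ large, $\varphi^j\in\PSH(X,\theta+\omega)$. Set $\psi^j:=P^{\theta+\omega}[\varphi^j]$ and $u^j:=P^{\theta+\omega}[\varphi^j]_{\mathcal{I}}$. Because $\varphi^j$ has analytic singularities, a direct log-resolution computation shows $\psi^j$ and $u^j$ differ from $\varphi^j$ by bounded functions, so
\[
\int_X(\theta+\omega+\ddc\psi^j)^n=\int_X(\theta+\omega+\ddc u^j)^n=\int_X(\theta+\omega+\ddc\varphi^j)^n.
\]
The hypothesis that $\varphi$ is $\mathcal{I}$-model in $\theta$ forces $\varphi^j\xrightarrow{d_{S,\theta}}\varphi$: the quasi-equisingular limit mass coincides with the algebraic mass $\int_X\theta_{P[\varphi]_{\mathcal{I}}}^n=\int_X\theta_\varphi^n$, and this identification together with monotonicity yields convergence of all mixed masses $\int_X\theta_{\varphi^j}^k\wedge\theta_{V_\theta}^{n-k}$. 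Corollary~\ref{cor:dspert} upgrades this to $\varphi^j\xrightarrow{d_{S,\theta+\omega}}\varphi$, and Theorem~\ref{thm:dsmixedmass} gives $\int_X(\theta+\omega+\ddc\varphi^j)^n\to\int_X(\theta+\omega+\ddc\varphi)^n$.

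To conclude, observe that $\varphi^j\searrow\varphi$ yields $\psi^j\searrow\psi$ and $u^j\searrow u$ by monotonicity of $P$ and $P[\cdot]_{\mathcal{I}}$ in the singularity type (the quasi-equisingular multiplier-ideal control forces the envelopes of the limit to agree with the limit of the envelopes). Since $\psi,u\ge\varphi$ pointwise, Witt Nystr\"om monotonicity gives $\int_X(\theta+\omega)_\psi^n,\int_X(\theta+\omega)_u^n\ge\int_X(\theta+\omega)_\varphi^n$, while decreasing-sequence monotonicity together with the displayed identity yields $\int_X(\theta+\omega)_\psi^n,\int_X(\theta+\omega)_u^n\le\lim_j\int_X(\theta+\omega)_{\varphi^j}^n=\int_X(\theta+\omega)_\varphi^n$. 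This establishes \eqref{eq:goalIP}. The main obstacle is the $d_{S,\theta}$-convergence of the quasi-equisingular approximation, which is precisely the point at which the $\mathcal{I}$-model (rather than merely model) hypothesis is essential.
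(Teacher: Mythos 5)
Your skeleton is essentially the paper's argument unpacked: the paper's own proof is two lines, taking potentials $\varphi^j$ with analytic singularities such that $\varphi^j\xrightarrow{d_{S,\theta}}\varphi$ (possible precisely because $\varphi$ is $\mathcal{I}$-model of positive mass), upgrading to $d_{S,\theta+\omega}$-convergence by \cref{cor:dspert}, and then invoking \cite[Theorem~3.8]{DX21} — which says both that quasi-equisingular approximations $d_S$-converge to $P[\varphi]_{\mathcal{I}}$ and that $d_S$-limits of analytic singularity types are $\mathcal{I}$-model. You keep the approximation and \cref{cor:dspert}, but attempt to re-derive both directions of this black box by hand, and that is where the gaps sit.

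Concretely: (i) your justification of $\varphi^j\xrightarrow{d_{S,\theta}}\varphi$ — convergence of the top mass "together with monotonicity yields convergence of all mixed masses $\int_X\theta_{\varphi^j}^k\wedge\theta_{V_\theta}^{n-k}$" — is not a proof: for a decreasing sequence, convergence of the top masses does not formally imply convergence of the lower-degree mixed masses (which is what $d_S$-convergence requires); this implication is exactly the nontrivial content of \cite[Theorem~3.8]{DX21} and must be cited, not deduced from "monotonicity". (ii) Your reduction of the conclusion to the single mass identity $\int_X(\theta+\omega)_\psi^n=\int_X(\theta+\omega)_u^n$ silently uses two further nontrivial facts: that $P^{\theta+\omega}[\varphi]$ has the same Lelong numbers as $\varphi$ on all birational models (so that $\psi\sim_{\mathcal{I}}\varphi$), and that two model potentials with $\psi\le u$ and equal positive top masses coincide (the DDNL-type relative full mass theorem, cf.\ \cite{DDNL5}); both are true and in the cited literature, but they are load-bearing and cannot be asserted. (iii) The claim that, because $\varphi^j$ has analytic singularities, $P^{\theta+\omega}[\varphi^j]$ and $P^{\theta+\omega}[\varphi^j]_{\mathcal{I}}$ "differ from $\varphi^j$ by bounded functions" is stronger than what a log resolution gives for general analytic singularities in a big class; what you need (and what is true here) is only the mass identity $\int_X(\theta+\omega)^n_{u^j}=\int_X(\theta+\omega)^n_{\varphi^j}$, and the reason it holds is that $(\theta+\omega)+\ddc\varphi^j\ge(1-\epsilon_j)\omega$ is a Kähler current, so after resolution the residual class has volume equal to its top self-intersection. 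Finally, the parenthetical assertion $\psi^j\searrow\psi$, $u^j\searrow u$ is unjustified (envelopes do not commute with decreasing limits), though harmless, since your inequality chain only uses $\psi\le\psi^j$ and $u\le u^j$. With (i)–(iii) repaired by the appropriate citations, your argument does go through, but at that point it is a longer detour to the same conclusion the paper reaches directly from \cite[Theorem~3.8]{DX21} and \cref{cor:dspert}.
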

\begin{proof}
By \cite[Theorem~3.8]{DX21}, we may take a sequence $\varphi^j$ with analytic singularities such that $\varphi^j\xrightarrow{d_{S,\theta}}\varphi$. Then $\varphi^j\xrightarrow{d_{S,\theta+\omega}}\varphi$ by \cref{cor:dspert}. Thus, $P^{\theta+\omega}[\varphi]$ is $\mathcal{I}$-model.
\end{proof}

\begin{corollary}\label{cor:dssum}
	Let $\varphi^j,\varphi\in \PSH(X,\theta_1), \psi^j,\psi\in \PSH(X,\theta_2)$ ($j\in \mathbb{N}$). Assume that $\varphi^j\xrightarrow{d_{S,\theta_1}}\varphi$, $\psi^j\xrightarrow{d_{S,\theta_2}}\psi$. Then
	\[
		\varphi^j+\psi^j\xrightarrow{d_{S,\theta_1+\theta_2}}\varphi+\psi.
	\]
\end{corollary}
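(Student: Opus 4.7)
The strategy mirrors the proof of Corollary \ref{cor:dspert}: I reduce the problem to the convergence of mixed masses given by Theorem \ref{thm:dsmixedmass}. By the double inequality relating $d_{S,\theta_1+\theta_2}$ to mass differences in the class $\theta_1+\theta_2$, it suffices to show, for each $r = 0, \ldots, n$,
\[
2\int_X (\theta_1+\theta_2)_{\max\{\varphi^j+\psi^j,\varphi+\psi\}}^r \wedge (\theta_1+\theta_2)_{V_{\theta_1+\theta_2}}^{n-r} - \int_X (\theta_1+\theta_2)_{\varphi^j+\psi^j}^r \wedge (\theta_1+\theta_2)_{V_{\theta_1+\theta_2}}^{n-r} - \int_X (\theta_1+\theta_2)_{\varphi+\psi}^r \wedge (\theta_1+\theta_2)_{V_{\theta_1+\theta_2}}^{n-r} \to 0.
\]
This quantity is non-negative by the monotonicity theorem of \cite{WN19}, since the $\max$ potential dominates both of the others.

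The key observation is the pointwise inequality
\[
\max\{\varphi^j+\psi^j,\varphi+\psi\} \leq \max\{\varphi^j,\varphi\} + \max\{\psi^j,\psi\},
\]
which, again by monotonicity of non-pluripolar masses, allows me to bound the first integral above by the analogous one in which the $\max$ potential is replaced by $\max\{\varphi^j,\varphi\} + \max\{\psi^j,\psi\}$. The resulting expression still dominates the original (and remains non-negative), so it is enough to show that this upper bound tends to $0$. Using the multilinearity identity
\[
(\theta_1+\theta_2)_{u+v}^r = \sum_{a+b=r} \binom{r}{a}\, \theta_{1,u}^a \wedge \theta_{2,v}^b
\]
for $u \in \PSH(X,\theta_1)$, $v \in \PSH(X,\theta_2)$, each of the three integrals in the upper bound decomposes into mixed masses of the form $\int_X \theta_{1,u^j}^a \wedge \theta_{2,v^j}^b \wedge (\theta_1+\theta_2)_{V_{\theta_1+\theta_2}}^{n-r}$, where $u^j \in \{\max\{\varphi^j,\varphi\},\varphi^j,\varphi\}$ and $v^j \in \{\max\{\psi^j,\psi\},\psi^j,\psi\}$. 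Since $\max\{\varphi^j,\varphi\} \xrightarrow{d_{S,\theta_1}} \varphi$ and $\max\{\psi^j,\psi\} \xrightarrow{d_{S,\theta_2}} \psi$ by \cite[Proposition~3.5]{DDNL5}, and the $n-r$ factors $(\theta_1+\theta_2)_{V_{\theta_1+\theta_2}}$ are constant and trivially $d_S$-converge, Theorem \ref{thm:dsmixedmass}, applied to $n$ factors in the classes $\theta_1,\theta_2,\theta_1+\theta_2$ with multiplicities $a,b,n-r$, forces each of these mixed masses to converge to $\int_X \theta_{1,\varphi}^a \wedge \theta_{2,\psi}^b \wedge (\theta_1+\theta_2)_{V_{\theta_1+\theta_2}}^{n-r}$. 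Summing with signs $2,-1,-1$, the limit collapses to $0$, which proves the claim.

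The main technical obstacle is the absence of a distributive identity of the form $V_{\theta_1+\theta_2} = V_{\theta_1}+V_{\theta_2}$: only the inequality $V_{\theta_1}+V_{\theta_2} \leq V_{\theta_1+\theta_2}$ is available in general, so one cannot naively reduce to mixed masses in the separate classes $\theta_1$ and $\theta_2$. This is sidestepped by keeping $(\theta_1+\theta_2)_{V_{\theta_1+\theta_2}}$ intact as a single fixed factor within the mixed-mass framework of Theorem \ref{thm:dsmixedmass}, where it plays the role of a constant reference current whose precise singularities are irrelevant.
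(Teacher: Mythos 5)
Your proposal is correct and follows essentially the same route as the paper: reduce via the defining mass differences, use the pointwise bound $\max\{\varphi^j+\psi^j,\varphi+\psi\}\leq \max\{\varphi^j,\varphi\}+\max\{\psi^j,\psi\}$ together with monotonicity to replace the max potential, expand by multilinearity of the non-pluripolar product keeping $\theta_{V_{\theta_1+\theta_2}}$ as a fixed factor, and conclude term by term from \cref{thm:dsmixedmass} and \cite[Proposition~3.5]{DDNL5}. No gaps.
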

\begin{proof}
	Let $\theta=\theta_1+\theta_2$.
	It suffices to show that for each $r=0,\ldots,n$,
	\[
		2\int_X \theta_{\max\{\varphi^j+\psi^j,\varphi+\psi\}}^r\wedge \theta_{V_{\theta}}^{n-r}-\int_X \theta_{\varphi^j+\psi^j}^r\wedge \theta_{V_{\theta}}^{n-r}-\int_X \theta_{\varphi+\psi}^r\wedge\theta_{V_{\theta}}^{n-r}\to 0.
	\]
	Observe that
	\[
		\max\{\varphi^j+\psi^j,\varphi+\psi\}\leq \max\{\varphi^j,\varphi\}+\max\{\psi^j,\psi\}.
	\]
	Thus, it suffices to show that
	\[
		2\int_X \theta_{\max\{\varphi^j,\varphi\}+\max\{\psi^j,\psi\}}^r\wedge \theta_{V_{\theta}}^{n-r}-\int_X \theta_{\varphi^j+\psi^j}^r\wedge \theta_{V_{\theta}}^{n-r}-\int_X \theta_{\varphi+\psi}^r\wedge\theta_{V_{\theta}}^{n-r}\to 0.
	\]
	The left-hand side is a linear combination of
	\[
		2\int_X \theta_{1,\max\{\varphi^j,\varphi\}}^a\wedge \theta_{2,\max\{\psi^j,\psi\}}^{r-a}\wedge \theta_{V_{\theta}}^{n-r}-\int_X \theta_{1,\varphi^j}^a\wedge \theta_{2,\psi^j}^{r-a} \wedge \theta_{V_{\theta}}^{n-r}-\int_X \theta_{1,\varphi}^a\wedge \theta_{2,\psi}^{r-a} \wedge \theta_{V_{\theta}}^{n-r}
	\]
	with $a=0,\ldots,r$. Observe that $\max\{\varphi^j,\varphi\}\xrightarrow{d_S}\varphi$ and  $\max\{\psi^j,\psi\}\xrightarrow{d_S}\psi$ by \cite[Proposition~3.5]{DDNLmetric}, each term tends to $0$ by \cref{thm:dsmixedmass}.
\end{proof}
Finally, we prove the continuity of $P[\bullet]_{\mathcal{I}}$.
\begin{theorem}\label{thm:contPI}
	The map $\PSH(X,\theta)_{>0}\rightarrow \PSH(X,\theta)_{>0}$ given by $\varphi\mapsto P[\varphi]_{\mathcal{I}}$ is continuous with respect to the $d_S$-pseudometric.
\end{theorem}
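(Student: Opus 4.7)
The plan is to verify the convergence $P[\varphi^j]_{\mathcal{I}} \xrightarrow{d_S} P[\varphi]_{\mathcal{I}}$ directly from the double inequality characterizing $d_S$ recalled at the start of this section. Concretely, it suffices to show that for each $i = 0, 1, \ldots, n$,
\[
2\int_X \theta^i_{\max\{P[\varphi^j]_{\mathcal{I}},\, P[\varphi]_{\mathcal{I}}\}} \wedge \theta_{V_\theta}^{n-i} - \int_X \theta^i_{P[\varphi^j]_{\mathcal{I}}} \wedge \theta_{V_\theta}^{n-i} - \int_X \theta^i_{P[\varphi]_{\mathcal{I}}} \wedge \theta_{V_\theta}^{n-i} \xrightarrow{j \to \infty} 0.
\]

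The first step is to rewrite each individual mass in terms of the original potentials. Applying \cref{cor:equalCao} to the tuple consisting of $i$ copies of $\varphi$ and $n-i$ copies of $V_\theta$, and using that $V_\theta$ is itself $\mathcal{I}$-model so that $P[V_\theta]_{\mathcal{I}} = V_\theta$, yields
\[
\int_X \theta^i_{P[\varphi]_{\mathcal{I}}} \wedge \theta_{V_\theta}^{n-i} = \langle \theta_\varphi^i \wedge \theta_{V_\theta}^{n-i}\rangle,
\]
i.e.\ the non-pluripolar mixed mass of the $\mathcal{I}$-projection coincides with the Cao mixed mass of $\varphi$. The analogous identity applies to the $\max$-term after verifying that $\max\{P[\varphi^j]_{\mathcal{I}}, P[\varphi]_{\mathcal{I}}\} \sim_{\mathcal{I}} \max\{\varphi^j, \varphi\}$, which reduces via characterization (3) of $\preceq_{\mathcal{I}}$ to the elementary identity $\nu(\max\{u,v\}, E) = \min\{\nu(u,E), \nu(v,E)\}$ applied on every birational model, combined with the fact that $P[\cdot]_{\mathcal{I}}$ preserves generic Lelong numbers along all prime divisors over $X$.

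The second step is to establish the $d_S$-continuity of the Cao mixed mass functional $\varphi \mapsto \langle \theta_\varphi^i \wedge \theta_{V_\theta}^{n-i}\rangle$. Once this is known, combining it with the fact that $\max\{\varphi^j, \varphi\} \xrightarrow{d_S} \varphi$ (from \cite[Proposition~3.5]{DDNL5} applied to $\varphi^j \xrightarrow{d_S} \varphi$) completes the proof. For the continuity of Cao masses, the natural route is a diagonal approximation argument: take quasi-equisingular approximations $\varphi^{j,k}$ of $\varphi^j$ and $\tilde\varphi^k$ of $\varphi$, all with analytic singularities. By definition the Cao mass is the monotone limit as $k\to\infty$ of the non-pluripolar masses of these approximants, and \cref{thm:dsmixedmass} supplies the $d_S$-continuity of the non-pluripolar masses at each approximation level. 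A careful selection of a diagonal subsequence then transfers the continuity to the Cao masses themselves.

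The main obstacle is the diagonal argument, since the rate of $d_S$-convergence $\varphi^{j,k} \to \varphi^j$ as $k\to\infty$ is not a priori uniform in $j$. Handling this requires exploiting the monotone decrease of the Cao non-pluripolar masses of the approximants together with a standard $\varepsilon/3$-style selection in the diagonal index; equivalently, one can bypass this by first showing that $\varphi^{j,k} \xrightarrow{d_S} \varphi^j$ uniformly with an explicit mass-defect bound drawn from the definition of quasi-equisingular approximation. The secondary delicate point, namely $\max\{P[u]_{\mathcal{I}}, P[v]_{\mathcal{I}}\} \sim_{\mathcal{I}} \max\{u,v\}$ used in the first step, is routine via generic Lelong numbers and presents no real difficulty.
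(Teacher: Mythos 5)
The essential unproven point in your proposal is Step 3, and it is not a technical loose end but the whole content of the theorem. Your reduction converts \cref{thm:contPI} into the claim that $\varphi\mapsto\langle\theta_\varphi^i\wedge\theta_{V_\theta}^{n-i}\rangle$ is $d_S$-continuous on $\PSH(X,\theta)_{>0}$; but by \cref{cor:equalCao} this Cao mass equals $\int_X\theta^i_{P[\varphi]_{\mathcal{I}}}\wedge\theta_{V_\theta}^{n-i}$, so you are asking precisely for the $d_S$-continuity of the mixed masses of the $\mathcal{I}$-projections, which (together with the max-terms that you treat by the same device) is equivalent to the statement being proved. No reduction in difficulty has occurred, and the diagonal argument you sketch does not close it: \cref{thm:dsmixedmass} controls non-pluripolar masses of the approximants at each fixed level, but to pass to the limit you need the convergence in $k$ of the quasi-equisingular approximant masses to the Cao mass to be uniform in $j$, and nothing in the definition of quasi-equisingular approximation supplies this — the multiplier-ideal inclusions hold only for $j\geq j_0(\delta,k)$ depending on the potential being approximated, and for fixed $k$ there is no relation between the $k$-th approximant of $\varphi^j$ and that of $\varphi$. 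A Dini-type argument would require exactly the continuity of the limiting mass (circular) plus some compactness that is not available. The paper sidesteps all of this: after replacing the potentials by model ones, \cite[Theorem~5.6]{DDNL5} reduces to monotone (increasing or decreasing) $d_S$-convergent sequences, for which the convergence of the $\mathcal{I}$-projections is exactly \cite[Lemma~2.21]{DX20} together with \cite[Proposition~4.8]{DDNL5}. If you want to run your mass-defect strategy, you still need some such monotone reduction (or the results of \cite{DX20,DX21} it encodes) to prove Step 3.

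A secondary, fixable inaccuracy: the identity you claim for the max-term does not follow from $\max\{P[\varphi^j]_{\mathcal{I}},P[\varphi]_{\mathcal{I}}\}\sim_{\mathcal{I}}\max\{\varphi^j,\varphi\}$. Non-pluripolar mixed masses are not invariants of $\mathcal{I}$-equivalence classes (this is the very reason $P[\cdot]_{\mathcal{I}}$ appears in \cref{thm:DXmain}); \cref{cor:equalCao} identifies the Cao mass with the mass of $P[\max\{\varphi^j,\varphi\}]_{\mathcal{I}}$, which lies above $\max\{P[\varphi^j]_{\mathcal{I}},P[\varphi]_{\mathcal{I}}\}$ and could a priori have strictly larger mixed masses, unless one further proves that the max of two $\mathcal{I}$-model potentials computes the same masses (not established in the paper, which only treats convex combinations in \cref{cor:linearImodel}). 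This gap is harmless for your scheme, since the $d_S$-defect terms are non-negative and the one-sided bound $\int_X\theta^i_{\max\{P[\varphi^j]_{\mathcal{I}},P[\varphi]_{\mathcal{I}}\}}\wedge\theta_{V_\theta}^{n-i}\leq\langle\theta^i_{\max\{\varphi^j,\varphi\}}\wedge\theta_{V_\theta}^{n-i}\rangle$, obtained from Witt Nystr\"om monotonicity plus \cref{cor:equalCao}, suffices — but it should be stated as an inequality, not an identity.
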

Here $\PSH(X,\theta)_{>0}$ denotes the subset of $\PSH(X,\theta)$ consisting of $\varphi$ with $\int_X \theta_{\varphi}^n>0$.
\begin{proof}
	Let $\varphi_i,\varphi\in    \PSH(X,\theta)_{>0}$, $\varphi_i\xrightarrow{d_S}\varphi$. We want to show that
	\begin{equation}
		P[\varphi_i]_{\mathcal{I}}\xrightarrow{d_S}P[\varphi]_{\mathcal{I}}.
	\end{equation}
	We may assume that the $\varphi_i$'s and $\varphi$ are all model potentials by \cite[Theorem~3.3]{DDNLmetric}. By \cite[Theorem~5.6]{DDNLmetric}, we may assume that $\varphi_i$ is either increasing or decreasing. These cases follow from \cite[Lemma~2.21]{DX22} and \cite[Proposition~4.8, Lemma~4.1]{DDNLmetric}.
\end{proof}

\section{Partial Okounkov bodies}\label{sec:PoB}
Let $X$ be an irreducible smooth complex projective variety of dimension $n$ and $L$ be a big line bundle on $X$. Take a singular psh metric $\phi$ on $L$. We assume that $\vol(L,\phi)>0$.
Let $h$ be a smooth Hermitian metric on $L$. Let $\theta=c_1(L,h)$. Then we can identify $\phi$ with a function $\varphi\in \PSH(X,\theta)$.
We will use interchangeably the notations $(\theta,\varphi)$ and $(L,\phi)$.

For each $k\geq 0$,
\[
	W_k(\theta,\varphi)\coloneqq \mathrm{H}^0(X,L^k\otimes \mathcal{I}(k\varphi)),\quad W(\theta,\varphi)\coloneqq \bigoplus_{k=0}^{\infty} W_k(\theta,\varphi).
\]
We omit $(\theta,\varphi)$ from our notations when there is no risk of confusion. Observe that $W_k(\theta,\varphi)\neq 0$ when $k$ is large enough, as follows from \cref{thm:DXmain}.

Fix a rank $n$ valuation $\nu:\mathbb{C}(X)^{\times}\rightarrow \mathbb{Z}^n$. We will write
\[
\begin{aligned}
\Gamma_{\nu,k}(\theta,\varphi)= & \left\{ k^{-1}\nu(s) : s\in W_k(\theta,\varphi)^{\times} \right\},\quad k\geq 1\\
\Gamma_{\nu}(\theta,\varphi)= & \left\{ (\nu(s),k) : k\in \mathbb{N}, s\in W_k(\theta,\varphi)^{\times} \right\}.
\end{aligned}
\]

In \cite{LM09}, Lazarsfeld--Mustaț{\u{a}} only considered the case where $\nu$ is induced by an admissible flag, but thanks to \cref{rmk:Abhyankar}, their results can be easily extended to the current setup. We will use these results without further comments.

\subsection{Construction of partial Okounkov bodies}
Our goal in this section is to show that $\Gamma_{\nu}(\theta,\varphi)\in \overline{\mathcal{S}'(\Delta_{\nu}(L))}_{>0}$, namely it is an almost semigroup. Then we shall define
\begin{equation}\label{eq:Deltalbdef}
\Delta_{\nu}(\theta,\varphi)\coloneqq \Delta \left(\Gamma_{\nu}(\theta,\varphi)\right)
\end{equation}
using the theory of Okounkov bodies of almost semigroups developed in \cref{subsec:Okobalmosg}. Moreover, we have
\begin{equation}\label{eq:Okov}
    \vol \Delta_{\nu}(\theta,\varphi)=\frac{1}{n!}\int_X \theta_{P[\varphi]_{\mathcal{I}}}^n.
\end{equation}

\subsubsection{The case of analytic singularities}
Assume that $\varphi$ has analytic singularities and $\theta_{\varphi}$ is a K\"ahler current.

For any rational $\epsilon\geq 0$, we define
	\begin{equation}\label{eq:Weps}
		W^{\epsilon}_k=W^{\epsilon}_k(\theta,\varphi)\coloneqq \left\{ s\in \mathrm{H}^0(X,L^k): |s|_{h^k}^2\mathrm{e}^{-k(1-\epsilon)\varphi} \text{ is bounded}\right\}.
	\end{equation}
    Then $W^{\epsilon}\coloneqq \bigoplus_{k=0}^{\infty} W^{\epsilon}_k$ has the property that
    \begin{equation}\label{eq:Weps1}
        \Gamma_{\nu}(W^{\epsilon})\coloneqq \left\{ (\nu(s),k):k\in \mathbb{N}, s\in W^{\epsilon,\times}_k \right\}\in \mathcal{S}'(\Delta_{\nu}(L)).
    \end{equation}
    To see this, we may assume that $\varphi$ has analytic singularities along a $\mathbb{Q}$-divisor $E$, then \eqref{eq:Weps1} follows from the fact that $L-(1-\epsilon)E$ is big proved in \cite[Lemma~2.4]{Xia20}, c.f. \cite[Lemma~2.2]{LM09}.

For any $\epsilon\in \mathbb{Q}_{>0}$, we have that
	\begin{equation}\label{eq:OTc}
		W_k^0\subseteq W_k\subseteq W_k^{\epsilon}
	\end{equation}
	for $k$ large enough depending on $\epsilon$. The first inclusion is of course trivial. The second inclusion is widely known among experts. A detailed proof can be found in \cite[Remark~2.9]{DX21}.

	Let $\pi\colon Y\rightarrow X$ be a resolution such that $\pi^*\varphi$ has analytic singularities along a normal crossing $\mathbb{Q}$-divisor $E$.
	Then we have a natural identification for sufficiently divisible $k$,
	\[
		W_k^{\epsilon}\cong \mathrm{H}^0(Y,\pi^*L^k\otimes \mathcal{O}_Y(-(1-\epsilon) k E)).
	\]
	On the other hand,
	\[
		W^0_k\cong  \mathrm{H}^0(Y,\pi^*L^k\otimes \mathcal{O}_Y(-k E))\subseteq \mathrm{H}^0(Y,\pi^*L^k).
	\]
	We compute the volumes,
	\begin{equation}\label{eq:volDeltas}
		\vol \Gamma_{\nu}(W^{\epsilon})=\frac{1}{n!}\int_X \theta_{(1-\epsilon)\varphi}^n,\quad \vol \Gamma_{\nu}(W^0)=\frac{1}{n!}\int_X \theta_{\varphi}^n.
	\end{equation}
    It follows that $\Gamma_{\nu}(W^{\epsilon})\to \Gamma_{\nu}(W^0)$ and $\Gamma_{\nu}(\theta,\varphi)$ is equivalent to $\Gamma_{\nu}(W^0)$. In particular, $\Gamma_{\nu}(\theta,\varphi)\in \overline{\mathcal{S}'(\Delta_{\nu}(L))}_{>0}$, \eqref{eq:Deltalbdef} makes sense and \eqref{eq:Okov} holds.

\begin{remark}\label{rmk:DeltaanaW0}
	It follows from the proof that if $W^0(\theta,\varphi)$ is defined as in \eqref{eq:Weps} and \eqref{eq:Weps1}:
	\[
		W^0_k(\theta,\varphi)\coloneqq \left\{ s\in \mathrm{H}^0(X,L^k): |s|_{h^k}^2\mathrm{e}^{-k\varphi} \text{ is bounded}\right\},
	\]
	then
	\begin{equation}\label{eq:DeltaanaW0}
		\Delta(\Gamma_{\nu} (W^0(\theta,\varphi)))=\Delta_{\nu}(\theta,\varphi).
	\end{equation}
	If we assume furthermore that $\pi^*\varphi$ has analytic singularity along some normal crossing $\mathbb{Q}$-divisor $E$ on $Y$, then $\Delta_{\nu}(\theta,\varphi)$ is just the translation of $\Delta_{\nu}(\pi^*L-E)$ by $\nu(E)$.
\end{remark}

\subsubsection{The case of Kähler currents}

Now assume that $\theta_{\varphi}$ is Kähler current. Let $\varphi^j\in \PSH(X,\theta)$ be a quasi-equisingular approximation of $\varphi$. Then $\varphi^j\xrightarrow{d_S}P[\varphi]_{\mathcal{I}}$ by \cite[Proposition~3.3]{DX21}.

In this case, we claim that
\begin{equation}\label{eq:WtoWclaim}
	\Gamma_{\nu}(\theta,\varphi^j)\to \Gamma_{\nu}(\theta,\varphi).
\end{equation}
In fact, by \cref{thm:DXmain}, we have
\[
	\begin{aligned}
		d(\Gamma_{\nu}(\theta,\varphi^j),\Gamma_{\nu}(\theta,\varphi)) =& \varlimsup_{k\to\infty} k^{-n} \left( h^0(X,L^k\otimes \mathcal{I}(k\varphi^j))-h^0(X,L^k\otimes \mathcal{I}(k\varphi))\right)\\
  = & \lim_{k\to\infty} k^{-n}h^0(X,L^k\otimes \mathcal{I}(k\varphi^j))-\lim_{k\to\infty} k^{-n}h^0(X,L^k\otimes \mathcal{I}(k\varphi)) \\
		 =&\frac{1}{n!}\int_X \theta_{\varphi^j}^n-\frac{1}{n!}\int_X \theta_{P[\varphi]_{\mathcal{I}}}^n.
	\end{aligned}
\]
Letting $j\to\infty$, we conclude \eqref{eq:WtoWclaim} by \cref{thm:dsmixedmass}. 

Thus, $\Gamma_{\nu}(\theta,\varphi)\in \overline{\mathcal{S}'(\Delta_{\nu}(L))}_{>0}$ and \eqref{eq:Deltalbdef} makes sense.
By \cref{thm:Okocont}, we find that
\[
	\Delta_{\nu}(\theta,\varphi)=\bigcap_{j=0}^{\infty}\Delta_{\nu}(\theta,\varphi^j).
\]
In particular, \eqref{eq:Okov} holds.

\subsubsection{General case}

Now we consider general $\varphi$ with the assumption that $\int_X \theta_{P[\varphi]_{\mathcal{I}}}^n>0$.
We may replace $\varphi$ with $P[\varphi]_{\mathcal{I}}$ and then assume that the non-pluripolar mass of $\varphi$ is positive.
Take a potential $\psi\in \PSH(X,\theta)$ such that $\psi \leq \varphi$ and $\theta_{\psi}$ is a Kähler current. The existence of $\psi$ is proved in \cite[Proposition~3.6]{DX21}.
For each $\epsilon\in \mathbb{Q}\cap (0,1]$, let $\varphi_{\epsilon}=(1-\epsilon)\varphi+\epsilon\psi$.
Then we have $W(\theta,\varphi_{\epsilon})\subseteq W(\theta,\varphi)$.
By \eqref{eq:Okov},
\[
	\vol \Delta_{\nu}(\theta,\varphi_{\epsilon})=\frac{1}{n!}\int_X \theta^n_{P[\varphi_{\epsilon}]_{\mathcal{I}}}.
\]
We claim that 
\[
	\Gamma_{\nu}(\theta,\varphi_{\epsilon})\to \Gamma_{\nu}(\theta,\varphi).
\]
In fact, this follows from the simple computation:
\[
\begin{aligned}
    d\left(\Gamma_{\nu}(\theta,\varphi_{\epsilon}),\Gamma_{\nu}(\theta,\varphi)\right)=& \varlimsup_{k\to\infty} k^{-n} \left( h^0(X,L^k\otimes \mathcal{I}(k\varphi))- h^0(X,L^k\otimes \mathcal{I}(k\varphi_{\epsilon})) \right)\\
    =& \lim_{k\to\infty}k^{-n}h^0(X,L^k\otimes \mathcal{I}(k\varphi))-\lim_{k\to\infty}k^{-n}h^0(X,L^k\otimes \mathcal{I}(k\varphi_{\epsilon}))\\
    =& \frac{1}{n!}\int_X \theta_{\varphi}^n- \frac{1}{n!}\int_X \theta_{P[\varphi_{\epsilon}]_{\mathcal{I}}}^n.
\end{aligned}
\]
By \cite[Proposition~2.7]{DX21}, as $\epsilon$ decreases to $0$, $P[\varphi_{\epsilon}]_{\mathcal{I}}$ increases to $P[\varphi]_{\mathcal{I}}=\varphi$ a.e., which implies the $d_S$-convergence by \cite[Lemma~4.1]{DDNLmetric}. 
Therefore, the right-hand side of the above equation converges to $0$ by \cref{thm:dsmixedmass}. Our claim is proved. It follows that $\Gamma_{\nu}(\theta,\varphi)\in \overline{\mathcal{S}'(\Delta_{\nu}(L))}_{>0}$ and \eqref{eq:Deltalbdef} makes sense. 
By \cref{thm:Okocont},
\[
	\Delta_{\nu}(\theta,\varphi)=\overline{\bigcup_{\epsilon>0}\Delta_{\nu}(\theta,\varphi_{\epsilon})}.
\]
It remains to verify \eqref{eq:Okov}:
\[
	\vol \Delta_{\nu}(\theta,\varphi)=\frac{1}{n!}\lim_{\epsilon\to 0+}\int_X \theta_{P[\varphi_{\epsilon}]_{\mathcal{I}}}^n=\frac{1}{n!}\int_X \theta_{P[\varphi]_{\mathcal{I}}}^n.
\]

\begin{definition}
	Assume that $\varphi\in \PSH(X,\theta)$, $\int_X \theta_{P[\varphi]_{\mathcal{I}}}^n>0$.  We call $\Delta_{\nu}(\theta,\varphi)$ the \emph{partial Okounkov body} of $(L,\phi)$ or of $(\theta,\varphi)$ with respect to $\nu$. When $\nu$ is induced by an admissible flag $(Y_{\bullet})$ on $X$ (see \cref{def:admfl}), we also say that $\Delta_{\nu}(\theta,\varphi)$ the \emph{partial Okounkov body} of $(L,\phi)$ or of $(\theta,\varphi)$ with respect to $(Y_{\bullet})$. In this case, we also write $\Delta_{Y_{\bullet}}$ instead of $\Delta_{\nu}$.
\end{definition}
We use interchangeably the notations $\Delta_{\nu}(\theta,\varphi)$ and $\Delta_{\nu}(L,\phi)$. When there is no risk of confusion, we write $\Delta$ instead of $\Delta_{\nu}$ or $\Delta_{Y_{\bullet}}$.

\begin{remark}
	We have assumed $X$ to be smooth only for simplicity. All of our constructions work equally well when $X$ is normal or merely unibranch, based on the pluripotential theory in these settings developed in \cite{XiaMabuchi}.
\end{remark}

\begin{remark}
    In the transcendental setting, a theory of Okounkov bodies is recently established in \cite{DRWN+} based on the work of \cite{Deng17}. The proof of the volume identity of transcendental Okounkov bodies relies on the technique of partial Okounkov bodies developed in this paper. The transcendental analogue of the partial Okounkov bodies is constructed in a forthcoming joint paper with T.Darvas.
\end{remark}

\subsection{Basic properties of partial Okounkov bodies}
We first show that $\Delta(\theta,\varphi)$ does not depend on the explicit choices of $L$, $h$ and $\varphi$, it just depends on $\ddc\phi$.

\begin{lemma}\label{lma:indepL}
	Let $L'$ be another big line bundle on $X$. Let $h'$ be a smooth Hermitian metric on $L'$ with $c_1(L,h)=c_1(L',h')$.
	Then $\Delta(\theta,\varphi)$ defined with respect to $(L,h)$ is the same as the one defined with respect to $(L',h')$.
\end{lemma}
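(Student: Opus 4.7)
The plan is to show that the construction of $\Delta(\theta,\varphi)$, at every step in \cref{sec:PoB}, depends only on numerical data, and then to invoke Lazarsfeld--Mustaț{\u{a}}'s theorem \cite[Proposition~4.1]{LM09} that classical Okounkov bodies of big line bundles depend only on the numerical equivalence class. Note at the outset that since $c_1(L,h)=c_1(L',h')=\theta$, the line bundle $L'\otimes L^{-1}$ carries a flat Hermitian metric, so in particular $L\equiv L'$ numerically; moreover $\varphi\in\PSH(X,\theta)$ makes sense relative to both $(L,h)$ and $(L',h')$, and the multiplier ideals $\mathcal{I}(k\varphi)$ are purely local and do not depend on the line bundle.

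First I would handle the analytic singularities case, which is the core step. After pulling back to a resolution $\pi:Y\to X$ on which $\pi^*\varphi$ has analytic singularities along an nc $\mathbb{Q}$-divisor $E$ (using \cref{rmk:birDelta} to move freely between $X$ and $Y$), the proof of \cref{lma:anacase} identifies, for all large $k$ divisible enough,
\[
W_k^\epsilon(\theta,\varphi)\;\cong\;\mathrm{H}^0\bigl(Y,\pi^*L^k\otimes\mathcal{O}_Y(-(1-\epsilon)kE)\bigr),
\]
and similarly with $L'$ in place of $L$. Thus $\Delta(W^\epsilon(\theta,\varphi))$ computed from $(L,h)$ equals the classical Okounkov body of the big $\mathbb{R}$-divisor $\pi^*L-(1-\epsilon)E$, while the same construction from $(L',h')$ gives the Okounkov body of $\pi^*L'-(1-\epsilon)E$. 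As $\pi^*L\equiv\pi^*L'$, these two divisors are numerically equivalent, so \cite[Proposition~4.1]{LM09} gives equality of the two convex bodies. Since the same $W_k^\epsilon$, $W_k^0$ sandwich both constructions via \cref{lma:Hausinbetween}, the partial Okounkov bodies coincide.

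The remaining two cases follow by transport. If $\theta_\varphi$ is a Kähler current, take a quasi-equisingular approximation $\varphi^j$; this sequence depends only on $\varphi$ and $\theta$, not on the specific pair $(L,h)$. The analytic singularities case just established shows $\Delta(\theta,\varphi^j)$ is intrinsic, and by \cref{thm:Okocont}
\[
\Delta(\theta,\varphi)=\bigcap_{j=0}^{\infty}\Delta(\theta,\varphi^j)
\]
is therefore also intrinsic. For the general case, the perturbation $\varphi_\epsilon=(1-\epsilon)\varphi+\epsilon\psi$ with $\theta_\psi$ a Kähler current (the choice of $\psi$ lives in $\PSH(X,\theta)$ and is independent of the line bundle representative) reduces everything to the Kähler current case, and continuity of $\Delta$ under $d$-limits (\cref{thm:Okocont}) gives the conclusion after letting $\epsilon\to 0+$.

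The substantive content is concentrated in the analytic singularities case, where one really uses the Lazarsfeld--Mustaț{\u{a}} numerical invariance of Okounkov bodies of big line bundles; the other two steps are bookkeeping, reducing to this case via approximations that only see the $\theta$-psh function $\varphi$ and not the line bundle. The main obstacle to watch is merely ensuring at each reduction step that the approximating potentials and the perturbing potential $\psi$ can be chosen independently of the line bundle representative, which is automatic since all constructions of $\varphi^j$ and $\psi$ take place inside $\PSH(X,\theta)$.
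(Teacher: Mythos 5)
Your proposal is correct and follows essentially the same route as the paper: reduce via the construction (quasi-equisingular approximation and the perturbation $\varphi_\epsilon$, both intrinsic to $\varphi\in\PSH(X,\theta)$) to the case of analytic singularities along an nc $\mathbb{Q}$-divisor $E$ on a resolution, identify $\Delta(\theta,\varphi)$ via \cref{rmk:DeltaanaW0} with a translate (by a vector depending only on $E$ and $\nu$, hence the same for $L$ and $L'$) of the classical Okounkov body of $\pi^*L-E$, and conclude by numerical invariance of Okounkov bodies. The only difference is that you cite \cite[Proposition~4.1]{LM09} directly, whereas the paper reproduces that argument (the divisor $P=L'-L$, the very ample $B+kP$, etc.) for the reader's convenience; this is not a substantive divergence.
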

\begin{proof}
	From our construction, we may assume that $\theta_{\varphi}$ is a Kähler current and $\varphi$ has analytic singularities.
	After taking a birational resolution, it suffices to deal with the case where $\varphi$ has analytic singularities along normal crossing $\mathbb{Q}$-divisors $E$.
	By rescaling, we may also assume that $E$ is a divisor. By \cref{rmk:DeltaanaW0}, we further reduce to the case without the singular potential $\phi$.

	In this case, the assertion is proved in \cite[Proposition~4.1]{LM09}. 
\end{proof}
\begin{lemma}\label{lma:indepvarphi}
	Let $h'$ be another smooth Hermitian metric on $L$. Set $\theta'=c_1(L,h')$. Write $\ddc f=\theta-\theta'$.
	Let $\varphi'=\varphi+f\in \PSH(X,\theta')$. Then
	\begin{equation}\label{eq:DeltaDelta1}
		\Delta(\theta,\varphi)=\Delta(\theta',\varphi').
	\end{equation}
\end{lemma}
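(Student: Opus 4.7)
The plan is to reduce \eqref{eq:DeltaDelta1} to the identity $W(\theta,\varphi) = W(\theta',\varphi')$ inside $\LS(L)$, after which both sides of \eqref{eq:DeltaDelta1} are computed from the same element of $\overline{\GLS(L)}_{>0}$ via the canonical map $\Delta$ from \cref{thm:Okocont}. The content of the lemma is essentially that the construction of $\Delta(\theta,\varphi)$, although carried out in terms of the auxiliary reference data $(h,\varphi)$, depends only on the underlying singular metric $\phi$ on $L$.

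For the identity of graded linear subspaces, recall that
\[
	W_k(\theta,\varphi) = \mathrm{H}^0(X, L^k \otimes \mathcal{I}(k\varphi))
\]
and similarly for $W_k(\theta',\varphi')$, so it suffices to check $\mathcal{I}(k\varphi) = \mathcal{I}(k\varphi')$ for every $k \in \mathbb{N}$. Since $\varphi' - \varphi = f$ is smooth on the compact manifold $X$, the factor $e^{-2kf}$ is bounded away from $0$ and $\infty$; hence for any local holomorphic $g$, the functions $|g|^2 e^{-2k\varphi'}$ and $|g|^2 e^{-2k\varphi}$ have identical local integrability behaviour, giving the required equality of multiplier ideal sheaves. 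Note also that $\varphi'$ genuinely lies in $\PSH(X,\theta')$ since $\theta' + \ddc\varphi' = \theta + \ddc\varphi \ge 0$.

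It remains to verify that the positive-mass hypothesis used to construct the left-hand side of \eqref{eq:DeltaDelta1} is equivalent to the analogous one on the right-hand side, so that both $W(\theta,\varphi)$ and $W(\theta',\varphi')$ lie in $\overline{\GLS(L)}_{>0}$ and the map $\Delta$ applies to both. This is immediate from \cref{thm:DXmain}, since
\[
	\tfrac{1}{n!}\int_X \theta^n_{P^{\theta}[\varphi]_{\mathcal{I}}} = \vol(L,\phi) = \tfrac{1}{n!}\int_X (\theta')^n_{P^{\theta'}[\varphi']_{\mathcal{I}}},
\]
and the middle quantity $\lim_{k\to\infty} k^{-n} h^0(X, L^k \otimes \mathcal{I}(k\phi))$ depends only on the intrinsic data $(L,\phi)$. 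No real obstacle is expected; the lemma is a consistency check confirming that the partial Okounkov body is an invariant of $(L,\phi)$ together with the valuation $\nu$.
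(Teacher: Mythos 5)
Your proof is correct and takes essentially the same route as the paper, whose entire argument is the observation that $W(\theta,\varphi) = W(\theta',\varphi')$. You simply fill in the straightforward details (smoothness of $f$ implying equality of multiplier ideal sheaves, and the volume consistency check) that the paper leaves implicit as "obvious."
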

\begin{proof}
	This is obvious as $W(\theta,\varphi)=W(\theta',\varphi')$.
\end{proof}

\begin{corollary}\label{cor:Okocurrent}
	The partial Okounkov body $\Delta(L,\phi)$ depends only on $\ddc\phi$, not on the explicit choices of $L,\phi,h$.
\end{corollary}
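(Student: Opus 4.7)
The plan is to combine \cref{lma:indepL} and \cref{lma:indepvarphi} with the elementary fact that a pluriharmonic function on the compact connected manifold $X$ must be constant. Suppose $(L,\phi)$ and $(L',\phi')$ are two Hermitian psef line bundles with $\ddc\phi=\ddc\phi'$. Since $\ddc\phi\in c_1(L)$ and $\ddc\phi'\in c_1(L')$, this already forces $c_1(L)=c_1(L')$ in $H^{1,1}(X,\mathbb{R})$, and the bodies $\Delta(L,\phi)$ and $\Delta(L',\phi')$ are both defined, as the positivity assumption $\int_X\theta_{P[\varphi]_{\mathcal{I}}}^n>0$ depends only on $\ddc\phi$.

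The first step is to align the underlying line bundles. Fix a smooth metric $h'$ on $L'$, set $\theta'=c_1(L',h')$, and write $\phi'=h'e^{-\varphi'}$ with $\varphi'\in\PSH(X,\theta')$. Any smooth $(1,1)$-form representing $c_1(L)=c_1(L')$ arises as the Chern curvature of some smooth Hermitian metric on $L$, so I can choose a smooth metric $h''$ on $L$ with $c_1(L,h'')=\theta'$; \cref{lma:indepL} then identifies $\Delta(L',\phi')$ with the partial Okounkov body computed from $(L,h'')$ and $\varphi'$. The second step aligns the smooth reference metrics: pick a smooth metric $h$ on $L$, set $\theta=c_1(L,h)$, write $\phi=h e^{-\varphi}$ with $\varphi\in\PSH(X,\theta)$, and choose a smooth function $g$ with $\ddc g=\theta'-\theta$. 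Applying \cref{lma:indepvarphi} to $(h'',\varphi')$ identifies the resulting Okounkov body with the one attached to $(L,h)$ and $\tilde\varphi:=\varphi'+g\in\PSH(X,\theta)$.

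It remains to compare $\Delta(\theta,\varphi)$ with $\Delta(\theta,\tilde\varphi)$. By construction $\theta_{\tilde\varphi}=\theta'_{\varphi'}=\ddc\phi'=\ddc\phi=\theta_\varphi$, so $\ddc(\tilde\varphi-\varphi)=0$ as a current. Elliptic regularity makes $\tilde\varphi-\varphi$ smooth, and then the maximum principle on the compact connected manifold $X$ forces it to be a constant. An additive constant on the potential does not affect the multiplier ideals $\mathcal{I}(k\varphi)$, hence $W(\theta,\varphi)=W(\theta,\tilde\varphi)$ and their Okounkov bodies agree. Chaining the three identifications delivers $\Delta(L,\phi)=\Delta(L',\phi')$. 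No step presents a genuine obstacle: the two preceding lemmas have already done all the analytic work, and the conclusion reduces to the bookkeeping of identifications plus the triviality of pluriharmonic functions.
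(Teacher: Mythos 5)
Your proof is correct and follows essentially the same route as the paper, which simply cites \cref{lma:indepL} and \cref{lma:indepvarphi}; you merely fill in the implicit bookkeeping (using the $\partial\overline{\partial}$-lemma to realize the curvature form $\theta'$ by a metric on $L$, and the fact that two $\theta$-psh potentials with the same current $\theta_\varphi$ differ by a constant, so the multiplier ideals and hence $W(\theta,\varphi)$ are unchanged). No issues.
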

Thanks to this result, given a closed positive $(1,1)$-current $T\in c_1(L)$ on $X$ with $\int_X T^n>0$, we can define $\Delta(T)$ as $\Delta(\theta,\varphi)$ if $T=\theta+\ddc\varphi$ for some $\varphi\in \PSH(X,\theta)$.
\begin{proof}
	This is a direct consequence of \cref{lma:indepL} and \cref{lma:indepvarphi}.
\end{proof}

Let $\PSH(X,\theta)_{>0}$ denote the subset of $\PSH(X,\theta)$ consisting of potentials $\varphi$ such that $\int_X \theta_{\varphi}^n>0$.
\begin{proposition}\label{prop:IcompimplyDeltacomp}
	Let $\varphi,\psi\in \PSH(X,\theta)_{>0}$. Assume that $\varphi\preceq_{\mathcal{I}} \psi$, then
	\begin{equation}\label{eq:Deltacomp}
		\Delta(\theta,\varphi)  \subseteq \Delta(\theta,\psi).
	\end{equation}
	In particular, as by definition, $\Delta(\theta,V_{\theta})=\Delta(L)$, we have
	\[
		\Delta(\theta,\varphi)\subseteq \Delta(L).
	\]
\end{proposition}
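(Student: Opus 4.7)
The plan is to translate the hypothesis $\varphi\preceq_\mathcal{I}\psi$ directly into an inclusion of graded linear subspaces of $R(X,D)$ and then invoke the abstract comparison result (Lemma \ref{lma:Deltacompar}) proved in Section 3.

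First I would verify that both Okounkov bodies in the statement are well-defined. The construction in the previous subsection requires $\int_X\theta_{P[\cdot]_\mathcal{I}}^n>0$. Since $\varphi$ (after normalization by $\sup_X\varphi$) is itself a competitor in the envelope defining $P[\varphi]_\mathcal{I}$, we have $\varphi\leq P[\varphi]_\mathcal{I}+C$ for some constant $C$. The monotonicity of the non-pluripolar Monge--Amp\`ere mass (Witt Nystr\"om monotonicity) then yields
\[
\int_X\theta_{P[\varphi]_\mathcal{I}}^n\geq\int_X\theta_\varphi^n>0\,,
\]
and analogously for $\psi$. Therefore $W(\theta,\varphi),W(\theta,\psi)\in\overline{\GLS(D)}_{>0}$, so $\Delta(\theta,\varphi)$ and $\Delta(\theta,\psi)$ are well-defined.

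Next, I would translate the $\preceq_\mathcal{I}$ relation into a containment of subspaces. By the very definition of $\preceq_\mathcal{I}$, $\mathcal{I}(k\varphi)\subseteq\mathcal{I}(k\psi)$ for every positive integer $k$. Tensoring with $L^k$ and taking global sections gives $W_k(\theta,\varphi)\subseteq W_k(\theta,\psi)$ for each $k$, hence $W(\theta,\varphi)\subseteq W(\theta,\psi)$ in $\LS(D)$. Applying Lemma \ref{lma:Deltacompar} now yields \eqref{eq:Deltacomp} at once.

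For the final assertion $\Delta(\theta,\varphi)\subseteq\Delta(L)$, I would observe that $W(\theta,\varphi)\subseteq R(X,D)$ is a tautological inclusion of graded linear subspaces, and $R(X,D)\in\GLS(D)_{>0}$ since $L$ is big, with $\Delta(R(X,D))=\Delta(L)$ by the classical definition. A second application of Lemma \ref{lma:Deltacompar} with $W=W(\theta,\varphi)$ and $W'=R(X,D)$ finishes the proof.

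The main obstacle is essentially nonexistent: once the partial Okounkov body has been defined via the closure of $\GLS(D)$, everything reduces to the abstract comparison Lemma \ref{lma:Deltacompar} plus the fact that the $\mathcal{I}$-singularity ordering is literally defined by containment of multiplier ideals. The only subtlety is the well-definedness step, where one must ensure that positive non-pluripolar mass of $\varphi$ propagates to positive mass of $P[\varphi]_\mathcal{I}$ so that $\Delta(\theta,\varphi)$ is defined in the first place.
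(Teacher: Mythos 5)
Your proof is correct and follows essentially the same route as the paper: the paper's proof is precisely an appeal to \cref{lma:Deltacompar}, with the inclusion $W(\theta,\varphi)\subseteq W(\theta,\psi)$ coming from the definition of $\preceq_{\mathcal{I}}$ and membership in $\overline{\GLS(D)}_{>0}$ coming from the construction of the partial Okounkov body. Your explicit well-definedness check (positive mass of $\varphi$ propagating to $P[\varphi]_{\mathcal{I}}$ via Witt Nystr\"om monotonicity) and the comparison with $R(X,D)$ for the last assertion are fine and merely spell out details the paper leaves implicit.
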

\begin{proof}
	This follows from \cref{cor:Okocomp}.
\end{proof}

\begin{theorem}\label{thm:Okoucont}
	The Okounkov body map
	\[
		\Delta(\theta,\bullet):\left( \PSH(X,\theta)_{>0},d_S\right)\rightarrow \left(\mathcal{K}_n,d_n\right)
	\]
	is continuous.
\end{theorem}
\begin{remark}
    On the other hand, it is of interest to understand the dependence of $\Delta(\theta,\bullet)$ on $\nu$ as well. 
    For some preliminary results and anticipations in the usual Okounkov body setting, see \cite{AI22}. In particular, see \cite[Conjecture~10.1]{AI22} for a concrete continuity conjecture.
\end{remark}
\begin{proof}Let $\varphi_j\to \varphi$ be a $d_S$-convergent sequence in $\PSH(X,\theta)_{>0}$.
	We want to show that
	\begin{equation}\label{eq:Deltavjv}
		\Delta(\theta,\varphi_j)\xrightarrow{d_n}\Delta(\theta,\varphi).
	\end{equation}
	By \cref{prop:IcompimplyDeltacomp} and \cite[Theorem~3.3]{DDNLmetric}, we may assume that all $\varphi_j$'s and $\varphi$ are model potentials.
	By \cref{thm:Bst} and \cite[Theorem~5.6]{DDNLmetric}, we may assume that $\varphi_j$ is either decreasing or increasing. By \cref{thm:contPI}, we may further assume that the $\varphi_j$'s are $\mathcal{I}$-model.
 In both cases, we claim that $\Gamma_{\nu}(\theta,\varphi_j)\rightarrow \Gamma_{\nu}(\theta,\varphi)$. In fact, we can compute their distance as follows
 \[
 \begin{aligned}
 d\left(\Gamma_{\nu}(\theta,\varphi_j), \Gamma_{\nu}(\theta,\varphi)\right) = &\varlimsup_{k\to\infty} k^{-n}\left| h^0(X,L^k\otimes \mathcal{I}(k\varphi_j))-h^0(X,L^k\otimes \mathcal{I}(k\varphi)) \right|\\
 =& \frac{1}{n!} \left| \int_X \theta_{\varphi_j}^n-\int_X \theta_{\varphi}^n\right|,
 \end{aligned}
 \]
 where we applied \cref{thm:DXmain} at the last step. Then \cref{thm:dsmixedmass} implies our claim.
 Hence, \eqref{eq:Deltavjv} follows from \cref{thm:Okocont}.
\end{proof}

Although $W(\theta,\varphi)$ and $\Gamma_{\nu}(\theta,\varphi)$ are not birationally invariant, we could still show that the Okounkov body is.
\begin{proposition}\label{prop:birinvO}
	Let $\pi \colon Y\rightarrow X$ be a birational resolution. Let $(L,\phi)$ be a Hermitian big line bundle on $X$ with positive volume, then
	\[
		\Delta(\pi^*L,\pi^*\phi)=\Delta(L,\phi).
	\]
\end{proposition}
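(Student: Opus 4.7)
The plan is to combine two facts and then invoke \cref{lma:volcbimpeq}: under the natural identification coming from $\pi^*$, the graded linear subspace $W(\pi^*\theta,\pi^*\varphi)$ is contained in $W(\theta,\varphi)$, and the two resulting Okounkov bodies have the same (positive) volume. These together force equality.

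For the inclusion, I would begin by noting that since $\pi$ is birational between smooth varieties, pullback induces an isomorphism $\pi^*\colon H^0(X,L^k)\xrightarrow{\sim} H^0(Y,\pi^*L^k)$ for every $k$, compatible with the valuation $\nu$ on the shared function field $\mathbb{C}(X)=\mathbb{C}(Y)$. Under this identification, $\LS(\pi^*D)$ and $\LS(D)$ agree isometrically (see \cref{rmk:birDelta}). The key analytic input is the Nadel-type pullback formula
\[
\mathcal{I}(k\varphi)=\pi_*\bigl(\mathcal{O}_Y(K_{Y/X})\otimes \mathcal{I}(k\pi^*\varphi)\bigr).
\]
Since $K_{Y/X}$ is an effective exceptional divisor (coming from $\pi^*\omega_X\hookrightarrow \omega_Y$), one has $\mathcal{O}_Y\hookrightarrow \mathcal{O}_Y(K_{Y/X})$; tensoring with the flat invertible sheaf $\mathcal{O}_Y(K_{Y/X})$ yields the sheaf inclusion $\mathcal{I}(k\pi^*\varphi)\hookrightarrow \mathcal{O}_Y(K_{Y/X})\otimes\mathcal{I}(k\pi^*\varphi)$. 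Taking global sections tensored with $\pi^*L^k$ and applying the formula delivers $W_k(\pi^*\theta,\pi^*\varphi)\subseteq W_k(\theta,\varphi)$ inside $H^0(X,L^k)$. \cref{lma:Deltacompar} now upgrades this to $\Delta(\pi^*L,\pi^*\phi)\subseteq \Delta(L,\phi)$.

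For the volume equality, by \eqref{eq:volWfinal} and \cref{thm:DXmain} it suffices to prove
\[
\int_X \theta^n_{P[\varphi]_{\mathcal{I}}}=\int_Y (\pi^*\theta)^n_{P[\pi^*\varphi]_{\mathcal{I}}}.
\]
The inequality $\leq$ is immediate from the sheaf-level inclusion established above. For $\geq$, the Lelong-number characterization of $\sim_{\mathcal{I}}$ is preserved under pullback by $\pi$ (any birational model over $Y$ is also a birational model over $X$), so $\pi^*P[\varphi]_{\mathcal{I}}\sim_{\mathcal{I}} \pi^*\varphi$. Since $\pi^*P[\varphi]_{\mathcal{I}}\leq 0$, it is bounded above by $P[\pi^*\varphi]_{\mathcal{I}}$ by maximality. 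Combining monotonicity of non-pluripolar masses with the birational invariance $\int_Y (\pi^*\theta)^n_{\pi^*\psi}=\int_X \theta^n_\psi$ yields the reverse inequality.

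With both volumes equal and positive, \cref{lma:volcbimpeq} converts the inclusion into equality of convex bodies. The main delicacy is the presence of the $K_{Y/X}$-twist in the pullback formula: the two sheaves $\mathcal{I}(k\varphi)$ and $\mathcal{I}(k\pi^*\varphi)$ genuinely differ, and one must set up the comparison so that the set-theoretic inclusion of graded subspaces runs in the direction compatible with \cref{lma:volcbimpeq}. Once this is arranged, the positivity of the volume does the rest.
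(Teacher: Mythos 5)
Your proposal is correct, and at bottom it follows the same route as the paper: compare the graded linear subspaces $W(\theta,\varphi)$ and $W(\pi^*\theta,\pi^*\varphi)$ and then invoke \cref{rmk:birDelta}. The difference is that the paper's proof consists of the single observation ``$\pi^*W(\theta,\varphi)=W(\pi^*\theta,\pi^*\varphi)$'', and taken literally this equality of subspaces can fail, for exactly the reason you isolate: the change-of-variables formula $\mathcal{I}(k\varphi)=\pi_*\bigl(\mathcal{O}_Y(K_{Y/X})\otimes\mathcal{I}(k\pi^*\varphi)\bigr)$ only yields the inclusion $\mathrm{H}^0(Y,\pi^*L^k\otimes\mathcal{I}(k\pi^*\varphi))\subseteq \mathrm{H}^0(X,L^k\otimes\mathcal{I}(k\varphi))$, which can be strict (already for a blow-up of a point on a surface and $\varphi$ with an isolated singularity of Lelong number $3/2$, the constant section is integrable downstairs but not upstairs along $E=K_{Y/X}$). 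What the argument really needs is that the two graded subspaces are at $d$-distance zero, equivalently inclusion plus equal volumes, and that is precisely what your combination of the $K_{Y/X}\geq 0$ inclusion, \cref{thm:DXmain}, \eqref{eq:volWfinal}, \cref{lma:Deltacompar} and \cref{lma:volcbimpeq} delivers. So your write-up is, if anything, more careful than the paper's own.

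Two minor slips, neither fatal. First, the inequality labels in your volume step are interchanged: the sheaf-level inclusion gives $\int_Y(\pi^*\theta)^n_{P[\pi^*\varphi]_{\mathcal{I}}}\leq \int_X\theta^n_{P[\varphi]_{\mathcal{I}}}$, while the argument via $\pi^*P[\varphi]_{\mathcal{I}}\sim_{\mathcal{I}}\pi^*\varphi$, maximality ($\pi^*P[\varphi]_{\mathcal{I}}\leq P[\pi^*\varphi]_{\mathcal{I}}$), monotonicity of non-pluripolar masses and their birational invariance gives the opposite one; since you establish both, the equality stands, only the tags are swapped. Second, to apply \cref{lma:Deltacompar} you need $W(\pi^*\theta,\pi^*\varphi)\in\overline{\GLS(\pi^*D)}_{>0}$, i.e. $\vol(\pi^*L,\pi^*\phi)>0$; this follows from your envelope inequality, so that piece of the volume comparison should logically precede the invocation of \cref{lma:Deltacompar}. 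With these cosmetic adjustments the proof is complete.
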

Here we are using the same valuation $\nu$ on the function field $\mathbb{C}(Y)=\mathbb{C}(X)$ of $Y$.
\begin{proof}
    By \cref{def:Icomp}(3), $P_{\theta}[\bullet]_{\mathcal{I}}$ commutes with birational pullbacks, we may assume that $\varphi$ is $\mathcal{I}$-model.
   By \cite[Theorem~3.8]{DX21}, we can find a sequence $\varphi^j\in \PSH(X,\theta)$ with analytic singularities such that $\varphi^j\xrightarrow{d_S}\varphi$. It follows from \eqref{eq:defds} that $\pi^*\varphi^j\xrightarrow{d_S}\pi^*\varphi$.
    By \cref{thm:Okoucont}, we may then reduce to the case where $\varphi$ has analytic singularities. In this case, up to replacing $Y$ by a further sequences of blowups, we may assume that $\pi^*\varphi$ has analytic singularities along a normal crossing $\mathbb{Q}$-divisor $D$. It suffices to apply \cref{rmk:DeltaanaW0}.
\end{proof}

Next we prove the Brunn--Minkowski inequality.
\begin{proposition}
	Let $(L,\phi)$, $(L',\phi')$ be Hermitian big line bundles on $X$ of positive volumes. Then
	\[
		(\vol\Delta(L+L',\phi+\phi'))^{1/n}\geq (\vol \Delta(L,\phi))^{1/n}+(\vol \Delta(L',\phi'))^{1/n}.
	\]
\end{proposition}
\begin{proof}
	This follows from \cref{cor:BMineq}.
\end{proof}

\begin{proposition}\label{prop:suba}
	Let $(L',\phi')$ be another Hermitian big line bundle on $X$ with positive volume.  Then
	\[
		\Delta(L,\phi)+\Delta(L',\phi')\subseteq\Delta(L\otimes L',\phi\otimes \phi').
	\]
\end{proposition}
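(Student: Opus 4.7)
The plan is to reduce to analytic singularities along nc $\mathbb{Q}$-divisors via $d_S$-approximation, and then exploit the fact that multiplication of sections is compatible with both the valuation and the bounded-singular-norm condition.

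First I would pick $d_S$-approximations $\varphi_j \xrightarrow{d_S} \varphi$ and $\varphi'_j \xrightarrow{d_S} \varphi'$ where each $\varphi_j \in \PSH(X,\theta)$ and $\varphi'_j \in \PSH(X,\theta')$ has analytic singularities (for example quasi-equisingular approximations, truncated so the volumes stay positive). By \cref{cor:dssum} we have $\varphi_j+\varphi'_j \xrightarrow{d_{S,\theta+\theta'}} \varphi+\varphi'$. Applying \cref{thm:Okoucont} three times, together with the fact that Minkowski addition of convex bodies is continuous in the Hausdorff metric, I reduce to the case where $\varphi$ and $\varphi'$ both have analytic singularities. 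By pulling back to a common log resolution $\pi: Y \to X$ (which preserves all partial Okounkov bodies in question) I may further assume that $\varphi$ has analytic singularities along an nc $\mathbb{Q}$-divisor $E$ and $\varphi'$ along an nc $\mathbb{Q}$-divisor $E'$.

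In this analytic-singularity setting, by \cref{rmk:DeltaanaW0} I can use the graded subspaces $W^0_k(\theta,\varphi)$ and $W^0_k(\theta',\varphi')$ from the proof of \cref{lma:anacase} to compute the Okounkov bodies, and both enjoy the Hausdorff convergence property $\Delta_k \xrightarrow{d_n} \Delta$ along any subsequence $k \in a\mathbb{Z}$ for $a$ sufficiently divisible. Given $s \in W^0_k(\theta,\varphi)$ and $t \in W^0_k(\theta',\varphi')$, the product $s \otimes t$ is a section of $(L\otimes L')^k$ with $|s\otimes t|^2_{h^k (h')^k}\, e^{-k(\varphi+\varphi')}$ bounded, hence $s \otimes t \in W^0_k(\theta+\theta',\varphi+\varphi')$. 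Since $\nu(s\otimes t) = \nu(s) + \nu(t)$, I get
\begin{equation*}
\Gamma_k(W^0(\theta,\varphi)) + \Gamma_k(W^0(\theta',\varphi')) \subseteq \Gamma_k(W^0(\theta+\theta',\varphi+\varphi'))\,,
\end{equation*}
and taking convex hulls yields the same inclusion for $\Delta_k$. Passing to the Hausdorff limit along $k \in a\mathbb{Z}$ gives the claim in this reduced case.

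The main obstacle is ensuring that the reduction preserves the additive structure on both the current side (where $d_S$-convergence of $\varphi_j + \varphi'_j$ must follow from $d_S$-convergence of $\varphi_j$ and $\varphi'_j$) and the Okounkov body side (where Hausdorff convergence of the sum of bodies must follow from Hausdorff convergence of each factor). The first is precisely \cref{cor:dssum}, and the second is a standard property of the Hausdorff metric on $\mathcal{K}_n$, so once these two continuity statements are in hand the argument proceeds without further friction.
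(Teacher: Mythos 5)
Your proof is correct and follows essentially the same route as the paper's: $d_S$-approximation by analytic singularities using \cref{cor:dssum} and \cref{thm:Okoucont}, reduction to nc divisors via a log resolution, and then the subadditivity at the level of graded subspaces. The only cosmetic difference is that the paper, after invoking \cref{rmk:DeltaanaW0}, quotes the classical subadditivity of Okounkov bodies for big divisors, whereas you re-derive it directly from the semigroup inclusion $\Gamma_k(W^0)+\Gamma_k(W^0)\subseteq\Gamma_k(W^0)$ and the Hausdorff convergence property — this is the content of the "well-known" result and adds no real gap.
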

\begin{proof}
	Take a smooth metric $h'$ on $L'$, let $\theta'=c_1(L',h')$. We identify $\phi'$ with $\varphi'\in \PSH(X,\theta')$. Then we need to show
	\begin{equation}\label{eq:suba}
		\Delta(\theta,\varphi)+\Delta(\theta',\varphi')\subseteq \Delta(\theta+\theta',\varphi+\varphi').
	\end{equation}
	By \cite[Theorem~3.8]{DX21}, we can find $\varphi^j\in \PSH(X,\theta)$, $\varphi'^j\in \PSH(X,\theta')$ such that
	\begin{enumerate}
		\item $\varphi^j$ and $\varphi'^j$ both have analytic singularities and have positive masses.
		\item $\varphi^j\xrightarrow{d_S}\varphi$, $\varphi'^j\xrightarrow{d_S}\varphi'$.
	\end{enumerate}
	Then $\varphi^j+\varphi'^j\in \PSH(X,\theta+\theta')$ and $\varphi^j+\varphi'^j\xrightarrow{d_S} \varphi+\varphi'$ by \cref{cor:dssum}. Thus, by \cref{thm:Okoucont}, we may assume that $\varphi$ and $\psi$ both have analytic singularities. Taking a birational resolution, we may further assume that they have analytic singularities along some normal crossing divisors. By \cref{rmk:DeltaanaW0}, we reduce to the case without singularities, in which case the result is well-known, see \cite[The proof of Corollary~4.12]{LM09} for example.
\end{proof}

\begin{theorem}\label{thm:concOko}
	Let $\varphi,\psi\in \PSH(X,\theta)_{>0}$. Then for any $t\in (0,1)$,
	\begin{equation}\label{eq:Deltaconcave}
		\Delta(\theta,t\varphi+(1-t)\psi)\supseteq t\Delta(\theta,\varphi)+(1-t)\Delta(\theta,\psi).
	\end{equation}
\end{theorem}
\begin{proof}

	We may assume that $t$ is rational as a consequence of \cref{thm:Okoucont}. Similarly, by \cite[Theorem~3.8]{DX21},
	we could reduce to the case where both $\varphi$ and $\psi$ have analytic singularities. Taking a resolution, we may assume that $\varphi$ (resp. $\psi$) has analytic singularities along a normal crossing $\mathbb{Q}$-divisor $E$ (resp. $E'$).
	In this case, let $N>0$ be an integer such that $Nt$ is an integer. Then for any $s\in W_k^0(\theta,\varphi)$, $r\in W_k^0(\theta,\psi)$, we have
	\[
		\left( s^{t}r^{1-t} \right)^N\in W^0_{Nk}(\theta,t\varphi+(1-t)\psi).
	\]
	By \cref{thm:HausOkoun}, \eqref{eq:Deltaconcave} follows.
\end{proof}

\begin{proposition}\label{prop:res}
	For any integer $a>0$,
	\[
		\Delta(a\theta,a\varphi)=a\Delta(\theta,\varphi).
	\]
\end{proposition}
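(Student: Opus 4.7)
The crux is the reindexing identity
\[
W_k(a\theta, a\varphi) = H^0(X, L^{ak}\otimes \mathcal{I}(ak\varphi)) = W_{ak}(\theta, \varphi),
\]
which yields at the level of finite Okounkov sets
\[
\Gamma_k(W(a\theta, a\varphi)) = a\,\Gamma_{ak}(W(\theta, \varphi)), \qquad \Delta_k(W(a\theta, a\varphi)) = a\,\Delta_{ak}(W(\theta, \varphi)).
\]
The strategy is to propagate this scaling through the same approximation hierarchy used in the construction of $\Delta(\theta, \varphi)$.

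First I would handle the case where $\varphi$ has analytic singularities. After pulling back along a log resolution $\pi:Y\to X$ (which preserves partial Okounkov bodies), we may assume $\varphi$ has analytic singularities along an nc $\mathbb{Q}$-divisor $E$. By \cref{rmk:DeltaanaW0},
\[
\Delta(\theta, \varphi) = \nu(E) + \Delta(L-E), \qquad \Delta(a\theta, a\varphi) = \nu(aE) + \Delta(aL - aE),
\]
since $a\varphi$ has analytic singularities along $aE$. Using the classical scaling $\Delta(a(L-E)) = a\,\Delta(L-E)$ for the ordinary Okounkov body of a big $\mathbb{Q}$-divisor (provable by the argument in \cref{lma:indepL}; see also \cite[Proposition~4.1]{LM09}) and $\nu(aE)=a\nu(E)$, we obtain $\Delta(a\theta, a\varphi) = a\,\Delta(\theta, \varphi)$.

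For general $\varphi$ with $\int_X \theta_{P[\varphi]_\mathcal{I}}^n > 0$, I would pass to the limit via \cref{thm:Okoucont}. The key point is that the scaling $\varphi \mapsto a\varphi$ is continuous from $(\PSH(X,\theta)_{>0}, d_{S,\theta})$ to $(\PSH(X, a\theta)_{>0}, d_{S,a\theta})$: indeed $(a\theta)_{a\psi} = a\theta_\psi$ and $V_{a\theta} = aV_\theta$, so every mixed mass appearing in the double inequality of \cite[Proposition~3.5]{DDNL5} scales by $a^n$, and $d_{S,a\theta}(a\varphi, a\psi)$ is comparable to $a^n d_{S,\theta}(\varphi, \psi)$ up to a constant depending only on $n$. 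Choosing a sequence $\varphi^j$ with analytic singularities converging to $\varphi$ in $d_S$ (built as in the definition of $\Delta(\theta,\varphi)$: first approximate $\varphi$ by $(1-\epsilon)\varphi + \epsilon \psi$ with $\theta_\psi$ a Kähler current, then use a quasi-equisingular approximation), applying the analytic case to each $\varphi^j$, and invoking \cref{thm:Okoucont} on both sides gives
\[
\Delta(a\theta, a\varphi) = \lim_j \Delta(a\theta, a\varphi^j) = \lim_j a\,\Delta(\theta, \varphi^j) = a\,\Delta(\theta, \varphi).
\]

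I do not expect genuine obstacles: once the reindexing identity is noted, the only bookkeeping is to verify that $\varphi \mapsto a\varphi$ respects each approximation step, which it does by the scaling relations above. The minor pitfall to watch for is the distinction between $d_{S,\theta}$ and $d_{S, a\theta}$ when transporting convergence, which is resolved by the equivalence of the two pseudo-metrics established in the previous paragraph.
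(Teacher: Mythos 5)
Your proposal is correct and follows essentially the same route as the paper's proof: reduce to the analytic-singularities case by $d_S$-continuity (\cref{thm:Okoucont}), pass to a resolution so that $\varphi$ has analytic singularities along an nc $\mathbb{Q}$-divisor, and then use \cref{rmk:DeltaanaW0} to reduce to the classical homogeneity of Okounkov bodies from \cite{LM09}. The only difference is that you spell out the comparability of $d_{S,a\theta}(a\varphi,a\psi)$ with $a^n d_{S,\theta}(\varphi,\psi)$, a point the paper leaves implicit.
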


\begin{proof}
	By \cref{thm:Okoucont}, it suffices to treat the case where $\varphi$ has analytic singularities. Taking a birational resolution, we may assume that $\varphi$ has analytic singularities along a normal crossing $\mathbb{Q}$-divisor $E$. By \cref{rmk:DeltaanaW0}, we reduce to the case without the singularity $\varphi$, which is already proved in \cite{LM09}.
\end{proof}
In particular, if $T$ is a closed positive $(1,1)$-current on $X$ with $\int_X T^n>0$ and such that the cohomology class of $T$ lies in the Néron--Severi group with rational coefficients, then we can define $\Delta(T)$ as $a^{-1}\Delta(aT)$ for a sufficiently divisible positive integer $a$. 

We also need the following perturbation. Let $A$ be an ample line bundle on $X$. Fix a smooth Hermitian metric $h_A$ on $A$ such that $\omega\coloneqq c_1(A,h_A)$ is a K\"ahler form on $X$. Then for any $\delta\in \mathbb{Q}_{>0}$, we can define
\[
	\Delta(\theta+\delta\omega,\varphi)\coloneqq \Delta(\theta+\delta\omega+\ddc\varphi)= C^{-1}\Delta(C\theta+C\delta\omega,C\varphi)  ,
\]
where $C\in \mathbb{N}_{>0}$ is any  integer so that $C\delta\in \mathbb{N}$.

\begin{proposition}\label{prop:Deltapert}
	Under the above assumptions, as $\delta\in \mathbb{Q}_{>0}$ decreases to $0$, $\Delta(\theta+\delta\omega,\varphi)$ is decreasing under inclusion with Hausdorff limit $\Delta(\theta,\varphi)$.
\end{proposition}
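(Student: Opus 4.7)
The plan is to establish three intermediate facts about the rational family $\{\Delta(\theta+\delta\omega,\varphi)\}_{\delta\in\mathbb{Q}_{>0}}$---monotonicity in $\delta$, containment of $\Delta(\theta,\varphi)$ in each member, and convergence of volumes to $\vol\Delta(\theta,\varphi)$---and then conclude via the Blaschke selection theorem and \cref{lma:volcbimpeq}.

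For any $0\leq\delta_1<\delta_2$ in $\mathbb{Q}$, applying \cref{prop:suba} to the Hermitian psef $\mathbb{Q}$-line bundles $(L+\delta_1 A,\phi)$ and $((\delta_2-\delta_1)A,0)$ yields
\[
\Delta(\theta+\delta_1\omega,\varphi)+\Delta((\delta_2-\delta_1)\omega,0)\subseteq\Delta(\theta+\delta_2\omega,\varphi).
\]
As $(\delta_2-\delta_1)A$ is ample, for $k$ large there exist sections of its $k$-th power not vanishing at the image of the center of $\nu$ on the birational model carrying the admissible flag inducing $\nu$; their pullbacks have $\nu$-value $0$, so $0\in\Delta((\delta_2-\delta_1)\omega,0)$, and the above inclusion forces $\Delta(\theta+\delta_1\omega,\varphi)\subseteq\Delta(\theta+\delta_2\omega,\varphi)$. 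Specializing to $\delta_1=0$ also yields $\Delta(\theta,\varphi)\subseteq\Delta(\theta+\delta\omega,\varphi)$ for every $\delta\in\mathbb{Q}_{>0}$.

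The main step is the volume convergence. Since $\Delta(\theta+\delta\omega,\varphi)$ depends only on the multiplier ideals $\{\mathcal{I}(k\varphi)\}_k$, I may replace $\varphi$ by $P^\theta[\varphi]_\mathcal{I}$ without altering any of the bodies in question, and so assume $\varphi$ is $\mathcal{I}$-model in $\PSH(X,\theta)$. Set $\psi_\delta:=P^{\theta+\delta\omega}[\varphi]$; after normalizing $\varphi\leq 0$ (which affects no $\Delta$), by construction $\varphi\leq\psi_\delta\leq 0$ share the same singularity type, and by \cref{cor:I-modelpert} $\psi_\delta$ is $\mathcal{I}$-model in $\PSH(X,\theta+\delta\omega)$. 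The $\mathcal{I}$-model property together with \cref{thm:DXmain} gives $\int_X(\theta+\delta\omega)^n_{\psi_\delta}=\int_X(\theta+\delta\omega)^n_{P^{\theta+\delta\omega}[\varphi]_\mathcal{I}}$, while equality of singularity types gives $\int_X(\theta+\delta\omega)^n_{\psi_\delta}=\int_X(\theta+\delta\omega)^n_\varphi$. Using the volume formula and expanding (legitimately, since $\delta\omega$ is smooth and distributes under the non-pluripolar product),
\[
\vol\Delta(\theta+\delta\omega,\varphi)=\tfrac{1}{n!}\int_X(\theta_\varphi+\delta\omega)^n=\tfrac{1}{n!}\sum_{i=0}^n\binom{n}{i}\delta^i\int_X\omega^i\wedge\theta_\varphi^{n-i},
\]
which tends to $\tfrac{1}{n!}\int_X\theta_\varphi^n=\vol\Delta(\theta,\varphi)$ as $\delta\to 0+$.

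Combining the three facts, the decreasing family $\{\Delta(\theta+\delta\omega,\varphi)\}_{\delta\in\mathbb{Q}_{>0}}$ Hausdorff-converges to $\Delta_\infty:=\bigcap_{\delta\in\mathbb{Q}_{>0}}\Delta(\theta+\delta\omega,\varphi)\supseteq\Delta(\theta,\varphi)$, and by \cref{thm:contvol} $\vol\Delta_\infty=\vol\Delta(\theta,\varphi)>0$; \cref{lma:volcbimpeq} then forces $\Delta_\infty=\Delta(\theta,\varphi)$. The main obstacle I expect is the middle step: correctly tracking the three different envelope operations ($P^\theta[\cdot]_\mathcal{I}$, $P^{\theta+\delta\omega}[\cdot]$, and $P^{\theta+\delta\omega}[\cdot]_\mathcal{I}$) and verifying that the three non-pluripolar masses that appear agree; once the correct envelope $\psi_\delta$ is identified and the volume is expressed as $\tfrac{1}{n!}\int_X(\theta_\varphi+\delta\omega)^n$, the rest is formal.
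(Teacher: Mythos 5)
Your overall architecture coincides with the paper's: monotonicity and the containment $\Delta(\theta,\varphi)\subseteq\Delta(\theta+\delta\omega,\varphi)$ via \cref{prop:suba} (you spell out the needed fact that $0$ lies in the Okounkov body of the ample factor, which the paper leaves implicit), then volume convergence, then \cref{lma:volcbimpeq} to identify the intersection with $\Delta(\theta,\varphi)$. The genuine difference is the volume step: the paper simply quotes \cite[Corollary~3.5]{DX21} for $\int_X(\theta+\delta\omega)^n_{P^{\theta+\delta\omega}[\varphi]_{\mathcal{I}}}\to\int_X\theta^n_{P^{\theta}[\varphi]_{\mathcal{I}}}$, whereas you give a self-contained derivation by first replacing $\varphi$ with $P^{\theta}[\varphi]_{\mathcal{I}}$, invoking \cref{cor:I-modelpert}, and then expanding $\int_X(\theta_\varphi+\delta\omega)^n$ by multilinearity of the non-pluripolar product. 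This buys independence from the external corollary at the cost of a slightly longer envelope-chasing argument; it is a legitimate alternative, and in fact once $\varphi$ is $\mathcal{I}$-model the $\mathcal{I}$-model property of $\psi_\delta=P^{\theta+\delta\omega}[\varphi]$ forces the exact identity $P^{\theta+\delta\omega}[\varphi]_{\mathcal{I}}=\psi_\delta$, which streamlines your chain of mass equalities.

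One justification in your middle step is wrong as stated, though the conclusion survives: $\varphi$ and $\psi_\delta=P^{\theta+\delta\omega}[\varphi]$ do \emph{not} in general share the same singularity type; one only has $[\varphi]\preceq[\psi_\delta]$, and the envelope is typically strictly less singular (this is precisely why "model" is a nontrivial condition). The equality you actually need, $\int_X(\theta+\delta\omega)^n_{\psi_\delta}=\int_X(\theta+\delta\omega)^n_{\varphi}$, should instead be justified by the mass-preservation of the envelope, i.e. by $C^{\theta+\delta\omega}[\varphi]=P^{\theta+\delta\omega}[\varphi]$ when $\int_X(\theta+\delta\omega)^n_\varphi>0$ (the quoted \cite[Proposition~2.6]{DDNL5}), the positivity being guaranteed after your reduction since $\int_X(\theta+\delta\omega)^n_\varphi\geq\int_X\theta^n_\varphi>0$. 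With that correction the argument is complete.
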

\begin{proof}
	Let $0\leq \delta<\delta'$ be two rational numbers. Take $C\in \mathbb{N}_{>0}$ divisible enough, so that $C\delta$ and $C\delta'$ are both integers. Then by \cref{prop:suba},
	\[
		\Delta(C\theta+C\delta\omega,C\varphi)\subseteq    \Delta(C\theta+C\delta'\omega,C\varphi) .
	\]
	It follows that
	\[
		\Delta(\theta+\delta\omega,\varphi)\subseteq \Delta(\theta+\delta'\omega,\varphi).
	\]
	On the other hand,
	\[
		\vol   \Delta(\theta+\delta\omega,\varphi)=\frac{1}{n!}\int_X (\theta+\delta\omega)_{P^{\theta+\delta\omega}[\varphi]_{\mathcal{I}}}^n=\frac{1}{n!}\int_X (\theta+\delta\omega)_{P^{\theta}[\varphi]_{\mathcal{I}}}^n,
	\]
    where we applied \cref{cor:I-modelpert}.
	As $\delta\to 0+$, the right-hand side converges to
	\[
		\vol   \Delta(\theta,\varphi)=\frac{1}{n!}\int_X \theta_{P^{\theta}[\varphi]_{\mathcal{I}}}^n.
	\]
	It follows that
	\[
		\Delta(\theta,\varphi)=\bigcap_{\delta\in \mathbb{Q}_{>0}}\Delta(\theta+\delta\omega,\varphi).
	\]
\end{proof}

\subsection{The Hausdorff convergence property of partial Okounkov bodies}
For each $k\in \mathbb{Z}_{>0}$, we introduce
\[
\Delta_k(\theta,\varphi)\coloneqq \Conv\left\{ k^{-1}\nu(f): f\in \mathrm{H}^0(X,L^k\otimes \mathcal{I}(k\varphi))^{\times} \right\}\subseteq \mathbb{R}^n.
\]
Here $\Conv$ denotes the convex hull. The convex hull is a polytope if it is non-empty by \cite[Lemma~1.4]{LM09}. For large enough $\Delta_k(\theta,\varphi)$ is non-empty thanks to \cref{thm:DXmain}.

For later use, we introduce a twisted version as well. If $T$ is a holomorphic line bundle on $X$, we introduce
\[
\Delta_{k,T}(\theta,\varphi)\coloneqq \Conv\left\{ k^{-1}\nu(f): f\in \mathrm{H}^0(X,T\otimes L^k\otimes \mathcal{I}(k\varphi))^{\times} \right\}\subseteq \mathbb{R}^n.
\]
We also write
\[
    \Delta_{k,T}(L)\coloneqq \Conv\left\{ k^{-1}\nu(f): f\in \mathrm{H}^0(X,T\otimes L^k)^{\times} \right\}\subseteq \mathbb{R}^n
\]
and
\[
    \Delta_{k}(L)\coloneqq \Conv\left\{ k^{-1}\nu(f): f\in \mathrm{H}^0(X,L^k)^{\times} \right\}\subseteq \mathbb{R}^n.
\]

We write $\mathcal{I}_{\infty}(\varphi)=\mathcal{I}_{\infty}(\phi)$ for the ideal sheaf on $X$ locally consisting of holomorphic functions $f$ such that $|f|_{\phi}$ is locally bounded.

The main result is the following:
\begin{theorem}[Hausdorff convergence property]\label{thm:HCP}
    Let $T$ be a holomorphic line bundle on $X$.
    As $k\to\infty$, we have $\Delta_{k,T}(\theta,\varphi)\xrightarrow{d_n} \Delta(\theta,\varphi)$.
\end{theorem}
Although we are only interested in the untwisted case, the proof given below requires twisted case.

We first extend \cref{thm:HausOkoun} to the twisted case.
\begin{proposition}\label{prop-Deltaconvtwisted}
    For any holomorphic line bundle $T$ on $X$, as $k\to\infty$
    \[
        \Delta_{k,T}(L)\xrightarrow{d_n} \Delta(L).
    \]
\end{proposition}

\begin{proof}
    As $L$ is big, we can take $k_0\in \mathbb{Z}_{>0}$ so that
    \begin{enumerate}
        \item $T^{-1}\otimes L^{k_0}$ admits a non-zero global holomorphic section $s_0$;
        \item $T\otimes L^{k_0}$ admits a non-zero global holomorphic section $s_1$.
    \end{enumerate}
    Then for $k\in \mathbb{Z}_{>k_0}$, we have injective linear maps
    \[
        \mathrm{H}^0(X,L^{k-k_0})\xrightarrow{\times s_1}\mathrm{H}^0(X,T\otimes L^{k})\xrightarrow{\times s_0}\mathrm{H}^0(X, L^{k+k_0}).
    \]
    It follows that
    \[
        (k-k_0)\Delta_{k-k_0}(L)+\nu(s_1)\subseteq k\Delta_{k,T}(L)\subseteq (k+k_0)\Delta_{k+k_0}(L)-\nu(s_0).
    \]
    By \cref{thm:HausOkoun}, we conclude.
\end{proof}

\begin{lemma}\label{lma-twistedHcp}
    Let $T$ be a holomorphic line bundle on $X$.
    Assume that $\varphi$ has analytic singularities and $\theta_{\varphi}$ is a K\"ahler current, then as $k\to\infty$,
    \[
        \Delta_{k,T}(\theta,\varphi)\xrightarrow{d_n} \Delta(\theta,\varphi).
    \]
\end{lemma}
\begin{proof}
    Up to replacing $X$ by a birational model and twisting $T$ accordingly, we may assume that $\varphi$ has analytic singularities along a normal crossing $\mathbb{Q}$-divisor $D$, c.f. \cref{prop:birinvO}. 
    Take $\epsilon\in (0,1)\cap \mathbb{Q}$.
    In this case, as in \eqref{eq:OTc}, for large enough $k\in \mathbb{Z}_{>0}$ we have
    \[
        \mathrm{H}^0(X,T\otimes L^k\otimes \mathcal{I}_{\infty}(k\varphi)) \subseteq \mathrm{H}^0(X,T\otimes L^k\otimes \mathcal{I}(k\varphi))
        \subseteq \mathrm{H}^0(X,T\otimes L^k\otimes \mathcal{I}_{\infty}(k(1-\epsilon)\varphi)).
    \]
    Take an integer $N\in \mathbb{Z}_{>0}$ so that $ND$ is a divisor and $N\epsilon$ is an integer.

    Let $\Delta'$ be the limit of a subsequence of $(\Delta_{k,T}(\theta,\varphi))_k$, say the sequence defined by the indices $k_1,k_2,\ldots$. We want to show that $\Delta'=\Delta(\theta,\varphi)$.

    There exists $t\in \{0,1,\ldots,N-1\}$ such that $k_i\equiv t$ modulo $N$ for infinitely many $i$, up to replacing $k_i$ by a subsequence, we may assume that $k_i\equiv t$ modulo $N$ for all $i$. Write $k_i=N g_i+t$. 
    Then
    \[
    \begin{split}
        \mathrm{H}^0(X,T\otimes L^{-N+t} \otimes L^{N(g_i+1)}\otimes \mathcal{I}_{\infty}(N(g_i+1)\varphi)) \subseteq \mathrm{H}^0(X,T\otimes L^{k_i}\otimes \mathcal{I}(k_i\varphi)) \\
        \subseteq \mathrm{H}^0(X,T\otimes L^t \otimes L^{Ng_i}\otimes \mathcal{I}_{\infty}(g_i N(1-\epsilon)\varphi)).
    \end{split}
    \]
    So
    \[
    \begin{split}
    (g_i+1) \Delta_{g_i+1,T\otimes L^{-N+t}}(NL-ND)+N(g_i+1)\nu(D)\subseteq (Ng_i+t)\Delta_{k,T}(\theta,\varphi)\\
    \subseteq 
    g_i\Delta_{g_i,T\otimes L^t}(NL-N(1-\epsilon)D)+Ng_i(1-\epsilon)\nu(D).
    \end{split}
    \]
    Letting $i\to\infty$, by \cref{prop-Deltaconvtwisted},
    \[
        \Delta(L-D)+\nu(D)\subseteq \Delta'\subseteq \Delta(L-(1-\epsilon)D)+(1-\epsilon)\nu(D).
    \]
    Letting $\epsilon\to 0+$, we find that
    \[
        \Delta(L-D)+\nu(D)=\Delta'.
    \]
    It follows from \cref{thm:Bst} that 
    \[
        \Delta_{k,T}(\theta,\varphi)\xrightarrow{d_n} \Delta(L-D)+\nu(D)=\Delta(\theta,\varphi)
    \]
    as $k\to\infty$.
\end{proof}

\begin{lemma}\label{lma-Hausconvbetato0}
    Assume that $\theta_{\varphi}$ is a K\"ahler current, then as $\mathbb{Q}\ni\beta\to 0+$, we have
    \[
        \Delta((1-\beta)\theta,\varphi)\to \Delta(\theta,\varphi). 
    \]
\end{lemma}

\begin{proof}
    
    By \cref{prop:suba}, we have
    \[
        \Delta((1-\beta)\theta,\varphi)+\beta\Delta(L)\subseteq \Delta(\theta,\varphi).
    \]
    In particular, if $\Delta'$ is a limit of a subsequence of $(\Delta((1-\beta)\theta,\varphi))_{\beta}$, then 
    \[
        \Delta'\subseteq \Delta(\theta,\varphi).
    \]
    But 
    \[
        \vol \Delta'=\lim_{\beta\to 0+}\Delta((1-\beta)\theta,\varphi)=\lim_{\beta\to 0+}\int_X ((1-\beta)\theta+\ddc P^{(1-\beta)\theta}[\varphi]_{\mathcal{I}})^n=\int_X (\theta+\ddc P^{\theta}[\varphi]_{\mathcal{I}})^n,
    \]
    where the last step follows easily from \cite[Theorem~0.6]{Xia22}. It follows that $\Delta'=\Delta(\theta,\varphi)$. We conclude by \cref{thm:Bst}.
\end{proof}

\begin{proof}[Proof of \cref{thm:HCP}]
Fix a K\"ahler form $\omega\geq \theta$ on $X$.

\textbf{Step~1}. We first handle the case where $\theta_{\varphi}$ is a Kähler current, say $\theta_{\varphi}\geq \beta_0 \omega$ for some $\beta_0\in (0,1)$.

Take a decreasing quasi-equisingular approximation $\varphi_j$ of $\varphi$. 
Up to replacing $\beta_0$ by $\beta_0/2$, we may assume that $\theta_{\varphi_j}\geq \beta_0 \omega$ for all $j\geq 1$.

Let $\Delta'$ be a limit of a subsequence of $(\Delta_{k,T}(\theta,\varphi))_k$. Let us say the indices of the subsequence are $k_1<k_2<\cdots$. By \cref{thm:Bst}, it suffices to show that $\Delta'=\Delta(\theta,\varphi)$.

As $[\varphi] \preceq [\varphi_j]$ for each $j\geq 1$, we have $\Delta'\subseteq \Delta(\theta,\varphi_j)$ by \cref{lma-twistedHcp}. Letting $j\to \infty$, we find
\[
\Delta'\subseteq \Delta(\theta,\varphi).
\]
In particular, it suffices to prove that
\[
    \vol \Delta'\geq \vol \Delta(\theta,\varphi).
\]
Take $\beta\in (0,\beta_0)\cap \mathbb{Q}$. Write $\beta=p/q$ with $p,q\in \mathbb{Z}_{>0}$. Observe that for any $j\geq 1$,
\[
    \theta_{\varphi_j}\geq \beta\omega\geq \beta\theta.
\]
Namely, $\varphi_j\in \PSH(X,(1-\beta)\theta)$. Similarly, $\varphi\in \PSH(X,(1-\beta)\theta)$.
By \cref{lma-Hausconvbetato0}, it suffices to argue that
\begin{equation}\label{eq:volDeltatoprove}
\vol \Delta'\geq \vol \Delta((1-\beta)\theta,\varphi).
\end{equation}
For this purpose, we are free to replace $k_i$'s by a subsequence, so we may assume that $k_i\equiv a$ modulo $q$ for all $i\geq 1$, where $a\in \{0,1,\ldots,q-1\}$. We write $k_i=g_iq+a$.
Observe that for each $i\geq 1$,
\[
    \mathrm{H}^0(X,T\otimes L^{k_i}\otimes \mathcal{I}(k_i\varphi))\supseteq \mathrm{H}^0(X,T\otimes L^{-q+a}\otimes L^{g_iq+q}\otimes \mathcal{I}((g_iq+q)\varphi)).
\]
Up to replacing $T$ by $T\otimes L^{-q+a}$, we may therefore assume that $a=0$.

By \cite[Lemma~4.2]{DX21}, we can find $k'\in \mathbb{Z}_{>0}$ such that for all $k\geq k'$, there is $v_{\beta,k}\in \PSH(X,\theta)$ satisfying
\begin{enumerate}
    \item $P[\varphi]_{\mathcal{I}}\geq (1-\beta)\varphi_k+\beta v_{\beta,k}$;
    \item $v_{\beta,k}$ has positive mass. 
\end{enumerate}

Fix $k\geq k'$. It suffices to show that 
\begin{equation}\label{eq:DeltatransinDeltaprime}
    \Delta((1-\beta)\theta,\varphi_{k})+v'\subseteq \Delta'
\end{equation}
for some $v'\in \mathbb{R}^n$. In fact, if this is true, we have
\[
\vol \Delta'\geq \vol \Delta((1-\beta)\theta,\varphi_{k}).
\]
Letting $k\to\infty$ and applying \cref{thm:Okoucont}, we conclude \eqref{eq:volDeltatoprove}.

It remains to prove \eqref{eq:DeltatransinDeltaprime}. We will fix $k\geq k'$.
Let $\pi:Y\rightarrow X$ be a log resolution of the singularities of $\varphi_k$. 
By the proof of \cite[Proposition~4.3]{DX21}, there is $j_0=j_0(\beta,k)\in \mathbb{Z}_{>0}$ such that for any $j\geq j_0$, we can find a non-zero section $s_j\in \mathrm{H}^0(Y,\pi^* L^{pj}\otimes \mathcal{I}(jp\pi^*v_{\beta,k}))$ such that we get an injective linear map
    \[
        \mathrm{H}^0(Y,\pi^*T\otimes K_{Y/X}\otimes \pi^*L^{(q-p)j}\otimes \mathcal{I}(jq \pi^*\varphi_k))\xrightarrow{\times s_j} \mathrm{H}^0(X,T\otimes L^{jq}\otimes \mathcal{I}(jq\varphi)).
    \]
In particular, when $j=k_i$ for some $i$ large enough, we then find
\[
    \Delta_{k_i,\pi^*T\otimes K_{Y/X}}((1-\beta)q\pi^*\theta,q\pi^*\varphi_{k})+(k_i)^{-1}\nu(s_{k_i})\subseteq q\Delta_{k_i,T}(\theta,\varphi).
\]
We observe that $(k_i)^{-1}\nu(s_{k_i})$ is bounded as both convex bodies appearing in this equation are bounded when $i$ varies. 
Then by \cref{lma-twistedHcp}, there is a vector $v'\in \mathbb{R}^n$ such that
\[
    \Delta((1-\beta)\pi^*\theta,\pi^*\varphi_{k})+v'\subseteq \Delta'.
\]
By \cref{prop:birinvO}, we find \eqref{eq:DeltatransinDeltaprime}.

    \textbf{Step~2}. Next we handle the general case.

Let $\Delta'$ be the limit of a subsequence of $(\Delta_{k,T}(\theta,\varphi))_k$, say the subsequence with indices $k_1<k_2<\cdots$. 
By \cref{thm:Bst}, it suffices to prove that $\Delta'=\Delta(\theta,\varphi)$.
    
    Take $\psi\in \PSH(X,\theta)$ such that 
    \begin{enumerate}
        \item $\theta_{\psi}$ is a K\"ahler current;
        \item $\psi\leq \varphi$.
    \end{enumerate}
    The existence of $\psi$ is proved in \cite[Proposition~3.6]{DX21}.

    Then for any $\epsilon\in \mathbb{Q}\cap (0,1)$,
    \[
        \Delta_{k,T}(\theta,\varphi)\supseteq \Delta_{k,T}(\theta,(1-\epsilon)\varphi+\epsilon\psi)
    \]
    for all $k$. It follows from Step~1 that
    \[
        \Delta' \supseteq \Delta(\theta,(1-\epsilon)\varphi+\epsilon\psi).
    \]
    
    Letting $\epsilon\to 0$ and applying \cref{thm:Okoucont}, we have $\Delta' \supseteq \Delta(\theta,\varphi)$.
    It remains to establish that
    \begin{equation}\label{eq:Deltapvolumeupp}
        \vol \Delta'\leq \vol \Delta(\theta,\varphi).
    \end{equation}
    For this purpose, we are free to replace $k_1<k_2<\cdots$ by a subsequence. Fix $q>0$, we may then assume that $k_i\equiv a$ modulo $q$ for all $i\geq 1$ for some $a\in \{0,1,\ldots,q-1\}$. We write $k_i=g_iq +a$. Observe that 
    \[
        \mathrm{H}^0(X,T\otimes L^{k_i}\otimes \mathcal{I}(k_i \varphi))\subseteq \mathrm{H}^0(X, T\otimes L^a \otimes L^{g_i q}\otimes \mathcal{I}(g_i q \varphi) ).
    \]
    Up to replacing $T$ by $T\otimes L^a$, we may assume that $a=0$.
    
    Take a very ample line bundle $H$ on $X$ and fix a K\"ahler form $\omega\in c_1(H)$, take a non-zero section $s\in \mathrm{H}^0(X,H)$.
    
    We have an injective linear map
    \[
        \mathrm{H}^0(X,T\otimes L^{jq}\otimes \mathcal{I}(jq\varphi))\xrightarrow{\times s^j}\mathrm{H}^0(X,T\otimes H^j\otimes L^{jq}\otimes \mathcal{I}(jq\varphi))
    \]
    for each $j\geq 1$.
    In particular, for each $i\geq 1$,
    \[
        k_i\Delta_{k_i,T}(q\theta,q\varphi)+k_i\nu(s)\subseteq  k_i\Delta_{k_i,T}(\omega+q\theta,q\varphi).
    \]
    Letting $i\to\infty$, by Step~1, we have
    \[
        q\Delta' + \nu(s) \subseteq \Delta(\omega+q\theta,q\varphi).
    \]
    So
    \[
        \vol \Delta'\leq \vol \Delta(q^{-1}\omega+\theta,\varphi)=\int_X (q^{-1}\omega+\theta+\ddc P^{q^{-1}\omega+\theta}[\varphi]_{\mathcal{I}})^n.
    \]
    By \cref{cor:I-modelpert},
    \[
    \vol \Delta'\leq\int_X (q^{-1}\omega+\theta+\ddc P^{\theta}[\varphi]_{\mathcal{I}})^n.
    \]
    Letting $q\to\infty$, we conclude \eqref{eq:Deltapvolumeupp}.
\end{proof}

\begin{theorem}
	The Okounkov body $\Delta(L,\phi)$ is independent of the choice of a very general flag in a family of admissible flags.
\end{theorem}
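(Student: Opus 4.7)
The plan is to reduce, via the approximation procedure behind the construction of $\Delta(L,\phi)$, to the classical Okounkov body of elements of $\GLS(L)_{>0}$ and then to show genericity of the latter in $\mathbf{F}$. Concretely, $W(\theta,\varphi)\in\overline{\GLS(L)}_{>0}$ is the $d$-limit of a \emph{countable} approximating family $(W^i)_{i\in\mathbb{N}}\subseteq\GLS(L)_{>0}$, obtained by combining a quasi-equisingular approximation of $\varphi$ (reducing to the K\"ahler-current case via the rational perturbation $\varphi_\epsilon=(1-\epsilon)\varphi+\epsilon\psi$) with the sandwich $W^0(\theta,\varphi^j)\subseteq W(\theta,\varphi^j)\subseteq W^\epsilon(\theta,\varphi^j)$ from \cref{lma:anacase}; crucially, this approximating family is constructed in a flag-independent way. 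By the continuity of $\Delta\colon\overline{\GLS(L)}_{>0}\to\mathcal{K}_n$ (\cref{thm:Okocont}) for each fixed valuation $\nu$, it then suffices to produce for each $W^i$ a Zariski open dense subset $U_i\subseteq\mathbf{F}$ on which the classical body $\Delta_\nu(W^i)$ is constant; the countable intersection $\bigcap_i U_i$ is very general in $\mathbf{F}$, and on it $\Delta_\nu(L,\phi)=\lim_i\Delta_\nu(W^i)$ is constant.

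The core step is the following key lemma: for each $W\in\GLS(L)_{>0}$ and each $k\in\mathbb{N}$, there is a Zariski open dense $U_{W,k}\subseteq\mathbf{F}$ on which $\Gamma_k(W)=\nu_{(Y_\bullet)}(W_k^\times)\subseteq\mathbb{Z}^n$ is constant; the countable intersection over $k$ then yields a very general subset on which the full semigroup $\Gamma(W)$, and hence $\Delta_\nu(W)$, is constant. To prove the lemma, fix $V=W_k$ and, for each $a\in\mathbb{Z}^n$, consider the subspace $V^\nu_{\geq a}:=\{s\in V:\nu_{(Y_\bullet)}(s)\geq_{\mathrm{lex}}a\}$. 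For each fixed $s$ the locus $\{(Y_\bullet)\in\mathbf{F}:\nu(s)\geq a\}$ is Zariski closed (each lex inequality decomposes as a conjunction of usual vanishing-order inequalities, which are closed conditions), so $\dim V^\nu_{\geq a}$ is upper semicontinuous in the flag. Granting that only finitely many $a\in\mathbb{Z}^n$ arise as $\nu_{(Y_\bullet)}(s)$ for some $s\in V$ and some $(Y_\bullet)\in\mathbf{F}$, one intersects finitely many dense opens to obtain $U_V\subseteq\mathbf{F}$ on which every $\dim V^\nu_{\geq a}$ attains its generic minimum; on $U_V$ the entire lex flag of $V$ under $\nu$—in particular its label set $\nu(V\setminus 0)=\Gamma_k(W)$—is constant.

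The main obstacle is establishing the finiteness of the possible valuation values alluded to above. It will follow from a standard constructible-function argument: for each fixed $s\in V$ and each $a\in\mathbb{Z}^n$ the locus $\{(Y_\bullet):\nu(s)\geq a\}\subseteq\mathbf{F}$ is Zariski closed, so the map $(Y_\bullet)\mapsto\nu(s)$ is constructible on the Noetherian scheme $\mathbf{F}$ and thus takes only finitely many values. Applying this to each element of a basis of $V$, and using that the lex flag structure of $V$ under $\nu$ is determined by the filtration dimensions $\dim V^\nu_{\geq a}$ (which are bounded above by $\dim V$), one concludes that only finitely many $a\in\mathbb{Z}^n$ can appear in any such lex flag, completing the proof.
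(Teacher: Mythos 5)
Your overall architecture is the same as the paper's: the reduction through flag-independent countable approximations (quasi-equisingular approximations, the perturbation $\varphi_\epsilon$, the sandwich $W^0\subseteq W\subseteq W^\epsilon$) together with the continuity of $\Delta$ on $\overline{\GLS(L)}_{>0}$ is exactly how the paper reduces to showing that, for a fixed flag-independent space of sections, the finite-level sets $\Gamma_k$ are constant for a very general flag. The divergence is in how that key generic-constancy lemma is proved, and this is where your argument has genuine gaps. First, the closedness of $\{(Y_\bullet)\in\mathbf{F}:\nu_{(Y_\bullet)}(s)\geq_{\mathrm{lex}}a\}$ is not justified by your parenthetical: the lexicographic condition is \emph{not} a conjunction of vanishing-order inequalities on $s$ (e.g.\ $\nu(s)=(2,0)\geq_{\mathrm{lex}}(1,5)$ while $\nu_2(s)<5$); it is a disjunction of conditions on the sections $s_1,s_2,\ldots$ obtained by iterated division-and-restriction along the \emph{varying} flag, and closedness requires an induction along the flag carried out in the family over $\mathbf{F}$ — precisely what the coherent subsheaves $\mathcal{L}^{\geq\sigma}$ of \cite[Remark~1.6]{LM09} encode. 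Second, and more seriously, even granting fixed-$s$ closedness, upper semicontinuity of $\dim V^{\nu}_{\geq a}$ as a function of the flag does not follow: closedness of the fibers over each $s$ does not give closedness of the incidence locus in $\mathbf{F}\times\mathbb{P}(V)$, which is what you would need (together with properness of the projection) to conclude semicontinuity. The paper fills exactly this hole by pulling everything back to $X\times T$, shrinking $T$ so that $\mathcal{L}^{\geq\sigma}\otimes\mathcal{I}(\Phi)$ is flat, and invoking constancy of $h^0$ on a dense open.

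Third, your finiteness step is not a proof as written: having closed (or constructible) upper-level sets does not make $(Y_\bullet)\mapsto\nu(s)$ a constructible function in the sense of taking finitely many values, each on a constructible set — a function on a Noetherian scheme can have constructible level sets and still take infinitely many values. The finiteness you need is true, but for a different reason: either because the valuation vectors of sections of $L^k$ are uniformly bounded by intersection numbers that are constant in the family of flags (as in \cite{LM09}), or by a descending-chain argument on the closed sets $\{\nu\geq_{\mathrm{lex}}a\}$ once their closedness has actually been established. So the proposal identifies the right statement to prove and the right reduction, but the two pillars of the key lemma — relative (in-family) semicontinuity and finiteness of the possible valuation vectors — are asserted rather than proved; supplying them essentially forces you back to the coherent-sheaf/flatness argument of \cite[Proof of Theorem~5.1]{LM09} that the paper uses.
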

\begin{proof}
 By \cref{thm:HCP}, it suffices to show that $\Delta_k(W(\theta,\varphi))$ is independent of the choice of a very general flag. For this purpose, we may assume that $k=1$.

	Let $T$ be an irreducible component of the moduli space of admissible flags. Let
	\[
		X\times T=\mathcal{Y}_0\supseteq\cdots\supseteq\mathcal{Y}_n
	\]
	be the universal flag. The Hermitian line bundle $(L,\phi)$ pulls back to $(\mathcal{L},\Phi)$ on $X\times T$.
	We denote quantities at the fiber at $t\in T$ by a sub-index $t$.

	We claim that for each $\sigma\in \mathbb{N}^{n}$, there is a proper Zariski closed set $\Sigma\subseteq T$, so that
	\[
		\dim \mathrm{H}^0(X_t,L_t\otimes \mathcal{I}(\phi_t))^{\geq\sigma}
	\]
	are constants for $t\in T\setminus\Sigma$, where $\mathrm{H}^0(X_t,L_t\otimes \mathcal{I}(\phi_t))^{\geq\sigma}$ denotes the space of sections in $\mathrm{H}^0(X_t,L_t\otimes \mathcal{I}(\phi_t))$ with valuations no less than $\sigma$.

	Let $\mathcal{L}^{\geq\sigma}$ be the coherent subsheaf of $\mathcal{L}$ introduced in \cite[Remark~1.6]{LM09}. After possibly shrinking $T$, we may guarantee that $\mathcal{L}^{\geq \sigma}\otimes \mathcal{I}(\Phi)$ is flat over $T$. By further shrinking $T$, we may guarantee that
	\[
		t\mapsto \dim \mathrm{H}^0\left(X_t,(\mathcal{L}^{\geq \sigma}\otimes \mathcal{I}(\Phi))|_{X_t}\right)
	\]
	is constant.
	Observe that
	\[
		(\mathcal{L}^{\geq \sigma}\otimes \mathcal{I}(\Phi))|_{X_t}\cong L_t^{\geq \sigma}\otimes \mathcal{I}(\phi).
	\]
	Thus, our claim follows.

	From this claim, it follows that the images of $\Gamma_k(W(L,\phi))$ are independent of the choice of a very general flag $(Y_{\bullet})$ as \cite[Proof of Theorem~5.1]{LM09}. Thus, $\Delta(W(L,\phi))$ is independent of the choice of a very general flag.
\end{proof}

\subsection{Recover Lelong numbers from partial Okounkov bodies}

\begin{lemma}\label{lma:qesana}
Suppose that $\varphi\in \PSH(X,\theta)$ such that $\theta_{\varphi}$ is a K\"ahler current. Let $\varphi^j$ be a quasi-equisingular approximation of $\varphi$. Then $\nu(\varphi^j,E)\to \nu(\varphi,E)$ for any prime divisor $E$ over $X$.
\end{lemma}
This result is essentially \cite[Lemma~2.2]{Xia20}, proved under slightly different assumptions. We reproduce the argument for the convenience of the readers.
\begin{proof}
Fix $k\in \mathbb{Z}_{>0}$, $\delta\in \mathbb{Q}_{>0}$, take $j_0>0$, so that when $j>j_0$, $\mathcal{I}((1+\delta)k\varphi^j)\subseteq \mathcal{I}(k\varphi)$.
When $j>j_0$, we get
\[
\frac{1}{k}\ord_E(\mathcal{I}(k\varphi))\leq \frac{1}{k}\ord_E(\mathcal{I}((1+\delta)k\varphi^j)).
\]
By Fekete's lemma, 
\[
\nu(\varphi^j,E)=\sup_{k\in \mathbb{Z}_{>0}}\frac{1}{k}\ord_E(\mathcal{I}(k\varphi^j)).
\]
So
\[
\frac{1}{k}\ord_E(\mathcal{I}(k\varphi))\leq (1+\delta)\nu(\varphi^j,E).
\]
Take sup with respect to $k\in \mathbb{Z}_{>0}$, we get
\[
\nu(\varphi,E)\leq (1+\delta)\nu(\varphi^j,E).
\]
Letting $j\to\infty$ and then $\delta\to 0+$, we get
\[
\nu(\varphi,E)\leq \lim_{j\to\infty}\nu(\varphi^j,E).
\]
The reverse inequality is trivial.
\end{proof}

\begin{theorem}\label{thm:nuOk}
	Let $E$ be a prime divisor on $X$. Let $(Y_{\bullet})$ be an admissible flag with $E=Y_1$. Then
	\begin{equation}\label{eq:numinOk}
		\nu(\varphi,E)=\min_{x\in \Delta(\theta,\varphi)}x_1.
	\end{equation}
\end{theorem}
Here $x_1$ denotes the first component of $x$. The generic Lelong number $\nu(\varphi,E)$ means the minimum of $\nu(\varphi,x)$ for various $x\in E$.
\begin{proof}
	We first reduce to the case where $\theta_{\varphi}$ is a Kähler current. Let $\psi\leq \varphi$, $\theta_{\psi}$ is a Kähler current. Then by \eqref{eq:numinOk} applied to $\varphi_{\epsilon}\coloneqq (1-\epsilon)\varphi+\epsilon\psi$, we have
	\[
		\nu(\varphi_{\epsilon},E)=\min_{x\in \Delta(\theta,\varphi_{\epsilon})}x_1.
	\]
	Let $\epsilon\to 0+$ using \cref{thm:Okoucont}, we conclude \eqref{eq:numinOk}.

	Similarly, taking a quasi-equisingular approximation of $\varphi$ and applying \cref{lma:qesana}, we easily reduce to the case where $\varphi$ also has analytic singularities.
	Replacing $X$ by a birational model, we may assume that $\varphi$ has analytic singularities along a simple normal crossing $\mathbb{Q}$-divisor $F$. Perturbing $L$ by an ample $\mathbb{Q}$-line bundle by \cref{prop:Deltapert}, we may assume that $\theta_{\varphi}$ is a Kähler current. 
 Finally, by rescaling, we may assume that $F$ is a divisor and $L$ is a line bundle and $L-F$ is ample by \cite[Lemma~2.4]{Xia20}. In fact, since $\theta_{\varphi}$ is a K\"ahler current, the same holds for $\theta_{\varphi}-\epsilon\omega$, where $\omega$ is a Hodge form lying in $c_1(A)$ for some ample line bundle $A$ on $X$ and $\epsilon>0$ is a small enough rational number. By \cite[Lemma~2.4]{Xia20}, we deduce that $L-F-\epsilon A$ is nef and big and hence $L-F$ is ample.

	By \cref{thm:HCP}, we know that
	\[
		\min_{x\in \Delta(\theta,\varphi)}x_1=\lim_{k\to\infty}\min_{x\in \Delta_k(\theta,\varphi)}x_1.
	\]
	By definition,
	\[
		\min_{x\in \Delta_k(\theta,\varphi)}x_1=k^{-1}\ord_E \mathrm{H}^0(X,L^k\otimes \mathcal{I}(k\varphi)).
	\]
	It remains to show that
	\begin{equation}\label{eq:temp1}
		\lim_{k\to\infty}k^{-1}\ord_E \mathrm{H}^0(X,L^k\otimes \mathcal{I}(k\varphi))=\lim_{k\to\infty}k^{-1}\ord_E \mathcal{I}(k\varphi).
	\end{equation}
	The $\geq$ direction is trivial, we prove the converse. Observe that
	\[
		\mathrm{H}^0(X,L^k\otimes \mathcal{I}(k\varphi))=\mathrm{H}^0(X,L^k\otimes \mathcal{O}_X(-kF)),\quad \mathcal{I}(k\varphi)=\mathcal{O}(-kF).
	\]
	As $L-F$ is ample, for large enough $k$, we have
	\[
		\ord_E  \mathrm{H}^0(X,L^k\otimes \mathcal{O}_X(-kF))=\ord_E(kF).
	\]
	Thus, \eqref{eq:temp1} is clear.
\end{proof}

\begin{corollary}\label{cor:Deltacontimplyvarphi}
	Let $\varphi,\psi\in \PSH(X,\theta)_{>0}$. If
	\[
		\Delta(\pi^*\theta,\pi^*\varphi)\subseteq \Delta(\pi^*\theta,\pi^*\psi)
	\]
	for all birational models $\pi:Y\rightarrow X$ and all admissible flags on $Y$, then $\varphi\preceq_{\mathcal{I}} \psi$.
\end{corollary}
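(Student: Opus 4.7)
The target is to pass from the inclusion of partial Okounkov bodies on every birational model to the pointwise Lelong number inequality $\nu(\pi^*\varphi,y)\geq \nu(\pi^*\psi,y)$, which by the third characterization of $\preceq_{\mathcal{I}}$ listed in the preliminaries is equivalent to $\varphi\preceq_{\mathcal{I}}\psi$.

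First I would reduce to generic Lelong numbers along divisorial valuations. Let $\pi:Z\to X$ be any birational model and let $E\subseteq Z$ be any prime divisor. Choose an admissible flag $(Y_\bullet)$ on $Z$ with $Y_1=E$ (this is always possible after shrinking around a smooth point of $E$). Since $\int_Z(\pi^*\theta)_{\pi^*\varphi}^n=\int_X\theta_{\varphi}^n>0$ and likewise for $\psi$, both partial Okounkov bodies $\Delta(\pi^*\theta,\pi^*\varphi)$ and $\Delta(\pi^*\theta,\pi^*\psi)$ are well-defined convex bodies in $\mathbb{R}^n$ with positive volumes. Applying \cref{lma:nuOk} on $Z$ to each of $\pi^*\varphi$ and $\pi^*\psi$ yields
\[
\nu(\pi^*\varphi,E)=\min_{x\in\Delta(\pi^*\theta,\pi^*\varphi)}x_1,\qquad \nu(\pi^*\psi,E)=\min_{x\in\Delta(\pi^*\theta,\pi^*\psi)}x_1.
\]
By the hypothesis $\Delta(\pi^*\theta,\pi^*\varphi)\subseteq\Delta(\pi^*\theta,\pi^*\psi)$, the minimum of $x_1$ over the smaller body is at least the minimum over the larger body, hence $\nu(\pi^*\varphi,E)\geq\nu(\pi^*\psi,E)$ for every prime divisor $E$ on every smooth birational model of $X$.

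Next I would upgrade this divisorial inequality to an inequality of Lelong numbers at arbitrary points. Fix any birational model $\pi:Y\to X$ and any $y\in Y$; since $Y$ is smooth by our conventions, blow up $Y$ at $y$ to obtain $\pi':Y'\to Y$ with smooth exceptional prime divisor $E'$ over $y$. Then $\pi\circ\pi':Y'\to X$ is again a birational model in the sense of \cref{def:BM}, and the classical identity
\[
\nu(\pi^*\varphi,y)=\nu((\pi\circ\pi')^*\varphi,E'),\qquad \nu(\pi^*\psi,y)=\nu((\pi\circ\pi')^*\psi,E')
\]
(for point blow-ups of smooth varieties) combined with the inequality from the previous step applied to $E'$ on $Y'$ gives $\nu(\pi^*\varphi,y)\geq\nu(\pi^*\psi,y)$.

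Finally, the conclusion $\varphi\preceq_{\mathcal{I}}\psi$ follows from condition (3) in the definition of $\preceq_{\mathcal{I}}$. The main (minor) obstacle is the reduction from points to divisorial valuations; once that is in hand, the argument is a direct application of \cref{lma:nuOk}. Note that in this step one uses crucially that the hypothesis is quantified over \emph{all} birational models, not just $X$ itself, so that divisorial valuations of arbitrarily high discrepancy can be tested.
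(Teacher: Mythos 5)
Your argument is correct and is essentially the paper's intended one: the corollary is presented there as an immediate consequence of \cref{lma:nuOk} applied on all birational models and flags, which is exactly your first step. Your blow-up reduction from Lelong numbers at points to generic Lelong numbers along exceptional divisors fills in a detail the paper leaves implicit, and it is the standard way to pass from the divisorial inequalities to condition (3) in the definition of $\preceq_{\mathcal{I}}$.
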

\begin{proof}
    In view of \cref{thm:nuOk}, the assumption implies the following: for any prime divisor $E$ over $X$, we have $\nu(\varphi,E)\geq \nu(\psi,E)$. This implies $\varphi\preceq_{\mathcal{I}} \psi$: take a birational model $\pi\colon Y\rightarrow X$ and $y\in Y$, we need to show that $\nu(\pi^*\varphi,y)\geq \nu(\pi^*\psi,y)$. Let $E$ be the exceptional divisor of the blow-up of $Y$ at $\{y\}$. As explained in \cite[Corollaire~1.1.8]{Bou02}, we have $\nu(\pi^*\varphi,y)=\nu(\varphi,E)$ and $\nu(\pi^*\psi,y)=\nu(\psi,E)$. Our assertion follows.
\end{proof}
In particular, \cref{thm:IeqDelta} is proved.
This corollary is similar to \cite{Jow10}. It suggests that $\Delta(\theta,\varphi)$ is a universal invariant of the singularities of $\varphi$.

\cref{cor:Deltacontimplyvarphi} has a reminiscence of \cite{BFJ08}: in order to understand plurisubharmonic singularities, we need to consider all birational models of our variety at the same time.

\cref{thm:nuOk} can be regarded as a generalization of the following (slightly generalized form of the) classical result proved by Boucksom, see \cite[Theorem~5.4]{Bou02h}.
\begin{corollary}\label{cor:numin}
	Let $E$ be a prime divisor over $X$. Then
	\begin{equation}
		\nu(V_{\theta},E)=\lim_{k\to\infty}\frac{1}{k}\ord_E \mathrm{H}^0(X,L^k).
	\end{equation}
\end{corollary}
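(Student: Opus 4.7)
The plan is to deduce this corollary as a direct application of \cref{lma:nuOk} to the minimal singularity potential $V_\theta$, after first identifying the partial Okounkov body of $V_\theta$ with the classical Okounkov body of $L$.

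First, I would pass to a birational model. Let $\pi: Z \to X$ be a birational modification on which $E$ is realized as a prime divisor, and choose an admissible flag $(Y_\bullet)$ on $Z$ with $Y_1 = E$. Since $L$ is big, $\int_Z (\pi^\ast\theta)_{\pi^\ast V_\theta}^n = \int_X \theta_{V_\theta}^n > 0$, so $\pi^\ast V_\theta \in \PSH(Z, \pi^\ast\theta)_{>0}$. Applying \cref{lma:nuOk} on $Z$ with $\varphi = \pi^\ast V_\theta$ gives
\[
    \nu(V_\theta, E) = \nu(\pi^\ast V_\theta, E) = \min_{x \in \Delta(\pi^\ast\theta, \pi^\ast V_\theta)} x_1.
\]

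The key step is the identification $\Delta(\pi^\ast\theta, \pi^\ast V_\theta) = \Delta(\pi^\ast L)$, where the right-hand side is the classical Okounkov body. This reduces to showing that $W(\pi^\ast\theta, \pi^\ast V_\theta)$ equals the full section ring $R(Z, \pi^\ast L)$ as a graded linear subspace. The point is that $V_\theta$ has minimal singularities: for any $s \in \mathrm{H}^0(X, L^k)$, the function $\log|s|_{h^k}^2/k$ is $\theta$-psh and locally bounded above, so the maximality of $V_\theta$ gives $\log|s|_{h^k}^2 \leq kV_\theta + C$, whence $|s|_{h^k}^2 e^{-kV_\theta}$ is bounded on $X$. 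Pulling back, $|s|_{\pi^\ast h^k}^2 e^{-k\pi^\ast V_\theta}$ is bounded on $Z$, hence locally integrable, so $s \in W_k(\pi^\ast\theta, \pi^\ast V_\theta)$. The reverse inclusion is trivial, so $W(\pi^\ast\theta, \pi^\ast V_\theta) = R(Z, \pi^\ast L)$, and the two Okounkov bodies coincide.

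Finally, I would identify $\min_{x \in \Delta(\pi^\ast L)} x_1$ with $\lim_k k^{-1} \ord_E \mathrm{H}^0(X, L^k)$. By \cref{thm:HausOkoun}, we have Hausdorff convergence $\Delta_{ak}(\pi^\ast L) \to \Delta(\pi^\ast L)$ along multiples of some integer $a$, and by construction $\min_{x \in \Delta_{ak}(\pi^\ast L)} x_1 = (ak)^{-1} \ord_E \mathrm{H}^0(X, L^{ak})$. To upgrade to a full limit along all $k$, I would invoke Fekete's lemma: the subadditivity $\ord_E \mathrm{H}^0(X, L^{k+l}) \leq \ord_E \mathrm{H}^0(X, L^k) + \ord_E \mathrm{H}^0(X, L^l)$ (via tensor products of sections) ensures that $k^{-1} \ord_E \mathrm{H}^0(X, L^k)$ converges to its infimum, which must then equal $\min_{x \in \Delta(\pi^\ast L)} x_1$. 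No real obstacle is expected; everything is a transparent assembly of the previously established tools, with the minimal-singularity identification being the only non-formal ingredient.
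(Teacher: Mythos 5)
Your proposal is correct and follows the paper's own route: the paper proves this corollary precisely by applying \cref{lma:nuOk} to $\varphi=V_\theta$ (on a model where $E$ is a divisor, using birational invariance of the partial Okounkov body) together with the identification $\Delta(\theta,V_\theta)=\Delta(L)$, which is what you establish via $W(\theta,V_\theta)=R(X,L)$. Your extra details (minimal-singularity bound for sections, Hausdorff convergence plus Fekete to identify $\min_{x\in\Delta(L)}x_1$ with the asymptotic order of vanishing) simply fill in steps the paper leaves implicit.
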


\begin{proof}
	This follows from \cref{thm:nuOk} and the fact that $\Delta(\theta,V_{\theta})=\Delta(L)$.
\end{proof}
We write
\[
	\ord_E\|L\|\coloneqq 	\lim_{k\to\infty}\frac{1}{k}\ord_E \mathrm{H}^0(X,L^k).
\]
\begin{corollary}\label{cor:IVtheta}
	We have
	\[
		\mathcal{I}(V_{\theta})=\left\{ f\in \mathcal{O}_X: \exists\epsilon>0 \textup{ s.t. } \ord_E(f)\geq (1+\epsilon)\ord_E\|L\|-A_X(E)\,\, \forall\textup{prime } E \textup{ over }X\right\},
	\]
	where $A_X(E)$ is the log discrepancy of $E$ over $X$.
\end{corollary}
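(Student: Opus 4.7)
The plan is to prove both inclusions separately, leveraging the strong openness theorem of Guan--Zhou together with the Lelong-number identity $\nu(V_\theta,E)=\ord_E\|L\|$ provided by \cref{cor:numin}.

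For the inclusion $\mathcal{I}(V_\theta)\subseteq\mathrm{RHS}$, given $f\in\mathcal{I}(V_\theta)$, strong openness yields some $\epsilon>0$ with $f\in\mathcal{I}((1+\epsilon)V_\theta)$. Pulling $|f|^2 e^{-2(1+\epsilon)V_\theta}$ back to a birational model $\pi\colon Z\to X$ on which $E$ is a divisor, and using the transformation law involving $K_{Z/X}$, the standard Skoda-type estimate gives $\ord_E(f)+A_X(E)>(1+\epsilon)\nu(V_\theta,E)$. Invoking \cref{cor:numin} rewrites this as $\ord_E(f)\geq(1+\epsilon)\ord_E\|L\|-A_X(E)$ uniformly in $E$, which is exactly the stated condition.

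For the reverse inclusion, suppose $f$ satisfies $\ord_E(f)\geq(1+\epsilon)\ord_E\|L\|-A_X(E)$ for some fixed $\epsilon>0$ and every prime divisor $E$ over $X$. Choose a quasi-equisingular approximation $\varphi^j\searrow V_\theta$; each $\varphi^j$ has analytic singularities along an nc $\mathbb{Q}$-divisor $D^j$ on a log resolution $\pi_j\colon Y_j\to X$. Since $\varphi^j\geq V_\theta$, monotonicity of Lelong numbers gives $\nu(\varphi^j,E)\leq\nu(V_\theta,E)=\ord_E\|L\|$ for every divisor $E$ over $X$. Fix any $\delta\in(0,\epsilon)$. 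When $\nu(\varphi^j,E)>0$, the hypothesis yields
\[
\ord_E(f)+A_X(E)\geq(1+\epsilon)\ord_E\|L\|\geq(1+\epsilon)\nu(\varphi^j,E)>(1+\delta)\nu(\varphi^j,E),
\]
while the case $\nu(\varphi^j,E)=0$ is trivial because $A_X(E)\geq 1$. Specializing to divisors $E$ on $Y_j$ and applying the analytic-singularities formula $\mathcal{I}((1+\delta)\varphi^j)=\pi_{j*}\mathcal{O}_{Y_j}(K_{Y_j/X}-\lfloor(1+\delta)D^j\rfloor)$, this strict inequality is exactly equivalent to $f\in\mathcal{I}((1+\delta)\varphi^j)$. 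The defining property of quasi-equisingular approximations, applied with $k=1$, then gives $\mathcal{I}((1+\delta)\varphi^j)\subseteq\mathcal{I}(V_\theta)$ for $j$ sufficiently large, so $f\in\mathcal{I}(V_\theta)$.

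The main obstacle lies in the second direction: one must upgrade the weak ``$\geq$'' in the hypothesis to the strict ``$>$'' required by the multiplier-ideal formula on $Y_j$. The strict gap $\epsilon>\delta>0$ is precisely the mechanism that allows this trade, and without it the argument would not produce membership in any $\mathcal{I}((1+\delta)\varphi^j)$. The first direction, by contrast, is essentially a direct application of strong openness combined with the standard Lelong-number estimate.
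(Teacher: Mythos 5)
Your proof is correct, but it takes a genuinely different route from the paper: the paper disposes of this corollary in one line, by quoting the valuative characterization in \cite[Corollary~10.17]{Bo17} and then substituting $\nu(V_\theta,E)=\ord_E\|L\|$ from \cref{cor:numin}, whereas you reprove the characterization directly. Your inclusion $\mathcal{I}(V_\theta)\subseteq\mathrm{RHS}$ via Guan--Zhou strong openness plus the standard estimate $\ord_E(f)+A_X(E)>\nu\bigl((1+\epsilon)V_\theta,E\bigr)$, and your reverse inclusion via a quasi-equisingular approximation $\varphi^j$ of $V_\theta$ together with property (4) of its definition (with $k=1$), are both sound, and they have the advantage of staying inside the toolkit the paper has already set up (quasi-equisingular approximations, \cref{cor:numin}), at the price of invoking strong openness as an external black box instead of Boucksom's notes. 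Two small points to tighten: the assertion $\varphi^j\geq V_\theta$ should be justified (it follows because the sequence is decreasing with limit $V_\theta$, so every term dominates the limit; it is not literally part of conditions (1)--(4)); and the strict inequality $\ord_E(f)+A_X(E)>(1+\delta)\nu(\varphi^j,E)$ is not ``exactly equivalent'' to $f\in\mathcal{I}((1+\delta)\varphi^j)$ --- the precise criterion on $Y_j$ reads $\ord_E(f)+\ord_E K_{Y_j/X}\geq\lfloor(1+\delta)\ord_E D^j\rfloor$ --- but the strict inequality does imply it by integrality, which is the only implication your argument uses.
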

\begin{proof}
	This follows from \cite[Corollary~10.17]{Bo17} and \cref{cor:numin}.
\end{proof}

\subsection{Okounkov bodies induced by filtrations}\label{subsec:comp}
Assume that $L$ is ample.
\begin{definition}
	A \emph{multiplicative filtration} on $R(X,L)$ is a decreasing, left continuous, multiplicative $\mathbb{R}$-filtration $\mathscr{F}^{\bullet}$ on the ring $R(X,L)$ which is linearly bounded in the sense that there is $C>0$, so that
	\[
		\mathscr{F}^{-k\lambda}\mathrm{H}^0(X,L^k)=\mathrm{H}^0(X,L^k),\quad \mathscr{F}^{k\lambda}\mathrm{H}^0(X,L^k)=0,
	\]
	when $\lambda>C$.

	A multiplicative filtration $\mathscr{F}$ is called a \emph{multiplicative $\mathbb{Z}$-filtration} if $\mathscr{F}^{\lambda}=\mathscr{F}^{\floor{\lambda}}$ for any $\lambda\in \mathbb{R}$.

	A multiplicative $\mathbb{Z}$-filtration $\mathscr{F}$ is called \emph{finitely generated} if the bigraded algebra
	\[
		\bigoplus_{\lambda \in \mathbb{Z},k\in \mathbb{Z}_{\geq 0}} \mathscr{F}^{\lambda}\mathrm{H}^0(X,L^k)
	\]
	is finitely generated over $\mathbb{C}$.
\end{definition}

Let $\mathscr{F}^{\bullet}$ be a multiplicative filtration on $R(X,L)$. Then we can associate a test curve $\psi_{\bullet}$ as in \cite{RWN14, Xia20}.
\begin{equation}\label{eq:testcurvfromfilt}
	\psi_{\tau}\coloneqq \sups_{k\in \mathbb{Z}_{>0}} k^{-1} \sups\left\{\,\log|s|_{h^k}^2: s\in \mathscr{F}^{k\tau}\mathrm{H}^0(X,L^k),\sup_X|s|_{h^k}\leq 1\,\right\}.
\end{equation}
Here $\sup^*$ denotes the upper-semicontinuous regularized supremum.
By \cite[Theorem~3.11]{DX22}, $\psi_{\tau}$ is $\mathcal{I}$-model or $-\infty$ for each $\tau\in \mathbb{R}$.

\begin{theorem}\label{thm:filtOko}
	Let $\mathscr{F}^{\bullet}$ be a finitely generated multiplicative $\mathbb{Z}$-filtration on $R(X,L)$. Let $\psi_{\bullet}$ be the test curve associated with $\mathscr{F}$. For any $\tau<\tau^+$,
	\[
		\Delta\left( \bigoplus_{k=0}^{\infty}\mathscr{F}^{k\tau}\mathrm{H}^0(X,L^k)\right)=\Delta(\theta,\psi_{\tau}).
	\]
\end{theorem}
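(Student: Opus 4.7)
The plan is to establish the two inclusions $\Delta(W^{\tau})\subseteq \Delta(\theta,\psi_{\tau})$ and a matching of Lebesgue volumes, then conclude by \cref{lma:volcbimpeq}. Here I write $W^{\tau}:=\bigoplus_{k=0}^{\infty}\mathscr{F}^{k\tau}\mathrm{H}^0(X,L^k)$, which is a graded subalgebra of $R(X,L)$; finite generation of $\mathscr{F}$ and the hypothesis $\tau<\tau^+$ guarantee $W^{\tau}\in\GLS(D)_{>0}$.

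For the inclusion, the key observation is tautological from the definition of the test curve. If $0\ne s\in \mathscr{F}^{k\tau}\mathrm{H}^0(X,L^k)$ is normalized so that $\sup_X|s|_{h^k}\le 1$, then $k^{-1}\log|s|_{h^k}^2$ is one of the functions over which the upper envelope in \eqref{eq:testcurvfromfilt} is taken, hence $k^{-1}\log|s|_{h^k}^2\le \psi_{\tau}$ pointwise. Consequently $|s|_{h^k}^2 e^{-k\psi_{\tau}}$ is bounded, so $s\in W_k^0(\theta,\psi_{\tau})$ in the notation of \cref{rmk:DeltaanaW0}. Since $W^0(\theta,\psi_{\tau})\subseteq W(\theta,\psi_{\tau})$ by Ohsawa--Takegoshi, we deduce $W^{\tau}\subseteq W(\theta,\psi_{\tau})$. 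As $W(\theta,\psi_{\tau})\in\overline{\GLS(D)}_{>0}$ by the construction of partial Okounkov bodies in \cref{sec:PoB}, \cref{lma:Deltacompar} yields $\Delta(W^{\tau})\subseteq \Delta(\theta,\psi_{\tau})$.

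For the volume comparison, \cite[Theorem~3.11]{DX20} asserts that $\psi_{\tau}$ is $\mathcal{I}$-model, so combining \cref{thm:DXmain} with \eqref{eq:volDW} gives
\[
\vol\Delta(\theta,\psi_{\tau})=\frac{1}{n!}\int_X \theta_{\psi_{\tau}}^n=\vol(L,\psi_{\tau})=\lim_{k\to\infty}k^{-n}h^0(X,L^k\otimes \mathcal{I}(k\psi_{\tau})).
\]
On the other hand, $\vol\Delta(W^{\tau})=\vol W^{\tau}=\lim_k k^{-n}\dim \mathscr{F}^{k\tau}\mathrm{H}^0(X,L^k)$. The required identity is therefore
\[
\lim_{k\to\infty}k^{-n}\dim \mathscr{F}^{k\tau}\mathrm{H}^0(X,L^k)=\lim_{k\to\infty}k^{-n}h^0(X,L^k\otimes \mathcal{I}(k\psi_{\tau})).
\]
To obtain it, I would first reduce to rational $\tau$ by left continuity of $\tau\mapsto \psi_{\tau}$ and of $\tau\mapsto \dim \mathscr{F}^{k\tau}\mathrm{H}^0(X,L^k)$. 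Then, via the Rees construction, the finitely generated $\mathbb{Z}$-filtration $\mathscr{F}$ arises from a test configuration $(\mathcal{X},\mathcal{L})\to\mathbb{A}^1$. Taking a log resolution $\pi\colon Y\to X$ of the central fiber, for sufficiently divisible $k$ one identifies $\mathscr{F}^{k\tau}\mathrm{H}^0(X,L^k)$ with $\mathrm{H}^0(Y,\pi^*L^k\otimes \mathcal{O}_Y(-k\tau E))$ for an effective $\mathbb{Q}$-divisor $E$ on $Y$ determined by $\mathcal{X}_0$; simultaneously, $\pi^*\psi_{\tau}$ is shown to have analytic singularities precisely along $\tau E$ (not merely bounded above by such a potential). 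At this point the two asymptotic dimension counts coincide.

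With the inclusion and the equality of positive volumes in hand, \cref{lma:volcbimpeq} forces $\Delta(W^{\tau})=\Delta(\theta,\psi_{\tau})$. The main obstacle is the second half: the inclusion is essentially a one-line consequence of the definition of $\psi_{\tau}$, but showing that the classical volume of the filtered linear series matches the non-pluripolar mass $\int_X\theta_{\psi_{\tau}}^n$ really uses finite generation. Specifically, one must verify that on a suitable model the envelope $\psi_{\tau}$ has analytic singularities \emph{exactly} along the divisor governing the filtration, so that the Hausdorff convergence property from \cref{lma:anacase} and \cref{rmk:DeltaanaW0} applies and the two asymptotic counts agree term for term.
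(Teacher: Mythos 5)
Your skeleton coincides with the paper's proof: the inclusion $\Delta\bigl(\bigoplus_k\mathscr{F}^{k\tau}\mathrm{H}^0(X,L^k)\bigr)\subseteq\Delta(\theta,\psi_{\tau})$ via \cref{lma:Deltacompar}, then equality of volumes, then \cref{lma:volcbimpeq}. The inclusion half is fine (one small remark: once $\sup_X|s|_{h^k}\leq 1$ gives $|s|_{h^k}^2e^{-k\psi_{\tau}}\leq 1$, membership in $\mathrm{H}^0(X,L^k\otimes\mathcal{I}(k\psi_{\tau}))$ is immediate because a bounded function is locally integrable; Ohsawa--Takegoshi is not what is used there, it enters only for inclusions in the other direction).

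The gap is in the volume comparison, which is where all the content lies; the paper handles it by invoking \cite[Lemma~4.5]{Xia20}, which states exactly that $\lim_k k^{-n}\dim\mathscr{F}^{k\tau}\mathrm{H}^0(X,L^k)=\frac{1}{n!}\int_X\theta_{\psi_{\tau}}^n$ for finitely generated filtrations. Your sketch of this step rests on the claim that on a log resolution $\pi:Y\rightarrow X$ of the central fiber one has $\mathscr{F}^{k\tau}\mathrm{H}^0(X,L^k)\cong\mathrm{H}^0(Y,\pi^*L^k\otimes\mathcal{O}_Y(-k\tau E))$ for a single effective $\mathbb{Q}$-divisor $E$ ``determined by $\mathcal{X}_0$'', with $\pi^*\psi_{\tau}$ having analytic singularities exactly along $\tau E$. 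That structural claim is not correct in general: for a test configuration with central fiber $\sum_j b_jE_j$ the induced filtration is cut out by several divisorial valuations with affine conditions of the shape $v_j(s)\geq k(\tau b_j-c_j)$, so the singularity governing $\mathscr{F}^{k\tau}$ varies with $\tau$ piecewise linearly and is not the $\tau$-multiple of one fixed divisor on a model of $X$. The correct reduction is, for each fixed rational $\tau$, to resolve the base ideals of the finitely generated graded algebra $\bigoplus_k\mathscr{F}^{k\tau}\mathrm{H}^0(X,L^k)$, producing a divisor $E_{\tau}$ depending on $\tau$; and even then the two statements you actually need --- that $\dim\mathscr{F}^{k\tau}\mathrm{H}^0(X,L^k)$ agrees asymptotically with $h^0(Y,\pi^*L^k-kE_{\tau})$, and that the envelope \eqref{eq:testcurvfromfilt} is exactly as singular as $E_{\tau}$ rather than strictly less singular --- are precisely the nontrivial content of the cited lemma and are asserted, not proved, in your write-up (``at this point the two asymptotic dimension counts coincide''). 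As written, the volume step therefore does not go through; it needs either the citation or a genuine argument along the corrected lines above.
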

\begin{proof}
	Observe that $\mathscr{F}^{k\tau}\mathrm{H}^0(X,L^k)\subseteq \mathrm{H}^0(X,L^k\otimes \mathcal{I}(k\psi_{\tau}))$ for any $k\in \mathbb{N}$.
	Thus, by \cref{cor:Okocomp},
	\[
		\Delta\left( \bigoplus_{k=0}^{\infty}\mathscr{F}^{k\tau}\mathrm{H}^0(X,L^k)\right)\subseteq\Delta(\theta,\psi_{\tau}).
	\]
	On the other hand, the two sides have the same volume by \cite[Lemma~4.5]{Xia20}. Thus, equality holds.
\end{proof}

\subsection{Limit partial Okounkov bodies}\label{subsec:limpob}
Let $\varphi\in \PSH(X,\theta)$, not necessarily of positive volume. Take an ample effective divisor $H$ on $X$ and a Kähler form $\omega\in c_1(H)$. Then we just set
\[
	\Delta(\theta,\varphi)\coloneqq \bigcap_{\epsilon \in \mathbb{Q}_{>0}} \Delta(\theta+\epsilon\omega,\varphi).
\]
Clearly, this definition does not depend on the choice of $H$ and $\omega$. As in \cite{CPW18}, we cannot expect $\Delta(\theta,\varphi)$ to be continuous along decreasing sequences of $\varphi$.
Note that \cref{thm:nuOk}, \cref{cor:Deltacontimplyvarphi} and \cref{prop:IcompimplyDeltacomp} extend to this setup without changes.

We propose the following conjecture:
\begin{conjecture}Under the above assumptions,
	\[
		\dim \Delta(\theta,\varphi)=\nd(\theta,\varphi).
	\]
\end{conjecture}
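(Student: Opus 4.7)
The plan is to relate the dimension of $\Delta(\theta,\varphi)$ to the rate of decay of the volumes $v(\epsilon) := \vol \Delta(\theta+\epsilon\omega,\varphi)$ as $\epsilon \to 0^+$. By the extension of \cref{prop:Deltapert} to the general (possibly vanishing volume) case, $\Delta(\theta,\varphi)$ is the decreasing intersection of the $n$-dimensional convex bodies $\Delta(\theta+\epsilon\omega,\varphi)$, and
\[
v(\epsilon) \;=\; \tfrac{1}{n!}\int_X(\theta+\epsilon\omega)_{P^{\theta+\epsilon\omega}[\varphi]_{\mathcal{I}}}^n\,.
\]
The target is an asymptotic expansion of the form
\[
v(\epsilon) \;=\; \sum_{k=0}^n \binom{n}{k}\frac{\epsilon^{n-k}}{n!}\,m_k(\varphi)\,+\,o\bigl(\epsilon^{n-\nd(\theta,\varphi)}\bigr)\,,
\]
where $m_k(\varphi)$ is the appropriate mixed mass of $(\theta,\varphi)$ against $\omega^{n-k}$; for $k=n$ this is $\int_X \theta_{P[\varphi]_{\mathcal{I}}}^n$, and in general the numerical dimension should be characterized as $\nd(\theta,\varphi)=\max\{k:m_k(\varphi)>0\}$.

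For the upper bound $\dim\Delta(\theta,\varphi)\leq\nd(\theta,\varphi)$, I would first establish the expansion above using the continuity of the $\mathcal{I}$-projection (\cref{thm:contPI}), the mixed-mass compatibility of \cref{cor:equalCao}, and the $\mathcal{I}$-model perturbation statement \cref{cor:I-modelpert} to identify the coefficients intrinsically. The expansion then gives $v(\epsilon) = O(\epsilon^{n-\nd(\theta,\varphi)})$. Suppose for contradiction that $\Delta(\theta,\varphi)$ contains a convex body $K$ of dimension $\nd(\theta,\varphi)+1$. Applying \cref{prop:suba} (extended to the non-big regime) yields $K+\epsilon\Delta(\omega,0)\subseteq \Delta(\theta+\epsilon\omega,\varphi)$. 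Since $\Delta(\omega,0)$ is full-dimensional in $\mathbb{R}^n$, the Minkowski sum $K+\epsilon\Delta(\omega,0)$ has $n$-volume at least $c\epsilon^{n-\nd(\theta,\varphi)-1}$ for some $c>0$, which dominates the bound on $v(\epsilon)$ -- a contradiction.

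For the lower bound $\dim\Delta(\theta,\varphi)\geq\nd(\theta,\varphi)$, use the same inclusion $\Delta(\theta,\varphi)+\epsilon\Delta(\omega,0)\subseteq \Delta(\theta+\epsilon\omega,\varphi)$. Setting $d:=\dim\Delta(\theta,\varphi)$, a Steiner-type formula for the Minkowski sum of a $d$-dimensional convex body with a full-dimensional convex body gives
\[
\vol\bigl(\Delta(\theta,\varphi)+\epsilon\Delta(\omega,0)\bigr)\;\geq\; c'\epsilon^{n-d}\,,
\]
with $c'>0$ proportional to the $d$-dimensional Hausdorff content of $\Delta(\theta,\varphi)$ times the $(n-d)$-content of the projection of $\Delta(\omega,0)$ transverse to the affine hull of $\Delta(\theta,\varphi)$. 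Comparing with the expansion of $v(\epsilon)$ forces $n-d\leq n-\nd(\theta,\varphi)$, i.e.\ $d\geq \nd(\theta,\varphi)$.

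The main obstacle, and the reason the statement is left as a conjecture, is the rigorous justification of the asymptotic expansion of $v(\epsilon)$ to the required order. In the big case ($\nd=n$) the leading term is supplied by \cref{prop:Deltapert}, but in the degenerate regime the behaviour of $P^{\theta+\epsilon\omega}[\varphi]_{\mathcal{I}}$ as $\epsilon\to 0^+$ is delicate: one must control it uniformly in $\epsilon$ and identify the lower-order coefficients $m_k(\varphi)$ with a canonical mixed mass. Concretely, this requires extending \cref{cor:I-modelpert} and \cref{cor:equalCao} to the situation where $\int_X\theta_{P[\varphi]_{\mathcal{I}}}^n=0$, precisely where $\mathcal{I}$-model potentials can behave erratically under perturbation. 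A secondary but nontrivial step is verifying that \cref{prop:suba} extends to the non-big setting, so that the Minkowski-sum comparisons used in both bounds are available; this should follow from the perturbation $\Delta(\theta,\varphi)=\bigcap_{\epsilon}\Delta(\theta+\epsilon\omega,\varphi)$ and the subadditivity already known in the positive-volume range, but care is needed at the boundary.
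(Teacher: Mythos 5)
The statement you are addressing is not proved in the paper at all: it is stated as a conjecture, and the author only remarks that it should follow from the arguments of \cite{CPW18} combined with Cao's numerical criterion from \cite{Cao14}. So there is no paper proof to compare with, and your text, by your own admission, is a strategy with unresolved obstacles rather than a proof: the asymptotic expansion of $v(\epsilon)$ with coefficients identified as canonical mixed masses, the extension of \cref{prop:suba} and \cref{prop:Deltapert} to the zero-volume regime, and the extension of \cref{cor:I-modelpert} and \cref{cor:equalCao} past the positive-mass hypothesis are all left open. The last point is particularly serious, since the paper explicitly expects \cref{cor:equalCao} to \emph{fail} when some $P[\varphi_i]_{\mathcal{I}}$ has zero mass, which is exactly the regime you need for identifying the lower-order coefficients $m_k(\varphi)$; so the expansion you take as the target cannot simply be assembled from \cref{thm:contPI}, \cref{cor:equalCao} and \cref{cor:I-modelpert} as suggested.

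Beyond the acknowledged gaps, the lower-bound step contains a logical error. From the superadditivity inclusion you only get a \emph{lower} bound $v(\epsilon)\geq c'\epsilon^{\,n-d}$ with $d=\dim\Delta(\theta,\varphi)$; combining this with the upper bound $v(\epsilon)=O(\epsilon^{\,n-\nd(\theta,\varphi)})$ coming from the expansion forces $n-d\geq n-\nd(\theta,\varphi)$, i.e.\ $d\leq \nd(\theta,\varphi)$ — the same inequality as in your upper-bound argument, not $d\geq\nd(\theta,\varphi)$. To obtain $d\geq\nd(\theta,\varphi)$ you would need the opposite pairing: a lower bound $v(\epsilon)\gtrsim\epsilon^{\,n-\nd(\theta,\varphi)}$ (from $m_{\nd}>0$, which again requires the unproved expansion from below) together with an \emph{upper} bound $v(\epsilon)\lesssim\epsilon^{\,n-d}$, which would amount to a reverse inclusion confining $\Delta(\theta+\epsilon\omega,\varphi)$ to an $O(\epsilon)$-thickening of $\Delta(\theta,\varphi)$. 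Okounkov bodies are superadditive, not subadditive, so no such containment is available from the results of the paper, and nothing in your proposal supplies it. As it stands the argument establishes at best one inequality conditionally on the expansion, and the conjecture remains open.
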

For the definition of the analytic numerical dimension $\nd(\theta,\varphi)$, we refer to \cite[Definition~4]{Cao14}.

We expect that this conjecture follows from the arguments in \cite{CPW18} together with the numerical criterion of \cite{Cao14}.

\section{Chebyshev transform}\label{sec:Che}
Let $X$ be an irreducible smooth complex projective variety of dimension $n$ and $L$ be a big line bundle on $X$. Let $h$ be a fixed smooth Hermitian metric on $L$ and $\theta=c_1(L,h)$.
Consider a singular positive Hermitian metric $\phi$ on $L$ corresponding to $\varphi\in \PSH(X,\theta)$. Assume that $\int_X\theta_{P[\varphi]_{\mathcal{I}}}^n>0$.

Let $v\in C^0(X)$ corresponding to a continuous metric $h\mathrm{e}^{-v/2}$ on $L$. We do not distinguish $v$ and $h\mathrm{e}^{-v/2}$.
Fix a valuation $\nu=(\nu_1,\ldots,\nu_n):\mathbb{C}(X)^{\times}\rightarrow \mathbb{Z}^n$ of rank $n$. Assume that $\nu$ is defined by an admissible flag $(Y_{\bullet})$ on $X$.

The whole section is devoted to the proof of \cref{thm:intdiffc}. Our results are direct extensions of the results of Witt Nystr\"om \cite{WN14}. The latter is motivated by \cite{Zah75}.

\subsection{Equilibrium energy}
Let $\mathcal{E}^{\infty}(X,\theta;P[\varphi]_{\mathcal{I}})$ denote the set of $\psi\in \PSH(X,\theta)$ such that $\psi$ and $P[\varphi]_{\mathcal{I}}$ have the same singularity types.

Let $E^{\theta}_{[\varphi]}:\mathcal{E}^{\infty}(X,\theta;P[\varphi]_{\mathcal{I}})\rightarrow \mathbb{R}$ be the relative Monge--Ampère energy:
\[
	E^{\theta}_{[\varphi]}(\psi)\coloneqq \frac{1}{n+1}\sum_{i=0}^n\int_X\left(\psi-P[\varphi]_{\mathcal{I}}\right)\,\theta_{\psi}^i\wedge \theta_{P[\varphi]_{\mathcal{I}}}^{n-i}.
\]
Define the equilibrium energy $\mathcal{E}^{\theta}_{[\varphi]}:C^0(X)\rightarrow \mathbb{R}$:
\begin{equation}\label{eq:Eequil}
	\mathcal{E}^{\theta}_{[\varphi]}(v)\coloneqq E^{\theta}_{[\varphi]}(P[\varphi]_{\mathcal{I}}(v)).
\end{equation}
Here
\[
P[\varphi]_{\mathcal{I}}(v)=\sups\left\{\eta\in \PSH(X,\theta): \eta\leq v,\eta\preceq_{\mathcal{I}}\varphi \right\}.
\]
Note that this definition is different from the energy defined in \cite{DX21}, so we choose a different notation.

\begin{theorem}\label{thm:Gat}
	The Gateaux differential of $\mathcal{E}^{\theta}_{[\varphi]}$ at $v\in C^0(X)$ is given by $\theta_{P[\varphi]_{\mathcal{I}}(v)}^n$. In other words, for any $f\in C^0(X)$,
	\begin{equation}
		\ddtz\mathcal{E}^{\theta}_{[\varphi]}(v+tf)=\int_X f\,\theta_{P[\varphi]_{\mathcal{I}}(v)}^n.
	\end{equation}
\end{theorem}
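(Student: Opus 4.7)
The plan is to sandwich the difference quotient $t^{-1}(\mathcal{E}^{\theta}_{[\varphi]}(v+tf) - \mathcal{E}^{\theta}_{[\varphi]}(v))$ between $\int_X f\,\theta_{P(v+tf)}^n$ and $\int_X f\,\theta_{P(v)}^n$ for $t>0$, and then take $t\to 0$. Throughout I abbreviate $P(w) := P[\varphi]_{\mathcal{I}}(w)$. Since $P$ is a concave upper envelope that commutes with addition of constants, $\|P(v+tf) - P(v)\|_\infty \leq |t|\,\|f\|_\infty$, so $P(v+tf)\in \mathcal{E}^{\infty}(X,\theta;P[\varphi]_{\mathcal{I}})$ for every $t\in\mathbb{R}$ and the energy is well-defined along the path.

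First I would establish the cocycle identity
\[
	E^{\theta}_{[\varphi]}(\psi_1) - E^{\theta}_{[\varphi]}(\psi_0) = \frac{1}{n+1}\sum_{i=0}^n \int_X (\psi_1 - \psi_0)\,\theta_{\psi_1}^i \wedge \theta_{\psi_0}^{n-i}
\]
for $\psi_0,\psi_1 \in \mathcal{E}^{\infty}(X,\theta;P[\varphi]_{\mathcal{I}})$ by the standard telescoping integration by parts in the non-pluripolar setting, and couple it with the concavity of $s \mapsto E^{\theta}_{[\varphi]}((1-s)\psi_0 + s\psi_1)$. The resulting two-sided bound reads
\[
	\int_X(\psi_1-\psi_0)\,\theta_{\psi_1}^n \;\leq\; E^{\theta}_{[\varphi]}(\psi_1) - E^{\theta}_{[\varphi]}(\psi_0) \;\leq\; \int_X(\psi_1-\psi_0)\,\theta_{\psi_0}^n.
\]
Taking $\psi_0 = P(v)$ and $\psi_1 = P(v+tf)$ with $t>0$, and using the pointwise inequalities $P(v+tf)-P(v) \leq v+tf-P(v)$ on the right-hand side and $P(v+tf)-P(v) \geq P(v+tf)-v$ on the left-hand side, both error terms collapse via the orthogonality
\[
	\int_X (w - P(w))\,\theta_{P(w)}^n = 0
\]
applied to $w = v$ and to $w = v+tf$. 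Dividing by $t$ yields $\int_X f\,\theta_{P(v+tf)}^n \leq t^{-1}(\mathcal{E}^{\theta}_{[\varphi]}(v+tf) - \mathcal{E}^{\theta}_{[\varphi]}(v)) \leq \int_X f\,\theta_{P(v)}^n$. Since $P(v+tf)\to P(v)$ uniformly with unchanged singularity type, the non-pluripolar Monge--Amp\`ere measures converge weakly against the continuous function $f$ (this uniform convergence also forces $d_S$-convergence, so \cref{thm:dsmixedmass} applies), pinning the right-derivative at $\int_X f\,\theta_{P(v)}^n$. Substituting $-f$ for $f$ handles the left-derivative and gives the claimed formula.

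The main obstacle will be the orthogonality identity in the partial $\mathcal{I}$-model setting. The absolute case $\varphi = V_\theta$ is the classical Berman--Boucksom orthogonality, but here one must rule out the possibility that the singularity constraint $\preceq_{\mathcal{I}}\varphi$ forces $P(w)$ to stay strictly below $w$ on a set of positive $\theta_{P(w)}^n$-mass. I expect this to follow from the $\mathcal{I}$-model property of $P[\varphi]_{\mathcal{I}}$ combined with a local upward-perturbation argument: the locus where the $\preceq_{\mathcal{I}}\varphi$ constraint strictly binds is pluripolar, the non-pluripolar Monge--Amp\`ere product places no mass on pluripolar sets, and on the complement a standard maximality argument against a small psh bump yields the desired vanishing.
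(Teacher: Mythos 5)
Your differentiation scheme — cocycle formula, concavity sandwich, orthogonality at both endpoints, then continuity of the non-pluripolar Monge--Ampère measure under uniform convergence within a fixed singularity type — is the standard route, and it is essentially what the paper relies on: its proof of \cref{thm:Gat} is a one-line deferral to \cite[Proposition~5.9]{DX21}, whose argument has exactly this shape. The sandwich itself is fine (with $\psi_0=P[\varphi]_{\mathcal{I}}(v)$, $\psi_1=P[\varphi]_{\mathcal{I}}(v+tf)$ the two error terms do collapse via orthogonality at $v$ and at $v+tf$, and the final limit only needs uniform convergence within one singularity type, e.g. via \cite[Theorem~2.3]{DDNL2}; the appeal to $d_S$-convergence and \cref{thm:dsmixedmass} is beside the point, since that result controls total masses, which are already equal here). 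The genuine gap is the step you flag as the main obstacle, namely the orthogonality $\int_X\bigl(v-P[\varphi]_{\mathcal{I}}(v)\bigr)\,\theta^n_{P[\varphi]_{\mathcal{I}}(v)}=0$, and the strategy you sketch for it does not work as stated. The locus where the constraint $\preceq_{\mathcal{I}}\varphi$ is active is contained in $Z=\bigcup_k V(\mathcal{I}(k\varphi))$, a countable union of proper analytic subsets; this set is pluripolar but not closed, so its complement is not open and a local bump cannot be localized away from it: around points of $\overline{Z}\setminus Z$ every ball meets $Z$, and the non-contact mass you must kill can concentrate arbitrarily close to $Z$ without lying on it. Worse, the monotonicity of multiplier ideals runs against you: raising a psh function only enlarges $\mathcal{I}(k\cdot)$, and a balayage/Dirichlet modification of $\psi=P[\varphi]_{\mathcal{I}}(v)$ on a ball $B$ is not of the form $\psi+O(1)$ at the poles of $\psi$ inside $B$, so the modified competitor may fail $\mathcal{I}(k\tilde\psi)\subseteq\mathcal{I}(k\varphi)$ at points of $B\cap Z$. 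The fact that $\theta^n_{P[\varphi]_{\mathcal{I}}(v)}$ charges no pluripolar set disposes of the mass on $Z$ itself, but not of the mass on $\{P[\varphi]_{\mathcal{I}}(v)<v\}\setminus Z$, which is precisely what your maximality argument is supposed to reach.

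There are two honest ways to close this. Either quote the concentration of $\theta^n_{P[\varphi]_{\mathcal{I}}(v)}$ on the contact set $\{P[\varphi]_{\mathcal{I}}(v)=v\}$ from \cite{DX21}, where it is established as part of the partial equilibrium/partial Bergman kernel analysis (the same input that underlies \cref{thm:relvol}); or reduce to the singularity-type envelope: setting $\psi:=P[\varphi]_{\mathcal{I}}(v)$, any competitor $u\le v$ with $[u]\preceq[\psi]$ satisfies $u\le\psi+C$, hence $\mathcal{I}(ku)\subseteq\mathcal{I}(k\psi)\subseteq\mathcal{I}(k\varphi)$ (using $\psi\preceq_{\mathcal{I}}\varphi$ from \cite{DX20, DX21}), so $u\le\psi$ by maximality and therefore $\psi=P[\psi](v)$; one can then invoke the known orthogonality for envelopes with prescribed singularity type, which is itself proved not by a naive local bump but by approximating with rooftop envelopes $P(\psi+C,v)$ and using the rooftop orthogonality (\cite{RWN14}, Darvas--Di Nezza--Lu). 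With either input in place, the rest of your argument goes through.
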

\begin{proof}
	This is not exactly \cite[Proposition~5.10]{DX21} because we are using $P[\bullet]_{\mathcal{I}}$ projections instead of $P[\bullet]$ projections, but the proofs are identical.
\end{proof}

The metric $h\mathrm{e}^{-v/2}$ induces an $L^{\infty}$-type norm $\|\bullet\|_{L^{\infty}(kv)}$ on $\mathrm{H}^0(X,L^k\otimes \mathcal{I}(k\varphi))$:
\[
	\|s\|_{L^{\infty}(kv)}\coloneqq \sup_X |s|_{h^k}\mathrm{e}^{-kv/2}.
\]
In particular, $\det \|\bullet\|_{L^{\infty}(kv)}$ is a Hermitian metric on $\det \mathrm{H}^0(X,L^k\otimes \mathcal{I}(k\varphi))$.
\begin{theorem}\label{thm:relvol}
	Let $v,v'\in C^0(X)$,
	\begin{equation}\label{eq:relvolene}
		\lim_{k\to\infty}\frac{n!}{k^{n+1}}\log \left(\det \|\bullet\|_{L^{\infty}(kv)}/\det \|\bullet\|_{L^{\infty}(kv')}\right)=\mathcal{E}^{\theta}_{[\varphi]}(v)-\mathcal{E}^{\theta}_{[\varphi]}(v').
	\end{equation}
\end{theorem}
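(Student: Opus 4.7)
The plan is to follow the classical Berman--Boucksom strategy \cite{BB10} and deduce the result by first handling the case of analytic singularities along an nc $\mathbb{Q}$-divisor, then extending by approximation to general $\varphi$. As a preliminary reduction, observe that for any fixed smooth volume form $dV$ on $X$ one has $\|s\|_{L^2(kv,dV)} \leq C \|s\|_{L^\infty(kv)}$ trivially, and the reverse inequality holds up to a polynomial factor in $k$ by a standard Bernstein--Markov / submean-value estimate valid for sections in $\mathrm{H}^0(X,L^k\otimes \mathcal{I}(k\varphi))$ (the multiplier ideal condition costs at most an $L^\infty$ Skoda-type correction absorbed into a polynomial factor). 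Consequently the two determinant norms differ by $O(k^n\log k)$, which vanishes after dividing by $k^{n+1}/n!$. Thus I may replace $L^\infty$ by $L^2$ throughout.

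For the analytic singularities case, I would take a birational model $\pi:Y\to X$ on which $\pi^*\varphi$ has analytic singularities along an nc $\mathbb{Q}$-divisor $E$. For $k$ sufficiently divisible the Ohsawa--Takegoshi-type identification
\[
\mathrm{H}^0(X,L^k\otimes \mathcal{I}(k\varphi))=\mathrm{H}^0\bigl(Y,\pi^*L^k\otimes \mathcal{O}_Y(-kE)\bigr)
\]
holds with compatible $L^2$ pairings, and by \cref{rmk:DeltaanaW0} together with \eqref{eq:volDeltas} the class $\pi^*L-E$ is big with
$\int_Y (\pi^*\theta-[E])^n=\int_X \theta_{P[\varphi]_\mathcal{I}}^n$. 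Since $\pi^*v$ and $\pi^*v'$ are continuous and the reference weight $\pi^*V_\theta-[E]$ corresponds to the potential of minimal singularities in $c_1(\pi^*L-E)$, the statement in this case is precisely the (ample / big) Berman--Boucksom theorem applied on $Y$ to the big line bundle $\pi^*L-E$.

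For general $\varphi$ with $\int_X\theta_{P[\varphi]_\mathcal{I}}^n>0$, pick a quasi-equisingular approximation $\varphi^j\in \PSH(X,\theta+\epsilon_j\omega)$, resolve so that each $\varphi^j$ has analytic singularities along an nc $\mathbb{Q}$-divisor, and apply the previous step to $(\theta+\epsilon_j\omega,\varphi^j)$. The right-hand side passes to the limit as $j\to\infty$: by \cref{cor:dspert} one has $\varphi^j\xrightarrow{d_{S,\theta+\epsilon_j\omega}}\varphi$, so \cref{thm:contPI} gives $P^{\theta+\epsilon_j\omega}[\varphi^j]_\mathcal{I}\xrightarrow{d_S}P^\theta[\varphi]_\mathcal{I}$, and then \cref{thm:dsmixedmass} (applied to the mixed masses appearing in $E^\theta_{[\varphi]}$) together with the continuity of $\mathcal{E}^\theta_{[\varphi]}$ in its $C^0$ variable yields convergence of $\mathcal{E}^{\theta+\epsilon_j\omega}_{[\varphi^j]}(v)-\mathcal{E}^{\theta+\epsilon_j\omega}_{[\varphi^j]}(v')$ to $\mathcal{E}^\theta_{[\varphi]}(v)-\mathcal{E}^\theta_{[\varphi]}(v')$. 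For the left-hand side, the chain of inclusions $\mathcal{I}(k(1+\delta)\varphi^j)\subseteq \mathcal{I}(k\varphi)\subseteq \mathcal{I}(k\varphi^j)$ sandwiches the Hermitian subspace $\mathrm{H}^0(X,L^k\otimes \mathcal{I}(k\varphi))$ between two spaces whose determinant-norm ratios are controlled by Step~2; combined with the convexity (in $v$) of the functional $v\mapsto \frac{n!}{k^{n+1}}\log\det\|\cdot\|_{L^2(kv)}^{-1}$ on each of these spaces, and the fact that their dimensions differ from $h^0(X,L^k\otimes\mathcal{I}(k\varphi))$ by $o(k^n)$ uniformly in $k$ as $j\to\infty$ and $\delta\to 0$, this pins down the limit.

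The main obstacle, as expected, is the double limit interchange in the last step: one must show that the error $F_k^j(v,v')-F_k(v,v')$ (with obvious notation) is bounded by a quantity independent of $k$ and tending to $0$ as $j\to\infty$, $\delta\to 0$. I expect this to reduce to a quantitative form of \cite[Theorem 3.8]{DX21}, controlling the asymptotic distortion between the $L^2$ determinants on $\mathrm{H}^0(X,L^k\otimes\mathcal{I}(k\varphi))$ and on $\mathrm{H}^0(X,L^k\otimes\mathcal{I}(k\varphi^j))$ by the difference $\int_X\theta_{P[\varphi^j]_\mathcal{I}}^n-\int_X\theta_{P[\varphi]_\mathcal{I}}^n$ via a Brunn--Minkowski style argument and \cref{lma:vol1Lip}. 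Once this error estimate is in place, both sides converge and the identity is transported from the analytic-singularity setting to the general one.
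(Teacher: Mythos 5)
Your overall architecture is genuinely different from the paper's. The paper does not reduce to analytic singularities at all at this stage: assuming $v'=0$, it replaces the $L^\infty$-determinants by $L^2(\nu)$-determinants via Bernstein--Markov and \cite[Lemma~6.5]{DX21}, writes the normalized log-determinant ratio as $\int_0^1\int_X v\,\beta^k_{tv,\varphi,\nu}\,\mathrm{d}t$ where $\beta^k_{tv,\varphi,\nu}$ is the partial Bergman measure, and then invokes the convergence $\beta^k_{tv,\varphi,\nu}\rightharpoonup\theta^n_{P[\varphi]_{\mathcal{I}}(tv)}$ from \cite[Theorem~1.2]{DX21} together with \cref{thm:Gat} and dominated convergence. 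All the approximation-theoretic work you are trying to redo by hand is already packaged in that Bergman-kernel convergence, which holds for general $\varphi$, so no sandwich between $\mathcal{I}(k(1+\delta)\varphi^j)$ and $\mathcal{I}(k\varphi^j)$ is needed.

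The gap in your route is exactly the step you flag but do not close: the double-limit interchange on the left-hand side. Your control of the determinant-norm discrepancy between $\mathrm{H}^0(X,L^k\otimes\mathcal{I}(k\varphi))$ and $\mathrm{H}^0(X,L^k\otimes\mathcal{I}(k\varphi^j))$ rests on the claim that the codimensions are $o(k^n)$ \emph{uniformly} in $k$ as $j\to\infty$, $\delta\to 0$; but the quasi-equisingular inclusions hold only for $j\geq j_0(\delta,k)$, and the dimension comparison you can actually extract (from \cref{thm:DXmain} and \cite[Theorem~3.8]{DX21}) is asymptotic in $k$ for each fixed $j$, with no rate. Moreover \cref{lma:vol1Lip} bounds differences of volumes of graded linear series, not the metric distortion of determinants on nested subspaces, so "a Brunn--Minkowski style argument and \cref{lma:vol1Lip}" does not by itself produce the needed estimate; one would have to add an interlacing-type argument bounding the change of the relative determinant by $(\mathrm{codim})\cdot k\|v-v'\|_{C^0}$ and then run a careful $\limsup/\liminf$ diagonal argument, none of which is in the proposal. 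Two further points need repair even granting that: (i) passing to $\varphi^j\in\PSH(X,\theta+\epsilon_j\omega)$ changes the class, so the "analytic case" applies to sections of powers of $L+\epsilon_jA$, not of $L$, and this mismatch must be reconciled (the paper's own reductions first pass to the Kähler-current case inside $\PSH(X,\theta)$ via $\varphi_\epsilon=(1-\epsilon)\varphi+\epsilon\psi$ precisely to avoid this); (ii) in the base case the identification of $\mathrm{H}^0(X,L^k\otimes\mathcal{I}(k\varphi))$ with $\mathrm{H}^0(Y,\pi^*L^k\otimes\mathcal{O}_Y(-kE))$ is only an asymptotic sandwich (as in \cref{lma:anacase}), and the matching of $\mathcal{E}^{\theta}_{[\varphi]}$ with the Berman--Boucksom equilibrium energy of the big class $c_1(\pi^*L-E)$ on $Y$ must be verified rather than asserted. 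As written, the proposal is a plausible programme but not a proof.
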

\begin{remark}
	When $\varphi=V_{\theta}$, the left-hand side of \eqref{eq:relvolene} is known as the \emph{relative volume} between the two metrics $h\mathrm{e}^{-v/2}$ and $h\mathrm{e}^{-v'/2}$. They are studied in detail in \cite{BBWN11}.
\end{remark}
This theorem partially generalizes \cite[Theorem~A]{BB10}. We remind the readers that our conventions of multiplier ideal sheaves are different from those in \cite{BB10} and \cite{BBWN11}, which explains the difference between our coefficients and theirs.

For the definition of Bernstein--Markov property, see \cite[Definition~2.3]{BB10}. 
\begin{proof}
	We may assume that $v'=0$. Let $\nu$ be a smooth volume form on $X$. Then recall that $\nu$ satisfies the Bernstein--Markov property with respect to $tv$ for all $t\in [0,1]$ \cite[Theorem~2.4]{BB10}. We may replace the $L^{\infty}$-norm on the left-hand side with the $L^2(\nu)$-norm by \cite[Lemma~6.5]{DX21}.
	We recall the definition of the partial Bergman kernel:
	\[
		B^k_{tv,\varphi, \nu}(x) \coloneqq  \sup \left\{|s|_{h^k}^2\mathrm{e}^{-kv}(x): \int_X |s|_{h^k}^2\mathrm{e}^{-tv} \leq 1,  s \in \mathrm{H}^0(X,L^k \otimes \mathcal{I}(k\varphi)) \right\}
	\]
	and
	\[
		\beta^k_{tv,\varphi,\nu}:	=\frac{n!}{k^n} B^k_{tv,\varphi,\nu} \,\mathrm{d}\nu,
	\]
	where $k\in \mathbb{Z}_{>0}$.

	By \cite[Theorem~1.2]{DX21},
	\[
		\beta^k_{tv,\varphi,\nu}\rightharpoonup \theta^n_{P_X[\varphi]_{\mathcal{I}}(tv)}
	\]
	as $k\to\infty$ for all $t\in [0,1]$. By the dominated convergence theorem,
	\[
		\lim_{k\to\infty}\int_0^1\int_X v\,\beta^k_{tv,\varphi,\nu}\,\mathrm{d}t=\int_0^1\int_X v\,\theta^n_{P_X[\varphi]_{\mathcal{I}}(tv)}\,\mathrm{d}t
	\]
	and \eqref{eq:relvolene} follows.
\end{proof}

\begin{proposition}\label{prop:equieneconv2}
	Let $\varphi\in \PSH(X,\theta)$ such that $\theta_{\varphi}$ is a Kähler current. Let $(\varphi^j)_{j\in \mathbb{N}}$ be a quasi-equisingular approximation of $\varphi$.
	Then
	\begin{equation}\label{eq:Ethetaconv2}
		\lim_{j\to \infty}\mathcal{E}^{\theta}_{[\varphi^j]}(v)=\mathcal{E}^{\theta}_{[\varphi]}(v).
	\end{equation}
\end{proposition}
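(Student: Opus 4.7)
The plan is to reduce \eqref{eq:Ethetaconv2} to the $d_S$-continuity of non-pluripolar Monge--Amp\`ere measures by exploiting the Gâteaux derivative formula of \cref{thm:Gat}. The starting point is that any quasi-equisingular approximation satisfies $\varphi^j \xrightarrow{d_{S,\theta}} \varphi$ by \cite[Theorem~3.8]{DX21}; \cref{thm:contPI} then yields $P[\varphi^j]_{\mathcal{I}} \xrightarrow{d_{S,\theta}} P[\varphi]_{\mathcal{I}}$. Because $(\varphi^j)$ is decreasing and $P[\cdot]_{\mathcal{I}}$ is monotone with respect to the $\mathcal{I}$-order, both $P[\varphi^j]_{\mathcal{I}}$ and the prescribed-singularity envelopes $P[\varphi^j]_{\mathcal{I}}(v)$ are decreasing in $j$.

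The key intermediate step I would carry out is to upgrade convergence of the model potentials to convergence of the envelopes: $P[\varphi^j]_{\mathcal{I}}(v) \xrightarrow{d_{S,\theta}} P[\varphi]_{\mathcal{I}}(v)$ for every $v \in C^0(X)$. Let $\psi$ denote the pointwise decreasing limit of $P[\varphi^j]_{\mathcal{I}}(v)$; clearly $\psi \geq P[\varphi]_{\mathcal{I}}(v)$. For the reverse bound I would compare Lelong numbers on every birational model $\pi:Y\to X$: since $\psi \leq P[\varphi^j]_{\mathcal{I}}(v)$ and an envelope with prescribed singularities has the same Lelong numbers as its model, one has $\nu(\pi^*\psi,y) \geq \nu(\pi^* P[\varphi^j]_{\mathcal{I}},y)$ at every $y\in Y$. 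Passing to $j\to\infty$ via continuity of Lelong numbers along decreasing limits of psh functions yields $\nu(\pi^*\psi,y) \geq \nu(\pi^* P[\varphi]_{\mathcal{I}},y)$, so $\psi \preceq_{\mathcal{I}} \varphi$, and the envelope characterization forces $\psi \leq P[\varphi]_{\mathcal{I}}(v)$. To promote pointwise identification to $d_S$-convergence, I would verify that the corresponding non-pluripolar masses agree: $\int_X \theta^n_{P[\varphi^j]_{\mathcal{I}}(v)} = \int_X \theta^n_{P[\varphi^j]_{\mathcal{I}}}$ (envelope against a bounded function preserves mass), which converges to $\int_X \theta^n_{P[\varphi]_{\mathcal{I}}} = \int_X \theta^n_{P[\varphi]_{\mathcal{I}}(v)}$ by \cref{thm:dsmixedmass}. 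Decreasing convergence of model potentials with matching masses then gives the $d_S$-convergence by \cite[Theorem~5.6]{DDNL5}.

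Granted this, the proof concludes via \cref{thm:Gat}. Since $P[\varphi^j]_{\mathcal{I}}(0) = P[\varphi^j]_{\mathcal{I}}$ (and analogously for $\varphi$) we have $\mathcal{E}^\theta_{[\varphi^j]}(0) = 0 = \mathcal{E}^\theta_{[\varphi]}(0)$, and integrating the Gâteaux derivative along $t \mapsto tv$ gives
\[
\mathcal{E}^\theta_{[\varphi^j]}(v) = \int_0^1 \int_X v\, \theta^n_{P[\varphi^j]_{\mathcal{I}}(tv)}\,\mathrm{d}t,
\]
and similarly for $\varphi$. Applying the intermediate step with $tv$ in place of $v$, together with the weak convergence of non-pluripolar Monge--Amp\`ere measures under $d_S$-convergence (a consequence of \cref{thm:dsmixedmass} and \cite[Theorem~2.3]{DDNL2} as noted after \cref{thm:mixc}), the inner integral converges pointwise in $t\in[0,1]$, and is bounded by $\|v\|_\infty \sup_j \int_X \theta^n_{P[\varphi^j]_{\mathcal{I}}}$ uniformly in $j,t$. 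Dominated convergence yields \eqref{eq:Ethetaconv2}. The hard part will be the intermediate step: tracking how the $\mathcal{I}$-singularity relation interacts with the prescribed-singularity envelope under decreasing limits, while simultaneously ensuring that the total non-pluripolar masses pass to the limit, is the delicate point of the argument.
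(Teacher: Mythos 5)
Your overall skeleton coincides with the paper's proof: integrate the G\^ateaux derivative of \cref{thm:Gat} along $t\mapsto tv$, show weak convergence of the measures $\theta^n_{P[\varphi^j]_{\mathcal{I}}(tv)}$ to $\theta^n_{P[\varphi]_{\mathcal{I}}(tv)}$, and finish by dominated convergence; the paper obtains the weak convergence by citing \cite[Proposition~3.3]{DX21} together with \cite[Theorem~1.2]{DDNL2}, whereas you try to prove the required convergence of envelopes by hand. That is where there is a genuine gap. The decisive step of your intermediate claim is the passage from $\nu(\pi^*\psi,y)\geq \nu(\pi^*P[\varphi^j]_{\mathcal{I}},y)$ for all $j$ to $\nu(\pi^*\psi,y)\geq \nu(\pi^*P[\varphi]_{\mathcal{I}},y)$, which you justify by ``continuity of Lelong numbers along decreasing limits of psh functions.'' No such continuity holds: for a decreasing sequence $u_j\downarrow u$ one only has $\lim_j\nu(u_j,y)\leq \nu(u,y)$, and the inequality can be strict (e.g.\ $u_j=\max\{u,-j\}$), which is exactly the wrong direction for your argument; moreover the decreasing limit of $P[\varphi^j]_{\mathcal{I}}$ is not yet known to be $P[\varphi]_{\mathcal{I}}$ at that point, so invoking it is also circular. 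Note that your argument as written would apply verbatim to the (non quasi-equisingular) approximation $\varphi^j=\max\{\varphi,-j\}$, for which $P[\varphi^j]_{\mathcal{I}}=V_\theta$ for every $j$ and the conclusion fails; so any correct proof must use property (4) of quasi-equisingular approximations, i.e.\ the inclusions $\mathcal{I}(k(1+\delta)\varphi^j)\subseteq\mathcal{I}(k\varphi)$, which your Lelong-number step never invokes.

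The gap can be closed in two ways. Either quote the convergence of generic Lelong numbers along quasi-equisingular approximations, which is precisely a consequence of these multiplier-ideal inclusions (this is the role played by \cite[Lemma~2.2]{Xia20} in the proof of \cref{lma:nuOk}); or, closer to what you already set up, avoid Lelong numbers altogether: by \cref{thm:contPI} you have $P[\varphi^j]_{\mathcal{I}}\xrightarrow{d_S}P[\varphi]_{\mathcal{I}}$, and since $P[\varphi]_{\mathcal{I}}\leq u:=\lim_j P[\varphi^j]_{\mathcal{I}}\leq P[\varphi^j]_{\mathcal{I}}$, sandwiching the mixed masses $\int_X\theta^i_{\bullet}\wedge\theta^{n-i}_{V_\theta}$ gives $d_S(u,P[\varphi]_{\mathcal{I}})=0$, hence $u\leq C[u]=C[P[\varphi]_{\mathcal{I}}]=P[\varphi]_{\mathcal{I}}$ because $P[\varphi]_{\mathcal{I}}$ is $\mathcal{I}$-model and therefore model; thus $u=P[\varphi]_{\mathcal{I}}$, and then the bound $\lim_jP[\varphi^j]_{\mathcal{I}}(tv)\leq u+t\max_X v$ forces the limit envelope to be $\preceq_{\mathcal{I}}\varphi$ and hence equal to $P[\varphi]_{\mathcal{I}}(tv)$. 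With that identification, the decreasing convergence together with the mass equality you already noted yields the weak convergence of the Monge--Amp\`ere measures, and your concluding dominated convergence step is fine. Alternatively, simply cite \cite[Proposition~3.3]{DX21} as the paper does.
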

\begin{proof}
	By \cref{thm:Gat}, for $j\in \mathbb{N}$,
	\[
		\begin{aligned}
			\mathcal{E}^{\theta}_{[\varphi^j]}(v) & =\int_0^1 \int_X v\,\theta_{P[\varphi^j]_{\mathcal{I}}(tv)}^n \,\mathrm{d}t, \\
			\mathcal{E}^{\theta}_{[\varphi]}(v)   & =\int_0^1 \int_X v\,\theta_{P[\varphi]_{\mathcal{I}}(tv)}^n \,\mathrm{d}t.
		\end{aligned}
	\]
	It follows from \cite[Proposition~3.3]{DX21} and \cite[Theorem~1.2]{DDNL18mono} that as $j\to\infty$,
	\[
		\theta_{P[\varphi^j]_{\mathcal{I}}(tv)}^n\rightharpoonup \theta_{P[\varphi]_{\mathcal{I}}(tv)}^n.
	\]
	By the dominated convergence theorem, \eqref{eq:Ethetaconv2} follows.
\end{proof}

\begin{proposition}\label{prop:equieneconv3}
	Let $\varphi,\psi\in \PSH(X,\theta)$. Assume that $\psi\leq \varphi$. Set $\varphi_{\epsilon}=(1-\epsilon)\varphi+\epsilon\psi$ for any $\epsilon\in [0,1]$.
	Then
	\begin{equation}\label{eq:Ethetaconv3}
		\lim_{\epsilon\to 0+}\mathcal{E}^{\theta}_{[\varphi_{\epsilon}]}(v)=\mathcal{E}^{\theta}_{[\varphi]}(v).
	\end{equation}
\end{proposition}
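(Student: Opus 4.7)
The proof should follow the same three-step pattern as the proof of \cref{prop:equieneconv2} already given above, only with the approximation $\varphi_\epsilon=(1-\epsilon)\varphi+\epsilon\psi$ in place of a quasi-equisingular approximation. My plan is as follows.

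First, I would apply \cref{thm:Gat} to rewrite the energies as explicit integrals. Since $P[\varphi]_{\mathcal{I}}(0)=P[\varphi]_{\mathcal{I}}$ and the relative Monge--Amp\`ere energy vanishes on its reference potential, $\mathcal{E}^{\theta}_{[\varphi]}(0)=0$. Integrating the Gateaux derivative along the ray $t\mapsto tv$ gives
\[
\mathcal{E}^{\theta}_{[\varphi_\epsilon]}(v)=\int_0^1\int_X v\,\theta_{P[\varphi_\epsilon]_{\mathcal{I}}(tv)}^n\,\mathrm{d}t\,,\qquad \mathcal{E}^{\theta}_{[\varphi]}(v)=\int_0^1\int_X v\,\theta_{P[\varphi]_{\mathcal{I}}(tv)}^n\,\mathrm{d}t\,.
\]
Since $|v|$ is uniformly bounded on $X$ and the total masses $\int_X\theta_{P[\varphi_\epsilon]_{\mathcal{I}}(tv)}^n$ are uniformly bounded (they are dominated by $\int_X\theta_{V_\theta}^n$), it suffices to prove that for each fixed $t\in[0,1]$,
\[
\theta_{P[\varphi_\epsilon]_{\mathcal{I}}(tv)}^n\rightharpoonup \theta_{P[\varphi]_{\mathcal{I}}(tv)}^n\qquad\text{as }\epsilon\to 0+\,,
\]
after which the dominated convergence theorem concludes the argument.

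For the weak convergence of the Monge--Amp\`ere measures I would argue as follows. As $\epsilon\searrow 0$ one has $\varphi_\epsilon\nearrow\varphi$ pointwise, so in particular $\mathcal{I}(k\varphi_\epsilon)\subseteq\mathcal{I}(k\varphi)$, i.e. $\varphi_\epsilon\preceq_{\mathcal{I}}\varphi$. This implies that $P[\varphi_\epsilon]_{\mathcal{I}}(tv)\leq P[\varphi]_{\mathcal{I}}(tv)$ and that $\epsilon\mapsto P[\varphi_\epsilon]_{\mathcal{I}}(tv)$ is monotone increasing in $\epsilon\searrow 0$. Let $\widetilde{\Phi}_t$ denote the upper semicontinuous regularization of its pointwise limit; then $\widetilde\Phi_t\in\PSH(X,\theta)$, $\widetilde\Phi_t\leq tv$, and $\widetilde\Phi_t\leq P[\varphi]_{\mathcal{I}}(tv)$.

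The main obstacle is to identify $\widetilde\Phi_t$ with $P[\varphi]_{\mathcal{I}}(tv)$ and to upgrade pointwise-increasing convergence to weak convergence of $\theta^n$. I would combine two ingredients: (i) the total-mass convergence $\int_X\theta^n_{P[\varphi_\epsilon]_{\mathcal{I}}(tv)}\to\int_X\theta^n_{P[\varphi]_{\mathcal{I}}(tv)}$, which follows from \cite[Proposition~2.7]{DX21} applied to the metric $he^{-tv/2}$ (the argument there is insensitive to the upper barrier $tv$); and (ii) the monotone convergence theorem of \cite[Theorem~1.2]{DDNL2} for non-pluripolar Monge--Amp\`ere operators, which gives $\theta^n_{P[\varphi_\epsilon]_{\mathcal{I}}(tv)}\rightharpoonup\theta^n_{\widetilde\Phi_t}$. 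Matching total masses forces $\int_X\theta^n_{\widetilde\Phi_t}=\int_X\theta^n_{P[\varphi]_{\mathcal{I}}(tv)}$, and since $\widetilde\Phi_t\leq P[\varphi]_{\mathcal{I}}(tv)$ with the latter being $\mathcal{I}$-model, the domination principle forces the two to coincide. This yields the required weak convergence, and combined with the first paragraph gives \eqref{eq:Ethetaconv3}.
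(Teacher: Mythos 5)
Your outline coincides with the paper's own proof, which simply repeats the argument of \cref{prop:equieneconv2} with \cite[Proposition~2.7]{DX21} in place of \cite[Proposition~3.3]{DX21}: the Gateaux formula of \cref{thm:Gat}, the weak convergence $\theta^n_{P[\varphi_\epsilon]_{\mathcal{I}}(tv)}\rightharpoonup\theta^n_{P[\varphi]_{\mathcal{I}}(tv)}$ obtained from \cite[Proposition~2.7]{DX21} together with \cite[Theorem~1.2]{DDNL2}, and dominated convergence in $t$. So your first paragraph and your choice of inputs are exactly the intended ones.

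The gap is in the step identifying the increasing limit $\widetilde{\Phi}_t$ with $P[\varphi]_{\mathcal{I}}(tv)$. From $\widetilde{\Phi}_t\le P[\varphi]_{\mathcal{I}}(tv)$ and equality of total non-pluripolar masses, the domination principle does not yield equality: the potential $P[\varphi]_{\mathcal{I}}(tv)-1$ satisfies the same two hypotheses, and any usable domination principle requires an inequality holding $\theta^n_{\widetilde{\Phi}_t}$-almost everywhere (e.g.\ on the contact set with $tv$), not merely matching masses; note also that $P[\varphi]_{\mathcal{I}}(tv)$ is itself not an $\mathcal{I}$-model potential, only its singularity type is. The missing inequality $\widetilde{\Phi}_t\ge P[\varphi]_{\mathcal{I}}(tv)$ is true, but needs an argument, for instance the following sandwich: after subtracting a constant from $\psi$ so that $\psi\le tv$ (harmless, since $\preceq_{\mathcal{I}}$ and the envelopes depend only on singularity classes), the potential $(1-\epsilon)P[\varphi]_{\mathcal{I}}(tv)+\epsilon\psi$ lies below $tv$ and is $\preceq_{\mathcal{I}}\varphi_\epsilon$, because generic Lelong numbers on all birational models are affine in convex combinations and $P[\varphi]_{\mathcal{I}}(tv)\preceq_{\mathcal{I}}\varphi$; hence it is a competitor for $P[\varphi_\epsilon]_{\mathcal{I}}(tv)$, and letting $\epsilon\to0+$ gives $\widetilde{\Phi}_t\ge P[\varphi]_{\mathcal{I}}(tv)$ almost everywhere, hence everywhere. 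With this (or by invoking \cite[Proposition~2.7]{DX21} in the form that already treats the constrained envelope, which is what the paper implicitly does), your proof closes. A smaller caution: justifying the barrier by applying \cite[Proposition~2.7]{DX21} ``to the metric $he^{-tv/2}$'' is not quite legitimate, since $v$ is only continuous and the resulting reference form is not smooth; the honest statement is that the proof of that proposition goes through verbatim in the presence of the barrier $tv$.
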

\begin{proof}
	The proof is similar to that of \cref{prop:equieneconv2}. We just replace \cite[Proposition~3.3]{DX21}  by \cite[Proposition~2.7]{DX21}.
\end{proof}

We finally recall a technical lemma.
\begin{lemma}[{\cite[Corollary~3.4]{WN14}}]\label{lma:cF}
	Let $C\subseteq \mathbb{R}^{n+1}$ be an open convex cone. Let $F$ be a subadditive function on $C\cap \mathbb{Z}^{n+1}$ defined outside a compact set. Then for any sequence $\alpha_k\in C\cap \mathbb{Z}^{n+1}$ tending to infinity such that $\alpha_k/|\alpha_k|$ converges to some point $p\in C$. Then the limit
	\[
		c[F](p)\coloneqq \lim_{k\to\infty}\frac{F(\alpha_k)}{|\alpha_k|}
	\]
	exists and depends only on $p$ and $F$.
	Moreover, $c[F]$ is a convex function on $C\cap \{x_{n+1}=1\}$.
\end{lemma}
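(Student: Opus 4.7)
The plan is a Fekete-type argument on the cone $C$, following the classical pattern of subadditive limits. First, for each fixed $\alpha \in C\cap \mathbb{Z}^{n+1}$ at which $F$ is defined, the sequence $k\mapsto F(k\alpha)$ is subadditive in $k\in \mathbb{N}_{>0}$, so by the classical (one-dimensional) Fekete lemma
\[
g(\alpha):=\lim_{k\to\infty}\frac{F(k\alpha)}{k}=\inf_{k}\frac{F(k\alpha)}{k}
\]
exists. The function $g$ is positively homogeneous of integer degree by construction and inherits subadditivity from $F$.

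Next I would extend $g$ by convexity to the rational slice. On $C\cap \{x_{n+1}=1\}\cap \mathbb{Q}^{n+1}$, define $\tilde c(q):=g(\alpha)/|\alpha|$ for any $\alpha\in \mathbb{N}_{>0}\cdot q\cap \mathbb{Z}^{n+1}$; this is well defined by the homogeneity of $g$ along rays. To see $\tilde c$ is convex, pick rational $q_1,q_2$ and rational $t\in (0,1)$ and choose $N\in \mathbb{N}$ large enough that $tNq_1,(1-t)Nq_2\in \mathbb{Z}^{n+1}$ and all relevant values of $F$ are defined. Subadditivity gives
\[
F\bigl(N(tq_1+(1-t)q_2)\bigr)\leq F(tNq_1)+F((1-t)Nq_2)\,.
\]
Dividing by $N$ and sending $N\to\infty$ yields $\tilde c(tq_1+(1-t)q_2)\leq t\tilde c(q_1)+(1-t)\tilde c(q_2)$. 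Since a convex function on a dense subset of an open convex set extends uniquely to a convex (hence locally Lipschitz) function on the whole set, this produces the desired $c[F]$ on $C\cap \{x_{n+1}=1\}$.

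Finally, I would verify that $F(\alpha_k)/|\alpha_k|\to c[F](p)$ for any sequence $\alpha_k\in C\cap \mathbb{Z}^{n+1}$ tending to infinity with $\alpha_k/|\alpha_k|\to p$. Fix a rational $q\in C\cap \{x_{n+1}=1\}$ close to $p$ and an integer $N$ with $Nq\in \mathbb{Z}^{n+1}$. For $k$ large, write $\alpha_k=m_k(Nq)+r_k$ with $m_k\in \mathbb{N}$ chosen so that $r_k\in C\cap \mathbb{Z}^{n+1}$ stays in a bounded region; this is possible because $p$ is interior to $C$ and $q\approx p$, so greedy subtraction of $Nq$ keeps the remainder in a fixed compact slice of $C$. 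Subadditivity gives $F(\alpha_k)\leq m_k F(Nq)+F(r_k)$, and since $F(r_k)$ is bounded, dividing by $|\alpha_k|$ and passing to the limit in $k$ gives $\limsup_k F(\alpha_k)/|\alpha_k|\leq g(Nq)/(N|q|)=c[F](q)$. Letting $q\to p$ through rationals and invoking continuity of $c[F]$ yields the $\limsup$ bound. A symmetric argument, now writing $m(Nq')=\alpha_k+s_k$ with bounded $s_k$ for $q'$ rational near $p$, gives the matching $\liminf$ bound.

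The main obstacle I expect is precisely this last sandwiching step, particularly the uniform control of the lattice correction terms $r_k$ and $s_k$. One must exploit the openness of $C$ carefully to ensure that the greedy decompositions stay inside the cone and outside the compact set where $F$ is undefined; this is what forces the initial choice of $q$ to be genuinely close to $p$ (not just rational in $C$). The homogeneity and subadditivity machinery of the first two paragraphs is classical; the delicate point is transferring the limit from rational rays to general integer sequences in the cone.
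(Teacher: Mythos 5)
The paper does not prove this lemma; it cites \cite[Corollary~3.4]{WN14}, so there is no "paper proof" to compare against directly. Your overall strategy (Fekete along rays, extension by convexity to the open slice, then a sandwiching/greedy argument) is the natural elementary approach, but there is a genuine gap in the sandwiching step, which is exactly the step you flag as delicate.

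The problem is with the single-point decomposition $\alpha_k = m_k(Nq) + r_k$. If you choose $m_k$ so that the last component $|r_k| = |\alpha_k| - m_k N$ stays in $[0,N)$, the \emph{other} components of $r_k$ are
\[
	\alpha_k^{(i)} - m_k N q^{(i)} = |\alpha_k|\bigl(p^{(i)} - q^{(i)}\bigr) + o(|\alpha_k|)\,,
\]
which grows like $|\alpha_k|\,|p - q|$ whenever $q \neq p$ (and even for $q=p$ rational, the error term $\alpha_k - |\alpha_k|p$ need not be bounded — it is only $o(|\alpha_k|)$). So $r_k$ is not bounded, its direction degenerates toward the hyperplane $\{x_{n+1}=0\}$, and for $k$ large $r_k \notin C$ at all, so $F(r_k)$ is simply undefined. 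Taking $q$ "genuinely close to $p$" does not repair this. The same defect hits your $\liminf$ argument. A correct version of this step uses a \emph{simplex} of rational points $q_0,\ldots,q_{n+1}\in C\cap\{x_{n+1}=1\}$ with $p$ in the interior of their convex hull, writes $\alpha_k \approx \sum_i a_{i,k}\gamma_i$ with $\gamma_i = M N_i q_i$ and nonnegative integers $a_{i,k}$, keeps an additive copy of some fixed $\gamma_0$ to push the remainder into $C$ and outside the compact exceptional set, and then uses continuity of $c[F]$ (established beforehand) to pass from $\max_i c[F](q_i)$ to $c[F](p)$ as the simplex shrinks.

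Two smaller points. First, you assert that a convex function on a dense subset of an open convex set automatically extends to a convex, locally Lipschitz function; this is false without local boundedness (compare $\mathbb{Q}$-linear Hamel-basis functions on $\mathbb{Q}$, which are $\mathbb{Q}$-convex but not continuous). Here local boundedness above does hold — bound $\tilde c(q)$ by $\max_i \tilde c(q_i)$ for a rational simplex around the point, using $\mathbb{Q}$-convexity — and then local Lipschitzness follows, so the claim is salvageable, but the step needs to be argued, not quoted as a general principle. Second, in the $\limsup$ computation, "dividing by $|\alpha_k|$ and passing to the limit in $k$" yields $\limsup_k F(\alpha_k)/|\alpha_k| \leq F(Nq)/N$, not $g(Nq)/N$; you need a further infimum over multiples $MNq$ (using Fekete again) to replace $F(Nq)/N$ by $g(Nq)/N = c[F](q)$. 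Finally, for what it is worth, the proof in \cite{WN14} proceeds not by a hand-made Fekete argument but by applying the Okounkov-body theory of semigroups (in the spirit of Khovanskii and \cite{KK12,LM09}) to the hypograph semigroup $\{(\alpha,t): t \leq -F(\alpha)\}$ of the superadditive function $-F$; that packages the delicate lattice-point decomposition inside a theorem you already trust, which is why the direct route you chose feels harder precisely where it is.
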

Here $|\alpha_k|$ denotes the absolute value of the last component of $\alpha_k$.

Recall that a real-valued function $F$ defined on a semigroup $\Gamma$ is said to be \emph{sub-additive} if for any $x,y\in \Gamma$, $F(x+y)\leq F(x)+F(y)$.

\subsection{The case of analytic singularities}

Assume that $\varphi$ has analytic singularities. 

Let $\pi:Y\rightarrow X$ be a resolution such that $\pi^*\varphi$ has analytic singularity along a normal crossing $\mathbb{Q}$-divisor $E$.
We define as before
\[
	W_k^{0}= \mathrm{H}^0(Y,\pi^*L^k\otimes \mathcal{O}_Y(-k E))\subseteq \mathrm{H}^0(X,L^k).
\]
Fix $a\in \Gamma_k(W^0)$.
Let $p$ be the center of $\nu$ on $X$. Let $z=(z_1,\ldots,z_n)$ be a regular sequence in $\mathcal{O}_{X,p}$ such that $(Y_i)_x$ is the zero locus of $z_1,\ldots,z_i$. Fix a local trivialization of $L$ near $p$.
Define
\[
	A^a_{k}\coloneqq \left\{s\in W^{0}_k: \nu(s)\geq ka , s=z^{ka}+\text{higher order terms near } p\right\}.
\]
Define
\[
	F[v](ka,k)=\inf_{s\in  A_{a,k}} \log |s|_{L^{\infty}(kv)}.
\]
Recall the following two lemmas proved in \cite[Lemma~5.3, Lemma~5.4]{WN14}.
\begin{lemma}
	$F[v]$ is subadditive on $\Gamma(W^{0})$.
\end{lemma}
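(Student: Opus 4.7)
The plan is to verify subadditivity directly by exhibiting a product operation: given $(ka,k),(k'a',k') \in \Gamma(W^0)$ together with near-minimizers $s \in A^a_k$ and $s' \in A^{a'}_{k'}$, I claim $s \cdot s'$ lies in $A^{a''}_{k+k'}$ where $a'' = (ka+k'a')/(k+k')$, and its sup-norm satisfies the obvious submultiplicative bound. Taking infima on both sides and over all such $s,s'$ will yield
\[
F[v]\bigl((ka,k)+(k'a',k')\bigr) \leq F[v](ka,k)+F[v](k'a',k'),
\]
which is subadditivity.

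First I would check that $s \cdot s' \in W^0_{k+k'}$. This is immediate from the description of $W^0$ as the section ring of $\pi^*L - E$ on $Y$: multiplication gives $W^0_k \cdot W^0_{k'} \subseteq W^0_{k+k'}$, so $W^0$ itself is a graded linear series. Next, since $\nu$ is a valuation,
\[
\nu(s \cdot s') = \nu(s) + \nu(s') \geq ka + k'a',
\]
so the first condition defining $A^{a''}_{k+k'}$ holds. For the leading-term condition, I would work in the local trivialization with regular parameters $z_1,\dots,z_n$: since $s = z^{ka} + \sum_{\beta >_{\mathrm{lex}} ka} c_\beta z^\beta$ and $s' = z^{k'a'} + \sum_{\beta >_{\mathrm{lex}} k'a'} c'_\beta z^\beta$, the product expands as
\[
s \cdot s' = z^{ka+k'a'} + (\text{terms of strictly larger lex-valuation}),
\]
so $s \cdot s' \in A^{a''}_{k+k'}$.

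Finally, the $L^\infty$-norm is submultiplicative in the standard way:
\[
\sup_X |s \cdot s'|_{h^{k+k'}} e^{-(k+k')v/2} \leq \bigl(\sup_X |s|_{h^k} e^{-kv/2}\bigr)\bigl(\sup_X |s'|_{h^{k'}} e^{-k'v/2}\bigr),
\]
hence $\log \|s \cdot s'\|_{L^\infty((k+k')v)} \leq \log \|s\|_{L^\infty(kv)} + \log \|s'\|_{L^\infty(k'v)}$. Taking the infimum over $s \in A^a_k$ and $s' \in A^{a'}_{k'}$ on the right-hand side, and noting that $s \cdot s'$ ranges over a subset of $A^{a''}_{k+k'}$ (so the left-hand infimum is at most this value), gives the claimed subadditivity. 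There is no genuine obstacle here—every ingredient is either the definition of a valuation, the ring structure of $W^0$, local power series expansion, or the triangle inequality for sup-norms; the lemma is purely bookkeeping.
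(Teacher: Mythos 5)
Your proof is correct and follows essentially the same route as the paper: the paper simply asserts the containment $A^a_k\cdot A^{a'}_{k'}\subseteq A^{\gamma}_{k+k'}$ with $\gamma=(ka+k'a')/(k+k')$ and combines it with submultiplicativity of the sup-norm, while you spell out the same containment via additivity of the valuation and the leading-term expansion. No substantive difference.
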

\begin{lemma}
	There is $C>0$, so that for any $(ka,k)\in \Gamma(W^{0})$,
	\[
		F[v](ka,k)\geq C|(ka,k)|.
	\]
\end{lemma}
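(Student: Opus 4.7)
The plan is to extract a linear lower bound on $F[v](ka,k)$ from a multivariate Cauchy integral estimate at the center $p$ of the valuation $\nu$, exploiting the fact that every section $s\in A^a_k$ has Taylor expansion starting with the fixed monomial $z^{ka}$ with coefficient exactly one.

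First I would fix, once and for all, a coordinate polydisk $U\subseteq X$ centered at $p$ on which $z=(z_1,\ldots,z_n)$ are coordinates and $L$ admits a holomorphic frame $e$, and denote by $\phi_0=-\log|e|_h^2$ the corresponding local potential of $h$ on $U$. Choose a radius $R_0>0$ so that the distinguished polytorus $T:=\{x\in U:|z_j(x)|=R_0\text{ for all }j\}$ is a compact subset of $U$. For any $s=f\cdot e\in A^a_k$, the normalization built into $A^a_k$ says that the Taylor expansion of $f$ at $p$ begins with the monomial $z_1^{ka_1}\cdots z_n^{ka_n}$ with coefficient $1$. The multivariate Cauchy integral formula on $T$ then forces
\[
\sup_{T}|f|\;\geq\;R_0^{k|a|},\qquad |a|:=a_1+\cdots+a_n.
\]

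Next, at a point $x^\ast\in T$ realizing this supremum, combining with the fact that $\phi_0+v$ is continuous and hence bounded on the compact set $T$ gives
\[
\|s\|_{L^{\infty}(kv)}\;\geq\;|f(x^\ast)|\,e^{-k(\phi_0+v)(x^\ast)/2}\;\geq\;R_0^{k|a|}\,\exp\!\Bigl(-\tfrac{k}{2}\sup_{T}(\phi_0+v)\Bigr).
\]
Taking logarithms of this inequality produces the desired linear lower bound
\[
F[v](ka,k)\;\geq\;k|a|\log R_0\;-\;\tfrac{k}{2}\sup_{T}(\phi_0+v).
\]

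Finally, I would observe that $|a|$ is uniformly bounded as $(ka,k)$ ranges over $\Gamma(W^0)$, since $a\in\Gamma_k(W^0)\subseteq\Delta(W^0)\subseteq\Delta(L)$ and the Okounkov body $\Delta(L)\subseteq\mathbb{R}^n_{\geq 0}$ is compact; call this uniform bound $M$. Because $|(ka,k)|=k$, the preceding estimate rewrites as $F[v](ka,k)\geq C\,|(ka,k)|$ with $C$ depending only on $R_0$, on $M$, and on $\sup_{T}(\phi_0+v)$, all of which are independent of $a$ and $k$. The main technical care is only bookkeeping: ensuring that a single polydisk $U$, a single frame $e$, and a single radius $R_0$ work uniformly over all $a\in\Gamma_k(W^0)$ and all $k$, which follows from the once-and-for-all choices and the compactness of $\Delta(L)$. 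No deeper input is required beyond the Cauchy estimate for sections with prescribed leading monomial; this uniform linear lower bound is exactly what is needed to apply the subadditive Fekete-type result \cref{lma:cF} in the sequel.
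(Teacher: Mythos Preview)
Your argument is correct and is precisely the content of \cite[Lemma~5.4]{WN14}, which is all the paper invokes for its proof: the Cauchy integral on a fixed polytorus bounds the unit leading coefficient of $f$ by $\sup_T|f|/R_0^{k|a|}$, and combining with the continuity of $\phi_0+v$ on $T$ and the uniform bound $a\in\Delta(L)$ yields the linear lower bound in $k=|(ka,k)|$. Two cosmetic remarks: write $s=f\cdot e^{\otimes k}$ rather than $f\cdot e$, and note that the constant $C$ produced is in general real (not necessarily positive)---the ``$C>0$'' in the statement should be read as ``some constant $C$'' (equivalently $F[v](ka,k)\geq -C|(ka,k)|$ for some $C>0$), which is exactly what is needed to feed into \cref{lma:cF}.
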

\begin{proof}
	It suffices to apply \cite[Lemma~5.4]{WN14}.
\end{proof}

Let $c_{[\varphi]}[v]:\Int \Delta(\theta,\varphi)\rightarrow \mathbb{R}$ be the convex function $c[F[v]]$ defined by \cref{lma:cF}.

\begin{theorem}\label{thm:WepsCheb}
	We have
	\[
		\int_{\Delta(W(\theta,\varphi))}\left(c_{[\varphi]}[v]-c_{[\varphi]}[0]\right)\,\mathrm{d}\lambda=-\mathcal{E}_{[\varphi]}^{\theta}(v).
	\]
\end{theorem}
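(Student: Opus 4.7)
The plan is to adapt the strategy of \cite[Section~5]{WN14} to our partial Okounkov body setting. The bridge between the analytic and convex-geometric sides will be the graded linear subspace $W^0 = W^0(\theta,\varphi)$, which by \cref{rmk:DeltaanaW0} satisfies $\Delta(W^0) = \Delta(\theta,\varphi)$. The proof proceeds by relating $\log\det\|\bullet\|_{L^\infty(kv)}$ on $W_k^0$ to Riemann sums of $F[v]$ over the lattice points $\Gamma_k(W^0)$, and then passing to the limit on both sides.

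First I would apply \cref{thm:relvol} to rewrite
\[
\mathcal{E}^\theta_{[\varphi]}(v) = \lim_{k\to\infty} \frac{n!}{k^{n+1}} \log \frac{\det \|\bullet\|_{L^\infty(kv)}}{\det \|\bullet\|_{L^\infty(0)}}\,,
\]
where the determinants are taken on $\mathrm{H}^0(X,L^k\otimes\mathcal{I}(k\varphi))$. After passing to a log resolution $\pi:Y\to X$ on which $\pi^*\varphi$ has analytic singularities along an nc $\mathbb{Q}$-divisor $E$, $W_k^0$ differs from this space only by the fractional-part corrections on $E$; a direct estimate shows this introduces only a $o(k^{n+1})$ error in the ratio of log-determinants, so the same limit holds with determinants taken on $W_k^0$.

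Next, for each $k$ and each $\alpha\in \Gamma_k(W^0)$, I would select $s_\alpha\in A_k^\alpha$ whose $L^\infty(kv)$ norm realizes $F[v](k\alpha,k)$ up to an $o(k)$ error. Because $\nu$ has rank $n$ and rational rank $n$, the family $\{s_\alpha : \alpha\in \Gamma_k(W^0)\}$ is a basis of $W_k^0$ and the change-of-basis matrix to any valuation-compatible basis is upper-unitriangular. A Gram--Schmidt comparison of the form in \cite[Lemma~5.5]{WN14} then yields
\[
\log\det\|\bullet\|_{L^\infty(kv)} = \sum_{\alpha\in\Gamma_k(W^0)} F[v](k\alpha,k) + R_k(v)\,,
\]
with error term satisfying $R_k(v)-R_k(0)=o(k^{n+1})$. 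Subtracting the analogous identity at $v=0$ kills the basis-dependent error modulo $o(k^{n+1})$.

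Finally, since $\varphi$ has analytic singularities, \cref{lma:anacase} supplies the Hausdorff convergence $\Delta_k(W^0)\xrightarrow{d_n}\Delta(\theta,\varphi)$, and by \cref{lma:cF} the function $c_{[\varphi]}[v]-c_{[\varphi]}[0]$ is convex (hence continuous on the interior). A standard Riemann-sum argument then gives
\[
\frac{n!}{k^{n+1}}\sum_{\alpha\in\Gamma_k(W^0)} \bigl(F[v](k\alpha,k)-F[0](k\alpha,k)\bigr) \longrightarrow \int_{\Delta(\theta,\varphi)} \bigl(c_{[\varphi]}[v]-c_{[\varphi]}[0]\bigr)\,\mathrm{d}\lambda\,,
\]
which combined with the previous steps yields the claimed equality. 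The main obstacle is the Gram--Schmidt estimate controlling $R_k(v)-R_k(0)$: one must verify that valuation-adapted bases are sufficiently close to $L^\infty$-orthogonal so that the off-diagonal contributions are negligible after the $v=0$ subtraction. This is the technical heart of \cite[Section~5]{WN14} and carries over verbatim once the reduction to a graded linear series on $Y$ has been made.
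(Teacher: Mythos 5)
Your proposal is correct and follows essentially the same route as the paper, whose proof is simply a one-line citation to \cite[Theorem~6.2]{WN14} ``taking into account'' \cref{thm:relvol}. You have unpacked precisely what that verbatim transfer consists of: express $\mathcal{E}^\theta_{[\varphi]}(v)-\mathcal{E}^\theta_{[\varphi]}(0)$ as a log-determinant limit via \cref{thm:relvol}, pass to the graded linear series $W^0$ with the Hausdorff convergence of $\Delta_k(W^0)$ to $\Delta(\theta,\varphi)$, use the Gram--Schmidt (upper-triangular basis) comparison from \cite{WN14} to turn log-determinants into Riemann sums of $F[v]$, and let the subtraction of the $v=0$ case kill the basis-dependent error -- which is exactly the structure of Witt Nyström's proof.
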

\begin{proof}
	The proof follows \emph{verbatim} from that of \cite[Theorem~6.2]{WN14}, taking into account \cref{thm:relvol}.
\end{proof}

Observe that
\begin{equation}\label{eq:supcvdiff}
	\sup_{\Int \Delta(W(\theta,\varphi))}\left|c_{[\varphi]}[v]-c_{[\varphi]}[0]\right|\leq \|v\|_{C^0(X)}/2.
\end{equation}

The following result is obvious:
\begin{lemma}\label{lma:decc}
	Let $\varphi,\varphi'\in \PSH(X,\theta)$ be potentials with analytic singularities. If $[\varphi]\preceq [\varphi']$, then
	\[
		c_{[\varphi]}[v]\geq c_{[\varphi']}[v]
	\]
	when restricted to $\Int \Delta(\theta,\varphi)$.
\end{lemma}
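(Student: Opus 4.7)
The plan is a direct monotonicity argument that tracks how each ingredient in the construction of $c_{[\varphi]}[v]$ depends on $\varphi$. Since $[\varphi]\preceq[\varphi']$, there exists $C>0$ with $\varphi\leq \varphi'+C$ pointwise, which yields
\[
|s|_{h^k}^2 e^{-k\varphi'}\;\leq\;e^{kC}\,|s|_{h^k}^2 e^{-k\varphi}
\]
for every section $s$. Thus if $|s|_{h^k}^2 e^{-k\varphi}$ is bounded on $X$ then so is $|s|_{h^k}^2 e^{-k\varphi'}$, and by the definition in \eqref{eq:Weps} we obtain the inclusion
\[
W^0_k(\theta,\varphi)\;\subseteq\;W^0_k(\theta,\varphi')\qquad\forall k\in\mathbb{N}\,.
\]

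Next I carry this containment through the auxiliary sets. The two supplementary conditions defining $A^a_k$, namely $\nu(s)\geq ka$ and the normalization $s=z^{ka}+(\text{higher order terms})$ near $p$, are intrinsic constraints on the section $s$ and do not involve $\varphi$. Consequently
\[
A^a_k(\varphi)\;\subseteq\; A^a_k(\varphi')\qquad\text{for every }(ka,k)\in \Gamma(W^0(\theta,\varphi))\,.
\]
Taking the infimum of $\log\|s\|_{L^\infty(kv)}$ over a smaller set produces a larger number, so
\[
F[v]_{\varphi}(ka,k)\;\geq\;F[v]_{\varphi'}(ka,k)\qquad\text{on }\Gamma(W^0(\theta,\varphi))\,.
\]

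To conclude, I pass through \cref{lma:cF}. By \cref{prop:IcompimplyDeltacomp} we have $\Delta(\theta,\varphi)\subseteq \Delta(\theta,\varphi')$, and since both bodies have positive volume they are $n$-dimensional, so $\Int\Delta(\theta,\varphi)\subseteq \Int\Delta(\theta,\varphi')$; in particular the inequality we aim for makes sense on $\Int\Delta(\theta,\varphi)$. Fix $p$ in this open set. Using the Hausdorff convergence property of \cref{lma:anacase}, I select a sequence $(k_j a_j,k_j)\in \Gamma(W^0(\theta,\varphi))\subseteq \Gamma(W^0(\theta,\varphi'))$ with $a_j\to p$ and $k_j\to\infty$. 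Because the limit in \cref{lma:cF} is independent of the chosen approximating sequence, we may evaluate both $c_{[\varphi]}[v](p)$ and $c_{[\varphi']}[v](p)$ along this common sequence; dividing the pointwise inequality $F[v]_{\varphi}(k_j a_j,k_j)\geq F[v]_{\varphi'}(k_j a_j,k_j)$ by $k_j$ and taking $j\to\infty$ yields the desired inequality. There is no substantive obstacle here: once the inclusion of the spaces $W^0_k$ is in place, the remainder is a definitional chase. The only point requiring mild care is the ability to approximate arbitrary interior points of $\Delta(\theta,\varphi)$ by elements of $\Gamma(W^0(\theta,\varphi))$, which is exactly what the Hausdorff convergence property supplies.
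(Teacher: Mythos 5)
Your argument is correct, and it is essentially the proof the paper intends: the lemma is stated there without proof (among the results labelled obvious), the point being exactly your monotonicity chain $W^0_k(\theta,\varphi)\subseteq W^0_k(\theta,\varphi')$, hence $A^a_k(\varphi)\subseteq A^a_k(\varphi')$, hence $F[v]_{\varphi}\geq F[v]_{\varphi'}$ on $\Gamma(W^0(\theta,\varphi))$, which passes to the Chebyshev transforms via the sequence-independence of the limit in \cref{lma:cF} and the inclusion $\Int\Delta(\theta,\varphi)\subseteq\Int\Delta(\theta,\varphi')$ from \cref{prop:IcompimplyDeltacomp}. No gaps worth flagging.
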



\subsection{The case of Kähler currents}
Assume that $\theta_{\varphi}$ is a K\"ahler current.
Let $\varphi^j$ be a quasi-equisingular approximation of $\varphi$. Then $c_{[\varphi^j]}[v]$ restricted to $\Int \Delta(W(\theta,\varphi))$ is an increasing sequence.
Thus, we can define $c_{[\varphi]}[v]:\Int\Delta(\theta,\varphi)\rightarrow \mathbb{R}\cup \{\infty\}$ by
\[
	c_{[\varphi]}[v]\coloneqq \lim_{j\to\infty}c_{[\varphi^j]}[v].
\]
\begin{lemma}\label{lma:cvupp3}
	Let $s\in W_k(\theta,\varphi)$, locally written as $z^{ka}$ plus higher order terms near $p$. Then
	\[
		c_{[\varphi]}[v](a)\leq k^{-1}\log\|s\|_{L^{\infty}(kv)}.
	\]
\end{lemma}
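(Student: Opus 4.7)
}
The strategy is to apply \cref{cor:cvupp2} to each element of a quasi-equisingular approximation $\varphi^j$ of $\varphi$ and pass to the limit in $j$. First I would fix a quasi-equisingular approximation $\varphi^j \searrow \varphi$. Since $\theta_\varphi$ is a Kähler current, we have $\theta_\varphi \geq \delta\omega$ for some $\delta>0$, so the shifts $\epsilon_j\omega$ in the definition of a quasi-equisingular approximation can be absorbed and we may assume that each $\varphi^j$ lies in $\PSH(X,\theta)$ and has analytic singularities. In particular $c_{[\varphi^j]}[v]$ is defined via the analytic-singularity construction of the preceding subsection, and $c_{[\varphi]}[v]$ is by definition $\lim_{j\to\infty}c_{[\varphi^j]}[v]$ on $\Int\Delta(\theta,\varphi)$.

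Next, given $s \in W_k(\theta,\varphi)$ as in the hypothesis, the quasi-equisingular property provides, for this fixed $k$, some $j_0$ such that $\mathcal{I}(k\varphi)\subseteq \mathcal{I}(k\varphi^j)$ for all $j\geq j_0$. Thus $s \in W_k(\theta,\varphi^j)$ for $j\geq j_0$. The local expansion $s = z^{ka} + \text{(higher order)}$ at $p$ depends only on $s$ and not on $\varphi$, so \cref{cor:cvupp2} applied to $(\theta,\varphi^j)$ yields
\[
c_{[\varphi^j]}[v](a) \leq k^{-1}\log\|s\|_{L^\infty(kv)}
\]
for every $j\geq j_0$ and every $a \in \Int\Delta(\theta,\varphi^j)$ at which the left-hand side is evaluated.

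To transfer the inequality to the domain of $c_{[\varphi]}[v]$, I note that $\varphi \preceq_{\mathcal{I}} \varphi^j$ implies $\Delta(\theta,\varphi)\subseteq \Delta(\theta,\varphi^j)$ by \cref{prop:IcompimplyDeltacomp}. As both sides are convex bodies, this containment promotes to $\Int\Delta(\theta,\varphi)\subseteq \Int\Delta(\theta,\varphi^j)$: any $a$ in the smaller interior admits an open ball contained in $\Delta(\theta,\varphi)$, hence in $\Delta(\theta,\varphi^j)$. Thus the displayed inequality holds for every $a \in \Int\Delta(\theta,\varphi)$, and letting $j \to \infty$ using $c_{[\varphi]}[v](a) = \lim_{j\to\infty} c_{[\varphi^j]}[v](a)$ gives the desired bound.

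The argument is short and essentially a limit procedure; the only point requiring any care is the initial reduction guaranteeing that the approximants live in $\PSH(X,\theta)$ (rather than in a slightly enlarged class $\PSH(X,\theta+\epsilon_j\omega)$), which is what allows \cref{cor:cvupp2} to be invoked verbatim for each $\varphi^j$. This reduction uses precisely the hypothesis that $\theta_\varphi$ is a Kähler current.
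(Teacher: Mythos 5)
Your proof is correct and takes the same route as the paper, which simply reads ``This follows from \cref{cor:cvupp2}.'' You have written out the implicit details: apply \cref{cor:cvupp2} to the analytic-singularity approximants $\varphi^j$ (which contain $s$ because $\varphi^j\geq\varphi$, hence $\mathcal{I}(k\varphi)\subseteq\mathcal{I}(k\varphi^j)$, even without invoking condition (4) of the quasi-equisingular approximation), and pass to the increasing limit using the definition $c_{[\varphi]}[v]=\lim_j c_{[\varphi^j]}[v]$ together with $\Int\Delta(\theta,\varphi)\subseteq\Int\Delta(\theta,\varphi^j)$.
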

\begin{proof}
	This follows from the corresponding result for the $\varphi^j$'s.
\end{proof}
By convexity, $c_{[\varphi]}[v]$ takes finite values.

It follows that \eqref{eq:supcvdiff} still holds in this case. By the dominated convergence theorem, \cref{prop:equieneconv2} and the previous case
we find
\[
	\int_{\Delta(\theta,\varphi)}\left(c_{[\varphi]}[v]-c_{[\varphi]}[0]\right)\,\mathrm{d}\lambda=-\mathcal{E}_{[\varphi]}^{\theta}(v).
\]

It follows from \cref{lma:decc} that our definition of $c_{[\varphi]}(v)$ is independent of the choice of $\varphi^j$.
\begin{lemma}\label{lma:mono2}
	Let $\varphi,\varphi'\in \PSH(X,\theta)$ be potentials such that $\theta_{\varphi}$ and $\theta_{\varphi'}$ are both Kähler currents. If $[\varphi]\preceq_{\mathcal{I}} [\varphi']$, then
	\[
		c_{[\varphi]}[v]\geq c_{[\varphi']}[v]
	\]
	when restricted to $\Int \Delta(\theta,\varphi)$.
\end{lemma}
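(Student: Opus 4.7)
The strategy is to reduce the statement to the analytic-singularity version \cref{lma:decc} via a shared-basis Demailly-type regularization of $\varphi$ and $\varphi'$ that promotes the $\mathcal{I}$-singularity comparison $\varphi\preceq_{\mathcal{I}}\varphi'$ into an honest pointwise comparison at every level of the approximation.

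Fix a smooth volume form on $X$ and a Kähler form $\omega$. For each $m\in \mathbb{Z}_{>0}$, pick an $L^2$-orthonormal basis $\{\sigma_i^{(m)}\}$ of $\mathrm{H}^0(X,L^m\otimes \mathcal{I}(m\varphi))$, and extend it (using $\mathcal{I}(m\varphi)\subseteq \mathcal{I}(m\varphi')$) to an $L^2$-orthonormal basis $\{\sigma_i^{(m)}\}\cup\{\tau_j^{(m)}\}$ of $\mathrm{H}^0(X,L^m\otimes \mathcal{I}(m\varphi'))$. Set
\[
\varphi^{(m)}:=\frac{1}{2m}\log\sum_i |\sigma_i^{(m)}|_{h^m}^2,\qquad {\varphi'}^{(m)}:=\frac{1}{2m}\log\left(\sum_i |\sigma_i^{(m)}|_{h^m}^2+\sum_j |\tau_j^{(m)}|_{h^m}^2\right).
\]
Then $\varphi^{(m)}\leq {\varphi'}^{(m)}$ pointwise by construction, both potentials have analytic singularities, both lie in $\PSH(X,\theta+\epsilon_m\omega)$ for a suitable $\epsilon_m\to 0^+$, and Demailly's Bergman-kernel asymptotics deliver the $L^1$-convergences $\varphi^{(m)}\to \varphi$, ${\varphi'}^{(m)}\to \varphi'$ together with the two-sided multiplier-ideal sandwich $\mathcal{I}(k(1+\delta)\varphi^{(m)})\subseteq \mathcal{I}(k\varphi)\subseteq \mathcal{I}(k\varphi^{(m)})$ for $m$ large in terms of $k,\delta$, and similarly for $\varphi'$. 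After a diagonal extraction along the lines of \cite{Cao14, DPS01} to impose the required decreasingness, one obtains genuine quasi-equisingular approximations $\varphi^j$, ${\varphi'}^j$ of $\varphi$, $\varphi'$ respectively, with the pointwise comparison $\varphi^j\leq {\varphi'}^j$ preserved at every step.

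For each $j$, both $\varphi^j$ and ${\varphi'}^j$ have analytic singularities and satisfy $[\varphi^j]\preceq [{\varphi'}^j]$, so \cref{lma:decc} applied in the class $\theta+\epsilon_j\omega$ yields
\[
c_{[\varphi^j]}[v]\geq c_{[{\varphi'}^j]}[v]
\]
on $\Int\Delta(\theta+\epsilon_j\omega,\varphi^j)$. By \cref{prop:IcompimplyDeltacomp} combined with \cref{prop:Deltapert}, this interior contains $\Int\Delta(\theta,\varphi)$, and by the same two results the domain of $c_{[{\varphi'}^j]}[v]$ also contains it. By the very definition of the Chebyshev transform in the Kähler-current case, the left-hand side is monotone in $j$ with limit $c_{[\varphi]}[v]$ (the independence of this limit from the chosen quasi-equisingular approximation was recorded immediately after \cref{lma:cvupp3}); the analogous assertion applies to the right-hand side. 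Passing to the limit delivers $c_{[\varphi]}[v]\geq c_{[\varphi']}[v]$ on $\Int\Delta(\theta,\varphi)$.

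The main technical obstacle is verifying that the shared-basis Bergman construction can be upgraded to \emph{simultaneously} quasi-equisingular approximations of $\varphi$ and $\varphi'$ while preserving the pointwise comparison. The ordering $\varphi^{(m)}\leq {\varphi'}^{(m)}$ and the two-sided multiplier-ideal sandwich are built into the construction; the decreasingness requirement, however, forces a careful diagonal extraction and possibly a uniform vanishing correction, which must be applied consistently to both sequences so as not to destroy the comparison. Should this diagonal argument turn out to be delicate in any particular case, one may instead avoid strict decreasingness and invoke \cref{lma:decc} pairwise, comparing $c_{[\varphi^j]}[v]$ with $c_{[{\varphi'}^{j'}]}[v]$ for any pair $(j,j')$ with $\varphi^j\leq {\varphi'}^{j'}+O(1)$, before letting both indices grow along independently chosen quasi-equisingular subsequences and using the already-noted independence of $c_{[\bullet]}[v]$ from the approximation.
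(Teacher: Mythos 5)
Your overall strategy—compare approximants through \cref{lma:decc} and pass to the increasing limits defining $c_{[\varphi]}[v]$ and $c_{[\varphi']}[v]$—is the reduction the paper intends (its proof is a one-line citation of \cref{lma:decc}), but the central device you introduce has a genuine gap. To extend an orthonormal basis of $\mathrm{H}^0(X,L^m\otimes\mathcal{I}(m\varphi))$ to one of $\mathrm{H}^0(X,L^m\otimes\mathcal{I}(m\varphi'))$ you must equip both spaces with one and the same inner product. If you take the natural Demailly weights $e^{-m\varphi}$ and $e^{-m\varphi'}$, the two inner products differ and the extension step is not defined (and no uniform comparison of the two norms follows from $\varphi\preceq_{\mathcal{I}}\varphi'$, which gives no pointwise inequality). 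If instead, as your notation $|\sigma_i|^2_{h^m}$ and the fixed smooth volume form suggest, you use a smooth weight, then $\varphi^{(m)}$ and ${\varphi'}^{(m)}$ are partial Bergman potentials and the properties you assert are not delivered by Demailly's asymptotics: they are not decreasing in $m$, and they do not converge to $\varphi$ and $\varphi'$. For instance, if $\varphi$ is smooth but not of minimal singularities, then $\mathcal{I}(m\varphi)=\mathcal{O}_X$ and $\varphi^{(m)}$ converges to $V_{\theta}=P[\varphi]_{\mathcal{I}}$, not to $\varphi$; in general the construction approximates the envelopes $P[\varphi]_{\mathcal{I}}$ and $P[\varphi']_{\mathcal{I}}$. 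Identifying $\lim_m c_{[\varphi^{(m)}]}[v]$ with $c_{[\varphi]}[v]$, which the paper defines through a quasi-equisingular approximation of $\varphi$ itself, then amounts to the $\mathcal{I}$-invariance of the Chebyshev transform, i.e.\ to both directions of the very lemma you are proving. Moreover the inclusion $\mathcal{I}(k\varphi)\subseteq\mathcal{I}(k\varphi^{(m)})$ is not "built in": it requires an Ohsawa--Takegoshi/global-generation argument and in general only holds for $m$ large relative to $k$.

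Your fallback leaves the same point unresolved: it presupposes that for every $j'$ there is $j$ with $\varphi^j\leq{\varphi'}^{j'}+O(1)$, and producing such pairs is precisely the crux, since the hypothesis $\varphi\preceq_{\mathcal{I}}\varphi'$ compares the limits and not the approximants. For potentials with analytic singularities the comparison reduces, on a joint log resolution, to finitely many inequalities between generic Lelong numbers; the numbers $\nu(\varphi^j,E)$ increase to $\nu(\varphi,E)$ from below, so the needed inequality $\nu(\varphi^j,E)\geq\nu({\varphi'}^{j'},E)$ is not automatic for any finite $j$ when ${\varphi'}^{j'}$ loses nothing along $E$, and must be recovered by a scaling argument (compare $(1+\delta)\varphi^j$ with ${\varphi'}^{j'}$ via the multiplier-ideal sandwich in the definition of quasi-equisingularity and the valuative characterization of $\preceq$ for analytic singularities, then let $\delta\to 0$ using the behaviour of $c$ under scaling); none of this is supplied. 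A secondary mismatch: your approximants live in $\PSH(X,\theta+\epsilon_j\omega)$, whereas in the K\"ahler-current case the paper's $c_{[\varphi]}[v]$ is defined through quasi-equisingular approximations inside $\PSH(X,\theta)$ (possible because $\theta_{\varphi}$ dominates a K\"ahler form); for a perturbed class the Chebyshev transform is built from different section spaces, so the claim that your limits equal $c_{[\varphi]}[v]$ and $c_{[\varphi']}[v]$ "by the very definition" does not apply without handling the change of class.
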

\begin{proof}
	This follows from \cref{lma:decc}.
\end{proof}


\subsection{General case}
Let $\varphi\in \PSH(X,\theta)$ such that $\int_X \theta_{P[\varphi]_{\mathcal{I}}}^n>0$.  We may replace $\varphi$ with $P[\varphi]_{\mathcal{I}}$ and therefore assume that the non-pluripolar mass of $\varphi$ is positive.

Let $\eta\in \PSH(X,\theta)$ be a potential so that $\theta_{\eta}$ is a
Kähler current and $\eta\preceq \varphi$. The existence of such $\eta$ is guaranteed by \cite[Proposition~3.6]{DX21}.
Define $\varphi_{\epsilon}\coloneqq (1-\epsilon)\varphi+\epsilon\eta$.
Then we define $c_{[\varphi]}[v]:\Int\Delta(\theta,\varphi)\rightarrow \mathbb{R}\cup\{-\infty\}$ as
\[
	c_{[\varphi]}[v]\coloneqq \lim_{\epsilon\to 0+}c_{[\varphi_{\epsilon}]}[v].
\]
This is a decreasing limit by \cref{lma:mono2}. On the other hand, $c_{[\varphi]}[v]\geq c_{[V_{\theta}]}[v]$, the latter is finite by \cite{WN14}. Thus, $c_{[\varphi]}[v]$ is real-valued.  \eqref{eq:supcvdiff} extends to this situation. By the dominated convergence theorem and \cref{prop:equieneconv3} again,
\[
	\int_{\Delta(\theta,\varphi)}(c_{[\varphi]}[v]-c_{[\varphi]}[0])\,\mathrm{d}\lambda=-\mathcal{E}_{[\varphi]}^{\theta}(v).
\]
We do not know if $c_{[\varphi]}[v]$ is independent of the choice of $\eta$.

\section{A generalization of Boucksom--Chen theorem}\label{sec:BC}
In this section, let $X$ be an irreducible smooth projective variety of dimension $n$. Let $L$ be a big line bundle on $X$. Take a smooth Hermitian metric $h$ on $L$ with $\theta=c_1(L,h)$.

Fix a rank $n$ valuation $\nu:\mathbb{C}(X)^{\times}\rightarrow \mathbb{Z}^n$.

\subsection{The theory of test curves}
Let $V=\langle L^n\rangle$.
\begin{definition}\label{def:testcurve}
	A \emph{test curve (of finite energy)} with respect to $(X,\theta)$ is a map $\psi=\psi_{\bullet}:\mathbb{R}\to \PSH(X,\theta)\cup\{-\infty\}$, such that
	\begin{enumerate}
		\item $\psi_{\bullet}$ is concave in $\bullet$.
		\item $\psi_{\tau}$ is a model potential or $-\infty$ for any $\tau$.
		\item $\psi$ is usc as a function in the $\mathbb{R}$-variable.
		\item $\lim_{\tau\to-\infty}\psi_{\tau}=V_{\theta}$ in $L^1$.
		\item $\psi_{\tau}=-\infty$ for $\tau$ large enough.
		\item
		      \begin{equation}\label{eq:Etc}
			      \mathbf{E}(\psi_{\bullet})\coloneqq \tau^+V+\int_{-\infty}^{\tau^+} \left(\int_X \theta_{\psi_{\tau}}^n-V\right)\,\mathrm{d}\tau>-\infty.
		      \end{equation}
	\end{enumerate}
	Here $\tau^+\coloneqq \inf \{\tau\in \mathbb{R}: \psi_{\tau}=-\infty\}$. The set of test curves of finite energy with respect to $(X,\theta)$ is denoted by $\TC^1(X,\theta)$.
	We say $\psi$ is \emph{normalized} if $\tau^+=0$.
	The test curve is called \emph{bounded} if $\psi_{\tau}=V_{\theta}$ for $\tau$ small enough. Let $\tau^-\coloneqq \sup\{\tau\in \mathbb{R}: \psi_{\tau}=V_{\theta}\}$ in this case.
	The set of bounded test curves is denoted by $\TC^{\infty}(X,\theta)$.

	We say a test curve is \emph{$\mathcal{I}$-model} if $\psi_{\tau}$ is $\mathcal{I}$-model for each $\tau<\tau^+$. The set of $\mathcal{I}$-model test curves is denoted by $\TC^1_{\mathcal{I}}(X,\theta)$.
\end{definition}

\subsection{Okounkov test curves}
Let $\Delta\in \mathcal{K}^n$. Assume that $V=n!\vol\Delta>0$.
\begin{definition}\label{def:Otc}

	An \emph{Okounkov test curve} relative to $\Delta$ is an assignment $(\Delta_{\tau})_{\tau\leq \tau^+}$ ($\tau^+\in \mathbb{R}$) such that
	\begin{enumerate}
		\item $\Delta_{\tau}$ is a decreasing assignment of convex bodies in $\mathbb{R}^n$ for $\tau\leq \tau^+$;
		\item $\Delta_{\tau}$ converges to $\Delta$ as $\tau\to-\infty$ with respect to the Hausdorff metric (c.f. \cref{subsec:Hausmetric});
		\item $\Delta_{\tau}$ is concave in the $\tau$ variable;
		\item The energy is finite:
		      \[
			      \mathbf{E}(\Delta_{\bullet})\coloneqq \tau^+V+V\int_{-\infty}^{\tau^+} \left(\frac{n!}{V}\vol \Delta_{\tau}-1\right)\,\mathrm{d}\tau>-\infty\,;
		      \]
		\item Continuity holds at $\tau^+$:
		      \[
			      \Delta_{\tau^+}=\bigcap_{\tau<\tau^+}\Delta_{\tau}.
		      \]
	\end{enumerate}
\end{definition}
\begin{proposition}\label{prop:Otccont}
	Any Okounkov test curve $(\Delta_{\tau})_{\tau\leq \tau^+}$ relative to $\Delta$ is continuous for $\tau< \tau^+$.
\end{proposition}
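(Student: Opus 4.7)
The plan is to reduce the Hausdorff continuity of $\tau \mapsto \Delta_\tau$ to the ordinary continuity on $\mathbb{R}$ of a single real-valued concave function for each direction $u \in S^{n-1}$. Concretely, for each such $u$ I consider the support function
\begin{equation*}
f_u(\tau) := h_{\Delta_\tau}(u) = \sup_{x \in \Delta_\tau}\langle x, u\rangle\,,\quad \tau\leq \tau^+\,.
\end{equation*}
Since $\Delta_\tau$ is decreasing in $\tau$ and converges in Hausdorff metric to $\Delta$ as $\tau\to-\infty$, the family $\{\Delta_\tau\}_{\tau \leq \tau^+}$ is uniformly bounded, so $f_u$ is finite (and bounded) on $(-\infty,\tau^+]$.

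Next I will establish that $f_u$ is concave in $\tau$. I read condition (3) of \cref{def:Otc}, by analogy with the concavity condition in \cref{def:testcurve} for test curves of potentials and with the Brunn--Minkowski-type inclusion \cref{thm:concOko}, as the Minkowski-sum inequality
\begin{equation*}
\Delta_{\lambda s + (1-\lambda)t}\ \supseteq\ \lambda\Delta_s + (1-\lambda)\Delta_t
\end{equation*}
for $s,t\leq \tau^+$ and $\lambda\in[0,1]$. Applying $h_{\bullet}(u)$ and using the standard identities $h_{A+B}=h_A+h_B$ and $h_{\lambda A}=\lambda h_A$ for $\lambda\geq 0$ gives $f_u(\lambda s+(1-\lambda)t)\geq \lambda f_u(s)+(1-\lambda)f_u(t)$, as required.

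A finite concave function on a subinterval of $\mathbb{R}$ is automatically continuous on its interior, so for each fixed $u$ the function $f_u$ is continuous on $(-\infty,\tau^+)$. To conclude, I invoke the standard fact that for a uniformly bounded family of convex bodies, pointwise convergence of the support functions on $S^{n-1}$ to the support function of a convex body is equivalent to Hausdorff convergence of the bodies themselves; this follows from the $1$-Lipschitz property of normalized support functions together with Arzelà--Ascoli, or alternatively by checking the two conditions in \cref{thm:Hausconvcond} directly. Applied to any sequence $\tau_n\to\tau$ with $\tau<\tau^+$, this yields $\Delta_{\tau_n}\to \Delta_\tau$ in the Hausdorff metric, which is the claim.

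The only delicate point in the argument is fixing the precise meaning of the concavity hypothesis (3) in \cref{def:Otc}; the Minkowski-sum interpretation above is the natural one for convex-body-valued maps and is the one compatible with the concavity of Okounkov bodies under concave interpolation of potentials, so once this is granted the proof is otherwise routine. No further obstacle is expected.
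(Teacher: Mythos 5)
Your proof is correct, and your reading of Condition~(3) in \cref{def:Otc} as the Minkowski-sum containment $\Delta_{\lambda s+(1-\lambda)t}\supseteq \lambda\Delta_s+(1-\lambda)\Delta_t$ is exactly the one the paper itself uses (both in its proof of this proposition and in the proof of \cref{thm:Okotestcurve}), so there is no ambiguity issue. However, your route is genuinely different from the paper's. You linearize the problem via support functions: concavity of $\tau\mapsto h_{\Delta_\tau}(u)$ for each fixed $u$ gives continuity of each scalar function on $(-\infty,\tau^+)$, and then the standard identification of the Hausdorff metric with the sup-norm distance of support functions on $S^{n-1}$ (pointwise convergence upgrading to uniform convergence by the uniform Lipschitz bound coming from the uniform boundedness $\Delta_\tau\subseteq\Delta$) yields Hausdorff continuity. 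The paper instead argues through volumes: it first shows $\vol\Delta_\tau>0$ for $\tau<\tau^+$ using concavity, then that $\vol\Delta_\tau$ is continuous via Brunn--Minkowski (log-concavity), and finally identifies $\Delta_\tau=\bigcap_{\tau'<\tau}\Delta_{\tau'}=\overline{\bigcup_{\tau'>\tau}\Delta_{\tau'}}$ by the rigidity statement \cref{lma:volcbimpeq} for nested bodies of equal positive volume, which gives one-sided Hausdorff limits along the monotone family. Your argument is more elementary and more robust: it never uses Brunn--Minkowski, \cref{lma:volcbimpeq}, or positivity of any $\vol\Delta_\tau$ (so it would survive even for lower-dimensional bodies), and it gives two-sided continuity in one stroke. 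The paper's argument, on the other hand, produces along the way the auxiliary facts that are in the spirit of later use — positivity and continuity of $\vol\Delta_\tau$ on $(-\infty,\tau^+)$ and the intersection/union identities (the left-continuity identity reappears when proving that the Legendre and inverse Legendre transforms are mutually inverse). Either way, both proofs are complete; the only point worth spelling out slightly more in yours is the uniform bound, e.g.\ by noting that \cref{thm:Hausconvcond} applied to the increasing family as $\tau\to-\infty$ gives $\Delta_\tau\subseteq\Delta$ for all $\tau\leq\tau^+$.
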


\begin{proof}
	We first claim that $\vol\Delta_{\tau'}>0$ for all $\tau'<\tau^+$. By Condition~(2) and \cref{thm:contvol}, we know that $\vol\Delta_{\tau''}>0$ when $\tau''$ is small enough. Fix one such $\tau''$.
	Any $\tau'<\tau^+$ can be written as a convex combination of $\tau^+$ and $\tau''$, thus $\Delta_{\tau'}$ has positive volume by Condition~(3).

	Next we claim that $\vol\Delta_{\tau}$ is continuous for $\tau< \tau^+$. In fact, by Condition~(3) and the Minkowski inequality, we know that $\log \vol\Delta_{\tau}$ is concave for $\tau<\tau^+$. The continuity follows.

	Next we show that
	\[
		\Delta_{\tau}=\bigcap_{\tau'<\tau}\Delta_{\tau'}.
	\]
	The $\supseteq$ direction is obvious. By the continuity of the volume, both sides have the same volume and the volume is positive, hence, equality holds by \cref{lma:volcbimpeq}.

	Similarly, we have
	\[
		\Delta_{\tau}=\overline{\bigcup_{\tau'>\tau}\Delta_{\tau'}}.
	\]
	The continuity of $\Delta_{\tau}$ at $\tau<\tau^+$ is proved. 
\end{proof}

\begin{definition}\label{def:tf}
	A \emph{test function} on $\Delta$ is a function $F:\Delta\rightarrow [-\infty,\infty)$ such that
	\begin{enumerate}
		\item $F$ is concave;
		\item $F$ is finite on $\Int \Delta$;
		\item $F$ is usc;
		\item the energy is finite:
		      \begin{equation}\label{eq:EF}
			      \mathbf{E}(F)\coloneqq n!\int_{\Delta} F\,\mathrm{d}\lambda>-\infty.
		      \end{equation}
	\end{enumerate}
\end{definition}
Let $\tau^+=\sup_{\Delta} F$, then
\begin{equation}\label{eq:EFlevelset}
	\mathbf{E}(F)=  \tau^+V+V\int_{-\infty}^{\tau^+}\left(\frac{n!}{V}\vol \{F\geq\tau\}-1\right)\,\mathrm{d}\tau.
\end{equation}

Let $\Delta_{\bullet}$ be an Okounkov test curve relative to $\Delta$.
We define the \emph{Legendre transform} of $\Delta_{\bullet}$ as
\[
	G[\Delta_{\bullet}]:\Delta\rightarrow [-\infty,\infty),\quad a\mapsto    \sup\left\{\tau<\tau^+:a\in \Delta_{\tau}\right\}.
		\]
		Conversely, a test function $F$ on $\Delta$, set $\tau^+=\sup_{\Delta} F$.
		We define the \emph{inverse Legendre transform} of $F$ as
		\[
		\Delta[F]:(-\infty,\tau^+]\rightarrow \mathcal{K}_n,\quad \Delta[F]_{\tau}=\{F\geq\tau\}.
\]
\begin{theorem}\label{thm:Okotestcurve}
	The Legendre transform and inverse Legendre transform are inverse to each other,
	defining a bijection between the set of Okounkov test curves relative to $\Delta$ and test functions on $\Delta$.
	Moreover, if $\Delta_{\bullet}$ is an Okounkov test curve relative to $\Delta$, then
	\begin{equation}\label{eq:Deltaene}
		\mathbf{E}(\Delta_{\bullet})=\mathbf{E}\left(G[\Delta_{\bullet}]\right).
	\end{equation}
\end{theorem}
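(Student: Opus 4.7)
The plan is to verify that $G[\Delta_\bullet]$ and $\Delta[F]$ take values in the claimed spaces, are mutually inverse, and that the energies match via a layer-cake computation. The central identity, which organizes everything, is
\[
\{G[\Delta_\bullet] \geq \tau\} = \Delta_\tau \qquad \text{for every } \tau \leq \tau^+,
\]
which for $\tau = \tau^+$ uses condition~(5) of \cref{def:Otc}, and for $\tau < \tau^+$ uses \cref{prop:Otccont} in the form $\Delta_\tau = \bigcap_{\tau' < \tau} \Delta_{\tau'}$.

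First, I would check that $G := G[\Delta_\bullet]$ is a test function in the sense of \cref{def:tf}. Concavity: given $a_i \in \Delta_{\tau_i}$ with $\tau_i < G(a_i)$, the concavity of $\Delta_\bullet$ in the Minkowski sense gives $ta_1 + (1-t)a_2 \in \Delta_{t\tau_1 + (1-t)\tau_2}$, and passing $\tau_i \nearrow G(a_i)$ yields concavity of $G$. Upper semicontinuity follows immediately from the level-set identity above, since the superlevel sets $\Delta_\tau$ are closed. For $a \in \Int \Delta$, the Hausdorff convergence $\Delta_\tau \to \Delta$ as $\tau \to -\infty$ guarantees $a \in \Delta_\tau$ for all sufficiently negative $\tau$, so $G(a) > -\infty$; and $G \leq \tau^+ < \infty$ by construction.

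Next I would verify that $(\Delta[F]_\tau)_{\tau \leq \tau^+}$ is an Okounkov test curve. Upper semicontinuity and concavity of $F$ make $\Delta[F]_\tau = \{F \geq \tau\}$ a closed convex subset of $\Delta$, non-empty for $\tau \leq \tau^+ = \max_\Delta F$ (the maximum being attained since $F$ is usc on a compact set). Concavity of $\Delta[F]_\bullet$ in $\tau$ reduces to: if $F(a) \geq \tau_1$ and $F(b) \geq \tau_2$, then $F(ta + (1-t)b) \geq t\tau_1 + (1-t)\tau_2$, which is just concavity of $F$. Continuity at $\tau^+$ holds by the pointwise identity $\{F \geq \tau^+\} = \bigcap_{\tau < \tau^+}\{F \geq \tau\}$. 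For Hausdorff convergence $\Delta[F]_\tau \to \Delta$ as $\tau \to -\infty$, since $F$ is finite on $\Int \Delta$, the increasing union $\bigcup_\tau \{F \geq \tau\}$ contains $\Int \Delta$ and is therefore dense in $\Delta$; combined with monotonicity this gives Hausdorff convergence via \cref{thm:Hausconvcond}.

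Mutual inversion is then immediate: the level-set identity reads $\Delta[G[\Delta_\bullet]] = \Delta_\bullet$, and the elementary calculation
\[
G[\Delta[F]](a) = \sup\{\tau < \tau^+ : F(a) \geq \tau\} = F(a),
\]
obtained by splitting into the cases $F(a) = \tau^+$, $F(a) < \tau^+$, and $F(a) = -\infty$, gives $G[\Delta[F]] = F$. Finally, for the energy identity \eqref{eq:Deltaene}, I would apply the layer-cake representation $G(a) = \tau^+ - \int_{-\infty}^{\tau^+} \mathbf{1}_{G(a) < \tau}\,\mathrm{d}\tau$, valid wherever $G \leq \tau^+$ (the set $\{G = -\infty\}$ lies in $\partial \Delta$ and has Lebesgue measure zero), integrate over $\Delta$, and use the level-set identity to obtain
\[
n!\int_\Delta G\,\mathrm{d}\lambda = \tau^+ V + \int_{-\infty}^{\tau^+}\left(n!\vol \Delta_\tau - V\right)\,\mathrm{d}\tau = \mathbf{E}(\Delta_\bullet).
\]
The main obstacle is really just keeping the boundary behaviour at $\tau = \tau^+$ and $\tau \to -\infty$ under control; once the level-set identity and the Hausdorff convergence at $-\infty$ are in place, everything else is essentially formal.
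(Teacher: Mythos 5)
Your proposal is correct and follows essentially the same route as the paper: you verify both directions against the definitions and hinge everything on the level-set identity $\{G[\Delta_{\bullet}]\geq\tau\}=\Delta_{\tau}$ (via \cref{prop:Otccont} and condition~(5)), with the energy identity coming from the same layer-cake formula as \eqref{eq:EFlevelset}. The only cosmetic difference is that you deduce upper semicontinuity of $G[\Delta_{\bullet}]$ from closedness of its superlevel sets rather than by the paper's sequential argument, which is a harmless simplification.
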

\begin{proof}
	Let $\Delta_{\bullet}$ be an Okounkov test curve relative to $\Delta$. We prove that $G[\Delta_{\bullet}]$ is a test function on $\Delta$.

	Firstly $G[\Delta_{\bullet}]$ is concave by Condition~(1) and Condition~(3) in \cref{def:Otc}. More precisely, take $a,b\in \Delta$. We want to prove that for any $t\in (0,1)$,
	\begin{equation}\label{eq:GDeltaconc}
		G[\Delta_{\bullet}](ta+(1-t)b)\geq  tG[\Delta_{\bullet}](a)+(1-t)G[\Delta_{\bullet}](b).
	\end{equation}
	There is nothing to prove if $G[\Delta_{\bullet}](a)$ or $G[\Delta_{\bullet}](b)$ is $-\infty$. So we assume that both are finite.
	Take $\epsilon>0$, then $a\in \Delta_{G[\Delta_{\bullet}](a)-\epsilon}$ and $b\in \Delta_{G[\Delta_{\bullet}](b)-\epsilon}$. Thus,
	\[
		ta+(1-t)b\in t  \Delta_{G[\Delta_{\bullet}](a)-\epsilon}+(1-t)\Delta_{G[\Delta_{\bullet}](b)-\epsilon}\subseteq \Delta_{tG[\Delta_{\bullet}](a)+(1-t)G[\Delta_{\bullet}](b)-\epsilon}.
	\]
	We deduce that
	\[
		G[\Delta_{\bullet}](ta+(1-t)b)\geq  tG[\Delta_{\bullet}](a)+(1-t)G[\Delta_{\bullet}](b)-\epsilon.
	\]
	Since $\epsilon>0$ is arbitrary, \eqref{eq:GDeltaconc} follows.

	Next $G[\Delta_{\bullet}]$ is finite on $\Int \Delta$ by Condition~(2). In fact, as $\Delta_{\tau}$ is increasing and converges to $\Delta$ as $\tau\to-\infty$, we have
	\[
		\Delta=\overline{\bigcup_{\tau}\Delta_{\tau}}.
	\]
	Hence, by \cite[Theorem~1.1.15]{Sch14} and the assumption that $\vol\Delta>0$, $\bigcup_{\tau}\Delta_{\tau}$ contains $\Int\Delta$.

	Thirdly, we show that $G[\Delta_{\bullet}]$ is usc.
	Let $a_i\in \Delta$ with $a_i\to a\in \Delta$. Define $\tau_i=G[\Delta_{\bullet}](a_i)$. Let $\tau=\varlimsup_i \tau_i$.
	We need to show that
	\begin{equation}\label{eq:ainDelta1}
		G[\Delta_{\bullet}](a)\geq \tau.
	\end{equation}
	There is nothing to prove if $\tau=-\infty$. We assume that it is not this case. Up to subtracting a subsequence we may assume that $\tau_i\to \tau$. In particular, we can assume that $\tau_i\neq -\infty$ for all $i$. Fix $\epsilon>0$, then $a_i\in\Delta_{\tau_i-\epsilon}$.
	Observe that $\Delta_{\tau_i-\epsilon}\xrightarrow{d_n}\Delta_{\tau-\epsilon}$. By \cref{thm:Hausconvcond} it follows that $a\in \Delta_{\tau-\epsilon}$.
	Thus,\eqref{eq:ainDelta1} follows since $\epsilon>0$ is arbitrary.

	Finally, \eqref{eq:Deltaene} follows from \eqref{eq:EFlevelset}, and it follows that $\mathrm{E}(G[\Delta_{\bullet}])>-\infty$.

	Conversely, if $F:\Delta\rightarrow [-\infty,\infty)$ is a test function on $\Delta$. Let $\Delta[F]$ be the inverse Legendre transform of $F$.
	Then one can similarly show that $\Delta[F]$ is an Okounkov test curve.

	Firstly, for each $\tau<\tau^+\coloneqq \sup_{\Delta}F$, $\Delta[F](\tau)$ is a convex body as $F$ is concave and usc.
	Moreover, $\Delta[F]_{\tau}$ is clearly decreasing in $\tau$. Hence, $\Delta[F]_{\tau^+}$ is also a convex body.

	Secondly, for each $a\in \Delta$, we can write $a=\lim_{i}a_i$ with $a_i\in \Int \Delta$.  By assumption, $F$ is finite at $a_i$. Thus,
	\[
		a\in \overline{\{F>-\infty\}}=\overline{\bigcup_{\tau}\Delta[F]_{\tau}}.
	\]
	By \cref{thm:Hausconvcond}, $\Delta[F]_{\tau}\xrightarrow{d_n}\Delta$ as $\tau\to -\infty$.

	Thirdly, $\Delta[F]$ is concave. To see, take $\tau,\tau'\leq \tau^+$, we need to prove that for any $t\in (0,1)$,
	\begin{equation}\label{eq:Deconc}
		\Delta[F]_{t\tau+(1-t)\tau'}\supseteq t\Delta[F]_{\tau}+(1-t)\Delta[F]_{\tau'}.
	\end{equation}
	Let $a\in \Delta[F]_{\tau}$ and $b\in \Delta[F]_{\tau'}$. We have $F(a)\geq\tau$ and $F(b)\geq \tau'$. As $F$ is concave, we have $F(ta+(1-t)b)\geq t\tau+(1-t)\tau'$. Thus,
	\[
		ta+(1-t)b\in   \Delta[F]_{t\tau+(1-t)\tau'}
	\]
	and \eqref{eq:Deconc} follows.

	Fourthly, \eqref{eq:EF} follows immediately from \eqref{eq:EFlevelset}.

	Finally, we show that $\Delta[F]_{\bullet}$ is continuous at $\tau^+$. This amounts to
	\[
		\{F\geq \tau^+\}    =\bigcap_{\tau<\tau^+}\{F\geq \tau\},
	\]
	which is obvious.

	To see that these two operations are inverse to each other, observe that by definition for any Okounkov test curve $\Delta_{\bullet}$, any $a\in \Delta$  and any $\tau\leq \tau^+$, $G[\Delta_{\bullet}](a)\geq \tau$ if and only if $a\in \Delta_{\tau-\epsilon}$ for any $\epsilon>0$. By \cref{prop:Otccont}, this happens if and only if $a\in \Delta_{\tau}$, that is,
	\[
		\{G[\Delta_{\bullet}]\geq\tau\}=\Delta_{\tau}.
	\]

	Conversely, for any test function $F:\Delta\rightarrow [-\infty,\infty)$, any $\tau\leq \tau^+$, by definition,
	\[
		\{F\geq \tau\}=\Delta[F]_{\tau}.
	\]
\end{proof}

\begin{definition}
	Let $\Delta_{\bullet}$ be an Okounkov test curve relative to $\Delta$. We define the \emph{Duistermaat--Heckman measure} $\DHm(\Delta_{\bullet})$ as
	\[
		\DHm(\Delta_{\bullet})\coloneqq G[\Delta_{\bullet}]_*(\mathrm{d}\lambda).
	\]
	It is a Radon measure on $\mathbb{R}$.
\end{definition}
Observe that
\begin{equation}\label{eq:massDH}
	\int_{\mathbb{R}}\DHm(\Delta_{\bullet})= \vol \Delta.
\end{equation}

\subsection{Boucksom--Chen theorem}
Let $\psi_{\bullet}\in \TC^1_{\mathcal{I}}(X,\theta)$.
Let $\tau^+=\inf \{\tau\in \mathbb{R}:\psi_{\tau}=-\infty \}$.

\begin{lemma}
	The curve
	\[
		\Delta[\psi_{\bullet}]_{\tau}\coloneqq \left\{
		\begin{aligned}
			\Delta(\theta,\psi_{\tau})                           & ,\quad \tau<\tau^+, \\
			\bigcap_{\tau'<\tau^+}\Delta[\psi_{\bullet}]_{\tau'} & ,\quad \tau=\tau^+
		\end{aligned}
		\right.
	\]
	is an Okounkov test curve relative to $\Delta(L)$. Moreover,
	\begin{equation}\label{eq:EpsiEDeltapsi}
		\mathbf{E}(\psi_{\bullet})=\mathbf{E}(\Delta[\psi_{\bullet}]_{\bullet}).
	\end{equation}
\end{lemma}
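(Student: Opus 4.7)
My plan is to verify the five defining conditions of \cref{def:Otc} for $\Delta[\psi_{\bullet}]_{\bullet}$ and then derive the energy identity by a direct comparison.

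First I would establish pointwise monotonicity of $\psi_{\tau}$ in $\tau$, which is essential for the decreasing property of $\Delta[\psi_{\bullet}]_{\tau}$. For $\tau_1<\tau_2<\tau_3$, concavity of $\psi_\bullet$ gives $\psi_{\tau_2}\geq s\psi_{\tau_1}+(1-s)\psi_{\tau_3}$ with $s=(\tau_3-\tau_2)/(\tau_3-\tau_1)$; letting $\tau_1\to-\infty$ (so $s\to 0$) and using $\psi_{\tau_1}\to V_\theta$ yields $\psi_{\tau_2}\geq \psi_{\tau_3}$. Hence $\psi_{\tau_2}\preceq_{\mathcal{I}}\psi_{\tau_1}$ for $\tau_1<\tau_2$, and \cref{prop:IcompimplyDeltacomp} gives the monotonicity $\Delta(\theta,\psi_{\tau_2})\subseteq\Delta(\theta,\psi_{\tau_1})$. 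In particular, $\Delta[\psi_\bullet]_\tau\subseteq\Delta(L)=\Delta(\theta,V_\theta)$, so these are bounded subsets of $\mathbb{R}^n$, and when the non-pluripolar mass is positive they are proper Okounkov bodies while otherwise we invoke the limit construction from \cref{subsec:limpob}.

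Next I would prove Hausdorff convergence $\Delta[\psi_\bullet]_\tau\xrightarrow{d_n}\Delta(L)$ as $\tau\to-\infty$. Since each $\psi_\tau$ is $\mathcal{I}$-model, \cref{thm:DXmain} gives $\vol \Delta(\theta,\psi_\tau)=\tfrac{1}{n!}\int_X \theta_{\psi_\tau}^n$. Finiteness of $\mathbf{E}(\psi_\bullet)$ combined with monotonicity of $\int_X\theta_{\psi_\tau}^n$ (increasing as $\tau\to-\infty$, uniformly bounded by $V$) forces $\int_X\theta_{\psi_\tau}^n\to V$. The increasing union $\overline{\bigcup_\tau \Delta[\psi_\bullet]_\tau}$ is therefore a convex body of volume $V$ contained in $\Delta(L)$, so by \cref{lma:volcbimpeq} it equals $\Delta(L)$, which is equivalent to Hausdorff convergence. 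For concavity, chain together three inclusions: concavity of $\psi_\bullet$ gives $\psi_{t\tau_1+(1-t)\tau_2}\geq t\psi_{\tau_1}+(1-t)\psi_{\tau_2}$, monotonicity of the Okounkov body in the singularity type gives
\[
\Delta\bigl(\theta,\psi_{t\tau_1+(1-t)\tau_2}\bigr)\supseteq \Delta\bigl(\theta,t\psi_{\tau_1}+(1-t)\psi_{\tau_2}\bigr),
\]
and the Brunn--Minkowski-type inclusion \cref{thm:concOko} gives
\[
\Delta\bigl(\theta,t\psi_{\tau_1}+(1-t)\psi_{\tau_2}\bigr)\supseteq t\Delta(\theta,\psi_{\tau_1})+(1-t)\Delta(\theta,\psi_{\tau_2}).
\]
Continuity at $\tau^+$ is built into the definition of $\Delta[\psi_\bullet]_{\tau^+}$.

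Finally, the energy identity \eqref{eq:EpsiEDeltapsi} follows by substituting $\tfrac{n!}{V}\vol\Delta[\psi_\bullet]_\tau=\tfrac{1}{V}\int_X \theta_{\psi_\tau}^n$ (from \cref{thm:DXmain}) into the defining formula for $\mathbf{E}(\Delta[\psi_\bullet]_\bullet)$ in \cref{def:Otc} and matching term by term with \eqref{eq:Etc}. The main obstacle I anticipate is handling the possibility that $\int_X\theta_{\psi_\tau}^n$ vanishes for some $\tau<\tau^+$, so that the standard definition of partial Okounkov body does not apply and \cref{thm:concOko} cannot be invoked directly. I would dispatch this by applying all the above arguments to the perturbed data $(\theta+\epsilon\omega,\psi_\tau)$ for $\epsilon\in\mathbb{Q}_{>0}$ (where positivity of mass is automatic), then passing to the limit $\epsilon\to 0^+$ using the limit Okounkov body construction of \cref{subsec:limpob}; monotonicity and concavity of the perturbed bodies persist in the intersection, and the volume identity is preserved by \cite[Corollary~3.5]{DX21}.
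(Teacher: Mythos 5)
Your proof is correct and follows essentially the same route as the paper: you verify the conditions of \cref{def:Otc} via \cref{prop:IcompimplyDeltacomp}, \cref{thm:concOko} and the volume identity $\vol\Delta(\theta,\psi_\tau)=\frac{1}{n!}\int_X\theta_{\psi_\tau}^n$ (valid since $\psi_\tau$ is $\mathcal{I}$-model), and the energy identity is then a term-by-term translation of \eqref{eq:Etc}. The only remark is that your anticipated obstacle is vacuous, so the perturbation detour through \cref{subsec:limpob} is never needed: any $\tau<\tau^+$ is a convex combination $t\tau_1+(1-t)\tau_2$ with $\tau_1\ll 0$ (where $\int_X\theta_{\psi_{\tau_1}}^n>0$ by finite energy) and $\tau_2\in(\tau,\tau^+)$, and concavity of $\psi_\bullet$ together with monotonicity and multilinearity of non-pluripolar masses gives $\int_X\theta_{\psi_\tau}^n\geq t^n\int_X\theta_{\psi_{\tau_1}}^n>0$ — the same convex-combination trick the paper uses at the level of bodies in \cref{prop:Otccont}.
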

\begin{proof}
	We verify the conditions in \cref{def:Otc}.
	Condition~(1) follows from \cref{prop:IcompimplyDeltacomp}. Condition~(2) follows from the fact
	\[
		\lim_{\tau\to-\infty}\vol\Delta_{\tau}=  \vol\Delta.
	\]
	Condition~(3) follows from \cref{thm:concOko} and \cref{prop:IcompimplyDeltacomp}. Condition~(4) is a translation of \eqref{eq:Etc}. Condition~(5) is obvious.

	Finally, \eqref{eq:EpsiEDeltapsi} follows from \eqref{eq:Etc} and \eqref{eq:volD1}.
\end{proof}
\begin{definition}
	Let $\psi_{\bullet}\in \TC^1_{\mathcal{I}}(X,\theta)$. Define the \emph{Duistermaat--Heckman measure} of $\psi_{\bullet}$ as
	\[
		\DHm(\psi_{\bullet})\coloneqq \DHm(\Delta[\psi_{\bullet}]_{\bullet}).
	\]
\end{definition}
We write
\[
	G[\psi_{\bullet}]=G[\Delta[\psi_{\bullet}]].
\]
Then
\[
	\DHm(\psi_{\bullet})=G[\psi_{\bullet}]_*(\mathrm{d}\lambda).
\]

Now consider the (not necessarily multiplicative) filtration:
\[
	\mathscr{F}^k_{\tau}\mathrm{H}^0(X,L^k)\coloneqq \left\{
	\begin{aligned}
		\mathrm{H}^0(X,L^k\otimes \mathcal{I}(k\psi_{\tau})) & ,\quad \tau<\tau^+,     \\
		0                                                    & ,\quad \tau\geq \tau^+.
	\end{aligned}
	\right.
\]
Let $e_j(\mathrm{H}^0(X,L^k),\mathscr{F}^k)$ be the jumping numbers of $\mathscr{F}^k$ listed in the decreasing order.
In order words,
\[
	e_j\left(\mathrm{H}^0(X,L^k),\mathscr{F}^k\right)\coloneqq \sup\left\{\tau\in\mathbb{R}:\dim \mathscr{F}^k_{\tau}\mathrm{H}^0(X,L^k)\geq j \right\}.
\]
Let
\[
	\mu_k\coloneqq \frac{1}{k^n}\sum_{j= 1}^{h^0(X,L^k)}\delta_{e_j\left(\mathrm{H}^0(X,L^k),\mathscr{F}^k\right)}.
\]
\begin{theorem}\label{thm:BCgen}
	Let $\psi_{\bullet}\in \TC^1_{\mathcal{I}}(X,\theta)$. Then
	as $k\to\infty$, $\mu_k$ converges weakly to $\DHm(\psi_{\bullet})$.
\end{theorem}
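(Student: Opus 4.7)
The plan is to reduce the weak convergence $\mu_k \rightharpoonup \DHm(\psi_\bullet)$ to pointwise convergence of the right-tail distribution functions $F_{\mu_k}(\tau) := \mu_k([\tau, \infty))$ at every continuity point of the limiting distribution function, together with total mass convergence; by the standard Helly--Bray (Portmanteau) theorem for finite Radon measures on $\mathbb{R}$ this suffices.

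First I would identify both distribution functions explicitly. By the definition of the jumping numbers of the filtration,
\[
F_{\mu_k}(\tau) = k^{-n}\dim\mathscr{F}^k_\tau \mathrm{H}^0(X, L^k) = k^{-n} h^0\bigl(X, L^k \otimes \mathcal{I}(k\psi_\tau)\bigr)
\]
for $\tau < \tau^+$, and $F_{\mu_k}(\tau) = 0$ for $\tau \geq \tau^+$. On the other side, since $\DHm(\psi_\bullet) = G[\psi_\bullet]_*(\mathrm{d}\lambda)$ and $\{G[\psi_\bullet] \geq \tau\} = \Delta[\psi_\bullet]_\tau$ (from the proof of \cref{thm:Okotestcurve}), one has
\[
F(\tau) := \DHm(\psi_\bullet)([\tau, \infty)) = \vol\Delta[\psi_\bullet]_\tau
\]
for $\tau \leq \tau^+$, and $F(\tau) = 0$ for $\tau > \tau^+$.

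Pointwise convergence at every $\tau < \tau^+$ is the content of the paper's core machinery: the left-hand side tends to $\vol(L,\psi_\tau)$ by the very definition of the invariant (existence of the limit being the main result of \cite{DX21}), while $F(\tau) = \vol\Delta(\theta,\psi_\tau)$ by construction; the identity $\vol(L,\psi_\tau) = \vol\Delta(\theta,\psi_\tau)$ is exactly \eqref{eq:volD1} of \cref{thm:partOkobody} in the positive-volume regime, and both sides vanish simultaneously in the degenerate case, since the lemma preceding \cref{thm:BCgen} already established that $\Delta[\psi_\bullet]_\bullet$ is a genuine Okounkov test curve. For $\tau > \tau^+$ both sides vanish trivially. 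Total mass convergence is immediate from the classical Okounkov body formula \eqref{eq:volD} and \eqref{eq:massDH}:
\[
\mu_k(\mathbb{R}) = k^{-n}h^0(X, L^k) \longrightarrow \vol(L) = \vol\Delta(L) = \DHm(\psi_\bullet)(\mathbb{R}).
\]

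Finally, $F$ is continuous on $(-\infty,\tau^+)$: Hausdorff-continuity of $\Delta[\psi_\bullet]_\bullet$ there is \cref{prop:Otccont}, and volume is continuous on $\mathcal{K}_n$ by \cref{thm:contvol}; since $F$ vanishes on $(\tau^+,\infty)$, its only possible discontinuity is at $\tau^+$, and pointwise convergence has been verified at every other point. Combined with the matching total masses, Helly--Bray yields the desired weak convergence. The principal non-trivial input is \cref{thm:partOkobody} together with the fact that $\Delta[\psi_\bullet]_\bullet$ is an Okounkov test curve; once these are in hand the remainder is formal, the only subtle point being the degenerate endpoint behaviour, which is absorbed into the Okounkov test curve structure.
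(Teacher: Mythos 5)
Your proposal is correct and is essentially the paper's own proof in different packaging: both identify $\mu_k([\tau,\infty))$ with $k^{-n}h^0(X,L^k\otimes\mathcal{I}(k\psi_\tau))$ and $\DHm(\psi_\bullet)([\tau,\infty))$ with $\vol\Delta[\psi_\bullet]_\tau$, and reduce everything to the pointwise convergence for $\tau\neq\tau^+$ supplied by \cite{DX21} together with \eqref{eq:volD1}; the paper then merely passes to $L^1_{\mathrm{loc}}$ convergence of these tail functions and takes distributional derivatives, where you instead invoke Helly--Bray at continuity points of the limit. The only detail worth flagging is that your mass identity $\mu_k(\mathbb{R})=k^{-n}h^0(X,L^k)$ tacitly assumes every jumping number is finite; this is immaterial, since the sandwich $k^{-n}\dim\mathscr{F}^k_{\tau}\mathrm{H}^0(X,L^k)\leq \mu_k(\mathbb{R})\leq k^{-n}h^0(X,L^k)$ already forces $\mu_k(\mathbb{R})\to\vol\Delta(L)$, and the paper's distributional notion of weak convergence needs no mass control at all.
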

As explained in \cite{RWN14, DX22, Xia20}, $\TC^1_{\mathcal{I}}(X,\theta)$ is the completion of the space of filtrations, so this theorem indeed generalizes \cite[Theorem~A]{BC11}, in the case of full graded linear series. 
\begin{proof}
	It suffices to show the convergence holds as distributions.
	By our definition, $\mu_k$ is the distributional derivative of the function
	\[
		h_k(\tau)\coloneqq \left\{
		\begin{aligned}
			k^{-n}h^0(X,L^k\otimes \mathcal{I}(k\psi_\tau)) & ,\quad \tau<\tau^+,     \\
			0                                               & ,\quad \tau\geq \tau^+.
		\end{aligned}
		\right.
	\]
	On the other hand, $\DHm(\psi_{\bullet})$ is the distributional derivative of $h(\tau)\coloneqq \vol\{G[\Delta[\psi_{\bullet}]_{\bullet}]\geq\tau\}=\vol \Delta_{\tau}$ by Fubini--Tonelli theorem.

	By \cref{thm:DXmain}, $h_k(\tau)\to h(\tau)$ for all $\tau\neq \tau^+$.
	By the dominated convergence theorem $h_k\to h$ in $L^1_{\mathrm{\loc}}(\mathbb{R})$.
	Hence, $\mu_k\rightharpoonup \DHm(\psi_{\bullet})$.
\end{proof}

\begin{corollary}
	For any $\psi_{\bullet}\in \TC^1_{\mathcal{I}}(X,\theta)$.
	The Duistermaat--Heckman measure $\DHm(\psi_{\bullet})$ is independent of the choice of the valuation $\nu$.
\end{corollary}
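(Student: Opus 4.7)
The proof plan is quite short since almost all the work has been done by Theorem~\ref{thm:BCgen}. The idea is to exploit the fact that the approximating measures $\mu_k$ are intrinsically defined, with no reference to the valuation $\nu$.

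More precisely, I would first observe that the filtration
\[
\mathscr{F}^k_{\tau}\mathrm{H}^0(X,L^k) = \mathrm{H}^0(X,L^k\otimes \mathcal{I}(k\psi_{\tau})) \text{ for } \tau<\tau^+,\quad 0 \text{ otherwise,}
\]
depends only on the test curve $\psi_{\bullet}$ (through Nadel multiplier ideals on $X$). Consequently, the jumping numbers $e_j(\mathrm{H}^0(X,L^k),\mathscr{F}^k)$, and hence the discrete measure
\[
\mu_k = \frac{1}{k^n}\sum_{j=1}^{h^0(X,L^k)}\delta_{e_j(\mathrm{H}^0(X,L^k),\mathscr{F}^k)},
\]
are defined without any reference to the valuation $\nu$.

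Next I would invoke Theorem~\ref{thm:BCgen}, which asserts $\mu_k \rightharpoonup \DHm(\psi_{\bullet})$ weakly as $k \to \infty$. Since weak limits of measures are unique, and the left-hand side does not see $\nu$, the right-hand side cannot depend on $\nu$ either. This proves the claim.

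There is essentially no obstacle here; the content of the statement was already packed into Theorem~\ref{thm:BCgen}, whose proof identified $\DHm(\psi_{\bullet})$ with an object (the weak limit of $\mu_k$) which is manifestly independent of the valuation used to construct the partial Okounkov bodies $\Delta(\theta,\psi_{\tau})$. The corollary is therefore a direct reading of that theorem.
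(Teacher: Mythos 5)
Your argument is correct and is exactly the intended one: the paper states this corollary immediately after \cref{thm:BCgen} precisely because the approximating measures $\mu_k$ are built from the multiplier-ideal filtration alone, with no reference to $\nu$, so uniqueness of the weak limit gives the independence. Nothing is missing.
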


\subsection{Applications to non-Archimedean geometry}\label{subsec:apptonA}
Assume that $L$ is ample and $\theta$ is a Kähler form. We write $\omega=\theta$ instead. 

\subsubsection*{Finite energy geodesic rays}
Let $\mathcal{E}^1(X,\omega)$ denote the space of $\omega$-psh functions with finite energy:
\[
	\mathcal{E}^1(X,\omega)\coloneqq \left\{\varphi\in \PSH(X,\omega):\int_X \omega_{\varphi}^n=\int_X\omega^n,\int_X |\varphi|\,\omega_{\varphi}^n<\infty \right\}.
\]
See \cite{Dar19} for a detailed introduction. Recall that $\mathcal{E}^1(X,\omega)$ admits a natural metric $d_1$: for $\varphi,\psi\in \mathcal{E}^1(X,\omega)$, given by
\[
	d_1(\varphi,\psi)\coloneqq \mathrm{E}(\varphi)+\mathrm{E}(\psi)-2\mathrm{E}(\varphi\land\psi).
\]
Here 
\[
\varphi\land\psi\coloneqq \sup \left\{\eta\in \PSH(X,\omega): \eta\leq \varphi,\eta\leq \psi\right\}.
\]
In \cite[Theorem~2.10]{DDNL18fullmass}, Darvas--Di Nezza--Lu proved that $\varphi\land\psi\in \mathcal{E}^1(X,\omega)$. They proved in \cite[Section~3]{DDNL18big} that $d_1$ is indeed a metric.
The Monge--Amp\`ere energy functional $\mathrm{E}:\mathcal{E}^1(X,\omega)\rightarrow \mathbb{R}$ is defined as
\[
	\mathrm{E}(\varphi)=\frac{1}{n+1}\sum_{i=0}^n\int_X \varphi\,\omega_{\varphi}^i\wedge \omega^{n-i}.
\]

In this case, let $\mathcal{R}^1(X,\omega)$ denote the set of geodesic rays in $\mathcal{E}^1(X,\omega)$ emanating from $0$. For a detailed study of $\mathcal{R}^1(X,\omega)$, we refer to \cite{DL20}.
Here we only recall the definition of the metric on $\mathcal{R}^1(X,\omega)$. Given $\ell,\ell'\in \mathcal{R}^1(X,\omega)$, we define
\[
	d_1(\ell,\ell')\coloneqq \lim_{t\to\infty}\frac{1}{t}d_1(\ell_t,\ell'_t).
\]
By \cite[Corollary~5.5]{CC3}, $t\mapsto d_1(\ell_t,\ell'_t)$ is convex, guaranteeing the existence of the limit. 
It is shown in \cite{DL20} that $(\mathcal{R}^1(X,\omega),d_1)$ is a complete metric space.

The following notion is introduced in \cite{XiaMabuchi}:
\begin{definition}
	A \emph{rooftop metric space} is a triple $(E,d,\land)$: $(E,d)$ is a metric space and $\land:E\times E\rightarrow E$ is an associative, commutative binary operator on $E$ satisfying
	\[
		d(a\land c,b\land c)\leq d(a,b)
	\]
	for any $a,b,c\in E$.
\end{definition}
For $\ell,\ell'\in \mathcal{R}^1(X,\omega)$, define $\ell\land \ell'$ as the greatest geodesic in $\mathcal{R}^1(X,\omega)$ that lies below both $\ell$ and $\ell'$.
It is shown in \cite[Theorem~7.6]{XiaMabuchi} that $\land$ is well-defined and $(\mathcal{R}^1(X,\omega),d_1,\land)$ is a complete rooftop metric space.

The energy functional $\mathbf{E}:\mathcal{R}^1(X,\omega)\rightarrow \mathbb{R}$ is defined as
\[
	\mathbf{E}(\ell)\coloneqq \mathrm{E}(\ell_1).
\]
Recall that we have the following two maps: given any $\ell\in \mathcal{R}^1(X,\omega)$, its \emph{inverse Legendre transform} is defined as
\[
	\hat{\ell}_{\tau}\coloneqq \inf_{t\geq 0} (\ell_t-t\tau).
\]
Conversely, given any $\psi_{\bullet}\in \TC^1(X,\omega)$, we define its \emph{Legendre transform} by
\[
	\check{\psi}_t\coloneqq   \sup_{\tau\in \mathbb{R}}(\psi_{\tau}+t\tau).
\]
They are inverse to each other, as proved in \cite[Theorem~3.7]{DX22}.

\subsubsection*{Non-Archimedean pluripotential theory}
Let $X^{\An}$ be the Berkovich analytification of $X$ with respect to the trivial valuation on $X$ and $L^{\An}$ be the analytification of $L$. See \cref{subsec:nAp} for a brief introduction. In the same section, we also recalled the definition of the space $\mathcal{E}^1(L^{\An})$ of non-Archimedean psh metrics on $L^{\An}$ with finite energy and the energy functional $\mathrm{E}:\mathcal{E}^1(L^{\An})\rightarrow \mathbb{R}$.

Next we briefly explain the relation between the non-Archimedean pluripotential theory and the complex pluripotential theory. Firstly, given a geodesic ray $\ell\in \mathcal{R}^1(X,\omega)$, one can associate a non-Archimedean potential $\ell^{\An}\in \mathcal{E}^1(L^{\An})$ as in \cite[Definition~4.2, Theorem~6.2]{BBJ21}. The construction of $\ell^{\An}$ requires the notion of Gauss extension of valuations, as explained in \cite[Section~3.1]{BBJ21}. The map 
\[
\mathcal{R}^1(X,\omega)\rightarrow \mathcal{E}^1(L^{\An})
\]
is surjective but not injective. It admits a canonical section
\[
	\iota:\mathcal{E}^1(L^{\An})\hookrightarrow   \mathcal{R}^1(X,\omega)
\]
sending $\phi \in \mathcal{E}^1(L^{\An})$ to the maximal element $\ell\in \mathcal{E}^1(L^{\An})$ with $\ell^{\An}=\phi$. See \cite[Theorem~6.6]{BBJ21}.

The geodesics lying in the image of $\iota$ are known as \emph{maximal geodesic rays} or \emph{approximable geodesic rays}. Moreover,
\begin{equation}\label{eq:ENAEequal}
	\mathbf{E}(\iota(\alpha))=\mathrm{E}(\alpha)
\end{equation}
for any $\alpha\in \mathcal{E}^1(L^{\An})$, see \cite[Corollary~6.7]{BBJ21}.

Maximal geodesic rays are closely related to test curves:
\begin{theorem}
	The Legendre transform is a bijection from $\TC^1_{\mathcal{I}}(X,\omega)$ (resp. $\TC^1(X,\omega)$) to $\iota(\mathcal{E}^{1}(L^{\An}))$ (resp. $\mathcal{R}^1(X,\omega)$), the converse is given by the inverse Legendre transform.
	Moreover, for any $\psi_{\bullet}\in \TC^1(X,\omega)$,
	\begin{equation}\label{eq:Eequal}
		\mathbf{E}(\psi_{\bullet})=\mathbf{E}(\check{\psi}).
	\end{equation}
\end{theorem}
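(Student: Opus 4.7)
The plan is to establish the bijection in two stages. First, for $\psi_\bullet \in \TC^1(X,\omega)$, the Legendre transform $\check\psi_t(x) = \sup_\tau(\psi_\tau(x) + t\tau)$ is convex in $t$ as a supremum of affine functions, and for each fixed $t > 0$ is $\omega$-psh after usc regularization, being a supremum of $\omega$-psh functions. The concavity and upper semi-continuity of $\tau \mapsto \psi_\tau$ imply via pointwise Legendre duality that $\hat{\check\psi}_\tau = \psi_\tau$, so inverse Legendre recovers the curve. The finite energy condition $\mathbf{E}(\psi_\bullet) > -\infty$ combined with the envelope characterization of geodesics (as in \cite{RWN14} and \cite{DL20}) upgrades $\check\psi$ from a subgeodesic to a genuine geodesic ray in $\mathcal{R}^1(X,\omega)$. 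Conversely, for $\ell \in \mathcal{R}^1(X,\omega)$, the curve $\hat\ell$ is concave and usc in $\tau$, and each slice $\hat\ell_\tau$ is model by the envelope formula defining $\hat\ell_\tau$ together with \eqref{eq:CandP}; the energy bound on $\ell$ transfers to the bound \eqref{eq:Etc} by the Fubini computation described below.

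To restrict the bijection to $\TC^1_{\mathcal{I}}(X,\omega) \leftrightarrow \iota(\mathcal{E}^1(L^\An))$, we characterize maximal rays as exactly those whose inverse Legendre slices are $\mathcal{I}$-model. For the forward direction, $\iota(\mathcal{E}^1(L^\An))$ is the $d_1$-closure of the Fubini--Study approximable rays coming from finitely generated $\mathbb{Z}$-filtrations; by \cref{thm:filtOko} the Legendre slices of such rays are $\mathcal{I}$-model test curves, and the continuity of $P[\bullet]_{\mathcal{I}}$ from \cref{thm:contPI} preserves $\mathcal{I}$-modelness under $d_S$-limits in each slice. For the reverse direction, given $\psi_\bullet \in \TC^1_{\mathcal{I}}$, use \cite[Theorem~3.8]{DX21} to approximate each $\psi_\tau$ in $d_S$ by model potentials with analytic singularities arising from finitely generated filtrations $\mathscr{F}^k$; this produces test curves $\psi^k_\bullet$ whose Legendre transforms $\check{\psi^k}$ lie in $\iota(\mathcal{E}^1(L^\An))$, and $\check{\psi^k} \to \check{\psi}$ in $d_1$ by the energy continuity (Paragraph~3). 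Completeness of $\iota(\mathcal{E}^1(L^\An))$ inside $\mathcal{R}^1(X,\omega)$ then gives $\check\psi \in \iota(\mathcal{E}^1(L^\An))$.

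Finally, \eqref{eq:Eequal} follows from a Fubini-type computation. Expanding
\[
\mathbf{E}(\check\psi) = \mathrm{E}(\check\psi_1) = \frac{1}{n+1}\sum_{i=0}^n \int_X \check\psi_1\, \omega_{\check\psi_1}^i \wedge \omega^{n-i},
\]
one uses the identity $\check\psi_1 - \tau^+ = \int_{-\infty}^{\tau^+}\mathbf{1}_{\{\psi_\tau > -\infty\}}\, d\tau$ in the $x$-slot and the slicing of the non-pluripolar Monge--Amp\`ere measure of the envelope $\check\psi_1$ along level sets of its Legendre transform (which identifies the Monge--Amp\`ere mass on $\{\check\psi_1 - \tau^+ \geq s\}$ with $\int_X \omega_{\psi_{s+\tau^+}}^n$, compare the distributional computation in the proof of \cref{thm:BCgen}) to rewrite the right-hand side as
\[
\tau^+ V + \int_{-\infty}^{\tau^+}\Bigl(\int_X \omega_{\psi_\tau}^n - V\Bigr) d\tau = \mathbf{E}(\psi_\bullet).
\]
The principal obstacle is the identification in the second paragraph: showing that Legendre duality preserves $\mathcal{I}$-modelness is not a purely convex-analytic statement and genuinely requires the approximation and $d_S$-continuity results of \cite{DX20, DX21} together with the characterization of maximal rays via Fubini--Study data.
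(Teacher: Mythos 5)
First, a point of comparison: the paper does not actually prove this theorem — it is quoted verbatim from \cite[Theorems~3.7 and 3.17]{DX20}, which in turn build on \cite{RWN14} and \cite{DDNL3}. What you propose is therefore a re-proof of those external results, and at the two places where the real work happens your sketch either substitutes an identity that is false as written or silently assumes the theorem being proved. Even the first step is not ``pointwise Legendre duality'': $\check{\psi}_t$ is a usc-regularized supremum and $\hat{\check{\psi}}_\tau$ is an infimum of psh functions, so recovering the model potential $\psi_\tau$ from the double transform is precisely the nontrivial input of \cite{RWN14, DDNL3}, not convex analysis; likewise the upgrade from subgeodesic to geodesic ray is a theorem there, not a consequence of finite energy alone.

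Concretely, the gaps are these. For \eqref{eq:Eequal}, the identity $\check{\psi}_1-\tau^+=\int_{-\infty}^{\tau^+}\mathbf{1}_{\{\psi_\tau>-\infty\}}\,\mathrm{d}\tau$ cannot hold: since $\psi_\tau\to V_\omega$ as $\tau\to-\infty$, the indicator equals $1$ on an infinite half-line for a.e.\ $x$ and the integral diverges, while the left side is finite a.e.; moreover the slicing statement you invoke — that the non-pluripolar mass of $\omega_{\check{\psi}_1}$ on $\{\check{\psi}_1-\tau^+\geq s\}$ equals $\int_X\omega^n_{\psi_{s+\tau^+}}$ — is not a formal Fubini fact. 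The correct statement concerns the pushforward $\dot{\ell}_{t*}\omega^n_{\ell_t}$ of the Monge--Ampère measure of the ray under its time derivative, and that is one of the main results of \cite{RWN14, DDNL3}; the distributional computation in \cref{thm:BCgen} takes the energy formula as input rather than providing it. For the maximality characterization, $d_1$-convergence of rays does not yield $d_S$-convergence of the individual Legendre slices, so \cref{thm:contPI} cannot be applied slice by slice; in the converse direction, approximating each $\psi_\tau$ separately does not assemble into a test curve (concavity and usc in $\tau$ are lost), and even granting that, the claims that the approximating rays are maximal and that maximality and $\mathcal{I}$-modelness of slices pass to the $d_1$-limit are exactly the content of \cite[Theorem~3.17]{DX20} — the argument is circular at the decisive step. (A smaller slip: that the slices of filtration rays are $\mathcal{I}$-model is \cite[Theorem~3.11]{DX20}, quoted just before \cref{thm:filtOko}, not \cref{thm:filtOko} itself.)
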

This is one of the main theorems of \cite[Theorem~3.7, Theorem~3.17]{DX22}. It is based on the previous works \cite{RWN14, DDNL18big}.

\subsubsection*{Duistermaat--Heckman measures}

The space $\mathcal{E}^1(L^{\An})$ is closely related to the theory of test configurations. For the latter, we refer to \cite[Section~2]{BHJ17} for a brief introduction. Recall that two test configurations $(\mathcal{X},\mathcal{L})$ and $(\mathcal{X}',\mathcal{L}')$ of $(X,L)$ are said to be equivalent if they can be dominated by a common test configuration, see \cite[Definition~6.1]{BHJ17}. There is a natural injection from the set of equivalence classes of test configurations to $\mathcal{E}^1(L^{\An})$. Moreover, this injection has dense image and $\mathcal{E}^1(L^{\An})$ is the $d_1$-completion of the space of test configurations (modulo the equivalence relation). These results are explained in detail in \cite[Section~3.2]{DX22}.

Given a test configuration $(\mathcal{X},\mathcal{L})$, Witt Nystr\"om \cite{WN12} constructed a naturally defined Radon measure $\DHm(\mathcal{X},\mathcal{L})$ on $\mathbb{R}$, called the \emph{Duistermaat--Heckman measure}. See \cite[Section~3.2]{BHJ17} for more details. It is not hard to see from the definition that $\DHm(\mathcal{X},\mathcal{L})$ depends only on the equivalence class of $(\mathcal{X},\mathcal{L})$. 

In the sequel, we will define the Duistermaat--Heckman measure of an element in $\mathcal{E}^1(L^{\An})$.
As the space $\mathcal{E}^1(L^{\An})$ is the completion of the space of test configurations (modulo the equivalence relation), our definition can be seen as an extension of Witt Nystr\"om's results \cite{WN12}.

\begin{definition}\label{def:DHmNA}
	For any $\alpha\in \mathcal{E}^1(L^{\An})$, define the \emph{Duistermaat--Heckman measure} of $\alpha$ as
	\[
		\DHm(\alpha)\coloneqq  \DHm\left(\widehat{\iota(\alpha)}\right).
	\]
\end{definition}
We get a map $\DHm:\mathcal{E}^1(L^{\An})\rightarrow \mathcal{M}(\mathbb{R})$.
Here $\mathcal{M}(\mathbb{R})$ denotes the space of Radon measures on $\mathbb{R}$.

For the proof of the next theorem, we need to recall several basic constructions of test curves.

The space $\TC^1(X,\omega)$ is a rooftop metric space.
Its rooftop structures $(d_1,\land)$ are induced from the corresponding structures on $\mathcal{R}^1(X,\omega)$.
\begin{corollary}Let $\psi_{\bullet},\varphi_{\bullet},\eta_{\bullet}\in \TC^1(X,\omega)$.
	\begin{enumerate}
		\item The rooftop operator on $\TC^1(X,\omega)$ is given by
		      \begin{equation}\label{eq:rooftoptc}
			      \left(\psi\land\varphi\right)_{\tau}=\psi_{\tau}\land\varphi_{\tau}.
		      \end{equation}
		      It is the maximal element in $\TC^1(X,\omega)$ that lies below both $\psi_{\bullet}$ and $\varphi_{\bullet}$. In particular,
		      \begin{equation}\label{eq:d1rooftopineq}
			      d_1((\psi\land\eta)_{\bullet},(\varphi\land\eta)_{\bullet})\leq d_1(\psi_{\bullet},\varphi_{\bullet}).
		      \end{equation}
		\item The metric on  $\TC^1(X,\omega)$  is given by
		      \begin{equation}\label{eq:d1TC1}
			      d_1(\psi_{\bullet},\varphi_{\bullet})\coloneqq \mathbf{E}(\psi_{\bullet})+\mathbf{E}(\varphi_{\bullet})-2\mathbf{E}((\psi\land\varphi)_{\bullet}).
		      \end{equation}
	\end{enumerate}
\end{corollary}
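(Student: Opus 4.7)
The plan is to transport both claims through the Legendre/inverse-Legendre bijection $\psi_\bullet \leftrightarrow \check\psi$ between $\TC^1(X,\omega)$ and $\mathcal{R}^1(X,\omega)$. By construction the rooftop structure $(d_1,\land)$ on $\TC^1(X,\omega)$ is defined as the pullback of the corresponding structure on $\mathcal{R}^1(X,\omega)$, and the two energy functionals agree under \eqref{eq:Eequal}. Hence (1) reduces to identifying the induced $\land$-operation with the pointwise formula \eqref{eq:rooftoptc}, and (2) reduces to transporting the analogous $d_1$-formula on $\mathcal{R}^1(X,\omega)$.

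For \eqref{eq:rooftoptc}, denote by $A_\bullet := \psi_\bullet \land \varphi_\bullet$ the abstract rooftop in $\TC^1(X,\omega)$, and set $B_\tau := \psi_\tau \land \varphi_\tau$, the pointwise rooftop in $\PSH(X,\omega)$. First I would show $A_\tau \leq B_\tau$ for every $\tau$: since $(\check\psi \land \check\varphi)_t \leq \check\psi_t$ and $\leq \check\varphi_t$ for all $t$, applying the inverse Legendre transform (which preserves the pointwise order) gives $A_\tau \leq \psi_\tau$ and $A_\tau \leq \varphi_\tau$, hence $A_\tau \leq \psi_\tau \land \varphi_\tau = B_\tau$ by maximality of the rooftop in $\PSH(X,\omega)$. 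For the reverse inequality the key task is to verify that $B_\bullet$ itself lies in $\TC^1(X,\omega)$: concavity in $\tau$ follows from that of $\psi_\bullet, \varphi_\bullet$ together with monotonicity of $\land$, upper semi-continuity and convergence to $V_\omega$ as $\tau \to -\infty$ are inherited pointwise, while finiteness of $\mathbf{E}(B_\bullet)$ comes from the standard rooftop-energy estimate $\mathbf{E}(B_\bullet) \geq \mathbf{E}(\psi_\bullet) + \mathbf{E}(\varphi_\bullet) - \mathbf{E}((\psi \lor \varphi)_\bullet)$ combined with the obvious upper bound on the pointwise max. Once $B_\bullet \in \TC^1(X,\omega)$ and $B_\bullet \leq \psi_\bullet, \varphi_\bullet$ pointwise, the characterization of $A_\bullet$ as the largest test curve below both forces $A_\bullet \geq B_\bullet$, so $A_\bullet = B_\bullet$. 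The contraction inequality \eqref{eq:d1rooftopineq} is then immediate from the rooftop-metric-space axiom on $\mathcal{R}^1(X,\omega)$.

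For \eqref{eq:d1TC1} I would invoke the identity
\[
d_1(\ell,\ell') = \mathbf{E}(\ell) + \mathbf{E}(\ell') - 2\mathbf{E}(\ell \land \ell')
\]
on $\mathcal{R}^1(X,\omega)$, obtained from the $\mathcal{E}^1$-formula $d_1(\ell_t,\ell'_t) = \mathrm{E}(\ell_t) + \mathrm{E}(\ell'_t) - 2\mathrm{E}(\ell_t \land \ell'_t)$ by dividing by $t$ and taking $t \to \infty$, using linearity of $\mathrm{E}$ along geodesics emanating from $0$ (so $\mathrm{E}(\ell_t) = t\mathbf{E}(\ell)$) together with the identification $\lim_{t\to\infty} t^{-1}\mathrm{E}(\ell_t \land \ell'_t) = \mathbf{E}(\ell \land \ell')$. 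Combined with \eqref{eq:Eequal} and part (1), this yields \eqref{eq:d1TC1}. The most delicate steps are the finite-energy verification for $B_\bullet$ in the previous paragraph (equivalently, the identification of the pointwise and abstract rooftops at the level of Legendre transforms) and the limit identification $\lim_t t^{-1}\mathrm{E}(\ell_t \land \ell'_t) = \mathbf{E}(\ell \land \ell')$; both ultimately rest on monotonicity of Monge--Amp\`ere energy along decreasing families and the approximation of the geodesic rooftop by weak geodesic segments from \cite{DL20, CC3, Xia19}.
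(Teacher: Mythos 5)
Your proposal follows essentially the same route as the paper's proof: transport the rooftop structure and the metric through the order-preserving Legendre bijection with $\mathcal{R}^1(X,\omega)$, reduce (1) to checking that the pointwise formula $\tau\mapsto\psi_\tau\land\varphi_\tau$ defines an element of $\TC^1(X,\omega)$ lying below both curves, and obtain (2) from (1), \eqref{eq:Eequal} and the ray-level identity $d_1(\ell,\ell')=\mathbf{E}(\ell)+\mathbf{E}(\ell')-2\mathbf{E}(\ell\land\ell')$. The paper records these steps as immediate (invoking the known formula for $d_1$ on rays rather than re-deriving it, and calling the membership of the pointwise rooftop in $\TC^1(X,\omega)$ obvious), so your extra verifications are elaborations of the same argument rather than a different approach.
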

\begin{proof}
	(1) Note that $\eqref{eq:d1rooftopineq}$ is part of our definition of a rooftop structure.

	Observe that the bijection $\TC^1(X,\omega)\rightarrow \mathcal{R}^1(X,\omega)$ is order-preserving. In order to prove our claim, it suffices to show that $(\varphi\land \psi)_{\bullet}$ defined by \eqref{eq:rooftoptc} is indeed in $\TC^1(X,\omega)$, which is obvious.

	(2) This follows simply from (1) and \eqref{eq:Eequal}.
\end{proof}
If $\varphi_{\bullet},\psi_{\bullet}\in \TC_{\mathcal{I}}^1(X,\omega)$, then $(\psi\land \varphi)_{\bullet}\in \TC_{\mathcal{I}}^1(X,\omega)$ as well. This follows from the simple observation that the rooftop of two $\mathcal{I}$-model potentials is still $\mathcal{I}$-model.
Now the $d_1$ metric on $\TC^1(X,\omega)$ restricts to a metric $d_1$ on $\TC^1_{\mathcal{I}}(X,\omega)$. The rooftop structure also restricts to a rooftop structure on $\TC^1_{\mathcal{I}}(X,\omega)$.

We need the following constructions on test curves.
\begin{enumerate}
	\item Increasing limit. Let $\psi^{\alpha}_{\bullet}\in \TC^1(X,\omega)$ be an increasing net. Assume that $\tau^+_{\psi^{\alpha}}$ is bounded from above.
	      Define
	      \[
		      \tilde{\psi}_{\tau}\coloneqq C[\sups_{\alpha}\psi^{\alpha}_{\tau}].
	      \]
	      Let $\tau^+=\inf\{\tau:\tilde{\psi}_{\tau}=-\infty\}$. We define
	      \[
		      \psi_{\tau}=
		      \left\{
		      \begin{aligned}
			      \tilde{\psi}_{\tau}                           & , \quad \tau\neq \tau^+\,; \\
			      \lim_{\sigma\to \tau^+-}\tilde{\psi}_{\sigma} & ,\quad \tau\neq \tau^+.
		      \end{aligned}
		      \right.
	      \]
	      It is easy to verify that $\psi_{\bullet}\in \TC^1(X,\omega)$.
	\item Decreasing limit. Let $\psi^{\alpha}_{\bullet}\in \TC^1(X,\omega)$ be an increasing net and $\eta_{\bullet}\in \TC^1(X,\omega)$. Assume that $\psi^{\alpha}_{\bullet}\geq \eta_{\bullet}$ for all $\alpha$. Define
	      \[
		      (\inf\psi)_{\tau}\coloneqq \inf_{\alpha}\psi^{\alpha}_{\tau}.
	      \]
	      Then if $(\inf_{\psi})_{\bullet}$ is not identically $-\infty$, then $(\inf\psi)_{\bullet}\in \TC^1(X,\omega)$.
	\item Max. Let $\varphi_{\bullet},\psi_{\bullet}\in \TC^1(X,\omega)$. There is the smallest test curve $(\varphi\lor\psi)_{\bullet}\in \TC^1(X,\omega)$ such that $(\varphi\lor\psi)_{\bullet}\geq \varphi_{\bullet}$, $(\varphi\lor\psi)_{\bullet}\geq \psi_{\bullet}$. In fact, we could simply define
	      \[
		      (\varphi\lor\psi)_{\tau}\coloneqq \inf\left\{\eta_{\tau}:\eta_{\bullet}\in \TC^1(X,\omega),\eta_{\bullet}\geq \varphi_{\bullet},\eta_{\bullet}\geq \psi_{\bullet} \right\}.
	      \]
	      In terms of the Legendre transform, $(\varphi\lor\psi)^{\check{}}$ is the minimal geodesic ray lying above both $\check{\varphi}$ and $\check{\psi}$.
	      We observe that
	      \begin{equation}\label{eq:d1ineq}
		      d_1(\varphi_{\bullet},\psi_{\bullet})\leq d_1(\varphi_{\bullet},(\varphi\lor\psi)_{\bullet})+
		      d_1(\psi_{\bullet},(\varphi\lor\psi)_{\bullet}) \leq C_0d_1(\varphi_{\bullet},\psi_{\bullet})
	      \end{equation}
	      for some $C_0(n)>0$. See \cite[Proposition~2.15]{DDNLmetric} for the proof of the latter inequality. Moreover, if $\eta_{\bullet}\in \TC^1(X,\omega)$ and if $\varphi_{\bullet}\leq \psi_{\bullet}$, then
	      \begin{equation}\label{eq:d1max}
		      d_1((\varphi\lor\eta)_{\bullet},(\psi\lor\eta)_{\bullet})\leq d_1(\varphi_{\bullet},\psi_{\bullet}).
	      \end{equation}
	      This follows from the corresponding inequality of geodesic rays, which in turn follows from \cite[Proposition~4.12]{XiaMabuchi} (Proposition~6.8 in the arXiv version).

	      We also observe that the operator $\lor$ is associative and commutative, hence, we could also define $\psi^{1}_{\bullet}\lor\cdots \lor \psi^{k}_{\bullet}$ in the obvious way.
\end{enumerate}

\begin{lemma}\label{lma:incdecd1}
	Let $\psi^j_{\bullet},\psi_{\bullet}\in \TC^1(X,\omega)$. Assume that one of the following conditions holds
	\begin{enumerate}
		\item $\psi^j_{\bullet}$ is increasing and $\psi_{\bullet}$ is the increasing limit of $\psi^j_{\bullet}$.
		\item $\psi^j_{\bullet}$ is decreasing and $\psi_{\bullet}=(\inf\psi)_{\bullet}$.
	\end{enumerate}
	Then $\psi^j_{\bullet}\xrightarrow{d_1}\psi_{\bullet}$.
\end{lemma}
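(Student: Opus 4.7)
The plan is to reduce both monotone cases to the statement that $\mathbf{E}(\psi^j_\bullet)\to\mathbf{E}(\psi_\bullet)$, and then to transfer this through the Legendre transform to the classical monotone continuity of the Monge--Amp\`ere energy on $\mathcal{E}^1(X,\omega)$. The starting point is the metric formula \eqref{eq:d1TC1} together with the fibrewise rooftop identity \eqref{eq:rooftoptc}: the monotonicity hypothesis gives $(\psi^j\wedge\psi)_\tau=\psi^j_\tau$ pointwise in case~(1), and $(\psi^j\wedge\psi)_\tau=\psi_\tau$ in case~(2). Substituting into \eqref{eq:d1TC1} produces, in both situations,
\[
d_1(\psi^j_\bullet,\psi_\bullet)=\bigl|\mathbf{E}(\psi^j_\bullet)-\mathbf{E}(\psi_\bullet)\bigr|,
\]
so the lemma is reduced to showing convergence of the energies.

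Next, I would invoke \eqref{eq:Eequal} together with $\mathbf{E}(\ell)=\mathrm{E}(\ell_1)$ to rewrite the required convergence as $\mathrm{E}(\check{\psi}^j_1)\to\mathrm{E}(\check{\psi}_1)$ inside $\mathcal{E}^1(X,\omega)$. The Legendre transform is order-preserving, so $\check{\psi}^j$ is increasing in case~(1) and decreasing in case~(2); the boundedness of $\tau^+_{\psi^j}$ imposed in case~(1) ensures the limit stays inside $\mathcal{E}^1$. The heart of the argument is then to identify the monotone pointwise limit of $\check{\psi}^j_t$ with $\check{\psi}_t$. The decreasing case is essentially immediate: the chain
\[
\sup_{\tau}\bigl(\inf_j\psi^j_\tau+t\tau\bigr)\leq\inf_j\sup_\tau\bigl(\psi^j_\tau+t\tau\bigr)\leq\check\psi_t,
\]
combined with the bijection of \cite{DX20}, forces equality. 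In the increasing case I would take $\ell$ to be the upper semicontinuous regularization of $\sup_j\check{\psi}^j$, observe that $\ell$ is then a maximal geodesic ray of finite energy, and argue that its inverse Legendre transform $\hat{\ell}_\bullet$ is squeezed between the $\psi^j_\bullet$ and $\psi_\bullet$; the defining maximality properties of the model envelope $C[\,\cdot\,]$ together with condition~(5) of \cref{def:Otc} imposed at $\tau^+$ then force $\hat{\ell}_\bullet=\psi_\bullet$, and hence $\ell=\check{\psi}$ by the Legendre bijection. Once pointwise monotone convergence $\check{\psi}^j_1\to\check{\psi}_1$ is in hand, the classical continuity of the Monge--Amp\`ere energy along monotone sequences in $\mathcal{E}^1(X,\omega)$ delivers $\mathrm{E}(\check{\psi}^j_1)\to\mathrm{E}(\check{\psi}_1)$.

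The main obstacle will be the identification $\ell=\check{\psi}$ in case~(1): the naive pointwise supremum $\sup^*_j\psi^j_\tau$ need not itself be a model potential, and the envelope $C[\,\cdot\,]$ that appears in the construction of the increasing limit of test curves is precisely what one has to track through the Legendre transform. Handling the boundary value at $\tau^+$, where the definition of $\psi_\bullet$ is patched by continuity from the left, will also require some care. Case~(2) is transparent by comparison, since the infimum interacts well with both the model structure and the Legendre transform.
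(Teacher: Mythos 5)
Your opening reduction is exactly how the paper's proof begins: since the curves are ordered, the fibrewise rooftop identity \eqref{eq:rooftoptc} and the metric formula \eqref{eq:d1TC1} give $d_1(\psi^j_{\bullet},\psi_{\bullet})=\bigl|\mathbf{E}(\psi^j_{\bullet})-\mathbf{E}(\psi_{\bullet})\bigr|$. From there, however, the two arguments genuinely diverge. The paper never leaves the test-curve side: it expands the energy difference through \eqref{eq:Etc} as $(\tau^+_{\psi^j}-\tau^+_{\psi})\int_X\omega^n+\int_{-\infty}^{\infty}\bigl(\int_X\omega^n_{\psi^j_{\tau}}-\int_X\omega^n_{\psi_{\tau}}\bigr)\,\mathrm{d}\tau$ and concludes by dominated convergence in $\tau$; no Legendre transform and no geodesic rays appear. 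You instead push the problem through the bijection $\TC^1(X,\omega)\leftrightarrow\mathcal{R}^1(X,\omega)$ via \eqref{eq:Eequal} and appeal to the classical monotone continuity of $\mathrm{E}$ on $\mathcal{E}^1(X,\omega)$. This route can be completed, and it is a legitimate alternative, but it purchases the conclusion at the price of an extra obligation the paper never incurs: identifying the monotone limit of the rays $\check{\psi}^j$ with $\check{\psi}$.

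That identification is where your write-up is thinnest. In the decreasing case the displayed chain is not yet a proof: its second inequality, $\inf_j\sup_{\tau}(\psi^j_{\tau}+t\tau)\leq\check{\psi}_t$, i.e.\ $\inf_j\check{\psi}^j_t\leq\check{\psi}_t$, is precisely the nontrivial min--max exchange you need (the first inequality is the trivial direction, and the leftmost term already equals $\check{\psi}_t$). The clean repair is to note that the inverse Legendre transform is itself an infimum over $t$, so it commutes with $\inf_j$: setting $\ell_t:=\inf_j\check{\psi}^j_t$, which one checks is again a ray in $\mathcal{R}^1(X,\omega)$ since it is squeezed between $\check{\psi}$ and $\check{\psi}^0$, one gets $\hat{\ell}_{\tau}=\inf_{t\geq0}\inf_j(\check{\psi}^j_t-t\tau)=\inf_j\psi^j_{\tau}=\psi_{\tau}$, and then $\ell=\check{\psi}$ by the bijection of \cite{DX20}. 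In the increasing case your plan (usc-regularized supremum of the rays, then squeezing via the maximality of the $C[\cdot]$-envelope and the patching at $\tau^+$) is the right idea—both $\hat{\ell}_{\bullet}$ and $\psi_{\bullet}$ are the least test curve dominating all $\psi^j_{\bullet}$—but carrying it out amounts to re-deriving a piece of the Legendre correspondence, which is substantially more work than the paper's two lines. In short: your strategy is correct and genuinely different, but heavier; if you want the short proof, compute $\mathbf{E}$ directly from \eqref{eq:Etc} and apply dominated convergence to the slice masses and the $\tau^+$'s, as the paper does.
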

\begin{proof}
	We assume that condition (2) holds, the other case is similar. First observe that $\tau^+_{\psi^j}\to \tau^+_{\psi}$.
	It suffices to observe that
	\[
		d_1(\psi^j_{\bullet},\psi_{\bullet})=(\tau^+_{\psi^j}-\tau^+_{\psi})\int_X\omega^n+\int_{-\infty}^{\infty}\left(\int_X\omega_{\psi^j_{\tau}}^n-\int_X\omega_{\psi_{\tau}}^n\right)\,\mathrm{d}\tau.
	\]
	The assertion is a simple consequence of dominated convergence theorem.
\end{proof}

\begin{theorem}\label{thm:contDH}
	The map $\DHm:\mathcal{E}^1(L^{\An})\rightarrow \mathcal{M}(\mathbb{R})$ is continuous.

	For any $\alpha\in \mathcal{E}^1(L^{\An})$,
	\begin{equation}\label{eq:momDHm}
		\int_{\mathbb{R}} x\,\mathrm{d}\DHm(\alpha)(x)=\mathrm{E}(\alpha)
	\end{equation}
	and
	\begin{equation}\label{eq:massDHm}
		\int_{\mathbb{R}}\DHm(\alpha)=\frac{1}{n!}(L^n).
	\end{equation}
\end{theorem}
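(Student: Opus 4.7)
The plan is to handle the mass and moment identities as direct consequences of the definitions together with the structural results already in the paper, and then reduce the continuity statement to an $L^1$-comparison of the test functions associated to the Okounkov test curves.

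For the mass identity \eqref{eq:massDHm}, write $\psi_\bullet:=\widehat{\iota(\alpha)}\in\TC^1_{\mathcal{I}}(X,\omega)$, so that $\DHm(\alpha)=G[\psi_\bullet]_*(\mathrm{d}\lambda)$ is a pushforward from $\Delta(L)$. Then $\int_{\mathbb{R}}\DHm(\alpha)=\vol\Delta(L)=\tfrac{1}{n!}(L^n)$ by \eqref{eq:volD}. For the moment identity \eqref{eq:momDHm}, the pushforward gives $\int_{\mathbb{R}}x\,\mathrm{d}\DHm(\alpha)=\int_{\Delta(L)}G[\psi_\bullet]\,\mathrm{d}\lambda$. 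Using the test-function energy formula $\mathbf{E}(F)=n!\int_{\Delta}F\,\mathrm{d}\lambda$ built into \cref{def:tf} and \eqref{eq:EFlevelset}, together with the chain of equalities \eqref{eq:Deltaene}, \eqref{eq:EpsiEDeltapsi}, \eqref{eq:Eequal}, \eqref{eq:ENAEequal}, the right-hand side is identified (up to the appropriate normalization of $\mathrm{E}$) with $\mathrm{E}(\alpha)$.

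For continuity, I would reduce the problem to $\TC^1_{\mathcal{I}}(X,\omega)$. The isometric embedding $\iota:\mathcal{E}^1(L^{\An})\hookrightarrow\mathcal{R}^1(X,\omega)$ from \cite{BBJ21} and the Legendre-transform bijection $\TC^1(X,\omega)\leftrightarrow\mathcal{R}^1(X,\omega)$ (which is a $d_1$-isometry by the very definitions \eqref{eq:rooftoptc}, \eqref{eq:d1TC1} and \eqref{eq:Eequal}) let us replace a $d_1$-convergent sequence $\alpha_k\to\alpha$ in $\mathcal{E}^1(L^{\An})$ by a $d_1$-convergent sequence $\psi_{\bullet,k}\to\psi_\bullet$ in $\TC^1_{\mathcal{I}}(X,\omega)$, with $\DHm(\alpha_k)=\DHm(\psi_{\bullet,k})$. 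Combining \eqref{eq:d1TC1} with \eqref{eq:Deltaene} and the identity $\mathbf{E}(F)=n!\int F\,\mathrm{d}\lambda$ yields
\[
d_1(\psi_{\bullet,k},\psi_\bullet)=n!\int_{\Delta(L)}\bigl(G[\psi_{\bullet,k}]+G[\psi_\bullet]-2G[(\psi_k\wedge\psi)_\bullet]\bigr)\,\mathrm{d}\lambda.
\]
Applying \cref{prop:IcompimplyDeltacomp} to $(\psi_k\wedge\psi)_\tau\le\psi_{k,\tau}$ and $(\psi_k\wedge\psi)_\tau\le\psi_\tau$ gives the containment $\Delta[(\psi_k\wedge\psi)_\bullet]_\tau\subseteq\Delta[\psi_{\bullet,k}]_\tau\cap\Delta[\psi_\bullet]_\tau$, which translates at the level of Legendre transforms into the pointwise bound $G[(\psi_k\wedge\psi)_\bullet]\le\min(G[\psi_{\bullet,k}],G[\psi_\bullet])$. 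Using $a+b-2\min(a,b)=|a-b|$ we obtain
\[
d_1(\psi_{\bullet,k},\psi_\bullet)\ge n!\int_{\Delta(L)}\bigl|G[\psi_{\bullet,k}]-G[\psi_\bullet]\bigr|\,\mathrm{d}\lambda.
\]
Hence $G[\psi_{\bullet,k}]\to G[\psi_\bullet]$ in $L^1(\Delta(L))$, so in measure. For any $f\in C_b(\mathbb{R})$, $f\circ G[\psi_{\bullet,k}]\to f\circ G[\psi_\bullet]$ in measure on $\Delta(L)$, and dominated convergence yields $\int_{\mathbb{R}}f\,\mathrm{d}\DHm(\alpha_k)=\int_{\Delta(L)}f\circ G[\psi_{\bullet,k}]\,\mathrm{d}\lambda\to\int_{\mathbb{R}}f\,\mathrm{d}\DHm(\alpha)$, which is the desired weak continuity.

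The main obstacle I anticipate is the careful bookkeeping needed to verify that the Legendre duality between $\TC^1$ and $\mathcal{R}^1$ is indeed an isometry with respect to the full rooftop structure (not merely a bijection respecting the energy), so that $d_1$-convergence on the non-Archimedean side transports cleanly to $d_1$-convergence of the associated test curves. Once this compatibility is in place, the rest is driven by a single non-trivial ingredient from the Okounkov-body theory developed in this paper, namely the monotonicity \cref{prop:IcompimplyDeltacomp} applied to the rooftop of two model potentials.
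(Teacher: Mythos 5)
Your handling of \eqref{eq:momDHm} and \eqref{eq:massDHm} is the same as the paper's. For the continuity statement, however, your route is genuinely different: the paper reduces to monotone sequences by a sandwiching construction (iterated $\lor$ and $\land$ together with \eqref{eq:d1ineq}, \eqref{eq:d1max}, the pointwise $\mathcal{I}$-projection, and \cref{lma:incdecd1}) and then deduces $L^1$-convergence of the Legendre transforms from convergence of the energies; you instead extract directly from \eqref{eq:d1TC1} and \cref{prop:IcompimplyDeltacomp} the quantitative estimate $n!\int_{\Delta(L)}\bigl|G[\psi_{\bullet,k}]-G[\psi_\bullet]\bigr|\,\mathrm{d}\lambda\leq d_1(\psi_{\bullet,k},\psi_\bullet)$, i.e.\ that $\psi_\bullet\mapsto G[\psi_\bullet]$ is $1$-Lipschitz into $L^1(\Delta(L),n!\,\mathrm{d}\lambda)$. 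This is shorter than the paper's argument and gives a stronger, effective statement. Your worry about the Legendre duality is vacuous: the paper defines $d_1$ and $\land$ on $\TC^1(X,\omega)$ precisely as the structures induced from $\mathcal{R}^1(X,\omega)$ via the Legendre transform, and $\iota$ transports $d_1$-convergence in $\mathcal{E}^1(L^{\An})$ to $d_1$-convergence of the corresponding rays.

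One step does need repair. The displayed \emph{equality} $d_1(\psi_{\bullet,k},\psi_\bullet)=n!\int_{\Delta(L)}\bigl(G[\psi_{\bullet,k}]+G[\psi_\bullet]-2G[(\psi_k\land\psi)_\bullet]\bigr)\,\mathrm{d}\lambda$ is unjustified: the identity $\mathbf{E}(\eta_\bullet)=n!\int_{\Delta(L)} G[\eta_\bullet]\,\mathrm{d}\lambda$ (that is, \eqref{eq:EpsiEDeltapsi} combined with \eqref{eq:Deltaene}) rests on $\vol\Delta(\theta,\eta_\tau)=\frac{1}{n!}\int_X\theta^n_{\eta_\tau}$ and is only available for $\mathcal{I}$-model test curves, whereas the rooftop $(\psi_k\land\psi)_\tau=\psi_{k,\tau}\land\psi_\tau$ of two $\mathcal{I}$-model potentials need not be $\mathcal{I}$-model — this is exactly why the paper, in its own reduction, replaces such curves by their pointwise $\mathcal{I}$-projections. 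Fortunately only one inequality is needed: since $\vol\Delta(\theta,\varphi)=\frac{1}{n!}\int_X\theta^n_{P[\varphi]_{\mathcal{I}}}\geq\frac{1}{n!}\int_X\theta^n_{\varphi}$, one gets $\mathbf{E}((\psi_k\land\psi)_\bullet)\leq n!\int_{\Delta(L)} G[(\psi_k\land\psi)_\bullet]\,\mathrm{d}\lambda$, hence $d_1(\psi_{\bullet,k},\psi_\bullet)\geq n!\int_{\Delta(L)}\bigl(G[\psi_{\bullet,k}]+G[\psi_\bullet]-2G[(\psi_k\land\psi)_\bullet]\bigr)\,\mathrm{d}\lambda\geq n!\int_{\Delta(L)}\bigl|G[\psi_{\bullet,k}]-G[\psi_\bullet]\bigr|\,\mathrm{d}\lambda$, and the rest of your argument goes through. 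You should also record that $\Delta[(\psi_k\land\psi)_\bullet]$ is still an Okounkov test curve (equivalently, pass to the pointwise $\mathcal{I}$-projection, which changes neither $W(\theta,\cdot)$ nor the partial Okounkov bodies), and that \cref{prop:IcompimplyDeltacomp} is applied near $\tau^+$ through the limit partial Okounkov bodies of \cref{subsec:limpob}, where the paper notes it remains valid.
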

\begin{proof}
	We first prove the continuity of $\DHm$.

	By the dominated convergence theorem, it suffices to show that $G[\psi_{\bullet}](x)$ depends continuously on $\psi_{\bullet}$ for almost all $x\in \Int\Delta(L)$. To be more precise, let $\psi^j_{\bullet}\in \TC^1_{\mathcal{I}}(X,\omega)$ be a sequence converging to $\psi_{\bullet}$. We want to show that
	\[
		G[\psi^j_{\bullet}](x)\to G[\psi_{\bullet}](x)
	\]
	for almost all $x\in \Int \Delta(L)$.
	We will reduce to the case where $\psi^j_{\bullet}$ is either increasing or decreasing.  In these cases, it suffices to show that $G[\psi^j_{\bullet}]\to G[\psi_{\bullet}]$ in $L^1$.
	By \eqref{eq:Deltaene} and \eqref{eq:EpsiEDeltapsi}, this amounts to showing that $\mathbf{E}(\psi^j_{\bullet})\to \mathbf{E}(\psi_{\bullet})$. The latter follows from \cref{lma:incdecd1}.

	In order to make the reduction, we will prove that after passing to a subsequence, there exists an increasing sequence $\varphi_{\bullet}^j\in \TC^1_{\mathcal{I}}(X,\omega)$ and a decreasing sequence $\eta_{\bullet}^j\in \TC^1_{\mathcal{I}}(X,\omega)$ such that $\varphi_{\bullet}^j\leq \psi_{\bullet}^j \leq \eta_{\bullet}^j$
	and $\varphi^j_{\bullet}\xrightarrow{d_1}\psi_{\bullet}$, $\eta^j_{\bullet}\xrightarrow{d_1}\psi_{\bullet}$. In fact, we can relax the requirement to $\varphi^j_{\bullet},\eta^j_{\bullet}\in \TC^1(X,\omega)$, not necessarily $\mathcal{I}$-model. Then it suffices to replace both test curves by their pointwise $\mathcal{I}$-projections, which satisfy the same conditions by \cite[Theorem~3.18]{DX22}.

	Up to subtracting a subsequence, we may assume that for all $j$,
	\[
		d_1(\psi^{j}_{\bullet},\psi_{\bullet})\leq 2^{-j}.
	\]

	For $k\geq j\geq 0$, we set
	\[
		\eta^{j,k}_{\bullet}\coloneqq \psi^j_{\bullet}\lor \cdots\lor \psi^{k}_{\bullet}\in \TC^1(X,\omega).
	\]
	Let $\eta^j_{\bullet}\in \TC^1(X,\omega)$ be the increasing limit of $\eta^{j,k}_{\bullet}$ as $k\to\infty$.
	We then have
	\[
		\begin{aligned}
			d_1(\eta^{j,k}_{\bullet},\psi_{\bullet})\leq & d_1(\psi_{\bullet},(\psi\lor\psi^j)_{\bullet})+d_1((\psi\lor\psi^j)_{\bullet},(\psi\lor\psi^j\lor\psi^{j+1})_{\bullet})+\cdots \\
			                                             & +d_1((\psi\lor\psi^j\lor\cdots\lor\psi^{k-1})_{\bullet},(\psi\lor\psi^j\lor\cdots\lor\psi^{k})_{\bullet})                      \\
			\leq                                         & d_1(\psi_{\bullet},(\psi\lor\psi^j)_{\bullet})+\cdots+d_1(\psi_{\bullet},(\psi\lor\psi^k)_{\bullet})                           \\
			\leq                                         & C_0 \sum_{i=j}^k d_1(\psi_{\bullet},\psi^i_{\bullet})                                                                          \\
			\leq                                         & C_0 2^{1-j}.
		\end{aligned}
	\]
	Here the second inequality follows from \eqref{eq:d1max}, the third inequality follows from \eqref{eq:d1ineq}.
	Then by \cref{lma:incdecd1}, we find that $d_1(\eta^{j}_{\bullet},\psi_{\bullet})\leq C 2^{1-j}$.
	Thus, $\eta^j_{\bullet}\xrightarrow{d_1}\psi_{\bullet}$.

	Similarly, for $k\geq j\geq 0$, let
	\[
		\varphi^{j,k}_{\bullet}\coloneqq \psi^j_{\bullet}\land \cdots\land \psi^{k}_{\bullet}\in \TC^1(X,\omega).
	\]
	The same argument as above shows that for $k\geq j\geq 0$, $d_1(\varphi^{j,k}_{\bullet},\psi_{\bullet})\leq 2^{1-j}$.
	Let
	\[
		\psi^j_{\tau}\coloneqq \inf_{k\geq j}\varphi^{j,k}_{\tau}.
	\]
	By monotone convergence theorem, we find that $\psi^j\in \TC^1(X,\omega)$. Thus, by \cref{lma:incdecd1}, $d_1(\varphi^{j}_{\bullet},\psi_{\bullet})\leq 2^{1-j}$.

	Next we prove \eqref{eq:momDHm}. Let $\alpha\in \mathcal{E}^1(L^{\An})$. Let $\psi_{\bullet}$ be the test curve corresponding to $\alpha$.
	We need to compute
	\[
		\int_\mathbb{R}x\,\DHm(\alpha)(x)=\int_{\Delta(L)}G[\psi_{\bullet}]\,\mathrm{d}\lambda.
	\]
	By \eqref{eq:EFlevelset}, \eqref{eq:Deltaene} and \eqref{eq:EpsiEDeltapsi}, the right-hand side is just $\mathbf{E}(\psi_{\bullet})$,
	which is equal to $\mathrm{E}(\alpha)$ by \eqref{eq:ENAEequal} and \eqref{eq:Eequal}.

	Finally, \eqref{eq:massDHm} follows from \eqref{eq:massDH}.
\end{proof}
\begin{remark}\label{rmk:Ino}
    On the subspace $\mathcal{H}^{\NA}$, the Duistermaat--Heckman measure is the same as the one defined in \cite[Section~3.2]{BHJ17}. This follows from \cref{thm:filtOko} and \cite[Theorem~A]{BC11}. On the other hand, in \cite[Definition~3.56]{Ino22}, Inoue defined the Duistermaat--Heckman measure for a general non-Archimedean metric on $L^{\An}$. As explained in \cite[Remark~1.4]{Ino22}, his definition agrees with ours for metrics in $\mathcal{E}^1(L^{\An})$.
\end{remark}

\section{Toric setting}\label{sec:tor}

This section is devoted to a toric interpretation of the partial Okounkov body construction.

\subsection{Technical lemmata}

\begin{lemma}\label{lma:polybdd}
	Let $\alpha,\beta_1,\ldots,\beta_m\in \mathbb{Z}^n$. Let $\Delta$ be the convex polytope generated by $\beta_1,\ldots,\beta_m$.  Then the following are equivalent:
	\begin{enumerate}
		\item
		      \begin{equation}\label{eq:zalpha}
			      |z^{\alpha}|^2\left(\sum_{i=1}^m |z^{\beta_i}|^2 \right)^{-1}
		      \end{equation}
		      is a bounded function on $\mathbb{C}^{*n}$.
		\item $\alpha\in \Delta$.
	\end{enumerate}
\end{lemma}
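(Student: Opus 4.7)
The statement is a standard convex-geometry fact and the natural strategy is to reduce boundedness on $(\mathbb{C}^*)^n$ to a one-sided estimate on $\mathbb{R}^n$ via the Reinhardt change of variables, and then apply the separation theorem.

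First I would parametrize $z \in (\mathbb{C}^*)^n$ by $z_j = e^{(w_j + \mathrm{i}\theta_j)/2}$, so that for any $\gamma \in \mathbb{Z}^n$ one has $|z^{\gamma}|^2 = e^{\gamma\cdot w}$ with $w \in \mathbb{R}^n$, independently of the angular variables $\theta_j$. Hence the boundedness of \eqref{eq:zalpha} on $(\mathbb{C}^*)^n$ is equivalent to the boundedness from above of
\[
	f(w) := \alpha\cdot w - \log\Bigl(\sum_{i=1}^m e^{\beta_i\cdot w}\Bigr)
\]
on $\mathbb{R}^n$. Using the elementary sandwich
\[
	\max_{1\leq i\leq m}\beta_i\cdot w \;\leq\; \log\sum_{i=1}^m e^{\beta_i\cdot w} \;\leq\; \max_{1\leq i\leq m}\beta_i\cdot w + \log m,
\]
the boundedness of $f$ from above is in turn equivalent to the existence of a constant $C$ so that $\alpha\cdot w \leq \max_i \beta_i\cdot w + C$ for all $w\in\mathbb{R}^n$.

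For the direction (2)$\Rightarrow$(1), assume $\alpha\in\Delta$ and write $\alpha = \sum_i t_i \beta_i$ with $t_i\geq 0$, $\sum_i t_i = 1$; then $\alpha\cdot w = \sum_i t_i(\beta_i\cdot w)\leq \max_i \beta_i\cdot w$, so $f \leq \log m$.

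For the converse (1)$\Rightarrow$(2), I would argue contrapositively: suppose $\alpha\notin\Delta$. Since $\Delta$ is a nonempty closed convex subset of $\mathbb{R}^n$, the Hahn--Banach separation theorem furnishes $v\in\mathbb{R}^n$ and $c\in\mathbb{R}$ with $\alpha\cdot v > c \geq \beta_i\cdot v$ for every $i$. Testing $w = tv$ with $t\to+\infty$ gives $\alpha\cdot w - \max_i\beta_i\cdot w \geq t(\alpha\cdot v - c)\to +\infty$, contradicting the boundedness of $f$ from above. This closes the equivalence.

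There is no serious obstacle; the only point requiring minimal care is the reduction to $w\in\mathbb{R}^n$, to make clear that the angular factors $e^{\mathrm{i}\theta_j}$ cancel in both numerator and denominator of \eqref{eq:zalpha} so that boundedness really is a statement about the real log-coordinates.
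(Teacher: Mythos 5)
Your proof is correct and follows essentially the same route as the paper: the convex-combination estimate for (2)$\Rightarrow$(1), and for (1)$\Rightarrow$(2) a separating hyperplane with normal $v$ tested along a one-parameter family, which is exactly the paper's curve $z(t)=(t^{a_1},\ldots,t^{a_n})$ written in logarithmic coordinates. Your explicit reduction to the real statement about $\alpha\cdot w-\log\sum_i e^{\beta_i\cdot w}$ is just the paper's companion Lemma~\ref{lma:polybdd2} made visible, so there is nothing substantively different to flag.
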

\begin{proof}
	(2) implies (1): Write $\alpha=\sum_i t_i\beta_i$, where $t_i\in [0,1]$, $\sum_i t_i=1$. Then
	\[
		|z^{\alpha}|^2\left(\sum_{i=1}^m |z^{\beta_i}|^2 \right)^{-1}=\prod_i |z^{\beta_i}|^{2t_i}\left(\sum_{i=1}^m |z^{\beta_i}|^2 \right)^{-1}\leq \prod_i \sum_j|z^{\beta_j}|^{2t_i}\left(\sum_{i=1}^m |z^{\beta_i}|^2 \right)^{-1}\leq 1.
	\]

	(1) implies (2): Assume that $\alpha\not\in \Delta$. Let $H$ be a hyperplane that separates $\alpha$ and $\Delta$. Say $H$ is defined by $a_1x_1+\cdots+ a_nx_n=C$. Set
	\[
		z(t)\coloneqq (t^{a_1},\ldots,t^{a_n}).
	\]
	Then clearly \eqref{eq:zalpha} evaluated at $z(t)$ is not bounded.
\end{proof}
\begin{lemma}\label{lma:polybdd2}
	Let $\beta_1,\ldots,\beta_m\in \mathbb{N}^n$ and $\beta\in \mathbb{R}^n$. Then the following are equivalent
	\begin{enumerate}
		\item $\log \sum_{i=1}^m \mathrm{e}^{x\cdot\beta_i}-(x,\beta)$ is bounded from below.
		\item $\beta$ is in the convex hull of the $\beta_i$'s.
	\end{enumerate}
\end{lemma}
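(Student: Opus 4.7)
The plan is to follow the same pattern as \cref{lma:polybdd}, but in the logarithmic/tropical picture: the function $\log \sum_i e^{x\cdot \beta_i} - (x,\beta)$ behaves asymptotically like $\max_i(x\cdot \beta_i) - (x,\beta)$, so boundedness from below should exactly detect membership of $\beta$ in the convex hull via the separating hyperplane theorem.

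For the direction (2)$\Rightarrow$(1), I would write $\beta = \sum_i t_i \beta_i$ with $t_i \geq 0$ and $\sum_i t_i = 1$, and then apply Jensen's inequality to the convex function $\exp$:
\begin{equation*}
e^{(x,\beta)} = \exp\Bigl(\sum_i t_i (x\cdot \beta_i)\Bigr) \leq \sum_i t_i e^{x\cdot \beta_i} \leq \sum_{i=1}^m e^{x\cdot \beta_i}\,.
\end{equation*}
Taking logarithms yields $\log \sum_i e^{x\cdot \beta_i} - (x,\beta) \geq 0$, which is more than enough.

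For the direction (1)$\Rightarrow$(2), I would argue by contraposition. If $\beta$ lies outside the convex hull $\Delta$ of the $\beta_i$'s, the hyperplane separation theorem provides a vector $a \in \mathbb{R}^n$ and a constant $C > 0$ so that $a\cdot \beta_i \leq a \cdot \beta - C$ for every $i = 1,\ldots,m$. Testing the function along the ray $x = ta$ for $t > 0$ gives
\begin{equation*}
\log \sum_{i=1}^m e^{ta\cdot \beta_i} - t(a,\beta) \leq \log\bigl(m\, e^{t(a\cdot \beta - C)}\bigr) - t(a,\beta) = \log m - tC\,,
\end{equation*}
which tends to $-\infty$ as $t \to \infty$, violating (1).

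I do not anticipate any genuine obstacle here: both implications are soft and essentially identical in spirit to the proof of \cref{lma:polybdd}, with Jensen's inequality replacing the AM--GM step and the separating hyperplane replacing the torus curve $z(t)=(t^{a_1},\ldots,t^{a_n})$. The only mild subtlety is that the hypothesis $\beta_i \in \mathbb{N}^n$ is not actually used; the proof works for arbitrary $\beta_i \in \mathbb{R}^n$.
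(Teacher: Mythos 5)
Your proof is correct and is exactly what the paper intends by ``follows the same pattern as \cref{lma:polybdd}'': under the substitution $x = 2(\log|z_1|,\ldots,\log|z_n|)$, your Jensen step is the logarithmic form of the paper's weighted geometric-mean bound, and your separating hyperplane along the ray $x=ta$ is the logarithm of the paper's torus curve $z(t)=(t^{a_1},\ldots,t^{a_n})$. Your observation that $\beta_i \in \mathbb{N}^n$ is not needed is also right; the hypothesis is merely inherited from the toric application.
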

\begin{proof}
	The proof follows the same pattern as \cref{lma:polybdd}.
\end{proof}

\subsection{Toric Okounkov bodies}\label{subsec:torOko}
Let $X$ be an $n$-dimensional smooth projective toric variety, corresponding to a smooth complete fan $\Sigma$ in $N_{\mathbb{R}}\cong \mathbb{R}^n$.
Let $N$ be the lattice in $N_{\mathbb{R}}$, whose dual is the character lattice $M$.
Let $T\coloneqq N\otimes_{\mathbb{Z}} \mathbb{C}^*$ be the corresponding torus.
Define $M_{\mathbb{R}}=N_{\mathbb{R}}^{\vee}$. Given any $T$-invariant divisor $D$ on $X$, let $P_D\subseteq M_{\mathbb{R}}$ be the polyhedron associated with $D$.

Let $D_1,\ldots,D_s$ be the class of prime $T$-invariant divisors on $X$, each corresponding to a ray $\rho_i$ in $\Sigma$. Let $v_i$ be the primitive generator of $\rho_i$.
Any $T$-invariant admissible flag $Y_{\bullet}$ has the following form after renumbering the $D_i$'s:
\[
	Y_i=D_1\cap\cdots\cap D_i.
\]
Now the $v_i$'s induce an isomorphism $\Phi:M\rightarrow \mathbb{Z}^n$, $u\mapsto ((u,v_i))_i$.
Let $\Phi_{\mathbb{R}}:M_{\mathbb{R}}\rightarrow \mathbb{R}^n$ be the extension of $\phi$ to $M_{\mathbb{R}}$ and $\sigma$ be the cone generated by the $v_i$'s. Let $U_{\sigma}$ be the corresponding orbit of $T$. Given any $T$-invariant line bundle, there is a unique $T$-invariant divisor $D$ with $D|_{U_{\sigma}}=0$ such that $\mathcal{O}_X(D)=L$.

It is shown in \cite[Proposition~6.1]{LM09} that
\begin{equation}
	\Gamma_k(L)=\Phi_{\mathbb{R}} \left((kP_{D})\cap M\right)
\end{equation}
for sufficiently divisible $k$. We will omit $\Phi_{\mathbb{R}}$ from out notations from now on.

Let $T_c$ be the compact torus in $T$.
Next consider a $T_c$-invariant metric $\phi$ on $L$.  
An unpublished result of Yi Yao says that in the toric setting, two invariant potentials $\phi'$, $\phi''$ are $\mathcal{I}$-equivalent if and only if $\overline{\nabla\phi'_{\mathbb{R}}(\mathbb{R}^n)}=\overline{\nabla\phi''_{\mathbb{R}}(\mathbb{R}^n)}$. In other words, in the toric setting, for the invariant potentials, the $P[\bullet]$-envelope is the same as the $P[\bullet]_{\mathcal{I}}$-envelope.
In particular, 
\[
	\vol(L,\phi)=\frac{1}{n!}\int_X (\ddc\phi)^n
\]
always holds, without having to take the $P[\bullet]_{\mathcal{I}}$-envelope. For the proof of a more general result, we refer to \cite[Theorem~3.13, Proposition~3.11]{BBGHdJ21}.

Let $U_0$ be the maximal orbit of $T$. The basis $(v_i)$ allows us to identify $U_0=\mathbb{C}^{*n}$. We denote the coordinates on $\mathbb{C}^{*n}$ by $(z_1,\ldots,z_n)$, $z_i=x_i+\mathrm{i}y_i$.
Fix a $T$-invariant section $s_0$ of $L$ on $U_0$ corresponding to $D$. Then we can identify $\phi$ with a $T_c$-invariant function on $U_0$. Given the identification $U_0=\mathbb{C}^{*n}$, $\phi$ can be identified with a convex function $\phi_{\mathbb{R}}:\mathbb{R}^n\rightarrow \mathbb{R}$ such that $\nabla \phi_{\mathbb{R}} \subseteq P_D$.
We let $P_{D,\phi}$ be the closure of the image of $\nabla\phi$. By \cite[Lemma~2.5]{BB13}, $P_{D,\phi}$ corresponds to the closure of
\[
	Q_{D,\phi}\coloneqq \left\{y\in M_{\mathbb{R}}: \phi(x)-(x,y)\text{ is bounded from below}\right\}.
\]

We will be more explicit at this point. Assume that
\[
	\phi= \log \sum_{i=1}^a |s_i|^2+\mathcal{O}(1),
\]
where $s_i\in \mathrm{H}^0(X,L)$. Let $\beta_i$ be the lattice points in $P_D$ corresponding to $s_i$.
In this case, $Q_{D,\phi}$ is just the convex polytope generated by the $\beta_i$'s by \cref{lma:polybdd2}.

Consider $\alpha\in M\cap P_D$. It corresponds to a Laurent polynomial $z^{\alpha}$ on $\mathbb{C}^{*n}$.
Observe that $\alpha\in Q_{D,\phi}$ if and only if $|z^{\alpha}|^2\mathrm{e}^{-\phi}$ is bounded from above.
This is just a reformulation of \cref{lma:polybdd}.

Thus, we find
\begin{equation}
	\Gamma_k (W^0(L,\phi))=(kQ_{D,\phi})\cap M
\end{equation}
when $k$ is sufficiently divisible.
Hence, $\Delta(L,\phi)\supseteq P_{D,\phi}$.
Comparing the volumes, we find that equality holds.

Next we deal with $T_c$-invariant $\phi$ such that $\ddc\phi$ is a Kähler current. 
Let $\phi^j$ be an equivariant quasi-equisingular approximation of $\phi$ constructed as in \cite[Corollary~13.23]{Dem12}.
Then by definition,
\[
	\Delta(L,\phi)=\bigcap_j \Delta(L,\phi^j).
\]
On the other hand,
\[
	P_{D,\phi}\subseteq \bigcap_j P_{D,\phi^j}.
\]
Hence, $P_{D,\phi}\subseteq \Delta(L,\phi)$.
On the other hand, the volume of both sides agree, so they are indeed equal thanks to the assumption that $\phi$ has analytic singularities.

In general, if $\phi$ is $T_c$-invariant and has positive volume. Let $\psi\leq \phi$ be a potential with $\ddc\psi$ being a Kähler current. We may guarantee that $\psi$ is $T_c$-invariant. Then by definition, if we set $\phi_{\epsilon}=(1-\epsilon)\phi+\epsilon\psi$, then
\[
	\Delta(L,\phi)=\overline{\bigcup_{\epsilon\in (0,1)}\Delta(L,\phi_{\epsilon})}.
\]
While
\[
	P_{D,\phi}\supseteq \overline{\bigcap_{\epsilon} P_{D,P[\phi_{\epsilon}]_{\mathcal{I}}}}.
\]
Thus, $\Delta(L,\phi)\supseteq P_{D,\phi}$.
Comparing the volumes, we find that these convex bodies are equal.

\begin{theorem}\label{thm:torDeltaP}
	Let $\phi$ be a $T_c$-invariant psh metric on $L$ with positive volume. Then
	\[
		\Delta(L,\phi)=P_{D,\phi}
	\]
	under the identification $\Phi_{\mathbb{R}}$ as above.
\end{theorem}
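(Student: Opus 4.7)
The plan is to establish the identification case by case, matching the three-step stratification already used in the construction of $\Delta(L,\phi)$ in \cref{sec:PoB}: first analytic singularities, then Kähler currents via quasi-equisingular approximation, and finally the general $\mathcal{I}$-model case via the $(1-\epsilon)\phi + \epsilon\psi$ trick. The identification $\Phi_{\mathbb{R}}:M_{\mathbb{R}}\to\mathbb{R}^n$ and the description $\Gamma_k(L)=\Phi_{\mathbb{R}}((kP_D)\cap M)$ from \cite[Proposition~6.1]{LM09} will be in force throughout.

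For the analytic case, I would choose $\phi$ of the form $\log\sum_{i=1}^a |s_i|^2 + O(1)$ with $s_i\in\mathrm{H}^0(X,L)$ corresponding to lattice points $\beta_i\in P_D\cap M$. For $\alpha\in M\cap P_D$, the monomial $z^\alpha$ lies in $W^0_1(L,\phi)$ (in the sense of \cref{rmk:DeltaanaW0}) precisely when $|z^\alpha|^2 e^{-\phi}$ is bounded on the open torus $U_0$, which by \cref{lma:polybdd} is equivalent to $\alpha\in\mathrm{Conv}(\beta_i)=Q_{D,\phi}$. Taking powers of $\phi$, this yields $\Gamma_k(W^0(L,\phi))=(kQ_{D,\phi})\cap M$ for $k$ sufficiently divisible, hence $\Delta(L,\phi)\subseteq P_{D,\phi}$, and equality follows by comparing volumes using \eqref{eq:volDelta} and the assumption that $\phi$ has $\mathcal{I}$-model singularities.

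For the Kähler current case with $\mathcal{I}$-model singularities, I would invoke an equivariant quasi-equisingular approximation $\phi^j$ of $\phi$ (constructed as in \cite[Corollary~13.23]{Dem12} so that equivariance is preserved). By the construction in \cref{sec:PoB}, $\Delta(L,\phi)=\bigcap_j \Delta(L,\phi^j)$; on the other side $P_{D,\phi}\subseteq \bigcap_j P_{D,\phi^j}$ since $\phi\leq\phi^j+C$ forces $Q_{D,\phi}\subseteq Q_{D,\phi^j}$. Combining with the analytic case gives $P_{D,\phi}\subseteq \Delta(L,\phi)$, and equality again follows from comparing volumes (here using that $\phi$ is $\mathcal{I}$-model, so \eqref{eq:volDelta} still matches $n!\cdot\vol P_{D,\phi}=\int_X(\ddc\phi)^n$).

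For the general case, pick a $T_c$-invariant $\psi\leq\phi$ with $\ddc\psi$ a Kähler current (such $\psi$ exists by \cite[Proposition~3.6]{DX21}, and the $T_c$-average preserves the required properties), and set $\phi_\epsilon=(1-\epsilon)\phi+\epsilon\psi$. The construction gives $\Delta(L,\phi)=\overline{\bigcup_{\epsilon\in(0,1)}\Delta(L,P[\phi_\epsilon]_{\mathcal{I}})}$; meanwhile $P_{D,\phi}\supseteq \overline{\bigcup_\epsilon P_{D,P[\phi_\epsilon]_{\mathcal{I}}}}$ since $\phi_\epsilon\leq\phi+C_\epsilon$. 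The previous case applied to $P[\phi_\epsilon]_{\mathcal{I}}$ identifies these two sets, yielding $\Delta(L,\phi)\subseteq P_{D,\phi}$, and a final volume comparison closes the argument. The main technical obstacle I anticipate is the volume comparison step in the general case: one must know that $\vol P_{D,\phi}=\frac{1}{n!}\int_X(\ddc\phi)^n$, which requires the $\mathcal{I}$-model assumption together with toric Monge--Ampère computations relating $(\ddc\phi)^n$ to the pushforward of Lebesgue measure on $P_{D,\phi}$ via $\nabla\phi_{\mathbb{R}}$; this is standard for bounded toric potentials but requires care in the unbounded $\mathcal{I}$-model setting.
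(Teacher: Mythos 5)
Your proposal follows the paper's proof essentially verbatim: the same three-step stratification (sections of the special form $\log\sum_i|s_i|^2$, then K\"ahler currents via an equivariant quasi-equisingular approximation, then the general case via $\phi_\epsilon=(1-\epsilon)\phi+\epsilon\psi$), the same use of \cref{lma:polybdd}, \cref{rmk:DeltaanaW0} and the identity $\Gamma_k(W^0(L,\phi))=(kQ_{D,\phi})\cap M$, and the same volume comparisons at each stage. The toric Monge--Amp\`ere identity $\vol P_{D,\phi}=\frac{1}{n!}\int_X(\ddc\phi)^n$ that you flag as the delicate point is indeed the ingredient the paper also uses implicitly in each ``comparing the volumes'' step, so flagging it is fair rather than a deviation.

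One step of your general case is not justified as written. You claim $P_{D,\phi}\supseteq \overline{\bigcup_\epsilon P_{D,P[\phi_\epsilon]_{\mathcal{I}}}}$ ``since $\phi_\epsilon\leq\phi+C_\epsilon$''; that pointwise bound only gives $P_{D,\phi_\epsilon}\subseteq P_{D,\phi}$, while $P[\phi_\epsilon]_{\mathcal{I}}$ lies \emph{above} $\phi_\epsilon$ and is only related to $\phi$ through $\preceq_{\mathcal{I}}$, so the inclusion you want would require knowing that $\preceq_{\mathcal{I}}$ controls the closure of the gradient image --- essentially Yao's unpublished statement, which the paper deliberately avoids (this is why the $\mathcal{I}$-model hypothesis appears at all). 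The inexpensive inclusion goes the other way, and is the one the paper uses: since $P[\phi_\epsilon]_{\mathcal{I}}\geq\phi_\epsilon$ one has $\Delta(L,\phi_\epsilon)=\Delta(L,P[\phi_\epsilon]_{\mathcal{I}})=P_{D,P[\phi_\epsilon]_{\mathcal{I}}}\supseteq P_{D,\phi_\epsilon}$, and $Q_{D,\phi_\epsilon}\supseteq(1-\epsilon)Q_{D,\phi}+\epsilon Q_{D,\psi}$ shows $\overline{\bigcup_\epsilon P_{D,\phi_\epsilon}}\supseteq P_{D,\phi}$, whence $\Delta(L,\phi)\supseteq P_{D,\phi}$; the volume comparison (using the $\mathcal{I}$-model hypothesis and the toric mass identity) then closes the argument exactly as you intend. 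With this one inclusion rerouted, your proof coincides with the paper's.
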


\subsection{Mixed volumes of line bundles}
Let $X,T$ be as in \cref{subsec:torOko}.

\begin{lemma}\label{lma:mixedvollinebdl}
	Let $L_1,\ldots,L_n$ be big and nef $T$-invariant line bundles on $X$. Assume that the flag is $T$-invariant.
	Then
	\begin{equation}\label{eq:mixedvol}
		\frac{1}{n!}(L_1,\ldots,L_n)=\vol(\Delta(L_1),\ldots,\Delta(L_n)).
	\end{equation}
\end{lemma}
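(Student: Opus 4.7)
The plan is to reduce the mixed identity \eqref{eq:mixedvol} to the classical volume identity \eqref{eq:volD} by polarization of symmetric $n$-linear forms. The three key ingredients I would use are: (a) the toric identification $\Delta(L) = P_D$ for $T$-invariant big $L$ under the flag identification $\Phi_{\mathbb{R}}$, from \cite[Proposition~6.1]{LM09} as recalled in \cref{subsec:torOko}; (b) the Minkowski additivity $P_{D+D'} = P_D + P_{D'}$ for nef $T$-invariant divisors, a classical consequence of the additivity of support functions on the nef cone; and (c) the positive homogeneity $\Delta(aL) = a\Delta(L)$ provided by \cref{prop:res}.

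For positive rationals $t_1, \ldots, t_n$, set $L_t := \sum_i t_i L_i$: this is a big, nef, $T$-invariant $\mathbb{Q}$-line bundle with associated divisor $D_t = \sum_i t_i D_i$. Combining (a)--(c) yields
\[
\Delta(L_t) \;=\; P_{D_t} \;=\; \sum_{i=1}^n t_i P_{D_i} \;=\; \sum_{i=1}^n t_i \Delta(L_i)
\]
as convex bodies in $\mathbb{R}^n$. Since $L_t$ is big and nef, $\vol(L_t) = (L_t^n)$, so the movable intersection product coincides with the ordinary one, and \eqref{eq:volD} applied to $L_t$ specializes to the polynomial identity
\[
(L_t^n) \;=\; n!\,\vol\Bigl(\sum_{i=1}^n t_i \Delta(L_i)\Bigr), \qquad (t_1,\ldots,t_n)\in \mathbb{Q}_{>0}^n.
\]

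Both sides are homogeneous polynomials of degree $n$ in $(t_1,\ldots,t_n)$. Expanding the left by multilinearity of the intersection pairing, the coefficient of $t_1 t_2 \cdots t_n$ is $n!\,(L_1\cdots L_n)$; expanding the right by Minkowski multilinearity of the mixed volume, the coefficient of $t_1 t_2 \cdots t_n$ is $(n!)^2\,\vol(\Delta(L_1),\ldots,\Delta(L_n))$. Equating the two coefficients and dividing by $(n!)^2$ yields \eqref{eq:mixedvol}.

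The only step that is genuinely toric --- and not available in the general Okounkov body theory --- is (b). In general we have only the super-additivity inclusion $\sum_i t_i \Delta(L_i) \subseteq \Delta(L_t)$ from \cref{prop:suba}, and the reverse inclusion can fail outside the toric setting; here it is precisely this equality of polytopes that makes the diagonal identity polynomial in $t$ and thus amenable to polarization. I expect no other substantive obstacle.
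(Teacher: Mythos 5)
Your argument is correct, and its overall skeleton is the same as the paper's: establish additivity of the toric Okounkov bodies under sums of the line bundles, plug into the volume identity \eqref{eq:volD}, and polarize the resulting polynomial identity in $(t_1,\ldots,t_n)$ by comparing the coefficient of $t_1\cdots t_n$. The difference lies in how the key additivity $\Delta\bigl(\sum_i t_iL_i\bigr)=\sum_i t_i\Delta(L_i)$ is justified. The paper invokes Kaveh's theorem \cite[Theorem~3.1]{Kav11} for ample $T$-invariant bundles with integer $t_i$, and then passes from the ample to the big-and-nef case by a perturbation (continuity) argument with ample $\mathbb{Q}$-divisors. You instead derive the additivity directly from the toric identification $\Delta(L)=P_D$ of \cite[Proposition~6.1]{LM09} (as recalled in \cref{subsec:torOko}) together with the classical Minkowski additivity $P_{D+D'}=P_D+P_{D'}$ of moment polytopes of nef invariant divisors, which treats the big-and-nef case in one stroke and removes the perturbation step; the price is that you must check the compatibility of the normalization $D|_{U_\sigma}=0$ under sums (immediate) and the nefness hypothesis in the polytope additivity, both of which are fine. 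Two small remarks: the appeal to \cref{prop:res} and rational $t_i$ is harmless but unnecessary, since a homogeneous degree-$n$ polynomial is already determined by its values on $\mathbb{N}^n$, which is exactly how the paper phrases it; and your observation that only super-additivity (\cref{prop:suba}) survives outside the toric setting is precisely the point illustrated by \cref{ex:mixvol}.
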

Here $\vol$ denotes the mixed volume functional. We refer to \cite[Section~5.1]{Sch14} for the precise definition.

As pointed out by Rémi Reboulet, this result is already proved in \cite[Proposition~3.4.3]{BGPS14}.

\begin{proof}
	\textbf{Step 1}. We first assume that all $L_i$'s are ample.

	In this case, we know that for any $t_i\in \mathbb{N}$ ($i=1,\ldots,n$),
	\[
		\Delta\left(\sum_{i=1}^n t_i L_i\right)=\sum_{i=1}^nt_i\Delta(L_i)
	\]
	by \cite[Theorem~3.1]{Kav11}. Hence,
	\[
		\vol  \Delta\left(\sum_{i=1}^n t_i L_i\right)=\sum_{\alpha\in \mathbb{N}^n,|\alpha|=n}\binom{n}{\alpha} t^{\alpha}\vol\left(\Delta(L_1)^{\alpha_1},\ldots,\Delta(L_n)^{\alpha_n}\right).
	\]
	On the other hand, by \eqref{eq:volD1},
	\[
		\vol  \Delta\left(\sum_{i=1}^n t_i L_i\right)= \frac{1}{n!} \sum_{\alpha\in \mathbb{N}^n,|\alpha|=n} \binom{n}{\alpha} t^{\alpha}\left(L_1^{\alpha_1},\ldots,L_n^{\alpha_n}\right).
	\]
	Comparing the coefficients, we find \eqref{eq:mixedvol}.

	\textbf{Step 2}. General case.

	The results of Step 1 generalize immediately to ample $\mathbb{Q}$-divisors. Hence, the nef case follows from a simple perturbation argument.
\end{proof}
The following example is due to Chen Jiang.
\begin{example}\label{ex:mixvol}
	If the flag is not toric invariant, \cref{lma:mixedvollinebdl} fails. For example, consider $X=\mathbb{P}^1\times \mathbb{P}^1$, $L_1=\mathcal{O}(1,2)$ and $L_2=\mathcal{O}(2,1)$. Take a flag $X=Y_0\supseteq Y_1\supseteq Y_2$ with $Y_1$ being the diagonal. In this case, \eqref{eq:mixedvol} fails.

	In this case, $(L_1,L_2)=5$. By a simple computation using \cite[Theorem~6.4]{LM09}, we find
	$\Delta(L_1)=\Delta(L_2)$ is the trapezoid shown in \cref{cap:Oko}. In particular,
	\[
		\vol(\Delta(L_1),\Delta(L_2))=2<\frac{5}{2!}.
	\]
	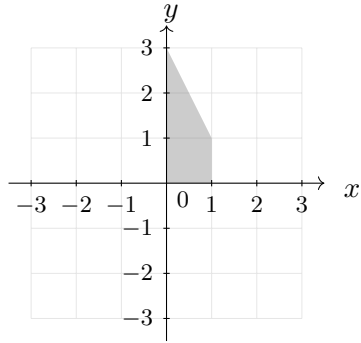
\begin{figure}[ht]\label{fig:Oko}
		\centering
		\begin{tikzpicture}[scale=0.6,domain=-3:5]

			\begin{scope}
				\path [clip] (-3,-5)--(5,3)--(-3,3)--cycle;
				\path[fill=black,opacity=0.2] (0,0) -- (1,0) -- (1,1) -- (0,3) -- cycle;
			\end{scope}

			\draw[very thin, color=gray,opacity=0.2] (-3.0,-3.0) grid (3.0,3.0);
			\draw[->,color=black] (-3.5,0) -- (3.5,0);
			\foreach \x in {-3,-2,-1,1,2,3}
			\draw[shift={(\x,0)},color=black] (0pt,2pt) -- (0pt,-2pt) node[below] {\footnotesize $\x$};
			\draw[->,color=black] (0,-3.5) -- (0,3.5);
			\draw (3.7,0.2) node[anchor=north west] {$x$};
			\draw (-0.3,4.2) node[anchor=north west] {$y$};
			\foreach \y in {-3,-2,-1,1,2,3}
			\draw[shift={(0,\y)},color=black] (2pt,0pt) -- (-2pt,0pt) node[left] {\footnotesize $\y$};
			\draw[color=black] (0pt,-10pt) node[right] {\footnotesize $0$};

		\end{tikzpicture}
		\caption{Okounkov body}\label{cap:Oko}
	\end{figure}
\end{example}

For simplicity, we call $(L,\phi)$ a \emph{$T$-invariant Hermitian big line bundle} on $X$ if $(T,\phi)$ is a Hermitian big line bundle on $X$, $L$ is $T$-invariant and $\phi$ is $T_c$-invariant.
\begin{corollary}\label{cor:mixvolnpp}
	Let $(L_i,\phi_i)$ ($i=1,\ldots,n$) be $T$-invariant Hermitian big line bundles on $X$ with positive volumes. 
	If the $T$-invariant flag satisfies that $Y_n$ is not contained in any of the polar loci of the $\phi_i$'s, then
	\begin{equation}\label{eq:mixeqmix}
		\frac{1}{n!}\int_X \ddc\phi_1\wedge\cdots\wedge\ddc \phi_n=\vol(\Delta(L_1,\phi_1),\ldots,\Delta(L_n,\phi_n)).
	\end{equation}
\end{corollary}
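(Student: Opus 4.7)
The plan is to use \cref{thm:torDeltaP} to rewrite the right-hand side of \eqref{eq:mixeqmix} as $\vol(P_{D_1,\phi_1},\ldots,P_{D_n,\phi_n})$, and then reduce the resulting identity
\[
\int_X \ddc\phi_1\wedge\cdots\wedge\ddc\phi_n = n!\,\vol(P_{D_1,\phi_1},\ldots,P_{D_n,\phi_n})
\]
to the case of big and nef toric line bundles, where it is the content of \cref{lma:mixedvollinebdl}, via approximation and equivariant resolution.

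First I would reduce to the case where every $\phi_i$ has analytic singularities. Both sides are continuous under $d_S$-convergence of the $\phi_i$: the left-hand side by \cref{thm:dsmixedmass}, after rewriting it as a Cao mixed mass via \cref{cor:equalCao}; the right-hand side by \cref{thm:Okoucont} combined with \cref{thm:torDeltaP} (which together give $P_{D_i,\phi_i^j}\xrightarrow{d_n} P_{D_i,\phi_i}$) and the Hausdorff continuity of mixed volume. Choosing $T_c$-invariant quasi-equisingular approximations $\phi_i^j$ of each $\phi_i$ (obtained by averaging the usual approximations over $T_c$) carries out the reduction.

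Next I would use $T$-equivariant resolution to find a single $T$-equivariant birational model $\pi\colon Y \to X$ on which each $\pi^*\phi_i$ has analytic singularities along a $T$-invariant nc $\mathbb{Q}$-divisor $E_i$. The hypothesis that $Y_n$ avoids every polar locus implies that $Y_n$ also avoids each $E_i$ and the exceptional locus of $\pi$; I can therefore lift the admissible flag $(Y_\bullet)$ to a $T$-invariant admissible flag on $Y$ via the local isomorphism near $Y_n$. By \cref{rmk:DeltaanaW0}, $\Delta(\pi^*L_i, \pi^*\phi_i)$ coincides (up to a translation that does not affect mixed volumes) with the classical Okounkov body $\Delta(\pi^*L_i - E_i)$. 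A small ample perturbation in each class, controlled on the polytope side by \cref{prop:Deltapert}, lets me arrange the $\pi^*L_i - E_i$ to be ample.

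At this point the left-hand side of the identity, computed on $Y$, is a standard intersection number $((\pi^*L_1 - E_1)\cdots (\pi^*L_n - E_n))$: the non-pluripolar mixed Monge--Amp\`ere of currents with analytic singularities along the $E_i$ strips off the $[E_i]$ contributions and evaluates to the intersection of the bounded representatives of the nef residual classes. \cref{lma:mixedvollinebdl} then identifies this with $n!$ times the mixed volume of the Okounkov bodies $\Delta(\pi^*L_i - E_i)$, concluding the proof after inverting the reductions.

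The main obstacle I anticipate is the equivariant bookkeeping: constructing the approximations and resolution $T$-equivariantly while preserving the admissible-flag hypothesis and the ``$Y_n$ not in a polar locus'' condition at each step. The passage from the non-pluripolar mixed Monge--Amp\`ere to the intersection number on $Y$ is the other technical point, but it should follow either directly from the analytic-singularity structure or by an additional $\epsilon$-perturbation and limit using \cref{thm:dsmixedmass} and \cref{cor:equalCao}.
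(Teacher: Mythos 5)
Your proposal is correct and follows essentially the same route as the paper: reduce by $d_S$-continuity of both sides (\cref{thm:dsmixedmass} for the non-pluripolar mixed Monge--Amp\`ere mass, \cref{thm:Okoucont} plus Hausdorff continuity of mixed volumes for the bodies) to the case of analytic singularities, pass to a resolution, and use \cref{rmk:DeltaanaW0} to land in \cref{lma:mixedvollinebdl}; the detours through \cref{thm:torDeltaP} and \cref{cor:equalCao} are harmless but unnecessary, since \cref{thm:dsmixedmass} applies directly to the product on the left-hand side. The one caveat is your parenthetical construction of invariant approximants by averaging over $T_c$, which need not preserve analytic singularities; instead take the equivariant quasi-equisingular approximations of Demailly, as the paper does in \cref{subsec:torOko}, after which the equivariant resolution and flag bookkeeping you describe (using that $Y_n$ avoids the polar loci) goes through.
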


\begin{proof}
	According to \cref{prop:Deltapert}, by perturbing $L_i$, we may assume that each $\ddc\phi_i$ is a K\"ahler current.

	Observe that both sides of \eqref{eq:mixeqmix} are continuous under $d_S$-approximations of $\phi_i$: the left-hand side follows from \cref{thm:dsmixedmass} and the right-hand side follows from \cref{thm:Okoucont}.

	Hence, by \cite[Lemma~3.7]{DX21}, we may assume that each $\phi_i$ has analytic singularities. Taking a birational resolution, we may assume that $\phi_i$ has analytic singularities along normal crossing $\mathbb{Q}$-divisor $E_i$. By \cref{rmk:DeltaanaW0}, we reduce to the situation of \cref{lma:mixedvollinebdl}.
\end{proof}

We have finished the proof of \cref{thm:tormixedmass}.

\begin{corollary}
	Let $L_1,\ldots,L_n$ be big $T$-invariant line bundles on $X$. Assume that the flag $(Y_{\bullet})$ is $T$-invariant and $Y_n$ is not contained in the non-K\"ahler locus of any $c_1(L_i)$.
	Then
	\begin{equation}
		\frac{1}{n!}\langle L_1,\ldots,L_n\rangle=\vol(\Delta(L_1),\ldots,\Delta(L_n)).
	\end{equation}
\end{corollary}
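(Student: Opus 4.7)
The plan is to reduce this corollary to \cref{cor:mixvolnpp} by plugging in $T_c$-invariant minimal singularity metrics. Fix $T_c$-invariant smooth Hermitian metrics $h_i$ on $L_i$ and set $\theta_i=c_1(L_i,h_i)$. Since the defining set $\{\varphi\in \PSH(X,\theta_i):\varphi\le 0\}$ is preserved by the compact torus, its upper envelope $V_{\theta_i}$ is a $T_c$-invariant $\theta_i$-psh potential of minimal singularities, so $(L_i,V_{\theta_i})$ is a $T$-invariant Hermitian psef line bundle in the sense of \cref{sec:tor}.

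The first substantive step is to verify the hypotheses of \cref{cor:mixvolnpp} for these potentials. The positive volume of $(L_i,V_{\theta_i})$ is immediate from bigness of $L_i$. For the $\mathcal{I}$-model property, I would use the standard observation that for any $s\in \mathrm{H}^0(X,L_i^k)$ with $\sup_X |s|_{h_i^k}\le 1$ the function $k^{-1}\log |s|^2_{h_i^k}$ is a non-positive $\theta_i$-psh potential, hence dominated by $V_{\theta_i}$; this forces $|s|^2_{h_i^k}e^{-kV_{\theta_i}}$ to be locally bounded and so $s\in \mathrm{H}^0(X,L_i^k\otimes \mathcal{I}(kV_{\theta_i}))$. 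Consequently $\vol(L_i,V_{\theta_i})=\vol(L_i)=\frac{1}{n!}\int_X\theta_{i,V_{\theta_i}}^n$, and $V_{\theta_i}$ is $\mathcal{I}$-model by \cref{def:phiImodel}. For the last hypothesis, the assumption $Y_n\notin E_{nK}(c_1(L_i))$ provides a K\"ahler current in $c_1(L_i)$ whose potential $\psi_i$ is smooth near $Y_n$; after normalizing $\psi_i\le 0$ we have $V_{\theta_i}\ge \psi_i$, so $V_{\theta_i}(Y_n)>-\infty$ and $Y_n$ lies outside the polar locus of $V_{\theta_i}$.

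With the hypotheses checked, \cref{cor:mixvolnpp} applied to the tuple $\{(L_i,V_{\theta_i})\}_{i=1}^n$ yields
\[
\frac{1}{n!}\int_X \ddc V_{\theta_1}\wedge\cdots\wedge \ddc V_{\theta_n}= \vol\bigl(\Delta(L_1,V_{\theta_1}),\ldots,\Delta(L_n,V_{\theta_n})\bigr).
\]
I would then identify the two sides: the left-hand side equals $\frac{1}{n!}\langle L_1,\ldots,L_n\rangle$ by the characterization of the movable intersection product as the non-pluripolar product of minimal singularity metrics \cite{BFJ09,BDPP13}, while the right-hand side equals $\vol(\Delta(L_1),\ldots,\Delta(L_n))$ because $\Delta(L_i,V_{\theta_i})=\Delta(L_i)$ whenever $V_{\theta_i}$ has minimal singularities, as noted immediately after \cref{thm:partOkobody}.

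The main obstacle is really packaged into \cref{cor:mixvolnpp}; once the three hypotheses (positive volume, $\mathcal{I}$-model, $Y_n$ outside the polar locus) are checked, the corollary follows by this identification. The only subtlety lies in passing from the condition on the non-K\"ahler locus to the absence of $Y_n$ from the polar locus of a \emph{minimal singularity} potential, and this is handled by the envelope argument above.
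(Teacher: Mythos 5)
Your proposal is correct and follows exactly the paper's route: the paper's proof is the one-line reduction "apply \cref{cor:mixvolnpp} to the case where $\phi_i$ has minimal singularities," and your argument is that reduction with the hypotheses (invariance of $V_{\theta_i}$, positive volume, the $\mathcal{I}$-model property via $\vol(L_i,V_{\theta_i})=\vol(L_i)=\frac{1}{n!}\int_X\theta_{i,V_{\theta_i}}^n$, and $Y_n$ outside the polar locus) spelled out, plus the standard identifications $\int_X\theta_{1,V_{\theta_1}}\wedge\cdots\wedge\theta_{n,V_{\theta_n}}=\langle L_1,\ldots,L_n\rangle$ and $\Delta(L_i,V_{\theta_i})=\Delta(L_i)$. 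These verifications are accurate, so no gap.
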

Here $\langle \bullet\rangle$ denotes the movable intersection in the sense of \cite{BDPP13, BFJ09}.
\begin{proof}
	It suffices to apply \cref{cor:mixvolnpp} to the case where $\phi_i$ has minimal singularities.
\end{proof}

Finally, we propose the following conjecture concerning the mixed volume of partial Okounkov bodies in the non-toric setting:
\begin{conjecture}
    Let $(L_i,\phi_i)$ ($i=1,\ldots,n$) be Hermitian big line bundles on $X$ (not necessarily a toric variety) with positive volumes. Then
	\begin{equation}
		\frac{1}{n!}\int_X \ddc\phi_1\wedge\cdots\wedge\ddc \phi_n=\sup_{\nu}\vol(\Delta_{\nu}(L_1,\phi_1),\ldots,\Delta_{\nu}(L_n,\phi_n)),
	\end{equation}
    where $\nu$ runs over all rank $n$ valuations $\mathbb{C}(X)^{\times}\rightarrow \mathbb{Z}^n$.
\end{conjecture}
To the best of the author's knowledge, this conjecture is open even when the $\phi_i$'s have minimal singularities.

\clearpage


\printbibliography

\end{document}